\documentclass[14pt]{amsart}
\usepackage{amsfonts, amsthm, amssymb, amsmath, stmaryrd}
\usepackage{amsaddr}
\usepackage{mathrsfs,array}
\usepackage[english]{babel}
\usepackage[margin = 2.35cm]{geometry}
\usepackage{blindtext}
\usepackage{ucs}
\usepackage[utf8]{inputenc}
\usepackage[T1]{fontenc}  
\usepackage{latexsym}
\usepackage{amsfonts}
\usepackage{euscript}
\usepackage[pdftex]{graphicx}
\usepackage{color}
\usepackage{epic}
\usepackage{amsmath}
\usepackage{amssymb}
\usepackage{bbm}
\usepackage{url}
\usepackage{amsthm}
\usepackage{bbm}
\usepackage{textcomp}
\usepackage{enumitem}
\usepackage[linktoc=all]{hyperref}
\hypersetup{linktocpage} 
\usepackage[all, cmtip]{xy}
\usepackage{framed}
\usepackage{array}
\usepackage{tikz-cd}

\hypersetup{
    colorlinks,
    linkcolor={red!50!black},
    citecolor={blue!50!black},
    urlcolor={blue!80!black}
}
  
\setcounter {section} {0}

\numberwithin {equation}{section}

\newcommand\restr[2]{{
  \left.\kern-\nulldelimiterspace 
  #1 
  \right|_{#2} 
  }}
  
\input cyracc.def
\font\tencyr=wncyr6
\def\cyr{\tencyr\cyracc}


\newtheorem{theorem}{Theorem}[section]

\newtheorem*{theorem*}{Theorem}

\newtheorem*{theorem1}{Theorem 1}

\newtheorem*{theorem2}{Theorem 2}

\newtheorem*{theorem3}{Theorem 3}

\newtheorem{proposition}{Proposition}
\numberwithin{proposition}{subsubsection}

\newtheorem{lemma}[proposition]{Lemma}

\newtheorem{corollary}[proposition]{Corollary}

\theoremstyle{definition}

\newtheorem{definition}[theorem]{Definition}

\newtheorem*{definition*}{Definition}

\theoremstyle{remark}

\newtheorem{remark}[proposition]{Remark}

\newtheorem{notation}[proposition]{Notation}

\newtheorem{construction}[proposition]{Construction}

\newcounter{para}

\title{On the boundary and intersection motives of genus 2 Hilbert-Siegel varieties}

\author{Mattia Cavicchi}
\email{cavicchi@math.univ-paris13.fr}
\address{LAGA, Institut Galilée, Université Paris 13, F-93430 Villetaneuse,
France}
\subjclass[2010]{14G35 (primary), 11F46 (secondary)}
\keywords{Shimura varieties, Hilbert-Siegel varieties, boundary motive, intersection motive, weight structures, motives for Hilbert-Siegel modular forms}

\begin{document}

\newcommand{\BA}{{\mathbb{A}}}
\newcommand{\BB}{{\mathbb{B}}}
\newcommand{\BC}{{\mathbb{C}}}
\newcommand{\BD}{{\mathbb{D}}}
\newcommand{\BE}{{\mathbb{E}}}
\newcommand{\BF}{{\mathbb{F}}}
\newcommand{\BG}{{\mathbb{G}}}
\newcommand{\BH}{{\mathbb{H}}}
\newcommand{\BI}{{\mathbb{I}}}
\newcommand{\BJ}{{\mathbb{J}}}
\newcommand{\BK}{{\mathbb{K}}}
\newcommand{\BL}{{\mathbb{L}}}
\newcommand{\BM}{{\mathbb{M}}}
\newcommand{\BN}{{\mathbb{N}}}
\newcommand{\BO}{{\mathbb{O}}}
\newcommand{\BP}{{\mathbb{P}}}
\newcommand{\BQ}{{\mathbb{Q}}}
\newcommand{\BR}{{\mathbb{R}}}
\newcommand{\BS}{{\mathbb{S}}}
\newcommand{\BT}{{\mathbb{T}}}
\newcommand{\BU}{{\mathbb{U}}}
\newcommand{\BV}{{\mathbb{V}}}
\newcommand{\BW}{{\mathbb{W}}}
\newcommand{\BX}{{\mathbb{X}}}
\newcommand{\BY}{{\mathbb{Y}}}
\newcommand{\BZ}{{\mathbb{Z}}}


\newcommand{\Fa}{{\mathfrak{a}}}
\newcommand{\Fb}{{\mathfrak{b}}}
\newcommand{\Fc}{{\mathfrak{c}}}
\newcommand{\Fd}{{\mathfrak{d}}}
\newcommand{\Fe}{{\mathfrak{e}}}
\newcommand{\Ff}{{\mathfrak{f}}}
\newcommand{\Fg}{{\mathfrak{g}}}
\newcommand{\Fh}{{\mathfrak{h}}}
\newcommand{\Fi}{{\mathfrak{i}}}
\newcommand{\Fj}{{\mathfrak{j}}}
\newcommand{\Fk}{{\mathfrak{k}}}
\newcommand{\Fl}{{\mathfrak{l}}}
\newcommand{\Fm}{{\mathfrak{m}}}
\newcommand{\Fn}{{\mathfrak{n}}}
\newcommand{\Fo}{{\mathfrak{o}}}
\newcommand{\Fp}{{\mathfrak{p}}}
\newcommand{\Fq}{{\mathfrak{q}}}
\newcommand{\Fr}{{\mathfrak{r}}}
\newcommand{\Fs}{{\mathfrak{s}}}
\newcommand{\Ft}{{\mathfrak{t}}}
\newcommand{\Fu}{{\mathfrak{u}}}
\newcommand{\Fv}{{\mathfrak{v}}}
\newcommand{\Fw}{{\mathfrak{w}}}
\newcommand{\Fx}{{\mathfrak{x}}}
\newcommand{\Fy}{{\mathfrak{y}}}
\newcommand{\Fz}{{\mathfrak{z}}}

\newcommand{\FA}{{\mathfrak{A}}}
\newcommand{\FB}{{\mathfrak{B}}}
\newcommand{\FC}{{\mathfrak{C}}}
\newcommand{\FD}{{\mathfrak{D}}}
\newcommand{\FE}{{\mathfrak{E}}}
\newcommand{\FF}{{\mathfrak{F}}}
\newcommand{\FG}{{\mathfrak{G}}}
\newcommand{\FH}{{\mathfrak{H}}}
\newcommand{\FI}{{\mathfrak{I}}}
\newcommand{\FJ}{{\mathfrak{J}}}
\newcommand{\FK}{{\mathfrak{K}}}
\newcommand{\FL}{{\mathfrak{L}}}
\newcommand{\FM}{{\mathfrak{M}}}
\newcommand{\FN}{{\mathfrak{N}}}
\newcommand{\FO}{{\mathfrak{O}}}
\newcommand{\FP}{{\mathfrak{P}}}
\newcommand{\FQ}{{\mathfrak{Q}}}
\newcommand{\FR}{{\mathfrak{R}}}
\newcommand{\FS}{{\mathfrak{S}}}
\newcommand{\FT}{{\mathfrak{T}}}
\newcommand{\FU}{{\mathfrak{U}}}
\newcommand{\FV}{{\mathfrak{V}}}
\newcommand{\FW}{{\mathfrak{W}}}
\newcommand{\FX}{{\mathfrak{X}}}
\newcommand{\FY}{{\mathfrak{Y}}}
\newcommand{\FZ}{{\mathfrak{Z}}}


\newcommand{\CA}{{\cal A}}
\newcommand{\CB}{{\cal B}}
\newcommand{\CC}{{\cal C}}
\newcommand{\CE}{{\cal E}}
\newcommand{\CF}{{\cal F}}
\newcommand{\CG}{{\cal G}}
\newcommand{\CI}{{\cal I}}
\newcommand{\CJ}{{\cal J}}
\newcommand{\CK}{{\cal K}}
\newcommand{\CL}{{\cal L}}
\newcommand{\CM}{{\cal M}}
\newcommand{\CN}{{\cal N}}
\newcommand{\CO}{{\cal O}}
\newcommand{\CP}{{\cal P}}
\newcommand{\CQ}{{\cal Q}}
\newcommand{\CR}{{\cal R}}
\newcommand{\CS}{{\cal S}}
\newcommand{\CT}{{\cal T}}
\newcommand{\CU}{{\cal U}}
\newcommand{\CV}{{\cal V}}
\newcommand{\CW}{{\cal W}}
\newcommand{\CX}{{\cal X}}
\newcommand{\CY}{{\cal Y}}
\newcommand{\CZ}{{\cal Z}}

\newcommand{\cyrb}{{\cyr B}} 

\newcommand{\one}{\mathds{1}}

\newcommand{\omg}{\mathop{\rm \omega}\nolimits}


\newcommand{\End}{\mathop{\rm End}\nolimits}
\newcommand{\Hom}{\mathop{\rm Hom}\nolimits}
\newcommand{\Sym}{\mathop{\rm Sym}\nolimits}
\newcommand{\Rep}{\mathop{\rm Rep}\nolimits}
\newcommand{\Res}{\mathop{\rm Res}\nolimits}
\newcommand{\Spec}{\mathop{\rm Spec}\nolimits}
\newcommand{\pt}{\boldsymbol{\cdot}}
\newcommand{\Gal}{\mathop{\rm Gal}\nolimits}

\newcommand{\CH}{\mathop{\rm CH}\nolimits}
\newcommand{\DBM}{\mathop{DM_{\text{\cyrb}}}\nolimits}
\newcommand{\DBcM}{\mathop{DM_{\text{\cyrb},c}}\nolimits}
\newcommand{\PL}{\mathop{\rm PL}\nolimits}
\newcommand{\Id}{\mathop{\rm Id}\nolimits}

\newcommand{\SL}{\mathop{\rm SL} \nolimits}
\newcommand{\So}{\mathop{\rm SO} \nolimits}
\newcommand{\Sp}{\mathop{\rm Sp} \nolimits}
\newcommand{\GL}{\mathop{\rm GL} \nolimits}
\newcommand{\PGL}{\mathop{\rm PGL} \nolimits}
\newcommand{\Or}{\mathop{\rm O} \nolimits}
\newcommand{\POr}{\mathop{\rm PO} \nolimits}
\newcommand{\GSp}{\mathop{\rm GSp} \nolimits}
\newcommand{\U}{\mathop{\rm U} \nolimits}

\newcommand{\Stab}{\mathop{\rm Stab} \nolimits}
\newcommand{\der}{\mathop{\rm \tiny{der}} \nolimits}
\newcommand{\ad}{\mathop{\rm \tiny{ad}} \nolimits}
\newcommand{\Gr}{\mathop{\rm Gr} \nolimits}
\newcommand{\im}{\mathop{\rm Im} \nolimits}
\newcommand{\can}{\mathop{\rm \tiny{can}} \nolimits}
\newcommand{\cusp}{\mathop{\rm \tiny{cusp}} \nolimits}
\newcommand{\et}{\mathop{\rm \tiny{ét}} \nolimits}
\newcommand{\op}{\mathop{\rm \tiny{op}} \nolimits}

\newcommand{\q}[1]{``#1''}

\begin{abstract}
We study genus 2 Hilbert-Siegel varieties, i.e. Shimura varieties $S_K$ corresponding to the group $\mbox{GSp}_{4,F}$ over a totally real field $F$, along with the relative Chow motives $^\lambda \mathcal{V}$ of abelian type over $S_K$ obtained from irreducible representations $V_{\lambda}$ of $\mbox{GSp}_{4,F}$. We analyse the weight filtration on the degeneration of such motives at the boundary of the Baily-Borel compactification and we find a criterion on the highest weight $\lambda$, potentially generalisable to other families of Shimura varieties, which characterizes the absence of the \emph{middle weights} 0 and 1 in the corresponding degeneration. Thanks to Wildeshaus' theory, the absence of these weights allows us to construct Hecke-equivariant Chow motives over $\mathbb{Q}$, whose realizations equal interior (or intersection) cohomology of $S_K$ with $V_{\lambda}$-coefficients. We give applications to the construction of homological motives associated to automorphic representations. 
\end{abstract} 
\maketitle

\tableofcontents
\addtocontents{toc}{\setcounter{tocdepth}{1}}
\newpage
\section*{Introduction}
\subsection*{Background: motives for automorphic representations.} Let $S_K$ be a Shimura variety associated to a reductive $\mathbb{Q}$-group $G$ and to a neat open compact subgroup $K$ of $G(\mathbb{A}_f)$. The variety $S_K$ is then smooth and quasi-projective, and defined over its \emph{reflex field} $E$ (a number field). Every algebraic representation $V$ of $G$ defines a local system 
$\mu(V)$ on $S_K(\mathbb{C})$, whose \emph{interior cohomology} $H^*_!$, i.e. the image of cohomology with compact supports into ordinary cohomology, contains very rich analytical and arithmetic information: in particular, the $K$-invariants  
of \emph{cohomological} cuspidal automorphic representations of $G(\mathbb{A})$ appear exactly inside the spaces of the form $H^*_!(S_K(\mathbb{C}), \mu(V))$.
On the other hand, for every prime $\ell$, $V$ also defines an $\ell$-adic sheaf $\mu_{\ell}(V)$ on $S_K$, whose étale cohomology spaces are equipped with a Galois action and the action of Hecke operators. The study of the interaction of these different structures plays a pivotal role in the Langlands program.

Hence, following the general philosophy explained for example in \cite{Cloz90}, it is desirable to construct a Chow motive whose realizations equal interior cohomology - a subspace of the cohomology which is of \emph{pure weight}, in the Galois or Hodge-theoretic sense. Moreover, one would like this construction to be functorial, in order to further decompose such a motive according to the Hecke action (maybe switching to homological motives). The first successful example of such a construction was given by Scholl (\cite{Sch90}), who defined motives realizing to the Galois representations associated by Deligne to weight $k \geq 2$ modular cusp forms (\cite{Del69}). 

The results of this paper imply that analogous motives exist for \emph{most} coefficient systems in the case of \emph{genus 2 Hilbert-Siegel varieties}, which are Shimura varieties $S_K$ associated to (a subgroup of) the group $G=\mbox{Res}_{F|\mathbb{Q}}\mbox{GSp}_{4,F}$ over a totally real field $F$ of degree $d$ over $\mathbb{Q}$. More precisely, let $I_F$ be the set of real embeddings of $F$, and let $V_{\lambda}$ be an irreducible representation of $G$ of highest weight $\lambda$: such a weight is in particular specified by a couple of vectors of non-negative integers $(k_{1,\sigma})_{\sigma \in I_F}, \ (k_{2,\sigma})_{\sigma \in I_F}$ such that $k_{1,\sigma} \geq k_{2,\sigma} \geq 0$ for every $\sigma$ (i.e., the weight is \emph{dominant}). It is called \emph{regular at} $\sigma$ if $k_{1,\sigma} > k_{2,\sigma} > 0$, and \emph{regular} if it is regular at $\sigma$ for every $\sigma$. Denote moreover by $CHM(\mathbb{Q})$ the category of \emph{Chow motives over} $\mathbb{Q}$. Then, the main consequence of our results is the following:

\begin{theorem1}{(Corollaries \ref{heckeaction} and \ref{cor:realint}.\ref{itm:real} and Rmk. \ref{rmk:proprealint}.\ref{itm:degreal})} \label{mainthmintro}
If $\lambda$ is regular, there exists an object $s_*j_{!*}^\lambda \mathcal{V}$ of $CHM(\mathbb{Q})$ whose Hodge-theoretical, resp. $\ell$-adic realization, equals $H_!^{3d}(S_K(\mathbb{C}), \mu(V_{\lambda}))$, resp. $H_!^{3d}(S_K \times_{\mathbb{Q}} \bar{\mathbb{Q}}, \mu_{\ell}(V_{\lambda}))$, and such that every element\footnote{We adopt this phrasing in order to stress that this is \emph{not} known to be an algebra action, see Footnote \ref{algebract}.} of the \emph{Hecke algebra} $\mathfrak{H}(K, G(\mathbb{A}_f))$ acts naturally on it. 
\end{theorem1}

Actually, we show that a functorial Chow motive realizing to interior cohomology exists under a less restrictive hypothesis on the weight $\lambda$. To see how this is achieved, let us sketch the actual contents of this work, and explain the interest of the particular family of Shimura varieties which we consider.

\subsection*{The role of the boundary motive.} Let us first come back to the general case of a Shimura variety $S_K$ over $E$, and denote by $j: S_K \hookrightarrow S_K^*$ the open immersion into the \emph{Baily-Borel compactification} (a projective variety, still defined over $E$) and by $i: \partial S_K^* \hookrightarrow S_K^*$ the closed immersion of the boundary $\partial S_K^*:=S_K^* \backslash S_K$. The latter is itself stratified by (quotients by the action of a finite group of) Shimura varieties, still defined over $E$. 

Assume $S_K$ to be of \emph{PEL type} (loosely speaking, a moduli space of abelian varieties equipped with polarizations, endomorphisms and level structure). Then, according to \cite{Anc15}, every irreducible representation $V_{\lambda}$ of $G$ of highest weight $\lambda$ gives rise, in a functorial way, to an object $^\lambda \mathcal{V}$ of the category $CHM(S_K)$ of \emph{Chow motives over} $S_K$, whose $\ell$-adic realization is $\mu_{\ell}(V_{\lambda})$. In this context, the theory developed by Wildeshaus (especially the works \cite{Wil09}, \cite{Wil17a}, \cite{Wil19a}) implies that there exists a cohomological condition on the \emph{degeneration at the boundary} $i^* j_* \mu_{\ell} (V_{\lambda})$ of the $\ell$-adic sheaves $\mu_{\ell}(V_{\lambda})$, which, once satisfied, allows for the construction of a Chow motive with the properties stated in Theorem 1: the condition consists in the \emph{absence of weights 0 and 1} in the latter complex of $\ell$-adic sheaves, where the weights are those defined by the Galois action. More precisely, this hypothesis allows to construct a functorial Chow motive which realizes to \emph{intersection cohomology} with values in $\mu_{\ell}(V_{\lambda})$ (hence the name \emph{intersection motive}) and to identify the latter with interior cohomology (cfr. Cor. \ref{cor:realint}). Moreover, the criterion contained in \cite{Wil19a} (recorded here as Theorem \ref{critint_a}) implies that, in order to detect this weight avoidance, it suffices to analyse the weights of the \emph{perverse} cohomology objects of such complexes, \emph{stratum by stratum} on the boundary: this is what we actually do in this paper. Notice that the complex $i^* j_* \mu_{\ell} (V_{\lambda})$ is actually the $\ell$-adic realization of the (relative) \emph{boundary motive} $i^* j_* ^\lambda \mathcal{V}$, an object of the category $\DBcM(\partial S_K^*)$ of \emph{constructible Beilinson motives over} $\partial S_K^*$, defined over general bases in \cite{CD12}, along with its six functors formalism. If $\tilde{s}$ denotes the structural morphism of $S_K$, the intersection motive should be then thought as the \q{lowest weight-graded object} of the motive $\tilde{s}_* ^\lambda \mathcal{V} \in \DBcM(E)$, in the sense of the \emph{weight structure} introduced by Bondarko (\cite{Bon10}). 

The analysis of the weight filtration brings into consideration contributions coming from different variants of group cohomology: abstract cohomology of arithmetic and finitely generated free abelian groups, and cohomology of algebraic groups. In previous work (\cite{Wil12} for Hilbert-Blumenthal varieties, \cite{Wil15} for Picard surfaces, \cite{Cloî17} for Picard varieties of arbitrary dimension and \cite{Wil19b} for Siegel threefolds), these facts have been employed in order to show that \emph{regularity} of the coefficient systems implies the avoidance of the weights 0 and 1 at the boundary. Two natural questions then arise. First: does this hold for other families of Shimura varieties? Second, since, in the first and third case above, it can be seen that there exist non-regular representations, which nonetheless satisfy the weight avoidance, one is lead to ask: does there exist a general condition on the highest weights of irreducible representations of $G$, which is \emph{equivalent} to the absence of the weights 0 and 1 in the degeneration at the boundary? In this paper we answer both questions in the case of genus 2 Hilbert-Siegel varieties. 

Before explaining what the response is, let us just discuss the role of this special case. In the first three of the examples studied before, the strata of the boundary of the Baily-Borel compactification are simply of dimension 0, while one-dimensional strata appear in the boundary of Siegel threefolds: this makes the analysis sensibly more difficult and forces one to use the \emph{relative} formalism of Beilinson motives. Genus 2 Hilbert-Siegel varieties represent then a natural following step: the boundary still presents only two types of strata, but the higher-dimensional ones can be of \emph{arbitrary} dimension - equal, in fact, to the degree of the field $F$ over $\mathbb{Q}$. More importantly, for the first time with respect to the preceding cases, the phenomena, which are caused by the three types of cohomology listed before, make their appearance all together (especially because of the arithmetic of the field $F$, a point which we will comment on later in more detail). 
\\

Let us now define the \emph{corank} of a weight $\lambda =((k_{1,\sigma})_{\sigma}, \ (k_{2,\sigma})_{\sigma})$ as 0 if $k_{2,\sigma}$ is not the same integer for every $\sigma$, as 1 if $k_{2,\sigma}$ is constant but there exists a $\sigma$ such that $k_{1,\sigma} \neq k_{2,\sigma}$, and as 2 if there exists an integer $\kappa$ such that $k_{1,\sigma}=k_{2,\sigma}=\kappa$ for every $\sigma$. Moreover, let us call the weight \emph{completely irregular} if, for every $\sigma$, it is not regular at $\sigma$. Then, for genus 2 Hilbert-Siegel varieties, as a consequence of our main technical result (Thm. \ref{thm:mainthm}), we are able to exhibit the sought-for characterization of the absence of weights, by showing that it is precisely the notion of corank which allows for its formulation: 

\begin{theorem2}{(Cfr. Cor. \ref{evit01})}
The weights 0 and 1 appear in the complex $i^* j_* \mu_{\ell} (V_{\lambda})$ if and only if $\lambda$ is completely irregular of corank $\geq 1$.
\end{theorem2} 

Notice that, in particular, we are saying that either \emph{both} weights 0 and 1 appear, or \emph{none} of them appears. By the theory cited above, and explained in more detail in Subsection \ref{critexistence} and Section \ref{consapp}, Theorem 2 implies the validity of Theorem 1, and more generally of the following fact: a Chow motive which realizes to interior cohomology, equipped with a Hecke action, exists as soon as the weight $\lambda$ is either not completely irregular or of corank 0.
 
The characterization given by Theorem 2 subsumes the previously treated cases of Hilbert-Blumenthal varieties and Siegel threefolds, and lends itself to further generalization. In order to conclude this introduction, let us put this result into perspective, by explaining why we think of the corank as an important invariant for the study of the cohomology of Shimura varieties. 

\subsection*{Corank and weight filtration in the cohomology of Shimura varieties.} 

For a general (smooth) Shimura variety $S_K$, it is a very important problem to describe the weight filtration (say, in the Hodge-theoretical sense) on the spaces $H^n(S_K(\mathbb{C}), \mu(V))$. Suppose the variation of Hodge structure $\mu(V)$ to be pure of weight $w$. Consider moreover the long exact sequence associated to the complementary, closed-open immersions $i$ and $j$ given by \emph{any} (topological) compactification $\bar{S}$ of $S_K(\mathbb{C})$ and by its boundary $\partial \bar{S}$. By Hodge theory, one then sees that, for each $n$, interior cohomology is contained in the weight $n+w$ subspace of $H^n(S_K(\mathbb{C}), \mu(V))$ (the \emph{pure} part, of lowest possible weight), while the rest of the weight filtration is determined by a subspace of the \emph{boundary cohomology} $\partial H^n(S_K(\mathbb{C}), \mu(V)):=H^n(\partial \bar{S}, i^* j_{*}\mu(V))$. The latter is precisely the hypercohomology of (the Hodge-theoretical analogues of) the complexes studied in this paper.

In this context, one approach to understanding the cohomology of $i^* j_{*}\mu(V)$ consists in taking a closed cover of $\partial \bar{S}$ and in studying the associated spectral sequence, abutting to $\partial H^*(S_K(\mathbb{C}), \mu(V))$. For $\bar{S}$ equal to the analytification of a smooth toroidal compactification\footnote{But also, in the - \emph{a priori} - purely topological case, equal to the \emph{Borel-Serre compactification} of $S_K$. The fact that this non-algebraic compactification still gives rise to spectral sequences of Hodge structures represents one of the most important discoveries in \cite{HZ94}.}, the study of this spectral sequence (of mixed Hodge structures) is the subject of \cite{HZ94}, \cite{HZ01}. In these works, the authors remark (in a slightly different language) that the interaction between the two filtrations of $i^* j_{*}\mu(V)$, the one coming from the closed cover of the boundary and the weight filtration, is far from being understood. The same is then true for the two corresponding filtrations on the spaces $H^n(S_K(\mathbb{C}), \mu(V))$.

On the other hand, after \cite{Na13}, these same spaces are endowed with a third interesting filtration, whose graded objects are equipped with \emph{semisimple} mixed Hodge structures: in \emph{loc. cit.}, the author uses the work of Franke (\cite{Fra98}) to show in particular that this filtration has an \emph{automorphic} origin. Once again, the precise relationship with the weight filtration remains mysterious. 
\\

In order to read our main result in the light of the above considerations, let us come back to the situation of a genus 2 Hilbert-Siegel variety $S_K$. One knows (cfr. paragraphs \ref{strctcomp}-\ref{strates}) that the boundary $\partial S_K^*$ of its Baily-Borel compactification admits a stratification of the form 
\begin{equation*}
\partial S_K^*=\bigsqcup\limits_{i=0}^1 \bigsqcup_{g \in \mathcal{C}_i} S_{i,g},
\end{equation*}
where the $\mathcal{C}_i$'s are some finite subsets of $G(\mathbb{A}_f)$ and the open, resp. closed subschemes $S_{1,g}$, resp. $S_{0,g}$ of $\partial S_K^*$ are (quotients by the action of a finite group of) Shimura varieties of dimension $d$, resp. 0, for $d$ the degree of $F$ over $\mathbb{Q}$. 

For an irreducible representation $V_{\lambda}$ of $G$ determined by a dominant weight $\lambda$, let us recall without definition, just for the needs of this introduction, the associated \emph{automorphic (coherent) sheaf} $\omega(\lambda)$ over a fixed toroidal compactification $S_{K,\Sigma}$ of $S_K$. By pushforward along the projection $\pi:S_{K, \Sigma} \rightarrow S_K^*$, this sheaf gives rise to a coherent sheaf on $S_K^*$, whose space $\mbox{M}_{\lambda,K}$ of global sections over $S_K^*$ is called the \emph{space of} $\lambda$-\emph{automorphic forms of level $K$} (non-standard terminology). Then, pose the following: 
\begin{definition*}{(cfr. \cite[Def. 1.5.3]{BR16})}
Let $f \in \mbox{\emph{M}}_{\lambda,K}$ be a non-zero automorphic form. Let $q$ be the maximal integer in $\{ -1,0,1 \}$ such that 
$\oplus_{g \in \mathcal{C}_{q}} \restr{f}{S_{q,g}}=0$ 
(with $\mathcal{C}_{-1}=\varnothing$ by convention). The \emph{corank} of $f$ is then defined as $\mbox{cor}(f):=1-q$.
\end{definition*}

Notice that this notion gives a measure, in some sense, of the degree of cuspidality of $f$: for example, $\mbox{cor}(f)=0 \iff f$ is cuspidal, and we could define \emph{completely non-cuspidal forms} those $f$'s for which $\mbox{cor}(f)=2$. The notation would have been more natural if we had ordered the strata by \emph{decreasing} dimension, but we have preferred to remain coherent with the notation in the rest of the paper. 

Recall then the following theorem from \cite{BR16} (where the role of $\lambda$, resp. $\mbox{M}_{\lambda,K}$, is played by $k$, resp. $\mbox{M}_{k}(\mathcal{H},R)$), in which the notion of corank of $\lambda$ as defined before arises in an essential way:

\begin{theorem*}{(\cite[Thm. 1.5.6]{BR16})}
If $f \in \mbox{\emph{M}}_{\lambda,K}$ is non-zero, then $\mbox{\emph{cor}}(\lambda) \geq \mbox{\emph{cor}}(f)$.
\end{theorem*}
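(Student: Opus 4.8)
\textbf{Proof strategy for \cite[Thm. 1.5.6]{BR16}.}
The plan is to translate both coranks into geometric statements about vanishing along boundary strata of the toroidal compactification, and then to exploit the way the automorphic sheaf $\omega(\lambda)$ degenerates at those strata. First I would recall that the restriction of $\omega(\lambda)$ to (a formal/analytic neighbourhood of) the boundary stratum associated to a rational parabolic $P$ admits a description via the Fourier--Jacobi expansion, whose constant term is governed by the representation-theoretic branching of $V_\lambda$ along the Levi quotient of $P$. Concretely, for each of the two proper maximal (classes of) parabolics showing up in the stratification $\partial S_K^* = \bigsqcup_i \bigsqcup_{g \in \mathcal{C}_i} S_{i,g}$, the leading term of the expansion of $f$ along $S_{i,g}$ lands in a space of (Hilbert-, resp. elliptic-)modular forms whose weight is the highest weight of the corresponding Levi-constituent of $V_\lambda$. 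The point is that this constituent is \emph{nonzero} precisely when $\lambda$ has the appropriate corank: if $\lambda$ is cuspidal-type (corank $0$) there is nothing to prove; if $\mathrm{cor}(\lambda)=1$, the constant term along the $0$-dimensional strata $S_{0,g}$ can be forced to vanish by a weight/character obstruction, while it need not vanish along the $d$-dimensional strata; and if $\mathrm{cor}(\lambda)=2$ both types of constant terms are unobstructed.

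The key steps, in order, would be: (1) fix $f \neq 0$ and let $q = 1 - \mathrm{cor}(f)$, so by definition $f$ vanishes on every stratum of dimension $> \dim S_{q,\cdot}$ but $\restr{f}{S_{q,g_0}} \neq 0$ for some $g_0 \in \mathcal{C}_q$; (2) identify $\restr{f}{S_{q,g_0}}$, via the Fourier--Jacobi / Siegel $\Phi$-operator in this setting, with a \emph{nonzero} section of the automorphic sheaf $\omega(\lambda_q)$ on the Shimura variety $S_{q,g_0}$, where $\lambda_q$ is the highest weight of a specific irreducible constituent of the restriction of $V_\lambda$ to the relevant Levi; (3) observe that for such a constituent to be nonzero, $\lambda$ must satisfy exactly the numerical condition defining $\mathrm{cor}(\lambda) \geq 1 - q = \mathrm{cor}(f)$ — this is the representation-theoretic heart, and it is here that the explicit combinatorics of $\GSp_{4,F}$ enters: restricting the highest weight $((k_{1,\sigma}), (k_{2,\sigma}))$ to the Siegel and Klingen Levis and reading off when the resulting $\GL_1 \times \GL_2$- or $\GL_2 \times \GL_1$-type constituent survives; (4) conclude by contraposition, since a nonzero constant term in degree $q$ certifies $\mathrm{cor}(\lambda) \geq \mathrm{cor}(f)$.

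The main obstacle I expect is step (2)--(3): making precise which constituent of $V_\lambda |_{M_P}$ actually governs the \emph{leading} (as opposed to merely \emph{some}) term of the expansion, and checking that this leading-term map is genuinely nonzero on $\restr{f}{S_{q,g_0}}$ rather than killing it for a subtler reason (e.g. the leading FJ-coefficient of a nonzero boundary restriction could a priori still be zero if one has to pass to deeper coefficients). This requires either an analytic $q$-expansion principle on $S_{q,g_0}$ — a nonzero algebraic section has a nonzero expansion at a well-chosen cusp of that smaller Shimura variety — or an inductive bootstrapping on the rank of $F$ and the Siegel genus, reducing to the already-understood cases of Hilbert--Blumenthal varieties and elliptic modular curves. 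A secondary technical point is book-keeping the finite-group quotients defining the $S_{i,g}$ and the index sets $\mathcal{C}_i$, so that the vanishing/non-vanishing statements are insensitive to passing to the quotient; this is routine but must be stated carefully. Once the leading-term identification is in place, the numerical translation into the corank inequality is a direct computation with dominant weights and needs no further input.
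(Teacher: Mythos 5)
The paper contains no proof of this statement: it is quoted verbatim from \cite[Thm.\ 1.5.6]{BR16} as background motivation for the notion of corank, and the author explicitly remarks that the proofs of this theorem and of the paper's own results are ``completely independent''. There is therefore no internal argument to compare your proposal against; what follows assesses the proposal on its own terms.

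Your overall shape --- restrict $f$ to the deepest stratum on which it survives via the Siegel $\Phi$-operator, identify the restriction with a section of an automorphic sheaf on the boundary Shimura variety, and extract a numerical condition on $\lambda$ by contraposition --- is the right one and is essentially how such statements are proved. But your step (3), which you correctly flag as the heart, is misidentified: the relevant irreducible constituent of $V_\lambda$ restricted to the Siegel or Klingen Levi is \emph{never} zero, so ``the constituent is nonzero precisely when $\lambda$ has the appropriate corank'' is not the obstruction. The actual mechanism is that the fibre of $\pi_*\omega(\lambda)$ along a boundary stratum carries an action of an arithmetic subgroup of $\mathcal{O}_F^\times$ (the group denoted $\Gamma_m$ in this paper, acting through a character whose exponents are explicit linear combinations of the $k_{i,\sigma}$), and a section of the descended sheaf on the stratum must be invariant under this action; since the group is Zariski-dense in a torus of rank $d-1$, invariance of a nonzero vector forces the character to be trivial, and by Dirichlet's unit theorem (exactly the computation of Lemma \ref{trivact} and Remark \ref{rmk:gennet}) triviality is equivalent to parallelism of the exponent vector --- which for the Klingen strata gives $k_2$ parallel (i.e.\ $\mathrm{cor}(\lambda)\geq 1$) and for the Siegel strata gives in addition $k_1=k_2$ (i.e.\ $\mathrm{cor}(\lambda)=2$). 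With step (3) replaced by this unit--character computation, your contrapositive in step (4) goes through, and your worry about ``leading versus deeper Fourier--Jacobi coefficients'' largely dissolves, because the argument only needs that a nonzero restriction to the stratum is a nonzero invariant vector, not a full $q$-expansion principle.
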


This implies for example that, in order to have non-zero non-cuspidal forms, it is necessary for $\lambda$ to be of corank $\geq 1$; moreover, in order to have non-zero \emph{completely non-cuspidal} forms, it is necessary for $\lambda$ to be of corank 2, and in particular \emph{completely irregular}. 

Observe now that, reasoning along the lines of Corollary \ref{cor:propint}.\ref{itm:filt}, we can immediately deduce the following\footnote{The results stated so far concern a priori the weights of the complexes of $\ell$-\emph{adic sheaves} obtained from the $\ell$-adic realization of $i^* j_* ^\lambda \mathcal{V}$, but they actually give information on the \emph{weight filtration} in the sense of Bondarko (\cite{Bon10}) on the boundary motive itself. This doesn't imply directly anything on the Hodge side, because the \emph{Hodge realization} functor on Beilinson motives over \emph{singular} bases (as $\partial S^*$) has not been constructed yet. However, thanks to the complete formal analogy between the results of \cite{Pin92} ($\ell$-adic context) and \cite{BW04} (Hodge-theoretic context), and between the formalisms of six functors in the respective derived categories, our computations are also valid in the case of mixed Hodge modules; it is in fact the Hodge theoretic picture which guides these computations (cfr. Rmk. \ref{rmk:poidsrep}.\ref{itm:poidsfilt}).} from Theorem 2: 
\begin{theorem3}
Let $V_{\lambda}$ be an irreducible representation and let $w$ be the weight of the \emph{pure} variation of Hodge structure $\mu(V_{\lambda})$. Then, there can exist a $n$ such that weight $n+w+1$ appears in $H^n(S_K(\mathbb{C}), \mu(V_{\lambda}))$ only if $\lambda$ is completely irregular and of corank $\geq 1$.
\end{theorem3}
Even if the proofs of Theorem 2 and of the theorem from \cite{BR16} are completely independent, the analogy between the two is striking. In fact, thanks to the notion of corank, we see that weight 1 can appear in the complex computing $V_{\lambda}$-valued cohomology of $S_K$ only if $\lambda$ is completely irregular and satisfies the necessary conditions for the presence of a non-zero non-cuspidal form on $S_K$. Here we have in mind a non-zero non-cuspidal form as giving a class in the quotient of $H^n(S_K(\mathbb{C}), \mu(V_{\lambda}))$ of weight strictly bigger than $n+w$, for some $n$. Since the corank of an automorphic form is in turn defined in terms of its behaviour along the different types of strata in $\partial S_K^*$, Theorem 3 pushes us to think of our result as a (very little!) hint towards the understanding of the links between the weight filtration, the \q{simplicial} filtration coming from the stratification of the boundary and the \q{automorphic} filtration in the cohomology of Shimura varieties. One could ask if investigating the relationship with the filtration by \emph{holomorphic rank} considered in \cite[Sec. 4.4-4.5]{HZ01} could be a good starting point for studying such questions.

Notice that, in order to prove the \emph{presence} of the weights 0 and 1, we need exactly the existence of a non-zero automorphic form of a certain type. However, such a form is a \emph{cuspidal} Hilbert modular form over a \q{virtual} Hilbert-Blumenthal variety, which does not appear in $\partial S_K^*$ (cfr. Prop. \ref{poidsapp} and Rmk. \ref{fibres}). 

As a last remark, let us stress the fact that the non-triviality of the totally real extension $F$ gives rise to an action of some subgroups of units of the integers of $F$, which coincides with the action of the \emph{local Hecke operator} from \cite{LR91} (which, by the way, plays a crucial role for the results of \cite{Na13} cited before) and puts essential restrictions on the possible weights (cfr. Rmks. \ref{rmk:coincpoids} and \ref{rmk:coincpoids2}). This manifestation of the interaction between different group cohomologies, alluded to before, is the main phenomenon which leads us to the notion of corank for $\lambda$. 

\subsection*{Organisation of the paper} In the preliminary Section \ref{prelim} we first introduce the Shimura datum $(G, X)$ underlying the Hilbert-Siegel varieties $S_K$. Second, we recall the structure of the Baily-Borel compactification $S^*_K$ and we introduce the \emph{canonical construction} functors, which produce a variation of Hodge structure $\mu_H(V)$ or an $\ell$-adic sheaf $\mu_{\ell}(V)$ over $S_K$ from a representation $V$ of $G$, along with the relative Chow motives $^\lambda \mathcal{V}$ over $S_K$ associated to irreducible representations $V_{\lambda}$. Third, we recall the criterion which we will crucially make use of, i.e. Thm. \ref{critint_a}: it reduces the weight avoidance for the Beilinson motive $i^* j_* ^\lambda \mathcal{V}$ to an equivalent condition on the weights of the $\ell$-adic perverse cohomology sheaves of each of its restrictions to a stratum of the boundary. Finally, we recall some useful tools to study the weights of the latter sheaves: $(a)$ a theorem of Pink, which gives a formula for the $\ell$-adic \emph{classical} cohomology sheaves of the restriction of the degeneration to each stratum, in terms of cohomology of unipotent algebraic groups and of arithmetic groups; $(b)$ a theorem of Kostant, which allows one to express the cohomology of unipotent groups in terms of representations of subgroups, which are attached to the Shimura data underlying the strata of the boundary; $(c)$ some general lemmas which are useful for studying the cohomology of free abelian subgroups of arithmetic groups. 

In Section \ref{deg}, the heart of the paper, we begin by stating the main result (the description of the \emph{limit weights} of $i^* j_* ^\lambda \mathcal{V}$ in terms of the corank of $\lambda$, Thm. \ref{thm:mainthm}) and by showing its main consequence (the characterization of the absence of weights 0 and 1,  Cor. \ref{evit01}, our Theorem 2 here). The rest of the section is occupied by the proof of Thm. \ref{thm:mainthm}, which is divided in the following steps: 
\begin{itemize}
\item we begin by studying separately the classical cohomology sheaves of the degeneration along the 0-dimensional strata (Subsection \ref{degsieg}) and along the strata of dimension $d=[F:\mathbb{Q}]$ Subsection \ref{degkling}), via Pink's theorem;
\item in each of the two above cases, we decompose the cohomology of unipotent groups into a sum of irreducible representations carrying pure Hodges structures (paragraphs \ref{sec-cohunip0}, \ref{sec-cohunip1}); this is done thanks to Kostant's theorem and to a detailed study of the representation theory of $\GSp_4$. Thus, we get a list of the \emph{possible} weights appearing;
\item in each case, we use the cohomology of arithmetic groups to give restrictions on the non-triviality of the occurring spaces, and hence to give \emph{necessary} conditions for certain weights to appear (paragraphs \ref{arithm0}, \ref{arithm1}). The main technical ideas here consist in exploiting, often by spectral sequence arguments, the aforementioned action of suitable subgroups of units of $F$ on the fibers of the degeneration (Lemmas \ref{cohunip0}, \ref{trivbis} - where the action is exploited in two \q{orthogonal} ways, Lemma \ref{cohunip1}) and some vanishing theorems on the cohomology of locally symmetric spaces (Lemma \ref{restrcoh0}); 
\item some additional work is needed (as an existence statement for suitable non-zero Hilbert cusp modular forms, Prop. \ref{poidsapp}) to show that the above conditions are also \emph{sufficient} for some weights to appear;
\item finally, we have to relate the weights of the classical cohomology sheaves to those of the perverse cohomology sheaves appearing in Thm. \ref{critint_a}. For this, we are led to study the \emph{double} degeneration along the cusps of the $d$-dimensional strata, along the same lines as before (Subsection \ref{doubledeg}), and to use the properties of the \emph{intermediate extension} of $\ell$-adic perverse sheaves (Subsection \ref{evit}, where we complete the proof of the main theorem, i.e. we find the description of the relation between $\lambda$ and the weights of the motive $i^* j_* ^\lambda \mathcal{V}$).
\end{itemize}

In Section \ref{consapp} we give the applications of our result: the construction of a Hecke-equivariant Chow motive which realizes to interior cohomology of $S_K$ with coefficients which are not \emph{completely irregular of corank $\geq 1$}. In the case of regular coefficients, we describe the consequences of the above for the construction of motives associated to automorphic representations, obtaining in particular (homological) motives corresponding to the representations studied in \cite{Fli05} (cfr. Rmk. \ref{modmot}). 

\subsection*{Notations} 

The symbols $\mathbb{A}$, resp. $\mathbb{A}_f$ will denote the ring of adèles, resp. finite adèles. Throughout the whole paper, $F$ will be a fixed totally real field of degree $d$ over $\mathbb{Q}$, $I_F$ its set of real embeddings (thus, of cardinality $d$), and $L$ a fixed Galois closure of $F$ in $\BC$. An empty entry in a ring-valued matrix will mean that the corresponding coefficient is zero.

We will use the symbol $\pi_0(X)$ for the set of connected components of a topological space $X$. 

If $\mathcal{C}$ is a category, then $\mbox{Gr}_{\mathbb{Z}}\mathcal{C}$ will denote its category of graded objects. If $\mathcal{A}$ is an abelian category, $D^b(\mathcal{A})$ will denote its bounded derived category. 

\subsection*{Acknowledgements}
This work is part of the author's PhD thesis at the \emph{Laboratoire Analyse, Géométrie et Applications} of the \emph{Université Paris 13}, under the direction of Jörg Wildeshaus. I want to thank him very heartily for suggesting the problem and for all the discussions we have had. I also thank Kai-Wen Lan for answering my questions, Jacques Tilouine and Guillaume Cloître for interesting conversations, Giuseppe Ancona for precious suggestions and Vincent De Daruvar for reading a preliminary version of this text. Last but not least, I would like to thank the referee for his/her very careful reading and detailed comments, which have considerably contributed to improve both the organization of the paper and the exposition of several proofs. 

\addtocontents{toc}{\setcounter{tocdepth}{2}}

\section{The Shimura datum, the Baily-Borel compactification and the canonical construction} \label{prelim}

\noindent In this preliminary section we introduce the basic objects of interest: the Shimura datum $(G, X)$ which defines the \emph{genus 2 Hilbert-Siegel varieties} $S_K$, their \emph{Baily-Borel compactifications} $S_K^*$ and the relative Chow motives $^\lambda \mathcal{V}$ over $S_K$ associated to irreducible representations $V_{\lambda}$ of $G$. 

\subsection{The group for the Shimura datum}

\subsubsection{\textbf{The group of symplectic similitudes and the group $G$.}}
Define, for every positive integer $n$, the $2n \times 2n$ matrix
\begin{equation*}
J_n:= 
\begin{pmatrix}
& I_n \\
-I_n & 
\end{pmatrix}
\in \mbox{GL}_{2n}(\mathbb{Q}).
\end{equation*}

The group $\mbox{GSp}_{2n}$ of \emph{symplectic similitudes} of dimension $2n$ is the algebraic group over $\mathbb{Q}$ defined, for every $\mathbb{Q}$-algebra $R$, by posing
\begin{equation*}
\mbox{GSp}_{2n}(R):= \left\{ g \in \mbox{GL}_{2n}(R) | ^{t}gJ_ng=\nu(g)J_n, \nu(g) \in \mathbb{G}_m(R) \right\}.
\end{equation*}

It is a reductive group, whose center $Z$ is isomorphic to $\mathbb{G}_m$ and whose derived subgroup is isomorphic to $\mbox{Sp}_{2n}$, the usual \emph{symplectic group} over $\mathbb{Q}$ of dimension $2n$. The morphism $\nu:\mbox{GSp}_{2n} \to \mathbb{G}_m$ is called the \emph{multiplier} (or \emph{similitude factor}).

For the rest of this paper, fix $n=2$. Recall that we have fixed a totally real field $F$ of degree $d$ over $\mathbb{Q}$ along with a Galois closure $L$, and that we denote by $I_F$ the set of real embeddings of $F$. Define then the $\mathbb{Q}$-algebraic group $\tilde{G}$ by posing 
\begin{equation*}
\tilde{G}:=\mbox{Res}_{F|\mathbb{Q}}\mbox{GSp}_{4,F}.
\end{equation*}  
For every subfield $k$ of $\mathbb{C}$ containing $L$, one has, for every $k$-algebra $R$, an isomorphism 
\begin{equation}\label{isogalois}
F \otimes_{\mathbb{Q}} R \tilde{\rightarrow} \prod\limits_{\sigma \in I_F} R, \mbox{     } f \otimes r \mapsto (\sigma(f) \cdot r)_{\sigma}
\end{equation}
which induces a canonical isomorphism
\begin{equation*}
\tilde{G}_k \simeq \prod\limits_{\sigma \in I_F} (\mbox{GSp}_{4, k})_{\sigma},
\end{equation*}

Consider now the canonical adjunction morphism $\mathbb{G}_m \rightarrow \mbox{Res}_{F|\mathbb{Q}} \mathbb{G}_{m, F}$, induced by the fact that Weil restriction is right adjoint to base change, and the morphism $\tilde{G} \rightarrow \mbox{Res}_{F|\mathbb{Q}} \mathbb{G}_{m, F}$, induced by functoriality by the multiplier $\nu: \mbox{GSp}_{2n} \rightarrow \mathbb{G}_m$.
\begin{definition} \label{defgrp} 
The reductive $\mathbb{Q}$-group $G$ is defined by
\begin{equation}
G:= \mathbb{G}_m \times_{\mbox{Res}_{F|\mathbb{Q}} \mathbb{G}_{m, F}} \tilde{G},
\end{equation}
where the fibred product has been taken with respect to the preceding morphisms.

\end{definition}

\begin{remark}
\begin{enumerate} [wide, labelwidth=!, labelindent=0pt, label=(\arabic*)]
\item The isomorphism \eqref{isogalois} induces, for every subfield $k$ of $\mathbb{C}$ containing $L$, an isomorphism

\begin{equation} \label{grouppoints}
G_k \simeq \mathbb{G}_m \times_{\prod\limits_{\sigma \in I_F} (\mathbb{G}_{m, k})_{\sigma}, \prod\nu} \prod\limits_{\sigma \in I_F} (\mbox{GSp}_{4, k})_{\sigma}.
\end{equation}
\item \label{itm:centre}
The center of $G$ is such that $Z(G) \simeq \mathbb{G}_m \times_{\mbox{Res}_{F|\mathbb{Q}} \mathbb{G}_{m, F}, x \mapsto x^2 } \mbox{Res}_{F|\mathbb{Q}} \mathbb{G}_{m, F}$. Its neutral component is then isogenous to $\mathbb{G}_m$.
\end{enumerate}
\label{rmk:centre}
\end{remark}

\subsubsection{\textbf{The structure of parabolic subgroups of $G$.}} \label{parab}

The group $\mbox{GSp}_{4,F}(F)$ acts on $F^{\oplus 4}$ through the natural action induced by its inclusion into $\mbox{GL}_{4,F}(F)$. 

The standard $F$-basis $\{ e_1, e_2, e_3, e_4 \}$ gives then a symplectic basis for the non-degenerated, $F$-bilinear alternated form defined by $J_2 \in \mbox{GSp}_{4,F}(F)$, which we will also denote $J_2$. Fix as a maximal torus of $\mbox{GSp}_{4,F}$ the standard diagonal torus $\tilde{T}$ defined on $F$-points by 
\begin{equation} \label{pointstore}
\tilde{T}(F):=\{ \mbox{diag}(\alpha_1, \alpha_2, \alpha_1^{-1}\nu, \alpha_2^{-1}\nu) \  \vert \ \alpha_1, \alpha_2, \nu \in \mathbb{G}_m(F) \},
\end{equation}
along with the standard Borel $\tilde{B}$ containing it, defined on $F$-points as the subgroup of matrices in $\mbox{GSp}_{4,F}(F)$ of the form

\[
\left(
\begin{array} {cccc}
*&*&*&* \\
&*&*&*\\
& & * & \\
& & * & * 
\end{array}
\right)
\]

One knows that the parabolic subgroups of $\mbox{GSp}_{4,F}(F)$ correspond bijectively to subgroups of the form $\mbox{Stab}(V)$, for $V$ a sub-$F$-vector space of $F^{\oplus 4}$ which is totally isotropic for the form $J_2$. The case $V=\{0\}$ corresponds to the whole group $\mbox{GSp}_{4,F}(F)$, while $V=\langle e_1 \rangle$ gives the \emph{Klingen parabolic}

\[
\tilde{Q}_1(F):=\{ \left(
\begin{array} {cccc}
\alpha & * & * & * \\
 & a & * & b \\
 &  & \beta &  \\
 & c & * & d 
\end{array}
\right)
\vert ad-bc=\alpha \beta \in \mathbb{G}_{m,F}(F) \} \cap \mbox{GSp}_{4,F}(F)
\]
and $V=\langle e_1, e_2 \rangle$ gives the \emph{Siegel parabolic}

\[
\tilde{Q}_0(F):=\{ \left(
\begin{array} {cc}
\alpha A & AM \\
& ^{t}A^{-1}
\end{array}
\right)
\vert \alpha \in \mathbb{G}_{m,F}(F), A \in \mbox{GL}_{2,F}(F), ^{t}M=M \}.
\]
Every other parabolic subgroup is conjugated to one of the above.

One also knows that a maximal torus, resp. a Borel, of $\tilde{G}$ are given by $\mbox{Res}_{F|\mathbb{Q}}(\tilde{T})$, resp. $\mbox{Res}_{F|\mathbb{Q}}(\tilde{B})$, which we will still denote by $\tilde{T}$, $\tilde{B}$ in the following; note that $\tilde{T}$ is not split over $\mathbb{Q}$. A maximal torus and a Borel containing it in $G$ are then respectively defined by $T:=\mathbb{G}_m \times_{\mbox{Res}_{F|\mathbb{Q}} \mathbb{G}_{m, F}} \tilde{T}$ and $B:=\mathbb{G}_m \times_{\mbox{Res}_{F|\mathbb{Q}} \mathbb{G}_{m, F}} \tilde{B}$.

In the same way, the standard maximal parabolics of $\tilde{G}$ corresponding to the choice $(\tilde{T}, \tilde{B})$ are exactly given, up to conjugation, by $\mbox{Res}_{F|\mathbb{Q}}\tilde{Q}_0$, $\mbox{Res}_{F|\mathbb{Q}}\tilde{Q}_1$, which we will still denote by $\tilde{Q}_0$, $\tilde{Q}_1$. Then, $Q_0:=\mathbb{G}_m \times_{\mbox{Res}_{F|\mathbb{Q}} \mathbb{G}_{m, F}} \tilde{Q}_0$, $Q_1:=\mathbb{G}_m \times_{\mbox{Res}_{F|\mathbb{Q}} \mathbb{G}_{m, F}} \tilde{Q}_1$ are the standard maximal parabolics of $G$ with respect to $(T,B)$, still called the \emph{Siegel} and the \emph{Klingen} one. 

\subsubsection{\textbf{The Levi components of parabolic subgroups.}} \label{levicomp} Let $W_0$ and $W_1$ be the unipotent radicals of the groups $Q_0$ and $Q_1$ defined above. The quotients $Q_{i}/W_{i}$ will be canonically identified with subgroups of the $Q_i$'s, thanks to the Levi decomposition of the latter. 

Fix now a subfield $k$ of $\mathbb{C}$ which contains $L$. One has the following explicit description of the diagonal embedding of $Q_0/W_0 (\mathbb{Q})$ into $Q_0/W_0 (k)$:

\[
Q_0/W_0 (\mathbb{Q}) \simeq \{ ( \left( \begin{array} {cc} 
\alpha \sigma(A) &  \\
 & (\sigma(A)^{-1})^t 
\end{array} \right) )_{\sigma \in I_F}  \ \vert \alpha \in \mathbb{Q}^{\times}, A \in \mbox{GL}_2(F) \} \hookrightarrow{} 
\]

\[ 
\hookrightarrow Q_0/W_0 (k) = \{ ( \left( \begin{array} {cc} 
\alpha A_{\sigma} &  \\
 & (A_{\sigma}^{-1})^t 
\end{array} \right) )_{\sigma \in I_F}  \ \vert \alpha \in k^{\times}, A_{\sigma} \in \mbox{GL}_2(k) \ \mbox{for every} \ \sigma \}
\]
and of the diagonal embedding of $(Q_1/W_1)(\mathbb{Q})$ into $(Q_1/W_1)(k)$:

\[
Q_1/W_1 (\mathbb{Q}) \simeq \{ ( \left( \begin{array} {cccc} 
\sigma(t) \cdot (ad-bc) & & & \\
 & \sigma(a) & & \sigma(b)  \\
 & & \sigma(t^{-1}) & \\
& \sigma(c) & & \sigma(d) 
\end{array} \right) )_{\sigma \in I_F}  \ \vert t \in F^{\times},
\]

\[
a, b, c, d \in F \ \mbox{such that} \ ad-bc \in \mathbb{Q}^{\times} \} \hookrightarrow{} 
\]

\[ 
\hookrightarrow 
Q_1/W_1 (k) = \{ ( \left( \begin{array} {cccc} 
t_{\sigma} \cdot (a_{\sigma}d_{\sigma}-b_{\sigma}c_{\sigma}) & & & \\
& a_{\sigma} & & b_{\sigma} \\
& & t_{\sigma}^{-1} & \\
& c_{\sigma} & & d_{\sigma}
\end{array} \right) )_{\sigma \in I_F}  \ \vert
t_{\sigma} \in k^{\times} \ \mbox{for every} \ \sigma,
\]
\[
a_{\sigma}, b_{\sigma}, c_{\sigma}, d_{\sigma} \in k \ \mbox{such that} \ a_{\sigma}d_{\sigma}-b_{\sigma}c_{\sigma} = a_{\hat{\sigma}}d_{\hat{\sigma}}-b_{\hat{\sigma}}c_{\hat{\sigma}} \in k^{\times} \ \mbox{for every} \ \sigma, \hat{\sigma} \in I_F
\}.
\]
Thus, there is an isomorphism
\begin{equation} \label{isosiegel}
Q_0/W_0 \simeq \mathbb{G}_m \times \mbox{Res}_{F|\mathbb{Q}} \mbox{GL}_{2,F},
\end{equation}
given on $k$-points by

\begin{align*}
(Q_0/W_0)(k) \simeq \mathbb{G}_{m}(k) &\times (\prod\limits_{\sigma \in I_F} (\mbox{GL}_{2}(k))_{\sigma}) \\
\left( \begin{array} {cc} 
\alpha A_{\sigma} &  \\
 & (A_{\sigma}^{-1})^t 
\end{array} \right) )_{\sigma \in I_F} &\mapsto (\alpha, (A_{\sigma})_{\sigma \in I_F}),
\end{align*}
and an isomorphism

\begin{equation} \label{isoklingen}
Q_1/W_1 \simeq  (\mbox{Res}_{F|\mathbb{Q}} \mbox{GL}_{2,F} \times_{\mbox{Res}_{F|\mathbb{Q}} \mathbb{G}_{m,F}, \mbox{det}} \mathbb{G}_m) \times \mbox{Res}_{F|\mathbb{Q}} \mathbb{G}_{m,F}
\end{equation}
given on $k$-points by

\begin{align*}
(Q_1/W_1)(k) \simeq ((\prod\limits_{\sigma \in I_F} (\mbox{GL}_{2,k} )_{\sigma}) &\times_{\prod\limits_{\sigma \in I_F} (\mathbb{G}_{m, k})_{\sigma}} \mathbb{G}_{m,k} )(k) \times \prod\limits_{\sigma \in I_F} (\mathbb{G}_{m}(k))_{\sigma} \\
(\left( \begin{array} {cccc} 
t_{\sigma} \cdot (a_{\sigma}d_{\sigma}-b_{\sigma}c_{\sigma}) & & & \\
& a_{\sigma} & & b_{\sigma} \\
& & t_{\sigma}^{-1} & \\
& c_{\sigma} & & d_{\sigma}
\end{array} \right) )_{\sigma \in I_F} &\mapsto \left( ( \left( \begin{array}{cc}
a_{\sigma} & b_{\sigma} \\
c_{\sigma} & d_{\sigma}
\end{array} \right) )_{\sigma \in I_F}, \ (t_{\sigma})_{\sigma \in I_F} \right).
\end{align*}

\subsubsection{\textbf{Characters and dominant weights.}}

Consider our fixed Galois closure $L$ of $F$. Using the isomorphism \eqref{grouppoints} (for $k=L$) and Eq. \eqref{pointstore}, we get the following description for the points of the maximal torus $T_L$ of $G_L$:

\begin{equation*}
T_L(L)=\{ (\mbox{diag}(\alpha_{1, \sigma}, \alpha_{2, \sigma}, \alpha_{1, \sigma}^{-1}\nu, \alpha_{2, \sigma}^{-1}\nu))_{\sigma \in I_F} \ \vert \ \alpha_{1, \sigma}, \alpha_{2, \sigma} \in L^*, \nu \in \mathbb{Q}^{\times} \} 
\end{equation*}
This description naturally identifies $T_L$ with a subtorus of rank $2d+1$ of the rank-$3d$ torus $\prod\limits_{\sigma \in I_F} T_{\sigma}$, where each $T_ {\sigma}$ is a copy of the diagonal maximal torus of $\mbox{GSp}_{4, L}$.

The elements $\lambda$ of the group $X^*(T_L)$ of characters (or \q{weights}) of $T_L$ (a subgroup of $\bigoplus\limits_{\sigma \in I_F} X^*(T_{\sigma})$) are then parametrized by the ($2d+1$)-tuples of integers of the form
\begin{equation} \label{paramweight}
((k_{1,\sigma}, k_{2,\sigma})_{\sigma \in I_F}, c) \ \mbox{such that} \ \sum\limits_{\sigma \in I_F} (k_{1,\sigma}+k_{2,\sigma}) \equiv c \ \mbox{(mod} \ 2 \mbox{)}
\end{equation}
where the character $\lambda((k_{1,\sigma}, k_{2,\sigma})_{\sigma \in I_F}, c)$ corresponding to $((k_{1,\sigma}, k_{2,\sigma})_{\sigma \in I_F}, c)$ is defined by
\begin{equation} \label{actcar}
\lambda((k_{1,\sigma}, k_{2,\sigma})_{\sigma \in I_F}, c): (\mbox{diag}(\alpha_{1, \sigma}, \alpha_{2, \sigma}, \alpha_{1, \sigma}^{-1}\nu, \alpha_{2, \sigma}^{-1}\nu))_{\sigma \in I_F} \mapsto \prod\limits_{\sigma \in I_F} \alpha_{1,\sigma}^{k_{1,\sigma}} \cdot \prod\limits_{\sigma \in I_F} \alpha_{2,\sigma}^{k_{2,\sigma}} \cdot \nu^{\frac{1}{2} \ \cdot [c - \sum\limits_{\sigma \in I_F} (k_{1,\sigma}+k_{2,\sigma})] }.
\end{equation}

The \emph{dominant} weights are the characters such that $k_{1,\sigma} \geq k_{2,\sigma} \geq 0 \ \forall \sigma$. A weight is called \emph{regular at} $\sigma$ if $k_{1,\sigma} > k_{2,\sigma} > 0$ and \emph{regular} if it is regular at $\sigma$ for every $\sigma$.

\subsubsection{\textbf{Root system and Weyl group.}} \label{racines} Take the couple $(T,B)$ fixed at the end of \ref{parab} and apply base change to our fixed Galois closure $L$ of $F$. By choosing the resulting couple $(T_L, B_L)$ as maximal torus and Borel, we identify the set of roots $\mathfrak{r}$ of $G_L$ with $\bigsqcup\limits_{\sigma \in I_F} \mathfrak{r}_{\sigma}$, where each $\mathfrak{r}_{\sigma}$ is a copy of the set of roots of $\mbox{GSp}_{4, L}$ corresponding to the diagonal torus and the standard Borel. For every fixed $\hat{\sigma} \in I_F$, $\mathfrak{r}_{\hat{\sigma}}$ contains two simple roots $\rho_{1, \hat{\sigma}}$ and $\rho_{2, \hat{\sigma}}$, which, through the inclusion of $\mathfrak{r}_{\hat{\sigma}}$ into $\mathfrak{r}$, can respectively be written $\rho_{1, \hat{\sigma}} = \rho_{1, \hat{\sigma}}((k_{1,\sigma}, k_{2,\sigma})_{\sigma \in I_F}, c)$, with

\[
k_{1,\sigma}=\left\{ \begin{array}{cc}
1 \ & \mbox{if} \ \sigma = \hat{\sigma} \\
0 \ & \mbox{otherwise}
\end{array} 
\right. ,\
k_{2,\sigma}=\left\{ \begin{array}{cc}
-1 \ & \mbox{if} \ \sigma = \hat{\sigma} \\
0 \ & \mbox{otherwise}
\end{array} 
\right. ,\
c=0,
\]
and $\rho_{2, \hat{\sigma}} = \rho_{2, \hat{\sigma}}((k_{1,\sigma}, k_{2,\sigma})_{\sigma \in I_F}, c)$, with

\[
k_{1,\sigma}=0 \ \ \forall \sigma,\
k_{2,\sigma}=\left\{ \begin{array}{cc}
2 \ & \mbox{if} \ \sigma = \hat{\sigma} \\
0 \ & \mbox{otherwise}
\end{array} 
\right. ,\
c=0.
\]

The Weyl group $\Upsilon$ of $G_L$ is in turn isomorphic to the product $\prod\limits_{\sigma \in I_F} \Upsilon_{\sigma}$, where, for every fixed $\hat{\sigma} \in I_F$, $\Upsilon_{\hat{\sigma}}$ is a copy of the Weyl group of $\mbox{GSp}_{4, L}$. The latter is a finite group of order 8 acting on $X^*(T_{\hat{\sigma}})$, generated by two elements $s_1$ and $s_2$, whose images $s_{\rho_{1, \hat{\sigma}}}$ and $s_{\rho_{2, \hat{\sigma}}}$ through the inclusion $\Upsilon_{\hat{\sigma}}$ into $\Upsilon$ are characterized as follows by their action on the elements of $X^*(T_L)$: if $\lambda=\lambda((k_{1,\sigma}, k_{2,\sigma})_{\sigma \in I_F}, c)$, then $s_{\rho_{1, \hat{\sigma}}}.\lambda=\lambda((h_{1,\sigma}, h_{2,\sigma})_{\sigma \in I_F}, c)$, with

\[
h_{1,\sigma}=\left\{ \begin{array}{cc}
k_{2,\sigma} \ & \mbox{if} \ \sigma = \hat{\sigma} \\
k_{1,\sigma} \ & \mbox{otherwise}
\end{array} 
\right. ,\
h_{2,\sigma}=\left\{ \begin{array}{cc}
k_{1,\sigma} \ & \mbox{if} \ \sigma = \hat{\sigma} \\
k_{2,\sigma}\ & \mbox{otherwise}
\end{array} 
\right.
\]
and $s_{\rho_{2, \hat{\sigma}}}.\lambda=\lambda((h_{1,\sigma}, h_{2,\sigma})_{\sigma \in I_F}, c)$, with

\[
h_{1,\sigma}=k_{1,\sigma} \ \forall \sigma \in I_F,\
h_{2,\sigma}=\left\{ \begin{array}{cc}
-k_{2,\sigma} \ & \mbox{if} \ \sigma = \hat{\sigma} \\
k_{2,\sigma}\ & \mbox{otherwise}
\end{array} 
\right. .
\]
These descriptions mean that $s_{\rho_{1, \hat{\sigma}}}$ corresponds to the reflection associated to $\rho_{1, \hat{\sigma}}$ and that $s_{\rho_{2, \hat{\sigma}}}$ corresponds to the reflection associated to $\rho_{2, \hat{\sigma}}$.

\subsubsection{\textbf{Irreducible representations.}} \label{irred}

Irreducible representations of a split reductive group over a field of characteristic 0 are parametrized by its dominant weights. By the description of the dominant weights of $G_L$ given in \eqref{paramweight}, we see that isomorphism classes of irreducible $L$-representations of $G_L$ are in bijection with the set 

\begin{equation*}
\Lambda:= \{ \lambda((k_{1,\sigma}, k_{2,\sigma})_{\sigma \in I_F}, c)) \ \vert \ k_{1,\sigma}, k_{2,\sigma}, c \in \mathbb{Z} \ \mbox{and} \ k_{1,\sigma} \geq k_{2,\sigma} \geq 0 \ \mbox{for every} \ \sigma, 
\end{equation*}
\begin{equation*}
\sum\limits_{\sigma \in I_F} (k_{1,\sigma}+k_{2,\sigma}) \equiv c \mod 2 \mbox{)} \}.
\end{equation*}

\subsection{Hilbert-Siegel varieties} \label{hilbsieg} 

In this subsection, let $G$ denote the group defined in \ref{defgrp}. We are going to define a \emph{pure Shimura datum} $(G, X)$ in the sense of \cite[Def. 2.1]{Pin90}.

\begin{definition}
The complex analytic space $\mathcal{H}_2$ is the subspace of $M_2(\mathbb{C})$ formed by complex $2\times2$ matrices which are symmetric and whose imaginary part is definite (positive or negative). 
\end{definition}

Recall that $G(\mathbb{R})= \{ (A_{\sigma})_{\sigma \in I_F} \in \prod_{\sigma \in I_F} \mbox{GSp}_{4}(\mathbb{R}) \ \mbox{such that} \ \nu(A_{\sigma})=\nu(A_{\hat{\sigma}}), \ \forall \ \sigma, \hat{\sigma} \in I_F  \}$ and that $\vert I_F \vert=d$. Then, $G(\mathbb{R})$ acts on $\mathcal{H}_2^d$ by analytical isomorphisms. Let $J_2$ be the element of $\mbox{GSp}_{4}(\mathbb{R})$ introduced in \ref{parab}.

\begin{proposition}\label{hilbsiegdatum}
Let $h:\mathbb{S} \rightarrow G_{\mathbb{R}}$ be the morphism defined on real points by
\begin{align*}
\mathbb{S}(\mathbb{R}) & \rightarrow G(\mathbb{R}) \\
x+iy &\mapsto \left( (xI_2 + yJ_2)_{\sigma} \right)_{\sigma \in I_F}
\end{align*}
The $G(\mathbb{R})$-conjugacy class $X$ of $h$ has a canonical structure of complex analytic space (of dimension $3d$), such that there exists a $G(\mathbb{R})$-equivariant isomorphism $X \simeq \mathcal{H}_2^d$ as complex manifolds. Moreover, $(G, X)$ is a pure Shimura datum.
\end{proposition}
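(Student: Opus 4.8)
The plan is to verify the three defining axioms of a pure Shimura datum in the sense of \cite[Def.\ 2.1]{Pin90} for the pair $(G, X)$, where $X$ is the $G(\mathbb{R})$-conjugacy class of the homomorphism $h$, and then to exhibit the stated isomorphism $X \simeq \mathcal{H}_2^d$. The natural strategy is to reduce everything to the already-classical case of $\mbox{GSp}_4$ (the Siegel datum of genus $2$) via the product decomposition $G_{\mathbb{R}} \hookrightarrow \prod_{\sigma \in I_F} \mbox{GSp}_{4,\mathbb{R}}$ coming from \eqref{grouppoints}, together with the constraint that the multipliers $\nu(A_\sigma)$ all agree. Since $h$ is, componentwise, exactly the standard Siegel homomorphism $\mathbb{S} \to \mbox{GSp}_{4,\mathbb{R}}$ in each factor, the Hodge-theoretic axioms will follow factor by factor.

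First I would check that $h$ actually lands in $G_{\mathbb{R}}$: the matrix $xI_2 + yJ_2$ has symplectic multiplier $x^2 + y^2 = |x+iy|^2 \in \mathbb{G}_m(\mathbb{R})$, which is the \emph{same} real number in every component $\sigma$, so the fibre-product condition defining $G$ in Definition \ref{defgrp} is met; moreover one checks $h$ is a homomorphism of real algebraic groups (this is the familiar identification of $\mathbb{C}^\times$ with the similitude-norm-one-times-scalars subgroup, here $x+iy \mapsto xI_2+yJ_2$). Next, the three axioms of \cite[Def.\ 2.1]{Pin90}: (i) the Hodge structure on $\mathrm{Lie}(G_{\mathbb{R}})$ induced by $\mathrm{ad}\circ h$ has only weights $(-1,1),(0,0),(1,-1)$ --- this can be read off from the $\mathrm{GSp}_4$ case in each factor, since $\mathrm{Lie}(G_{\mathbb{R}})$ is a $\mathbb{S}$-subrepresentation of $\prod_\sigma \mathrm{Lie}(\mathrm{GSp}_{4,\mathbb{R}})$ and sub-Hodge-structures of a type-$\{(-1,1),(0,0),(1,-1)\}$ structure are of the same type; (ii) $\mathrm{ad}(h(i))$ is a Cartan involution on $G^{\mathrm{ad}}_{\mathbb{R}}$, equivalently on $G^{\mathrm{der}}_{\mathbb{R}} = \mathrm{Res}_{F|\mathbb{Q}}\mathrm{Sp}_{4,F} \otimes \mathbb{R} = \prod_\sigma \mathrm{Sp}_{4,\mathbb{R}}$, which again reduces to the known fact that $h(i)$ induces a Cartan involution on $\mathrm{Sp}_{4,\mathbb{R}}$ componentwise; (iii) $G^{\mathrm{ad}}$ has no $\mathbb{Q}$-factor on which the projection of $h$ is trivial --- here one uses that $G^{\mathrm{ad}} = \mathrm{Res}_{F|\mathbb{Q}}(\mathrm{PGSp}_{4,F})$ is $\mathbb{Q}$-simple (as $F$ is a field), so there is only one factor and the projection of $h$ is visibly nontrivial.

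For the complex structure on $X$ and the identification with $\mathcal{H}_2^d$: the stabiliser in $G(\mathbb{R})$ of the base point $h$ is the preimage of the product of the compact-times-central subgroups stabilising the standard point of each copy of $\mathcal{H}_2$, so $X = G(\mathbb{R})/\mathrm{Stab}(h)$ is a homogeneous space which maps $G(\mathbb{R})$-equivariantly to $\prod_\sigma \mathcal{H}_2 = \mathcal{H}_2^d$; since $G(\mathbb{R})$ surjects onto the subgroup of $\prod_\sigma \mathrm{GSp}_4(\mathbb{R})$ with equal multipliers and that subgroup already acts transitively on $\mathcal{H}_2^d$ (the multiplier does not affect the action on the Siegel upper half space), this map is a bijection, and one transports the complex-manifold structure of $\mathcal{H}_2^d$ to $X$. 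By general theory (\cite[2.1]{Pin90}) this is the unique $G(\mathbb{R})$-invariant complex structure compatible with the Hodge-filtration interpretation of the points of $X$, and its dimension is $3d$ since $\dim_{\mathbb{C}}\mathcal{H}_2 = 3$.

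I expect the main obstacle to be purely bookkeeping rather than conceptual: carefully matching the fibre-product definition of $G$ (and the "equal multiplier" description of $G(\mathbb{R})$) against the product decomposition, and making sure the Cartan-involution and weight computations are done with respect to the correct real form $G^{\mathrm{der}}_{\mathbb{R}}$ rather than naively on all of $\prod_\sigma \mathrm{GSp}_{4,\mathbb{R}}$. In particular one should be slightly careful that passing to $G$ (rather than working with $\tilde{G} = \mathrm{Res}_{F|\mathbb{Q}}\mathrm{GSp}_{4,F}$) does not change $X$ or the associated symmetric space --- it only cuts down the centre --- so the Shimura datum really is the "$\mathcal{H}_2^d$" one; this is where invoking Remark \ref{rmk:centre}.\ref{itm:centre} (the neutral component of $Z(G)$ is isogenous to $\mathbb{G}_m$, hence the Shimura datum condition on the centre is unproblematic) is useful. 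Everything else is the standard verification for Siegel-type data applied $d$ times, subject to the single compatibility constraint on similitude factors.
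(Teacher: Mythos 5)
The paper states this proposition without proof (it is treated as a routine extension of the genus $2$ Siegel datum, cf.\ \cite[2.8, 4.25]{Pin90}), so your argument has to stand on its own. The part verifying Pink's axioms is correct and is the expected argument: the computation $\nu(xI+yJ_2)=x^2+y^2$ showing that $h$ lands in the fibre product $G_{\mathbb{R}}$, the reduction of the weight condition and of the Cartan-involution condition to the individual $\mathrm{GSp}_{4,\mathbb{R}}$ factors, the $\mathbb{Q}$-simplicity of $G^{\mathrm{ad}}=\mathrm{Res}_{F|\mathbb{Q}}\mathrm{PGSp}_{4,F}$ for the non-triviality axiom, and the appeal to Remark \ref{rmk:centre}.\ref{itm:centre} for the condition on the centre are all fine.

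The identification $X\simeq\mathcal{H}_2^d$ is where your argument has a genuine gap. The parenthetical justification \emph{"the multiplier does not affect the action on the Siegel upper half space"} is false: for $g=\left(\begin{smallmatrix}A&B\\C&D\end{smallmatrix}\right)\in\mathrm{GSp}_4(\mathbb{R})$ one has $\mathrm{Im}(gZ)=\nu(g)\cdot{}^{t}\overline{(CZ+D)}^{-1}\,\mathrm{Im}(Z)\,(CZ+D)^{-1}$, so an element of negative multiplier swaps the two connected components $\mathfrak{H}_2^{+}$ and $\mathfrak{H}_2^{-}$ of $\mathcal{H}_2$ (this is precisely why one needs $\mathrm{GSp}_4$ rather than $\mathrm{Sp}_4$ to act transitively on $\mathcal{H}_2$). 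Consequently an element of the equal-multiplier group $G(\mathbb{R})$ either preserves or flips \emph{all} $d$ signs simultaneously, and the orbit of the base point is $(\mathfrak{H}_2^{+})^d\sqcup(\mathfrak{H}_2^{-})^d$, i.e.\ only $2$ of the $2^d$ connected components of $\mathcal{H}_2^d$. This is consistent with a direct count of $\pi_0(X)$: any $(A_\sigma)_\sigma$ centralizing $h$ commutes with $J_2$ in each factor, whence ${}^{t}A_\sigma A_\sigma=\nu(A_\sigma)I$ and $\nu(A_\sigma)>0$, so $\mathrm{Stab}(h)$ lies in the identity component of $G(\mathbb{R})$ and $X$ has exactly $\vert\pi_0(G(\mathbb{R}))\vert=2$ components. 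So your map $X\to\mathcal{H}_2^d$ is an open embedding but is not surjective once $d\geq 2$, and the sentence "that subgroup already acts transitively on $\mathcal{H}_2^d$" is the step that fails. A correct proof should identify $X$ with $(\mathfrak{H}_2^{+})^d\sqcup(\mathfrak{H}_2^{-})^d$ (which is all that is used later: the complex structure, the dimension $3d$ and the Shimura datum are unaffected), or else work with $\tilde{G}(\mathbb{R})=\prod_\sigma\mathrm{GSp}_4(\mathbb{R})$, which genuinely is transitive on $\mathcal{H}_2^d$; as written, the transitivity claim for $G(\mathbb{R})$ cannot be repaired.
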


Fix now a compact open subgroup $K$ of $G(\mathbb{A}_f)$ which is moreover \emph{neat} (\cite[Section 0.6]{Pin90}). Then, the double quotient $G(\mathbb{Q}) \backslash X \times G(\mathbb{A}_f) /K$ has the structure of a smooth complex analytic variety, which is the analytification of a canonical smooth quasi-projective variety $S(G, X)_K$ (the \emph{Shimura variety} corresponding to the datum $(G, X)$ and to the subgroup $K$), defined over a number field $E(G,X)$, the \emph{reflex field} of $(G, X)$ (independent of $K$). In our case, the reflex field is just $\mathbb{Q}$, $S_K:=S(G, X)_K$ is of dimension $3d$ and it is called the \emph{genus 2 Hilbert-Siegel variety (of level $K$, associated to $F$)}. 

\begin{remark}
The datum $(G, X)$ is a \emph{PEL datum}. In particular, according to \cite[4.12]{Del71}, $S_K$ admits an interpretation as moduli space of abelian varieties of dimension $2d$ with additional structures, among which real multiplication by a sub-algebra $\mathcal{O}$ of $F$ of rank $2d$ over $\mathbb{Z}$, which depends on $K$. Thus, there exists a universal family $p:\mathcal{A}_K \rightarrow S_K$ of abelian varieties over $S_K$. 
\label{rmk:abuniv}
\end{remark}

\subsection{The Baily-Borel compactification} \label{comp}

Let $S_K$ be a Shimura variety corresponding to a general pure Shimura datum $(\mathcal{G}, \mathfrak{X})$ and to a neat compact open subgroup $K \subset \mathcal{G}(\mathbb{A}_f)$. Recall that $S_K$ has a canonical compactification $S_K^*$, called \emph{Baily-Borel compactification}, a projective variety, in general singular, defined over the reflex field $E(\mathcal{G}, \mathfrak{X})$ (\cite[Main Theorem 12.3, (a), (b)]{Pin90}), in which $S_K$ embeds as an open dense sub-scheme. In the case of Hilbert-Siegel varieties, $S_K^*$ is then defined over $\mathbb{Q}$.

\subsubsection{\textbf{Structure of the stratification of $S_K^*$.}} \label{strctcomp}
The variety $S_K^*$ admits a stratification by locally closed strata, amongst which $S_K$ is the only open stratum. The other ones form a stratification of the boundary $\partial S_K^*:=S_K^* \backslash S_K$. If $(\mathcal{Q}_m)_{m \in \Phi}$ is any ordering of any (finite) set of representatives of the conjugacy classes of \emph{admissible parabolic subgroups} (\cite[Def. 4.5]{Pin90}) in $\mathcal{G}$, one sees from \cite[Section 3.6-3.7]{Pin92} that, for every $m \in \Phi$, there exist suitable finite subsets $\mathcal{C}_m$ of $\mathcal{G}(\mathbb{A}_f)$ such that the set of strata of $\partial S_K^*$ is given by
\begin{equation}\label{repstrat}
\{ S_{m,g} \vert m \in \Phi, g \in \mathcal{C}_m \}.
\end{equation}
Here, the locally closed subscheme $S_{m,g}$ of $\partial S_K^*$ is the image of a canonical morphism \begin{equation} \label{defstrat}
i_g: S_{\pi_m(K_{m,g})}:=S_{\pi_m(K_{m,g})}(\mathcal{G}_m, \mathfrak{H}_m) \rightarrow S_K^*
\end{equation}
where the compact open subgroup $\pi_m(K_{m,g})$ and the pure Shimura datum $(\mathcal{G}_m, \mathfrak{H}_m)$ defining the Shimura variety $S_{\pi_m(K_{m,g})}$ are given as follows: there exists a canonical normal subgroup $P_m$ of $Q_m$ (\cite[4.7]{Pin90}) whose unipotent radical $W_m$ coincides with the unipotent radical of $Q_m$ (cfr. \cite[proof of Lemma 4.8]{Pin90}), and we denote $K_{m,g}:=P_m(\mathbb{A}_f) \cap g \cdot K \cdot g^{-1}$, $\pi_m: P_m \rightarrow P_m/W_m$ the natural projection, $(\mathcal{G}_m, \mathfrak{H}_m)$ the pure Shimura datum obtained by quotienting by $W_m$ any of the \emph{rational boundary components} (\cite[4.11]{Pin90}) associated to $P_m$. In particular, $\mathcal{G}_m$ is a reductive subgroup of the Levi component of $Q_m$. In the rest of the paper, we will rather use the stratification of $\partial S_K^*$ indexed by $m \in \Phi$, each of whose strata $Z_m$ corresponds to the $m$-th conjugacy class of admissible parabolics of $\mathcal{G}$ and coincides with the disjoint union of those subschemes $S_{m,g}$ with $g \in \mathcal{C}_m$. The latter will be called \emph{strata of $\partial S_K^*$ contributing to $Z_m$}. 

\begin{remark} \label{c+}
Suppose that the pure Shimura datum $(\mathcal{G}, \mathfrak{X})$ satisfies condition ($+$) from \cite[page 7]{BW04}. Then, it follows from \cite[Lemma 8.2]{Wil17a} that the stratum $S_{m,g}$ equals the quotient of $S_{\pi_m(K_{m,g})}$ by the action of a suitable finite group and is smooth over $E(\mathcal{G}, \mathfrak{X})$.
Thanks to Remark \ref{rmk:centre}.\ref{itm:centre}, the condition is in particular satisfied by the Shimura datum $(G, X)$ of \ref{hilbsiegdatum}. 
\end{remark}

\subsubsection{\textbf{Explicit description of strata in the boundary of genus 2 Hilbert-Siegel varieties.}} \label{strates}
Let us now describe in detail the (pure) Shimura data underlying the strata of $\partial S_K^*$ in the case of the Hilbert-Siegel datum $(G,X)$ of Proposition \ref{hilbsiegdatum}. 

Each admissible parabolic subgroup $Q$ of $G$ is conjugated to exactly one of the subgroups $Q_0$ (Siegel parabolic) or $Q_1$ (Klingen parabolic) defined in \ref{parab}. 

Denote respectively by $P_0$ and $P_1$ the canonical normal subgroups of $Q_0$ and $Q_1$ considered in \ref{strctcomp}. Denote also by $G_0$, resp. $G_1$ their quotients by the respective unipotent radicals, and by $(G_0, \mathfrak{H}_0)$, resp. $(G_1, \mathfrak{H}_1)$ the associated Shimura data. An immediate generalisation to $\mbox{Res}_{F|\mathbb{Q}}\mbox{GSp}_{4,F}$ (and then to $G$) of \cite[4.25]{Pin90} (which treats the case of $\mbox{GSp}_{4}$) gives us the following: 

\begin{itemize}[leftmargin=*]
\renewcommand{\labelitemi}{$\bullet$} 
\item The group $G_0$ is identified with the factor $\mathbb{G}_m$ inside $Q_0/W_0 \simeq \mathbb{G}_m \times \mbox{Res}_{F|\mathbb{Q}} \mbox{GL}_{2,F}$ (remember \eqref{isosiegel}). Moreover, let $k$ be the morphism $\mathbb{S} \rightarrow G_{0, \mathbb{R}}$ which induces on real points, via the above identification, 

\begin{align} 
\mathbb{S}(\mathbb{R}) & \rightarrow G_0(\mathbb{R}) \nonumber \\
z &\mapsto ( \left( \begin{array} {cc} 
z \bar{z} \cdot I_2 &  \\
 & I_2 
\end{array} \right) )_{\sigma \in I_F} \label{donnée0}
\end{align}
and let $\mathfrak{H}_0$ be the set of isomorphisms between $\mathbb{Z}$ and $\mathbb{Z}(1)$. Consider the unique transitive action of $\pi_0(\mathbb{G}_m(\mathbb{R}))$ on $\mathfrak{H}_0$ and denote by $h_0$ the constant map $\mathfrak{H}_0 \rightarrow \{ k \} \subset \mbox{Hom}(\mathbb{S}, G_{0, \mathbb{R}}))$. 
Then, the Shimura datum corresponding to $G_0$ is given by $(G_0, \mathfrak{H}_0)$. 
Thus, $G_0$ contributes with $0$-dimensional strata to $\partial S_K^*$.

\item The group $G_1$ is identified with the factor $\mbox{Res}_{F|\mathbb{Q}} \mbox{GL}_{2,F} \times_{\mbox{Res}_{F|\mathbb{Q}} \mathbb{G}_{m,F}} \mathbb{G}_m$ inside 
\begin{equation*}
Q_1/W_1 \simeq (\mbox{Res}_{F|\mathbb{Q}} \mbox{GL}_{2,F} \times_{\mbox{Res}_{F|\mathbb{Q}} \mathbb{G}_{m,F}} \mathbb{G}_m) \times \mbox{Res}_{F|\mathbb{Q}} \mathbb{G}_{m,F}
\end{equation*}
(remember \eqref{isoklingen}).
Denoting by $\mathfrak{H}_1$ the $G_1(\mathbb{R})$-conjugacy class of the morphism 
\begin{align}
h_1: \mathbb{S}(\mathbb{R}) & \rightarrow G_1(\mathbb{R}) \nonumber \\
x+iy &\mapsto (\left( \begin{array} {cccc} 
x^2+y^2 & & & \\
& x & & y \\
& & 1 & \\
& -y & & x
\end{array} \right) )_{\sigma \in I_F} \label{donnée1}
\end{align}
the Shimura datum corresponding to $G_1$ is then given by $(G_1, \mathfrak{H}_1)$. Thus, $G_1$ contributes with $d$-dimensional strata to $\partial S_K^*$. The description of the Shimura datum shows that these strata are in particular isomorphic to (quotients by the action of a finite group of) \emph{Hilbert-Blumenthal varieties}.

\end{itemize}

By the description in \eqref{repstrat}, each stratum of $\partial S_K^*$ corresponds to a Shimura datum of one of the above two types. In particular, it is either of dimension 0 (and it will be then called a \emph{Siegel stratum}) or of dimension $d$ (and it will be then called a \emph{Klingen stratum}). 

\subsection{The canonical construction functor and its motivic version}
\subsubsection{\textbf{Conventions for Hodge structures.}} \label{convhodge} Let $w: \mathbb{G}_{m, \mathbb{R}} \rightarrow \mathbb{S}$ be the cocharacter which induces the inclusion $\mathbb{R}^{\times} \hookrightarrow \mathbb{C}^{\times}$ on real points. A representation $(\rho, V)$ of $\mathbb{S}$ induces a (semisimple) mixed Hodge structure on the real vector space $V$; coherently with the convention of \cite{Pin90} 1.3, we will say that the subspace of $V$ where $\rho \circ w$ acts as multiplication by $t^{-k}$ is the subquotient of $V$ of \emph{weight k}. 

Moreover, if $(\mathcal{G}, \mathfrak{X})$ is a Shimura datum, defined by $h: \mathfrak{X} \rightarrow \mbox{Hom}(\mathbb{S}, \mathcal{G}_{\mathbb{R}})$, then every representation $\rho: \mathcal{G} \rightarrow \mbox{GL}(V)$ gives rise, for every $x \in \mathfrak{X}$, to a Hodge structure on $V$, by applying the above observation to $\rho \circ h(x) \circ w$.

\subsubsection{\textbf{The canonical construction functor.}} \label{constcan} Let $(\mathcal{G}, \mathfrak{X})$ be a Shimura datum satisfying condition ($+$) from \cite[page 7]{BW04}, $K$ a neat compact open subgroup of $\mathcal{G}(\mathbb{A}_f)$, and $S_K$ the corresponding Shimura variety. Denote by $\mbox{Rep}(\mathcal{G}_R)$ the Tannakian category of algebraic representations of $\mathcal{G}$ in finite dimensional $R$-vector spaces. If $R$ is a subfield of $\mathbb{R}$, we write $\mbox{MVar}_{R}(S_K(\mathbb{C}))$ for the category of \emph{graded-polarizable admissible variations of mixed $R$-Hodge structure} over $S_K(\mathbb{C})$. Then, we have at our disposal (\cite[Part II, Chap. 2]{Wil97}) the (exact tensor) \emph{Hodge canonical construction functor}

\begin{equation}
\mu_{H}^{K}: \mbox{Rep}(\mathcal{G}_R) \rightarrow \mbox{MVar}_{R}(S_K(\mathbb{C})).
\end{equation}
Moreover, let $R$ be finite over $\mathbb{Q}$, let $\ell$ be a fixed prime and fix a prime $l$ of $R$ above $\ell$. Write $\mbox{Et}_{\ell, R}(S_K)$ for the $R_l$-linear version of the category of \emph{lisse $\ell$-adic sheaves} over $S_K$. We have then (\cite[Part II, Chap. 4]{Wil97}) the (exact tensor) \emph{$\ell$-adic canonical construction functor}

\begin{equation}
\mu_{\ell}^{K}: \mbox{Rep}(\mathcal{G}_R) \rightarrow \mbox{Et}_{\ell, R}(S_K).
\end{equation}

\begin{remark} 
\begin{enumerate} [wide, labelwidth=!, labelindent=0pt, label=(\arabic*)]
\item \label{itm:poidsfilt} If $(\mathcal{G}, \mathfrak{X})$ is a Shimura datum of \emph{abelian type}, and $S_K$ any of the corresponding Shimura varieties, then the functor $\mu_{\ell}^{K}$ takes values in the full subcategory $\mbox{Et}^M_{\ell,R}(S_K)$ of $\mbox{Et}_{\ell, R}(S_K)$ formed by the \emph{mixed lisse sheaves with weight filtration}, in the sense of \cite[Part I, Definition before Theorem 2.8]{Wil97}. Moreover, if $V \in \mbox{Rep}(\mathcal{G}_R)$, then the weights of $\mu_{\ell}^{K}(V)$ in the sense of the latter definition are identical to the weights of $\mu_{H}^{K}(V)$ as a variation of mixed Hodge structure. These facts follow from \cite[Proposition (5.6.2)]{Pin92}. 
The hypothesis is in particular satisfied by PEL type Shimura data, and so by the data $(G, X)$, $(G_0, \mathfrak{H}_0)$ and $(G_1, \mathfrak{H}_1)$ defined in Subsections \ref{hilbsieg} and \ref{comp}.

\item \label{itm:poidsrep} If $G$ is the group defined in \ref{defgrp} and $V_{\lambda}$ is an irreducible representation of $G$ of highest weight $\lambda=\lambda((k_{1,\sigma}, k_{2,\sigma})_{\sigma \in I_F}, c))$, then $\mu_{H}^{K}(V_{\lambda})$, resp. $\mu_{\ell}^{K}(V_{\lambda})$, is a variation of Hodge structure, resp. an $\ell$-adic sheaf, pure of weight $w(\lambda):=-c$ (cfr. the convention fixed in \ref{convhodge}, which is extended to variations of Hodge structure in the obvious way).  
\end{enumerate}

\label{rmk:poidsrep}
\end{remark}

\begin{remark} 
\begin{enumerate} [wide, labelwidth=!, labelindent=0pt, label=(\arabic*)]
\item \label{itm:extgrad} Let $\mbox{Var}_{R}(S_K(\mathbb{C}))$ denote the category of \emph{pure} polarizable variations of $R$-Hodge structure. In the following, we will abusively denote by the same symbol $\mu_{H}^{K}$ the obvious factorization of the latter functor through the category $\mbox{Gr}_{\mathbb{Z}}\mbox{Var}_{R}(S_K(\mathbb{C}))$. Analogously, if $(\mathcal{G}, \mathfrak{X})$ is a Shimura datum of abelian type, Rmk. \ref{rmk:poidsrep}.\ref{itm:poidsfilt} will give us an obvious factorisation of the functor $\mu_{\ell}^K$ through the category $\mbox{Gr}_{\mathbb{Z}}\mbox{Et}_{\ell}(S_K)$, still abusively denoted by $\mu_{\ell}^K$.

\item \label{itm:exttriang} The exact functor $\mu_{\ell}^K$ extends to a triangulated functor, denoted by the same symbol, 

\begin{equation} \label{triangext}
\mu_{\ell}^K: D^b(\mbox{Rep}(\mathcal{G}_R)) \rightarrow D^b_c(S_K)_R,
\end{equation}
where $D^b_c(S_K)_R$ is the $R_l$-linear version of the \q{derived} bounded category of $\ell$-adic constructible sheaves over $S_K$ (\cite[Section 6]{Eke90}).
\end{enumerate}
\label{rmk:extcan}
\end{remark}

\subsubsection{\textbf{The motivic version of the canonical construction.}} \label{motcan}

Let us adopt again the notation of the beginning of \ref{comp}, applied to the Hilbert-Siegel Shimura datum $(G,X)$ of \ref{hilbsiegdatum}. Recall, for a base scheme $X$ (say, for our purposes, a separated, finite type $\mathbb{Q}$-scheme), the triangulated $R$-linear category $\DBcM(X)_R$ of \emph{constructible Beilinson motives} over $X$ (\cite{CD12}) and the \emph{$\ell$-adic realization functor} $\mathcal{R}_{\ell}$ on it, with values in the category $D^b_c(X)_R$ of \eqref{triangext} (\cite[Sec. 7.2]{CD16}). In our case, we will consider a base $\mathcal{S} \in \{S_K, S^*_K, \partial S^*_K \} \cup \{ \mbox{strata of} \ \partial S^*_K \}$. Then, composition with the collection of cohomology functors, resp. perverse cohomology functors, $R^*:D^b_c(\mathcal{S})_R \rightarrow \mbox{Gr}_{\mathbb{Z}}\mbox{Et}_{\ell, R}(\mathcal{S})$, resp. $\mathcal{H}^*:D^b_c(\mathcal{S})_R \rightarrow \mbox{Gr}_{\mathbb{Z}}\mbox{Perv(Et)}_{\ell,R}(\mathcal{S})$ (where $\mbox{Perv(Et)}_{\ell,R}(\mathcal{S})$ is the $R_l$-linear version of the category of \emph{ $\ell$-adic perverse sheaves} over $\mathcal{S}$), gives rise to the $\ell$-adic \emph{cohomological realization}, resp. \emph{perverse cohomological realization} functors. 

Consider in particular the case $\mathcal{S}=S_K$. The $R$-linear, tensor pseudo-abelian category of \emph{Chow motives over} $S_K$ (\cite{CH00}), denoted by $CHM(S_K)_R$, faithfully embeds into $\DBcM(S_K)_R$ (more on this in paragraph \ref{weighstr}). If $\mathcal{S}=S_K$, the restriction of $R^* \circ \mathcal{R}_{\ell}$ to $CHM(S_K)_R$ (still denoted by $\mathcal{R}_{\ell}$) equals the usual $\ell$-adic cohomological realization on this category; over $CHM(S_K)_R$, there also exists the  \emph{Hodge cohomological realization}, with values in the category $\mbox{Gr}_{\mathbb{Z}}\mbox{Var}_{R}(S_K(\mathbb{C}))$ of Rmk. \ref{rmk:extcan}. \ref{itm:extgrad}, denoted by $\mathcal{R}_H$ (one constructs such realizations on relative Chow motives as in \cite[1.8]{DM91}).  

Recall now the universal family $p: \mathcal{A}_K \rightarrow S_K$ from Remark \ref{rmk:abuniv}. The following result, valid for every PEL-type Shimura variety, is crucial: 

\begin{theorem}{(\cite[Thm. 8.6]{Anc15}, stated as in \cite[Thm. 5.1]{Wil19a})} \label{constmot}

Let $R$ be a subfield of $\mathbb{R}$. There exists a $R$-linear tensor functor 
\begin{equation}
\tilde{\mu}: \mbox{\emph{Rep}}(G_R) \rightarrow CHM(S_K)_R
\end{equation}
with the following properties:

\begin{enumerate}
\item The composition of $\tilde{\mu}$ with the Hodge cohomological realization is isomorphic to $\mu_H^{K}$ (with the convention of Rmk. \ref{rmk:extcan}. \ref{itm:extgrad}).
\item for every prime $\ell$, the composition of $\tilde{\mu}$ with the $\ell$-adic cohomological realization is isomorphic to $\mu_{\ell}^K$ (with the convention of Rmk. \ref{rmk:extcan}. \ref{itm:extgrad}).
\item $\tilde{\mu}$ commutes with Tate twists.
\item If $V$ is the standard $G_R$-representation on $R^{\oplus 4}$, then $\tilde{\mu}$ sends $V$ to the dual of the Chow motive $p_*^1 \mathbbm{1}_{\mathcal{A}_K}$ over $S_K$ (the first \emph{Chow-Künneth component} of the Chow motive $p_* \mathbbm{1}_{\mathcal{A}_K}$ over $S_K$, cfr. \cite[Thm. 3.1]{DM91}). 
\end{enumerate}
\end{theorem}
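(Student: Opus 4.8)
Since Theorem~\ref{constmot} is quoted verbatim from \cite[Thm.~8.6]{Anc15}, the plan is simply to recall the strategy of that construction in the present situation. The input is the universal abelian scheme $p:\mathcal{A}_K \to S_K$ of Remark~\ref{rmk:abuniv}, of relative dimension $2d$, carrying by the PEL structure an action of an order $\mathcal{O}\subset F$ and a polarization. First I would invoke the relative Chow--Künneth decomposition for abelian schemes (Deninger--Murre over a field, Künnemann over a base) to obtain a canonical splitting $p_*\mathbbm{1}_{\mathcal{A}_K}=\bigoplus_{i=0}^{4d}p_*^i\mathbbm{1}_{\mathcal{A}_K}$ in $CHM(S_K)_R$ with $p_*^i\mathbbm{1}_{\mathcal{A}_K}=\wedge^i(p_*^1\mathbbm{1}_{\mathcal{A}_K})$, and set $M:=(p_*^1\mathbbm{1}_{\mathcal{A}_K})^{\vee}$. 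The standard identification of the cohomology of the universal abelian scheme over a PEL Shimura variety gives that the $\ell$-adic, resp.\ Hodge, realization of $M$ is $\mu_\ell^K(V)$, resp.\ $\mu_H^K(V)$, for $V$ the standard representation; so property~(4) holds by construction and, simultaneously, parts~(1) and~(2) hold on the object $V$.

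Next I would record the additional structure on $M$. The polarization of $\mathcal{A}_K$ yields a morphism of relative Chow motives $\varphi:M\otimes M\to\mathbbm{1}(-1)$ — an alternating pairing valued in a Tate twist, the twist encoding the similitude character — while the $\mathcal{O}$-action makes $M$ an $\mathcal{O}$-module object. After enlarging the coefficient field to contain $L$ (harmless for the statement), the latter decomposes $M$ as $\bigoplus_{\sigma\in I_F}M_\sigma$ into rank-$4$ summands, each equipped with the restriction of $\varphi$: this realizes motivically the standard representation of the factor $(\GSp_4)_\sigma$ in $G_L\simeq\mathbb{G}_m\times_{\prod_\sigma\mathbb{G}_m}\prod_\sigma(\GSp_4)_\sigma$.

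The heart of the argument is then the passage to general $\lambda$. Every irreducible $V_\lambda$ of $G_L$ is a direct summand of a tensor product $\bigotimes_\sigma V_\sigma^{\otimes n_\sigma}$ twisted by a power of the similitude character, and — this is the key point of \cite{Anc15} — one can write down, in the endomorphism algebra of such a product computed inside the \emph{semisimple} category $\mbox{Rep}(G_L)$, an explicit idempotent $e_\lambda$ projecting onto the $V_\lambda$-isotypic piece, \emph{built purely out of permutations of the tensor factors and of the symplectic forms $V_\sigma\otimes V_\sigma\to L(-1)$ together with their transposes}. Both ingredients have motivic avatars: permutations are formal in the symmetric monoidal category $CHM(S_K)_R$, and the forms are given by $\varphi$. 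Hence $e_\lambda$ lifts to an idempotent $\tilde e_\lambda$ of the corresponding (Tate-twisted) tensor power of the $M_\sigma$'s in $CHM(S_K)_R$; since this category is pseudo-abelian, $\tilde e_\lambda$ splits, and I would define $\tilde\mu(V_\lambda):=\im(\tilde e_\lambda)$. Property~(3) is then built in, and~(1),~(2) follow for all $V_\lambda$: the Hodge and $\ell$-adic realizations are symmetric monoidal, send $M$ to $\mu_H^K(V)$, resp.\ $\mu_\ell^K(V)$, and send $\varphi$ and the permutations to their classical counterparts, so they carry $\tilde e_\lambda$ to the realization of $e_\lambda$ and hence $\tilde\mu(V_\lambda)$ to $\mu_H^K(V_\lambda)$, resp.\ $\mu_\ell^K(V_\lambda)$.

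The step I expect to be the real obstacle — and which accounts for the bulk of the work in \cite[\S\,6--8]{Anc15} — is not the construction on objects but the promotion of $V_\lambda\mapsto\tilde\mu(V_\lambda)$ to a genuine $R$-linear \emph{tensor functor} on all of $\mbox{Rep}(G_R)$: one has to check independence of the auxiliary embedding of $V_\lambda$ into a tensor power, compatible lifting of morphisms of representations, and the monoidal constraints, \emph{without} being able to lean on an abelian or semisimple structure on $CHM(S_K)_R$ — every relation must be produced from explicit idempotents and their identities in $\mbox{Rep}(G_L)$ and then transported. The extra bookkeeping forced by the Weil restriction and the $\mathcal{O}$-action, as opposed to the case of $\GSp_4$ alone, is exactly where the delicacy of our setting would lie.
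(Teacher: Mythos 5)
The paper offers no proof of this statement: it is imported verbatim from \cite[Thm.~8.6]{Anc15} (as the parenthetical in the theorem header indicates), so there is no internal argument to compare yours against. Your sketch — relative Chow--Künneth decomposition of $p:\mathcal{A}_K\to S_K$, motivic lifts of the polarization pairing and of the $\mathcal{O}$-action, splitting of idempotents produced by invariant theory for $G_L$ inside tensor powers of $(p^1_*\mathbbm{1}_{\mathcal{A}_K})^{\vee}$, with the genuine difficulty concentrated in well-definedness and tensor-functoriality of the assignment $V_\lambda\mapsto\im(\tilde e_\lambda)$ — is a fair account of Ancona's external argument and correctly locates where the work lies.
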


\begin{remark} \label{factpowab}
For every positive integer $n$, let $p_{n}: \mathcal{A}_K^{n} \rightarrow S_K$ be the $n$-fold fibred product of $\mathcal{A}_K$ with itself over $S_K$. Observe that the direct sum $V \oplus V^{\vee}$ of the standard representation $V$ with its dual generates the Tannakian category $\mbox{Rep}(G_R)$, by taking tensor products and direct summands. Hence, Theorem \ref{constmot} implies that every object in the essential image of $\tilde{\mu}$ is isomorphic to a finite direct sum $\bigoplus \limits_{i} M_i$, where each $M_i$ is a direct factor of a Tate twist of a Chow motive of the form $p_{n_i,*} \mathbbm{1}_{\mathcal{A}_K^{n_i}}$, for suitable $n_i$'s. 
\end{remark}

Let now $V_{\lambda}$ be a irreducible $L$-representation of $G_L$ of highest weight $\lambda$, where the latter is as in \ref{irred}. 

\begin{definition} \label{motrep}
The Chow motive $^{\lambda}\mathcal{V}$ over $S_K$ is defined by 
\begin{equation*}
^{\lambda}\mathcal{V}:=\tilde{\mu}(V_{\lambda}).
\end{equation*}
\end{definition}

\begin{remark}
\begin{enumerate} [wide, labelwidth=!, labelindent=0pt, label=(\arabic*)]
\item \label{itm:degclss} Since $\mu_H^{K}(V_{\lambda})$ and $\mu_{\ell}^K(V_{\lambda})$ are pure of weight $w(\lambda)$ (cfr. \ref{rmk:poidsrep}.\ref{itm:poidsrep}), the Hodge, resp. $\ell$-adic cohomological realizations of $^\lambda \mathcal{V}$ are zero in degree $ \neq w(\lambda)$, and identical to $\mu_H^{K}(V_{\lambda})$, resp. $\mu_{\ell}^K(V_{\lambda})$, in degree $w(\lambda)$. 
\item \label{itm:degperv} Since $S_K$ is a variety of dimension $3d$, \ref{itm:degclss} can be reformulated by saying that the \emph{perverse} Hodge, resp. $\ell$-adic cohomological realizations are zero in perverse degree $ \neq w(\lambda)+3d$, and identical to $\mu_H^{K}(V_{\lambda})$, resp. $\mu_{\ell}^K(V_{\lambda})$, in perverse degree $w(\lambda)+3d$. 
\item \label{itm:autodual} Let $\mathbb{D}_{\ell, S_K}$ denote the $\ell$-adic local duality endofunctor over $S_K$. Then
\begin{equation}\label{realshift}
 R_{\ell, S_K}(^\lambda \mathcal{V})=\mu_{\ell}^K(V_{\lambda})[-w(\lambda)]
\end{equation}
and $\mathbb{D}_{\ell, S_K}(R_{\ell, S_K}(^\lambda \mathcal{V}))\simeq R_{\ell, S_K}(^\lambda \mathcal{V})(w(\lambda)+3d))[2w(\lambda)+6d]$, as can be seen by imitating the first part of \cite[proof of Thm. 1.6]{Wil19b}.
\end{enumerate}
\label{rmk:degdual}
\end{remark}

\subsection{A criterion for the existence of the intersection motive} \label{critexistence}
Let $^{\lambda} \mathcal{V}$ be one of the Chow motives over the Hilbert-Siegel variety $S_K$ introduced in \ref{motcan}, and let $\tilde{s}:S_K \rightarrow \Spec \BQ$ be the structural morphism. Having in mind the problem of defining motives associated to cuspidal automorphic representations, we want to construct the \emph{lowest weight-graded object} of $\tilde{s}_* \ ^{\lambda} \mathcal{V}$ as a canonical Chow motive over $\Spec \BQ$. The first aim of this subsection is to recall a criterion (Theorem \ref{critint_a}) which allows one to perform this construction, formulated in the language of \emph{weight structures}. The second aim is to gather some tools (paragraphs \ref{pink}-\ref{cohunits}) which we will use in order to characterize the validity of this criterion in the case of genus 2 Hilbert-Siegel varieties. This characterization (Corollary \ref{evit01}) will then be a consequence of our main result, Theorem \ref{thm:mainthm}.   

\subsubsection{\textbf{Weight structures and a criterion for the weight avoidance}} \label{weighstr}

Fix a subfield $R$ of $\mathbb{R}$ and a scheme $X$ of finite type over $\mathbb{Q}$. According to \cite{Héb11}, the category $\DBcM(X)_R$ (see \ref{motcan}) is equipped with a canonical \emph{weight structure}, the \emph{motivic weight structure}, whose \emph{heart} (the subcategory of weight 0 objects) is denoted by $CHM(X)_R$ and called the ($R$-linear version of the) category of \emph{Chow motives over} $X$. If $X=S_K$ as in paragraph \ref{motcan}, then, after \cite{Fan16}, this category is equivalent to the homonymous category introduced in that paragraph. 

\begin{definition}{(cfr. \cite[Defs. 1.6-1.10]{Wil09})} \label{def:evit}
Let $M \in \DBcM(X)_R$ and let $\alpha$, $\beta$ be integers. We say that $M$ \emph{avoids weights} $\alpha, \dots, \beta$ if $\alpha \leq \beta$ and there exists an exact triangle in $\DBcM(X)_R$
\[
M_{\leq \alpha-1} \rightarrow M \rightarrow M_{\geq \beta + 1} \rightarrow M_{\leq \alpha-1}[1]
\]
such that $M_{\leq \alpha-1}$ is of weight at most $\alpha-1$ and $M_{\geq \beta + 1}$ of weight at least $\beta+1$. 

Such a triangle is called a \emph{weight filtration} of $M$ \emph{avoiding weights} $\alpha, \dots, \beta$.  
\end{definition}

Let now $V_{\lambda}$ be the irreducible representation of $G_L$ of highest weight 
\begin{equation*}
\lambda=\lambda((k_{1,\sigma}, k_{2,\sigma})_{\sigma \in I_F}, c)),
\end{equation*}
(defined in \eqref{irred}), $S_K$ the genus 2 Hilbert-Siegel variety of level $K$ corresponding to $(G,X)$ and $^{\lambda}\mathcal{V} \in CHM(S_K)$ the Chow motive over $S_K$ introduced in Definition \ref{motrep}. Let moreover $j:S_K \rightarrow S_K^*$, resp. $i:\partial S_K^* \rightarrow S_K^*$ denote the open, resp. closed immersion in the Baily-Borel compactification $S_K^*$ of $S_K$ (as in Subsection \ref{comp}). The theory of \cite{Wil19a} tells us that we will be able to construct the desired Chow motive described at the beginning of this subsection (the \emph{intersection motive}) whenever the Beilinson motive $i^* j_* ^\lambda \mathcal{V}$  over $\partial S_K^*$ \emph{avoids weights 0 and 1}.  

For a discussion of the definition of the intersection motive when the weights 0 and 1 are avoided, of its properties, and of the applications to the construction of motives associated to automorphic representations, we refer to Section \ref{consapp}. Here we recall instead a criterion which allows one to prove the avoidance of such weights, by looking at the structure of $\partial S_K^*$. By \ref{strctcomp}, $\partial S_K^*$ admits a natural stratification $ \partial S_K^* = Z_0 \sqcup Z_1$, indexed by $\Phi:= \{ 0, 1 \}$. We denote by $i_0: Z_0 \hookrightarrow \partial S_K^*$ the immersion of the disjoint union of strata corresponding to the conjugacy class of the parabolic $Q_0$ (i.e. of the closed, 0-dimensional Siegel strata), and by $i_1: Z_1 \hookrightarrow \partial S_K^*$ the immersion of the disjoint union of strata corresponding to the conjugacy class of the parabolic $Q_1$ (i.e. of the open, $d$-dimensional Klingen strata). Given this, one can prove the weight avoidance by reduction to a \emph{stratum-by-stratum} study of the weights (over $\partial S_K^*$) of the $\ell$-adic realization of $i^* j_* ^{\lambda}\mathcal{V}$. Consider in fact the \emph{intermediate extension} functor $j_{!*}$ from the category $\mbox{Perv(Et)}_{\ell, L}(S_K)$ to the category $\mbox{Perv(Et)}_{\ell, L}(S_K^*)$ (cfr. \ref{motcan}). Then, \cite{Wil19a} gives us the sought-for criterion: 

\begin{theorem}\label{critint_a}

Let $\beta \geq 1$ be an integer and fix a prime number $\ell$. For any stratum $\mathcal{Z}$ of $\partial S_K^*$, denote by $\mathcal{H}^n$ the $n$-th perverse cohomology functor on $D^b_c(\mathcal{Z})_L$ and write $j_{!*}(\mathcal{R}_{\ell}(^{\lambda}\mathcal{V}))$ for 
\begin{equation*}
\left( j_{!*}(\mathcal{R}_{\ell}(^{\lambda}\mathcal{V})[w(\lambda)+3d]) \right)[-w(\lambda)-3d].
\end{equation*}
The following assertions are then equivalent:
\begin{enumerate} [label=(\arabic*)]
\item the motive $i^*j_*^{\lambda}\mathcal{V}$ avoids weights $-\beta+1, -\beta +2, \dots, \beta$;
\item \label{realpoids} for every $n \in \mathbb{Z}$, $\mathcal{H}^n i_0^* i^* j_{!*}(\mathcal{R}_{\ell}(^{\lambda}\mathcal{V}))$ and $\mathcal{H}^n i_1^* i^* j_{!*}(\mathcal{R}_{\ell}(^{\lambda}\mathcal{V}))$ are of weights $\leq n- \beta$.
\end{enumerate}
\end{theorem}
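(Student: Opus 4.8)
\noindent\emph{Proof strategy.} The plan is to establish the equivalence by two passages: first from the \emph{motivic} weight structure on $\DBcM(\partial S_K^*)_L$ to the \emph{perverse} weight structure on $D^b_c(\partial S_K^*)_L$, and then, on the $\ell$-adic side, from the complex $i^*j_*\,{}^{\lambda}\mathcal{V}$ to the intermediate extension $j_{!*}$ by means of the recollement attached to $(j,i)$. For the first passage one uses that $\mathcal{R}_{\ell}$ is weight-exact, so that a motivic weight filtration of $i^*j_*\,{}^{\lambda}\mathcal{V}$ avoiding weights $-\beta+1,\dots,\beta$ realizes to one of $i^*j_*\mathcal{R}_{\ell}({}^{\lambda}\mathcal{V})$, and, for the converse direction, that ${}^{\lambda}\mathcal{V}$ --- hence $i^*j_*\,{}^{\lambda}\mathcal{V}$ and all of its restrictions to boundary strata --- is of \emph{abelian type} (Remark \ref{factpowab}): by the weight-complex formalism of \cite{Bon10} together with the conservativity of $\mathcal{R}_{\ell}$ on motives of abelian type, a perverse weight filtration of the realization lifts back to $\DBcM(\partial S_K^*)_L$. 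This is the point at which the theory of \cite{Wil09}, \cite{Wil17a}, \cite{Wil19a} really enters. Thus assertion (1) becomes the statement that $i^*j_*\mathcal{R}_{\ell}({}^{\lambda}\mathcal{V})$ avoids weights $-\beta+1,\dots,\beta$ for the perverse weight structure.

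For the second passage, write $\mathcal{P}:=\mathcal{R}_{\ell}({}^{\lambda}\mathcal{V})[w(\lambda)+3d]=\mu_{\ell}^{K}(V_{\lambda})[3d]$, the perverse sheaf on $S_K$ which is pure of weight $w(\lambda)+3d$ by Remark \ref{rmk:poidsrep}, so that $\mathcal{R}_{\ell}({}^{\lambda}\mathcal{V})$ is pure of weight $0$ for the perverse weight structure. Since $j\colon S_K\hookrightarrow S_K^*$ is affine (the boundary being the zero locus of a section of an ample line bundle on $S_K^*$), $j_!\mathcal{P}$ and $j_*\mathcal{P}$ are perverse and $j_{!*}\mathcal{P}$ is their image; writing $A'$, $B'$ for the perverse sheaves on $\partial S_K^*$ with $i_*A'$ the kernel and $i_*B'$ the cokernel of $j_!\mathcal{P}\to j_*\mathcal{P}$, applying $i^*$ (which kills $j_!\mathcal{P}$) yields $i^*j_{!*}\mathcal{P}\simeq A'[1]$ together with an exact triangle $i^*j_{!*}\mathcal{P}\to i^*j_*\mathcal{P}\to B'\to i^*j_{!*}\mathcal{P}[1]$; hence $i^*j_*\mathcal{P}$ has perverse cohomology only in degrees $-1$ (equal to $A'$) and $0$ (equal to $B'$), and $i^*j_{!*}\mathcal{P}={}^p\tau_{\leq -1}(i^*j_*\mathcal{P})$. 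Further, $i^!j_*\mathcal{P}=0$ gives $i^!j_{!*}\mathcal{P}\simeq B'[-1]$, and combining this with the self-duality of $\mathcal{R}_{\ell}({}^{\lambda}\mathcal{V})$ up to Tate twist and shift (Remark \ref{rmk:degdual}.\ref{itm:autodual}) and the compatibility of $j_{!*}$, $i^*$, $i^!$ with Verdier duality shows that $A'$ and $B'$ are interchanged by duality up to a fixed twist; so a bound ``$i^*j_{!*}\mathcal{P}$ of weights $\leq W$'' forces ``$B'$ of weights $\geq w(\lambda)+3d-W+1$''.

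With these ingredients the equivalence is immediate. By the stratum-by-stratum description of the perverse weight structure over $\partial S_K^*=Z_0\sqcup Z_1$ (paragraph \ref{strctcomp}), a complex has weights $\leq w$ if and only if its $n$-th perverse cohomology sheaf on each of $Z_0$ and $Z_1$ has weights $\leq w+n$; applied here and accounting for the shift relating $\mathcal{R}_{\ell}({}^{\lambda}\mathcal{V})$ to $\mathcal{P}$, assertion (2) is precisely the statement that $i^*j_{!*}\mathcal{P}$ has weights $\leq w(\lambda)+3d-\beta$, that is, $\beta$ below the bound $w(\lambda)+3d$ forced by purity of $\mathcal{P}$. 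Given (2), the duality above makes $B'$ of weights $\geq w(\lambda)+3d+\beta+1$, so the triangle $i^*j_{!*}\mathcal{P}\to i^*j_*\mathcal{P}\to B'\to i^*j_{!*}\mathcal{P}[1]$ becomes, after the shift, a weight filtration of $i^*j_*\mathcal{R}_{\ell}({}^{\lambda}\mathcal{V})$ avoiding $-\beta+1,\dots,\beta$; by the first passage, (1) holds. Conversely, if (1) holds, realize a weight filtration of $i^*j_*\,{}^{\lambda}\mathcal{V}$ avoiding that band; using the purity of $j_{!*}\mathcal{P}$ (which a priori gives $i^*j_{!*}\mathcal{P}$ of weights $\leq w(\lambda)+3d$), the perverse amplitude $[-1,0]$ of $i^*j_*\mathcal{P}$, and the essential uniqueness of weight truncations (\cite{Bon10}), one identifies the weight-$\leq$ part of the filtration with ${}^p\tau_{\leq -1}(i^*j_*\mathcal{P})=i^*j_{!*}\mathcal{P}$ up to shift, forcing $i^*j_{!*}\mathcal{P}$ to have weights $\leq w(\lambda)+3d-\beta$, which is (2).

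I expect the genuinely non-formal step to be the motivic lifting in the first passage: deducing, from the mere fact that the $\ell$-adic perverse realization of the boundary motive avoids a band of weights, that a weight filtration exists already in $\DBcM(\partial S_K^*)_L$. This rests essentially on the abelian type of the coefficient systems and on conservativity of $\mathcal{R}_{\ell}$ on abelian-type motives, and is the real content supplied by \cite{Wil09}, \cite{Wil17a}, \cite{Wil19a}; by contrast, the recollement-and-duality computation of the second passage, though one must keep track of the various shifts and twists, is formal.
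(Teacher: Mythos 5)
Your overall architecture --- a passage between the motivic and the $\ell$-adic perverse weight structures resting on the abelian-type theory of \cite{Wil09}, \cite{Wil17a}, \cite{Wil19a}, followed by a recollement-and-duality computation relating $i^*j_*$ to $i^*j_{!*}$ --- is exactly what the cited black box \cite[Cor. (4.6)(b)]{Wil19a} packages; the paper's own proof consists in verifying its hypotheses (the boundary motive is of abelian type, the stratification $\Phi$ is \emph{adapted} to it in the sense of \cite[Def. 2.1]{Wil19b} --- a condition you omit --- and the realization is concentrated in a single perverse degree and self-dual up to twist and shift) and then citing it. You correctly isolate the motivic lifting as the non-formal step and defer it to the same references, which is legitimate.

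The genuine error is in your second passage. The open immersion $j\colon S_K\hookrightarrow S_K^*$ is \emph{not} affine here: $\partial S_K^*$ has dimension $d$ inside the normal projective variety $S_K^*$ of dimension $3d$, hence codimension $2d\geq 2$, so for any affine open $V$ meeting the boundary one has $\mathcal{O}(V\cap S_K)=\mathcal{O}(V)$ and $V\cap S_K$ cannot be affine. Consequently $j_!\mathcal{P}$ and $j_*\mathcal{P}$ are not perverse, $i^*j_*\mathcal{P}$ does not have perverse amplitude $[-1,0]$, and $i^*j_{!*}\mathcal{P}$ is not a shifted perverse sheaf $A'[1]$: Proposition \ref{poidscoh0} shows that the classical cohomology sheaves $R^n i_0^*i^*j_*\mu_{\ell}^K(V_{\lambda})$ on the $0$-dimensional strata can be non-zero for every $n\in\{0,\dots,6d-2\}$, an amplitude of length $6d-1$. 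What does survive, directly from the recollement triangle $i^*j_{!*}\mathcal{P}\to i^*j_*\mathcal{P}\to i^!j_{!*}\mathcal{P}[1]\to$ and the characterization of the intermediate extension ($i^*j_{!*}\mathcal{P}$ in perverse degrees $\leq -1$, $i^!j_{!*}\mathcal{P}$ in perverse degrees $\geq 1$), is the truncation formula $i^*j_{!*}\mathcal{P}\simeq\tau^{\leq -1}i^*j_*\mathcal{P}$ (perverse truncation \emph{on} $\partial S_K^*$) together with the duality exchanging $i^*j_{!*}$ and $i^!j_{!*}$; but perverse truncation on $\partial S_K^*$ does not commute with the further restriction $i_0^*$ to the closed stratum, so converting this into the stratum-wise condition (2) is precisely the delicate point --- this is why the paper later needs the exact sequences of Remarks \ref{redpoids} and \ref{redpoidsd}, where $i_0^*i^*j_{!*}$ differs from a naive truncation of $i_0^*i^*j_*$ by a correction term coming from $i_{1,*}$. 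As written, your identification of the two graded pieces $A'$, $B'$ and the ensuing weight bookkeeping rest on the false affineness claim and would fail; the argument needs to be rebuilt on the general truncation formula and the adapted-stratification machinery of \cite{Wil19a}.
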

\begin{proof} Reasoning exactly as in the proof of \cite[Thm. 2.2]{Wil19b}, we see that the motive $i^* j_* ^{\lambda}\mathcal{V} \in \DBcM(\partial S_K^*)_L$ is \emph{of abelian type}, and that the stratification $\Phi$ is \emph{adapted to} $i^* j_* ^{\lambda}\mathcal{V}$ (\cite[Def. 2.1]{Wil19b}). Then, it suffices to use the fact that the $\ell$-adic realization of the motive $^{\lambda}\mathcal{V}$ is concentrated in only one perverse degree (Remark \ref{rmk:degdual}.\ref{itm:degperv}) together with its auto-duality (up to a twist and a shift) (Remark \ref{rmk:degdual}.\ref{itm:autodual}) in order to apply \cite[Corollary (4.6)(b)]{Wil19a}. 
\end{proof}

\subsubsection{\textbf{Pink's theorem.}} \label{pink}

With notation as in the previous paragraph, Thorem \ref{critint_a} leads us to analyze the weights of the perverse sheaves 
\begin{equation*}
\mathcal{H}^n i_m^* i^* j_{!*}(\mathcal{R}_{\ell}(^{\lambda}\mathcal{V})), 
\end{equation*}
for $m \in \{0, 1 \}$. This will be done by studying first some strongly related \emph{classical} sheaves, which we describe now in the general setting. Hence, let $j:S_K \hookrightarrow S_K^*$ be the open immersion of a Shimura variety $S_K$, associated to a datum $(\mathcal{G}, \mathfrak{X})$ and to a neat compact open subgroup $K \subset \mathcal{G}(\mathbb{A}_f)$, into its Baily-Borel compactification. Recall the finite stratification $(Z_m)_{m \in \Phi}$ of $\partial S^*_K$ introduced in \ref{strctcomp} and for $m \in \Phi$, denote by $i_m: Z_m \hookrightarrow \partial S^*_K$ the corresponding locally closed immersion. In the case of the Hilbert-Siegel Shimura datum of \ref{hilbsiegdatum}, this gives back the notation used to state Thm. \ref{critint_a}. 

In the following, with the notation of \ref{strctcomp}, denote by $Z$ a fixed stratum $S_{m,g}$ of $\partial S_K^*$ contributing to $Z_m$, and denote by $\pi_m(K_m)$ the associated compact open subgroup $\pi_m(K_{m,g})$ of $\mathcal{G}_m(\mathbb{A}_f)$ (i.e., drop the subscript $g$), so that $Z$ is the quotient of a Shimura variety $S_{\pi_m(K_m)}$. Associated to $Z$, there is a non-trivial arithmetic subgroup $\Gamma_m$ of $Q_m/W_m(\mathbb{Q})$, as defined in \cite[Sec. 2]{BW04}) (where it is denoted by $\bar{H}_C$).   

\begin{notation} \label{groupcompl}
Since $Q_m/W_m$ is reductive, there exists a \emph{complement} $M_m$ of $\mathcal{G}_m$ inside $Q_m/W_m$, i.e. a normal subgroup $M_m$ of $Q_m/W_m$ which is connected and reductive and such that $Q_m/W_m \simeq \mathcal{G}_m \cdot M_m$, with $\mathcal{G}_m \cap M_m$ finite.
\end{notation}

\begin{remark} \label{compl}
Since $K$ is neat, $\Gamma_m$ is torsion-free. Moreover, it is such that $\Gamma_m \cap \mathcal{G}_m (\mathbb{Q})= \{ 1 \}$ (cfr. \cite[Sec. 2]{BW04}). We will then see $\Gamma_m$ as a subgroup of the complement $M_m(\mathbb{Q})$ introduced above. 
\end{remark}

Denote now by $\mu_{\ell}^K$, $\mu_{\ell}^{\pi_m(K_m)}$ the extensions of the $\ell$-adic canonical construction functors introduced in Remark \ref{rmk:extcan}.\ref{itm:exttriang}, and by $R^n$ the $n$-th \emph{classical}, i.e. non-perverse, cohomology functor on the category $D^b_c(Z_m)_L$ (see \ref{motcan}), for any stratum $Z_m$ of $\partial S_K^*$. Our first main tool for the analysis of the weights is the following theorem of Pink:
 
\begin{theorem}{(\cite[Thms. (4.2.1)-(5.3.1)]{Pin92}, stated in the shape of \cite[Thms. 2.6-2.9]{BW04})}

Let $R$ be a subfield of $\mathbb{R}$, $\mathbb{V}^{\boldsymbol{\cdot}} \in D^b(\mbox{\emph{Rep}}_R(\mathcal{G}))$, $m \in \Phi$ and $Z$ a stratum of $\partial S_K^*$ contributing to $Z_m$.
\begin{enumerate} [wide, labelwidth=!, labelindent=0pt, label=(\arabic*)]
\item \label{itm:decder}
There exists a canonical isomorphism in $D^b_c(Z)_R$  

\[
\restr{i^*_m i^* j_* \mu_{\ell}^{K}(\mathbb{V}^{\boldsymbol{\cdot}})}{Z} \simeq \bigoplus\limits_{n} \restr{R^n i^*_m i^* j_* \mu_{\ell}^K(\mathbb{V}^{\boldsymbol{\cdot}})}{Z} [-n].
\]
\item \label{itm:decobj}
For every $n$, there exists a canonical and functorial isomorphism in $\mbox{\emph{Et}}_{\ell, R}(Z)$

\[
\restr{R^n i^*_m i^* j_* \mu_{\ell}^K(\mathbb{V}^{\boldsymbol{\cdot}})}{Z}\simeq\bigoplus_{\substack{p+q=n}} \mu_{\ell}^{\pi_m(K_m)} \left( H^p(\Gamma_m, H^q(W_{m,R}, \mathbb{V}^{\boldsymbol{\cdot}})) \right).
\]

\item \label{itm:gradobj}
Suppose that the datum $(\mathcal{G}_m, \mathfrak{H}_m)$ is of abelian type. Then, denoting by $\mathbb{W}$ both the weight filtration in the sense of Remark \ref{rmk:poidsrep}.\ref{itm:poidsfilt} and the one induced on $\mathcal{G}_{m,R}$-representations as explained in \ref{convhodge}, the sheaf $\restr{R^n i^*_m i^* j_* \mu_{\ell}^K(\mathbb{V}^{\boldsymbol{\cdot}})}{Z}$ is the direct sum of its weight-graded objects (in particular, it is a semisimple object) and there exist canonical and functorial isomorphisms in $\mbox{\emph{Et}}_{\ell, R}(Z)$ 

\[
\mbox{\emph{Gr}}_k^{\mathbb{W}} \restr{R^n i^*_m i^* j_* \mu_{\ell}^K(\mathbb{V}^{\boldsymbol{\cdot}})}{Z}\simeq\bigoplus_{\substack{p+q=n}} \mu_{\ell}^{\pi_m(K_m)} \left( H^p(\Gamma_m, \mbox{\emph{Gr}}_k^{\mathbb{W}} H^q(W_{m,R}, \mathbb{V}^{\boldsymbol{\cdot}})) \right).
\]

\end{enumerate} 
\label{thm:PinkThm}
\end{theorem}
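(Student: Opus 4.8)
The statement is, as indicated, a repackaging of \cite[Thms.~(4.2.1), (5.3.1)]{Pin92} in the form of \cite[Thms.~2.6--2.9]{BW04}; accordingly the ``proof'' amounts to recalling the shape of Pink's results and checking that the normalisations match those in force here. The geometric mechanism underlying all three parts is the following. By \ref{strctcomp} (and, in our case, Remark~\ref{c+}), a punctured formal, or analytic, neighbourhood of the stratum $Z=S_{m,g}$ inside $S_K^*$ is, after passing to the finite cover $S_{\pi_m(K_m)}\to Z$, an iterated bundle whose fibres are built out of the unipotent radical $W_m$ (tori and abelian schemes attached to the weight $-1$ and $-2$ parts of $W_m$), lying over an intermediate space which is in turn the quotient by the arithmetic group $\Gamma_m\subset M_m(\mathbb{Q})$ of a Shimura variety for the datum $(\mathcal{G}_m,\mathfrak{H}_m)$. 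Forming $j_*$ of the lisse sheaf $\mu_\ell^K(\mathbb{V}^{\boldsymbol{\cdot}})$ and restricting to $Z$ therefore computes cohomology along these fibres in two stages: first the cohomology $H^q(W_{m,R},\mathbb{V}^{\boldsymbol{\cdot}})$ of the unipotent part --- which, $W_m$ being normal in $Q_m$, carries a natural action of $Q_m/W_m$, hence of $\mathcal{G}_m$, and so defines a lisse sheaf on the intermediate space via $\mu_\ell$ --- and then the cohomology $H^p(\Gamma_m,-)$ of the arithmetic part, which produces the lisse sheaf $\mu_\ell^{\pi_m(K_m)}$ on $Z$.

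Part~\ref{itm:decobj} is precisely the $E_2$-formula of this two-step computation, and part~\ref{itm:decder} is the degeneration, in fact the splitting in $D^b_c(Z)_R$, of the corresponding filtration: both are contained in \cite[Thm.~(4.2.1)]{Pin92}. The splitting of part~\ref{itm:decder} can moreover be recovered \emph{a posteriori}: by part~\ref{itm:decobj} the sheaves $\restr{R^n i^*_m i^* j_* \mu_\ell^K(\mathbb{V}^{\boldsymbol{\cdot}})}{Z}$ are lisse and mixed, and over a smooth base a bounded complex of $\ell$-adic sheaves whose cohomology sheaves are pointwise pure splits, by Deligne's criterion, into the direct sum of its shifted cohomology sheaves.

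For part~\ref{itm:gradobj}, assume $(\mathcal{G}_m,\mathfrak{H}_m)$ of abelian type. By Remark~\ref{rmk:poidsrep}.\ref{itm:poidsfilt}, applied to $S_{\pi_m(K_m)}$, the functor $\mu_\ell^{\pi_m(K_m)}$ takes values in mixed lisse sheaves with weight filtration, is weight-exact, and sends the weight-$k$ part of a $\mathcal{G}_{m,R}$-representation to a sheaf pure of weight $k$. Since $\mathcal{G}_m$ is reductive and $R$ has characteristic $0$, each term of $H^q(W_{m,R},\mathbb{V}^{\boldsymbol{\cdot}})$ decomposes, as a $\mathcal{G}_{m,R}$-representation, as $\bigoplus_k \mathrm{Gr}_k^{\mathbb{W}} H^q(W_{m,R},\mathbb{V}^{\boldsymbol{\cdot}})$; this weight grading, being by eigenspaces of the central weight cocharacter of $\mathcal{G}_m$ that defines it, is preserved by the centralising action of the connected group $M_m\supseteq\Gamma_m$, so $H^p(\Gamma_m,-)$ may be applied weight-graded piece by weight-graded piece. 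Feeding this into the formula of part~\ref{itm:decobj} and using weight-exactness of $\mu_\ell^{\pi_m(K_m)}$ yields the isomorphisms of part~\ref{itm:gradobj}; the semisimplicity of $\restr{R^n i^*_m i^* j_* \mu_\ell^K(\mathbb{V}^{\boldsymbol{\cdot}})}{Z}$ then follows, each weight-graded piece lying in the essential image of $\mu_\ell^{\pi_m(K_m)}$, and the identification of these weights with the correct (Galois, equivalently Hodge) ones is \cite[Thm.~(5.3.1)]{Pin92}, recorded as \cite[Thm.~2.9]{BW04}. The only genuinely non-formal point --- and the main thing to get right --- is this last translation: matching Pink's sign conventions for weights with the one fixed in \ref{convhodge}, and checking that the finite quotient $S_{\pi_m(K_m)}\to Z$ does not disturb the weight-graded decomposition, which it does not, being finite \'etale.
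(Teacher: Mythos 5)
The paper offers no proof of Theorem \ref{thm:PinkThm} at all: it is stated purely as a citation of \cite[Thms.~(4.2.1), (5.3.1)]{Pin92} in the formulation of \cite[Thms.~2.6--2.9]{BW04}, followed only by the explanatory Remark \ref{rmk:explpink}. Your write-up takes the same route --- defer to Pink --- and your description of the underlying mechanism (the two-stage fibration of a punctured neighbourhood of $Z$, giving $H^q(W_{m,R},-)$ followed by $H^p(\Gamma_m,-)$, and the descent of $\mu_\ell^{\pi_m(K_m)}$ through the finite quotient $S_{\pi_m(K_m)}\to Z$) is accurate and consistent with \ref{strctcomp}, Remark \ref{c+} and Remark \ref{rmk:explpink}. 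Your observation that $M_m$ centralises $\mathcal{G}_m$ (two normal connected subgroups with finite intersection), so that $\Gamma_m$ preserves the weight grading, is exactly what makes Remark \ref{rmk:explpink}.\ref{itm:weightfilt} work.

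One supplementary claim is wrong, though it does not damage the main line since you also cite Pink directly for part \ref{itm:decder}: the ``a posteriori'' recovery of the splitting via ``Deligne's criterion'' for complexes with \emph{pointwise pure} cohomology sheaves does not apply here. By part \ref{itm:gradobj} the sheaves $\restr{R^n i^*_m i^* j_* \mu_\ell^K(\mathbb{V}^{\boldsymbol{\cdot}})}{Z}$ are genuinely \emph{mixed} (direct sums of pieces of several weights $k$ for fixed $n$), so the complex is not pure and the purity-based splitting criterion is unavailable. The actual mechanism behind the degeneration and splitting in \cite{Pin92} (cf.\ also \cite{LR91} and Remark \ref{rmk:coincpoids}) is the existence of a central element --- a local Hecke operator --- acting on the whole complex and inducing multiplication by pairwise distinct scalars on the distinct cohomology sheaves, which forces the complex to split by Deligne's \emph{endomorphism} criterion; that is a different (and weight-independent) argument from the one you invoke.
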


In order to explain the above statements, some remarks are in order:
\begin{remark}
\begin{enumerate}[wide, labelwidth=!, labelindent=0pt, label=(\arabic*)]
\item \label{itm:foncstrat}  Reasoning as in \cite{BW04}, before Definition 2.2, we see that the functor $\mu_{\ell}^{\pi_m(K_m)}$, \emph{a priori} with values in $\mbox{Et}_{\ell, R}(S_{\pi_m(K_m)})$, gives rise to a functor with values in $\mbox{Et}_{\ell, R}(Z)$, still denoted by the same symbol.
\item $(Q_{m}/W_{m})_R$ (seen as a subgroup of $Q_{m,R}$ via the Levi decomposition) acts on $H^q(W_{m,R}, \mathbb{V}^{\boldsymbol{\cdot}})$ via its action on $W_m$ and on $\mathbb{V}^{\boldsymbol{\cdot}}$, and so it acts on $H^p(\Gamma_m, H^q(W_{m,R}, \mathbb{V}^{\boldsymbol{\cdot}}))$. Hence, the latter space is seen as a representation of $\mathcal{G}_{m,R}$ via the inclusion $\mathcal{G}_{m,R} \subset (Q_{m}/W_{m})_R$.
\item \label{itm:weightfilt} The statement of point \ref{itm:gradobj} contains in particular the fact that 
\begin{equation*}
\mbox{Gr}_k^{\mathbb{W}} H^p(\Gamma_m, H^q(W_{m,R}, \mathbb{V}^{\boldsymbol{\cdot}})) \simeq H^p(\Gamma_m, \mbox{Gr}_k^{\mathbb{W}} H^q(W_{m,R}, \mathbb{V}^{\boldsymbol{\cdot}})).
\end{equation*}
\end{enumerate}
\label{rmk:explpink}
\end{remark}

\subsubsection{\textbf{Kostant's theorem.}} \label{kost}
The second ingredient for the analysis of the weights is a theorem of Kostant which allows one to make explicit the $(Q_{m}/W_{m})_R$-representations $H^q(W_{m,R}, \mathbb{V}^{\boldsymbol{\cdot}})$ appearing in Theorem \ref{thm:PinkThm}.

Fix a split reductive group $\mathcal{G}$ over a field of characteristic zero, with root system $\mathfrak{r}$ and Weyl group $\Upsilon$. Denote by $\mathfrak{r}^+$ the set of positive roots and fix moreover a parabolic subgroup $Q$ with its unipotent radical $W$. Let $\mathfrak{w}$ be the Lie algebra of $W$ and $\mathfrak{r}_W$ the set of roots appearing inside $\mathfrak{w}$ (necessarily positive). For every $w \in \Upsilon$, we define:

\begin{equation}
\mathfrak{r}^+(w):=\{\alpha \in \mathfrak{r}^+ \vert w^{-1} \alpha \notin \mathfrak{r}^+ \},
\end{equation}

\begin{equation}
l(w):=\vert \mathfrak{r}^+(w) \vert,
\end{equation} 

\begin{equation} \label{subweyl}
\Upsilon^\prime:= \{ w \in \Upsilon \vert \mathfrak{r}^+(w) \subset \mathfrak{r}_W \}.
\end{equation}   

We can now state Kostant's theorem:
\begin{theorem}{(\cite[Thm. 3.2.3]{Vog81})} \label{KostThm}

Let $V_{\lambda}$  be an irreducible $\mathcal{G}$-representation of highest weight $\lambda$, and let $\rho$ be the half-sum of the positive roots of $\mathcal{G}$. Then, as $(Q/W)$-representations,
\[
H^q(W, V_{\lambda}) \simeq \bigoplus \limits_{w \in \Upsilon^\prime \vert l(w)=q} U_{w.(\lambda+\rho)-\rho},
\]
where $U_{\mu}$ denotes an irreducible $(Q/W)$-representation of highest weight $\mu$. 
\end{theorem}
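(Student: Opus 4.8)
The statement to be proved is Kostant's theorem (Theorem \ref{KostThm}), so the natural route is to reproduce the classical argument, which is fundamentally a computation in Lie algebra cohomology combined with the combinatorics of the Weyl group. First I would pass from the group-theoretic statement to the Lie algebra level: since we are in characteristic zero and $W$ is unipotent with Lie algebra $\mathfrak{w}$, the cohomology $H^q(W, V_\lambda)$ is computed by the Chevalley--Eilenberg complex $\Hom_{\mathbb{C}}(\Lambda^\bullet \mathfrak{w}, V_\lambda)$, and the residual action of the Levi quotient $Q/W$ (equivalently, of a Levi subalgebra $\mathfrak{l}$) makes this a complex of $(Q/W)$-representations. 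So the task is to decompose each cohomology group $H^q(\mathfrak{w}, V_\lambda)$ as an $\mathfrak{l}$-module.

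The key step is the introduction of the \emph{Laplacian}. One equips $\Lambda^\bullet \mathfrak{w}^* \otimes V_\lambda$ with an inner product (built from a Hermitian form on $V_\lambda$ adapted to a compact real form, together with a chosen positive-definite form on $\mathfrak{w}$), forms the adjoint $d^*$ of the differential $d$, and sets $\Box = d d^* + d^* d$. Hodge theory then identifies $H^q(\mathfrak{w}, V_\lambda)$ with the space of harmonic cochains $\ker \Box$ in degree $q$. The heart of Kostant's argument is an explicit formula for $\Box$ in terms of Casimir operators: writing $\Omega_{\mathfrak{g}}$ for the Casimir of $\mathfrak{g}$ and $\Omega_{\mathfrak{l}}$ for that of $\mathfrak{l}$, one shows $2\Box = \Omega_{\mathfrak{l}} - (\text{scalar depending on }\lambda) + (\text{terms})$, and more precisely that on the $\mathfrak{l}$-isotypic component of highest weight $\mu$ inside $\Lambda^q \mathfrak{w}^* \otimes V_\lambda$ the operator $\Box$ acts by the scalar $\tfrac12\big(\|\mu+\rho_{\mathfrak{l}}\|^2 - \|\lambda+\rho\|^2\big)$ (with $\rho$, $\rho_{\mathfrak{l}}$ the respective half-sums of positive roots). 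Hence harmonicity is equivalent to the norm identity $\|\mu + \rho_{\mathfrak{l}}\| = \|\lambda + \rho\|$.

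The final step is purely combinatorial: one analyzes which $\mathfrak{l}$-highest weights $\mu$ actually occur in $\Lambda^q \mathfrak{w}^* \otimes V_\lambda$ and simultaneously satisfy the norm identity. Using that the extreme weights in $\Lambda^q \mathfrak{w}^*$ are sums of $q$ distinct roots from $\mathfrak{r}_W$, and that $\|\lambda + \rho\|$ is the length of a point in the interior of the dominant chamber, one shows that the solutions are exactly the weights of the form $\mu = w.(\lambda+\rho) - \rho := w(\lambda+\rho) - \rho$ for $w$ in the set $\Upsilon'$ of Eq. \eqref{subweyl}, each occurring with multiplicity one, and that such $\mu$ sits in degree $q = l(w)$. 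This uses the standard facts that $\{w.\,0 : w \in \Upsilon'\} = \{-\sum_{\alpha \in \mathfrak{r}^+(w)} \alpha\}$ and that $w.(\lambda+\rho)-\rho$ is $\mathfrak{l}$-dominant precisely when $w \in \Upsilon'$. Assembling these pieces gives the asserted direct sum decomposition.

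The main obstacle is the explicit computation of the Laplacian $\Box$ in terms of the two Casimir elements --- this is the technically delicate core of Kostant's original paper and requires careful bookkeeping with structure constants, the choice of invariant forms, and the interaction between the $\mathfrak{l}$-action and the wedge/contraction operators defining $d$ and $d^*$. Since the excerpt explicitly cites \cite[Thm. 3.2.3]{Vog81} for the statement, in the actual write-up I would not redo this computation but simply invoke that reference; the sketch above is how one would prove it from scratch.
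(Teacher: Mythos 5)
Your outline is the standard Kostant argument (harmonic theory for the Chevalley--Eilenberg complex of $\mathfrak{w}$, the Casimir formula for the Laplacian, and the combinatorial identification of the harmonic $\mathfrak{l}$-types with $w.(\lambda+\rho)-\rho$ for $w\in\Upsilon'$), which is exactly the proof in the cited reference; the paper itself gives no proof and simply invokes \cite[Thm. 3.2.3]{Vog81}, as you also propose to do. The only slip worth noting is that in the eigenvalue formula the relevant quantity is $\|\mu+\rho\|^2$ with $\rho$ the half-sum of positive roots of the \emph{full} group (not $\rho_{\mathfrak{l}}$), since it is the identity $\|\mu+\rho\|=\|\lambda+\rho\|$ that forces $\mu+\rho=w(\lambda+\rho)$.
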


In order to spell out the consequences of this theorem in the cases of interest to us, consider our fixed Galois closure $L$ of the totally real field $F$ and the group $G$ underlying the Hilbert-Siegel Shimura datum of \ref{hilbsiegdatum}. We will apply Kostant's theorem by choosing $\mathcal{G}=G_L$ and by setting $Q/W$ equal to, for $i=0,1$, the Levi components $(Q_i/W_i)_L$ of the standard parabolics $Q_{i,L}$, defined as in \ref{levicomp}. We have seen in \ref{racines} that the root system of $G_L$ is given by $\mathfrak{r}=\bigsqcup\limits_{\sigma \in I_F} \mathfrak{r}_{\sigma}$ and that every component $\mathfrak{r}_{\sigma}$ contains two simple roots $\rho_{1, \sigma}$, $\rho_{2, \sigma}$; the other positive roots in such a component are then given by $\rho_{1, \sigma}+\rho_{2, \sigma}$ and $2 \rho_{1, \sigma}+\rho_{2, \sigma}$.

\begin{lemma} 
\begin{enumerate} [wide, labelwidth=!, labelindent=0pt, label=(\arabic*)]
\item \label{itm:partweyl} Let $\Upsilon$ be the Weyl group of $G_L$ (cfr. \ref{racines}) and denote by $\Upsilon_m^\prime$, for $m\in\{ 0,1 \}$, the sets defined in \eqref{subweyl}, corresponding to the choices $\mathcal{G}=G_L$ and $Q=Q_{m,L}$. Then, in both cases, for every $\sigma \in I_F$, there exist sets $\Upsilon_{m, \sigma}^\prime = \{ w_{\sigma}^i \}_{i=0, \dots, 3} \subset \Upsilon_{\sigma}$ such that $l(w_{\sigma}^i)=i$ for every $i \in \{ 0, \dots, 3 \}$ and that $\Upsilon_m^\prime=\prod\limits_{\sigma \in I_F} \Upsilon_{m, \sigma}^\prime$. 

\item For $m=0,1$, one has $0 \leq l(w) \leq 3d$ for every $w \in \Upsilon_m^\prime$. Moreover, for every integer $q \in \{0, \dots, 3d \} $, there exists a bijection between the set $\{ w  \in \Upsilon_m^\prime \ \vert \ l(w)=q \}$ and the set of $q$-\emph{admissible decompositions} of $I_F$

\begin{equation} \label{admpart}
\mathcal{P}_q:= \{ \mbox{decompositions} \ I_F= \bigsqcup_{\substack{i=0, \dots, 3}} I_F^i \ \vert \sum\limits_{i=0}^3 i \vert I^i_F \vert =q \}.
\end{equation}
\end{enumerate}
\label{lemma:partweyl}
\end{lemma}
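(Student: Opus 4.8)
The plan is to reduce everything to the structure theory of $\GSp_4$, since by \ref{racines} both the root system and the Weyl group of $G_L$ split as products indexed by $I_F$, and by \ref{levicomp} the parabolics $Q_{m,L}$ are the Weil restrictions of the corresponding parabolics of $\GSp_{4,L}$, hence their unipotent radicals $W_m$ and Lie algebras $\mathfrak{w}_m$ also decompose as direct products/sums indexed by $\sigma \in I_F$. Consequently the set $\mathfrak{r}_{W_m}$ of roots occurring in $\mathfrak{w}_m$ is the disjoint union over $\sigma$ of the set $\mathfrak{r}_{W_{m,\sigma}}$ of roots in the unipotent radical of the corresponding parabolic of the $\sigma$-copy of $\GSp_{4,L}$. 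Since membership in $\Upsilon'_m$ is the condition $\mathfrak{r}^+(w) \subset \mathfrak{r}_{W_m}$, and since for $w = (w_\sigma)_\sigma$ one has $\mathfrak{r}^+(w) = \bigsqcup_\sigma \mathfrak{r}^+_\sigma(w_\sigma)$ (the positive roots of $G_L$ that $w^{-1}$ sends to negative roots are exactly, component by component, those that $w_\sigma^{-1}$ sends to negative), the condition $\mathfrak{r}^+(w) \subset \mathfrak{r}_{W_m}$ is equivalent to $\mathfrak{r}^+_\sigma(w_\sigma) \subset \mathfrak{r}_{W_{m,\sigma}}$ for every $\sigma$. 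This gives at once $\Upsilon'_m = \prod_\sigma \Upsilon'_{m,\sigma}$, where $\Upsilon'_{m,\sigma} \subset \Upsilon_\sigma$ is the analogous Kostant set for $\GSp_{4,L}$, and also $l(w) = \sum_\sigma l(w_\sigma)$.

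So the proof of part (1) comes down to a finite check inside the Weyl group of $\GSp_4$, which has order $8$: I would enumerate its eight elements, compute $\mathfrak{r}^+(w)$ for each (using the four positive roots $\rho_1,\rho_2,\rho_1+\rho_2,2\rho_1+\rho_2$ and the explicit action of $s_1, s_2$ recorded in \ref{racines}), and verify that for $m=0$ (Siegel, where $\mathfrak{r}_{W_0}$ consists of the roots $\{\rho_2, \rho_1+\rho_2, 2\rho_1+\rho_2\}$ not involving... — more precisely the roots of the abelian radical $\Sym^2$ of the standard representation) and $m=1$ (Klingen, $\mathfrak{r}_{W_1}$ the Heisenberg radical) there are exactly four elements $w$ with $\mathfrak{r}^+(w) \subset \mathfrak{r}_{W_m}$, and that their lengths are precisely $0,1,2,3$. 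In each case $w=e$ gives length $0$, and one checks that the four selected $w$ hit each length value exactly once. This yields the desired $\Upsilon'_{m,\sigma} = \{w^0_\sigma, \dots, w^3_\sigma\}$ with $l(w^i_\sigma) = i$; note that since all of $\Upsilon'_{m,\sigma}$ lies in the $\sigma$-copy $\Upsilon_\sigma$, the product decomposition $\Upsilon'_m = \prod_\sigma \Upsilon'_{m,\sigma}$ of part (1) follows.

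Part (2) is then essentially combinatorics on top of (1). Since $l(w) = \sum_{\sigma} l(w_\sigma)$ and each $l(w_\sigma) \in \{0,1,2,3\}$, the total length ranges in $[0, 3d]$. Given $w = (w_\sigma)_\sigma \in \Upsilon'_m = \prod_\sigma \{w^0_\sigma, \dots, w^3_\sigma\}$, record for each $i \in \{0,1,2,3\}$ the set $I^i_F := \{\sigma \in I_F : w_\sigma = w^i_\sigma\}$; this is a decomposition $I_F = \bigsqcup_{i=0}^3 I^i_F$, and since $l(w) = \sum_\sigma l(w_\sigma) = \sum_{i=0}^3 i\, |I^i_F|$, it is $q$-admissible exactly when $l(w) = q$. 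Because each $\Upsilon'_{m,\sigma} = \{w^0_\sigma, \dots, w^3_\sigma\}$ is indexed bijectively by $\{0,1,2,3\}$, the assignment $w \mapsto (I^i_F)_i$ is a bijection between $\Upsilon'_m$ and the set of all decompositions of $I_F$ into four labelled parts, which restricts to a bijection between $\{w \in \Upsilon'_m : l(w) = q\}$ and $\mathcal{P}_q$ as defined in \eqref{admpart}.

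The only genuine obstacle is the explicit verification for $\GSp_4$ in part (1): one must correctly identify, for each of the Siegel and Klingen parabolics, which of the four positive roots lie in the unipotent radical, and then correctly compute the sets $\mathfrak{r}^+(w)$ for the relevant Weyl elements from the $s_1, s_2$-action given above. This is a bounded, routine but error-prone computation — the kind of thing one must simply do carefully once; everything else (the product decompositions, the length additivity, the bijection with $\mathcal{P}_q$) is formal and follows from the Weil-restriction product structure already set up in Sections \ref{parab}--\ref{racines}.
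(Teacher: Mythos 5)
Your proposal is correct and follows essentially the same route as the paper: reduce to the $\sigma$-components via the product structure of the Weyl group and the additivity $l(w)=\sum_\sigma l(w_\sigma)$, identify the three positive roots in each of the Siegel and Klingen radicals (which you do correctly, namely $\{\rho_{2,\sigma},\rho_{1,\sigma}+\rho_{2,\sigma},2\rho_{1,\sigma}+\rho_{2,\sigma}\}$ and $\{\rho_{1,\sigma},\rho_{1,\sigma}+\rho_{2,\sigma},2\rho_{1,\sigma}+\rho_{2,\sigma}\}$), verify by inspection of the order-$8$ Weyl group of $\GSp_4$ that exactly four elements satisfy $\mathfrak{r}^+(w)\subset\mathfrak{r}_{W_m}$ with lengths $0,1,2,3$, and then deduce part (2) from the resulting labelling of $\Upsilon_m^\prime$ by decompositions of $I_F$. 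The only difference is that the paper carries out the finite enumeration explicitly (e.g. $\mathrm{id}$, $s_{\rho_{2,\sigma}}$, $s_{\rho_{1,\sigma}+\rho_{2,\sigma}}s_{\rho_{2,\sigma}}$, $s_{\rho_{1,\sigma}+\rho_{2,\sigma}}$ in the Siegel case), whereas you defer that routine check.
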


\begin{proof}
\begin{enumerate} [wide, labelwidth=!, labelindent=0pt, label=(\arabic*)]
\item In the case of $(Q_0/W_0)_L$, by fixing a component $\mathfrak{r}_{\sigma}$ of the root system of $G_L$ (cfr. \ref{racines}), one easily sees that the positive roots which are contained in such a component and which appear in the Lie algebra of $W_{0,L}$ are given by $\{ \rho_{1, \sigma}+\rho_{2, \sigma}, 2 \rho_{1, \sigma}+\rho_{2, \sigma}, \rho_{2, \sigma} \}$. 

Coherently with the notation of \ref{racines}, denote by $s_{\rho}$ the reflection, belonging to the Weyl group $\Upsilon$, whose axis is orthogonal to the root ${\rho}$. By direct inspection of the action of the component $\Upsilon_{\sigma}$ on $\mathfrak{r}_{\sigma}$, we find that $\Upsilon_0^\prime=\prod\limits_{\sigma \in I_F} \Upsilon_{0, \sigma}^\prime$, where the elements of the sets $\Upsilon_{0, \sigma}^\prime = \{ w_{\sigma}^i \}_{i=0, \dots, 3}$ are given by 

\[
w_{\sigma}^0 = \mbox{id}, 
\]
\[
w_{\sigma}^1 = s_{\rho_{2, \sigma}}, 
\]
\[
w_{\sigma}^2 = s_{\rho_{1, \sigma}+\rho_{2, \sigma}} s_{\rho_{2, \sigma}},
\]
\[
w_{\sigma}^3 = s_{\rho_{1, \sigma}+\rho_{2, \sigma}}.
\] 
These are the sets defined in the statement (for $m=0$).

In the case of $(Q_1/W_1)_L$, fix again a component $\mathfrak{r}_{\sigma}$ of the root system of $G_L$: the positive roots contained in this component which appear inside the Lie algebra of $W_{1,L}$ are given this time by $\{ \rho_{1, \sigma}, \rho_{1, \sigma}+\rho_{2, \sigma}, 2 \rho_{1, \sigma}+\rho_{2, \sigma} \}$. 

With notations as in the previous case, we find that $\Upsilon_1^\prime=\prod\limits_{\sigma \in I_F} \Upsilon_{1, \sigma}^\prime$, where the elements of the sets $\Upsilon_{1, \sigma}^\prime = \{ w_{\sigma}^i \}_{i=0, \dots, 3} \subset \Upsilon_{\sigma}$ are given by

\[
w_{\sigma}^0 = \mbox{id}, 
\]
\[
w_{\sigma}^1 = s_{\rho_{1, \sigma}}, 
\]
\[
w_{\sigma}^2 = s_{\rho_{1, \sigma}+\rho_{2, \sigma}} s_{\rho_{1, \sigma}},
\]
\[
w_{\sigma}^3 = s_{2\rho_{1, \sigma}+\rho_{2, \sigma}}.
\]
Again, these are the sets appearing in the statement (for $m$=1). 

\item The preceding point implies that every $w=(w_{\sigma})_{\sigma \in I_F} \in \Upsilon_m^\prime$ determines a decomposition 
\begin{equation*}
I_F= \bigsqcup_{\substack{i=0, \dots, 3}} I_F^i
\end{equation*}  
where $I_F^i:= \{ \sigma \in I_F \vert w_{\sigma}=w_{\sigma}^i \}$. Hence, since $l((w_{\sigma})_{\sigma \in I_F})= \sum \limits_{\sigma \in I_F} l(w_{\sigma})$, we get the desired bounds on $l(w)$. The bijection in the statement follows immediately. 
\end{enumerate}
\end{proof}

\begin{notation} \label{notadmpart}
For a integer $q \in \{0, \dots, 3d \}$, a $q$-admissible decomposition $\Psi$ of $I_F$ will be denoted by $\Psi=(I_F^0,I_F^1,I_F^2,I_F^3)$. If only one of the four subsets, say $I_F^i$, is non-empty, we will denote $\Psi$ by the symbol $I_F^i$ itself.
\end{notation}

Fix now a irreducible $G_L$-representation $V_{\lambda}$ of highest weight $\lambda=\lambda((k_{1,\sigma}, k_{2,\sigma})_{\sigma \in I_F}, c)$ (as defined in \ref{irred}) and, for $m=0,1$, apply Theorem \ref{KostThm} to identify the cohomology spaces $H^q(W_{m,L}, V_{\lambda})$ as $(Q_m/W_m)_L$-representations: employing the notation fixed in \eqref{admpart}, we get isomorphisms

\begin{equation} \label{isocohunip}
H^q(W_{m,L}, V_{\lambda}) \simeq \bigoplus\limits_{\Psi \in \mathcal{P}_q} V^{m,q}_{\Psi},
\end{equation}
where each $V^{m,q}_{\Psi}$ is an irreducible $(Q_m/W_m)_L$-representation. With the notations of Lemma \ref{lemma:partweyl}.\ref{itm:partweyl}, the explicit computation of $w.(\lambda + \rho)-\rho$ for $w \in \Upsilon_m^\prime$ (as in \cite[Sec. 4.3]{Lem15}) gives us the highest weight of such irreducible representations, as stated in the following lemma: 

\begin{lemma}
\begin{enumerate} [wide, labelwidth=!, labelindent=0pt, label=(\arabic*)]
\item \label{charakost1} Adopting Notation \ref{notadmpart}, the highest weight of the irreducible $(Q_0/W_0)_L$-representation $V^{0,q}_{\Psi}$ in \eqref{isocohunip} is given by the restriction (along the inclusion $(Q_0/W_0)_L \subset Q_{0,L} \subset G_L)$ of the character 

\begin{equation} \label{restrcarac0}
\lambda((\eta_{1,\sigma}, \eta_{2,\sigma})_{\sigma \in I_F}, c),
\end{equation}
where 

\[
\eta_{1,\sigma} = \left\{ \begin{array}{cc}
k_{1,\sigma} \  & \mbox{if} \ \sigma \in I_F^0 \\ k_{1,\sigma} \  & \mbox{if} \ \sigma \in I_F^1 \\
k_{2,\sigma}-1 \ & \mbox{if} \ \sigma \in I_F^2 \\
-k_{2,\sigma}-3 \ & \mbox{if} \ \sigma \in I_F^3
\end{array} 
\right. ,\
\eta_{2,\sigma} = \left\{ \begin{array}{cc}
k_{2,\sigma} \  & \mbox{if} \ \sigma \in I_F^0 \\
-k_{2,\sigma}-2 \ & \mbox{if} \ \sigma \in I_F^1 \\
-k_{1,\sigma}-3 \ & \mbox{if} \ \sigma \in I_F^2 \\
-k_{1,\sigma}-3 \ & \mbox{if} \ \sigma \in I_F^3
\end{array} 
\right.
\]

\item \label{charakost2} Adopting Notation \ref{notadmpart}, the highest weight of the irreducible $(Q_1/W_1)_L$-representation $V^{1,q}_{\Psi}$ is the restriction (along the inclusion $(Q_1/W_1)_L \subset Q_{1,L} \subset G_L)$ of the character 

\begin{equation} \label{carackling}
\lambda((\epsilon_{1,\sigma}, \epsilon_{2,\sigma})_{\sigma \in I_F}, c)
\end{equation}
where 

\[
\epsilon_{1,\sigma} = \left\{ \begin{array}{cc}
k_{1,\sigma} \  & \mbox{if} \ \sigma \in I_F^0 \\
k_{2,\sigma} -1 \ & \mbox{if} \ \sigma \in I_F^1 \\
-k_{2,\sigma}-3 \ & \mbox{if} \ \sigma \in I_F^2 \\
-k_{1,\sigma}-4 \ & \mbox{if} \ \sigma \in I_F^3
\end{array} 
\right. ,\
\epsilon_{2,\sigma} = \left\{ \begin{array}{cc}
k_{2,\sigma} \  & \mbox{if} \ \sigma \in I_F^0 \\
k_{1,\sigma}+1 \ & \mbox{if} \ \sigma \in I_F^1 \\
k_{1,\sigma}+1 \ & \mbox{if} \ \sigma \in I_F^2 \\
k_{2,\sigma} \ & \mbox{if} \ \sigma \in I_F^3
\end{array} 
\right.
\]
\end{enumerate}
\label{charkost}
\end{lemma}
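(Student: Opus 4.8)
The plan is to obtain both parts of Lemma \ref{charkost} by a direct computation of $w.(\lambda+\rho)-\rho$ for each $w \in \Upsilon_m^\prime$, reducing everything to the factorwise situation of Lemma \ref{lemma:partweyl}. First I would recall that, by Lemma \ref{lemma:partweyl}.\ref{itm:partweyl}, any $w \in \Upsilon_m^\prime$ decomposes as $w=(w_\sigma)_{\sigma \in I_F}$ with $w_\sigma = w_\sigma^i$ precisely when $\sigma \in I_F^i$, and that the $\rho$-shifted dot action is multiplicative over the factors $\Upsilon_\sigma$. So it suffices to compute, for each of the four model elements $w_\sigma^0,\dots,w_\sigma^3 \in \Upsilon_\sigma$ and for each of the two parabolics, the effect of $\lambda \mapsto w_\sigma^i.(\lambda+\rho)-\rho$ on the $\sigma$-component $(k_{1,\sigma},k_{2,\sigma})$ of the weight, leaving the other components and the similitude entry $c$ untouched (the latter because every $w_\sigma^i$ lies in the derived Weyl group and hence fixes the central character — this explains why $c$ is unchanged in both \eqref{restrcarac0} and \eqref{carackling}).

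Concretely, I would record that the half-sum of positive roots of $\GSp_4$, in the $\sigma$-factor, is $\rho_\sigma = \rho((k_{1,\sigma},k_{2,\sigma},c)=(2,1,0))$ (the standard $(2,1)$ for $\mathrm{Sp}_4$), so that $\lambda+\rho$ has $\sigma$-component $(k_{1,\sigma}+2,\,k_{2,\sigma}+1)$; then apply the explicit formulas for $s_{\rho_{1,\sigma}}$ and $s_{\rho_{2,\sigma}}$ on $X^*(T_L)$ given in \ref{racines}, together with the composite reflections $s_{\rho_{1,\sigma}+\rho_{2,\sigma}}$ and $s_{2\rho_{1,\sigma}+\rho_{2,\sigma}}$, which one writes as words in $s_{\rho_{1,\sigma}},s_{\rho_{2,\sigma}}$; finally subtract $\rho$ again. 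For the Siegel case, using the list $w_\sigma^0=\mathrm{id}$, $w_\sigma^1=s_{\rho_{2,\sigma}}$, $w_\sigma^2=s_{\rho_{1,\sigma}+\rho_{2,\sigma}}s_{\rho_{2,\sigma}}$, $w_\sigma^3=s_{\rho_{1,\sigma}+\rho_{2,\sigma}}$, one checks that the four cases give $\sigma$-components $(k_{1,\sigma},k_{2,\sigma})$, $(k_{1,\sigma},-k_{2,\sigma}-2)$, $(k_{2,\sigma}-1,-k_{1,\sigma}-3)$ and $(-k_{2,\sigma}-3,-k_{1,\sigma}-3)$ respectively, matching the $\eta$-table. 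For the Klingen case one does the same with $w_\sigma^1=s_{\rho_{1,\sigma}}$, $w_\sigma^2=s_{\rho_{1,\sigma}+\rho_{2,\sigma}}s_{\rho_{1,\sigma}}$, $w_\sigma^3=s_{2\rho_{1,\sigma}+\rho_{2,\sigma}}$, obtaining $(k_{1,\sigma},k_{2,\sigma})$, $(k_{2,\sigma}-1,k_{1,\sigma}+1)$, $(-k_{2,\sigma}-3,k_{1,\sigma}+1)$, $(-k_{1,\sigma}-4,k_{2,\sigma})$, matching the $\epsilon$-table.

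To assemble the global statement, I would note that for a fixed $q$-admissible decomposition $\Psi=(I_F^0,I_F^1,I_F^2,I_F^3)$, the corresponding $w \in \Upsilon_m^\prime$ (via the bijection of Lemma \ref{lemma:partweyl}) has $\sigma$-component acting by $w_\sigma^i$ for $\sigma \in I_F^i$; applying the factorwise computation of the previous paragraph simultaneously over all $\sigma$ yields exactly the character $\lambda((\eta_{1,\sigma},\eta_{2,\sigma})_{\sigma},c)$, resp. $\lambda((\epsilon_{1,\sigma},\epsilon_{2,\sigma})_{\sigma},c)$. By Kostant's theorem (Theorem \ref{KostThm}) this is the highest weight, as a $(Q_m/W_m)_L$-representation, of the summand $U_{w.(\lambda+\rho)-\rho}$, which is our $V^{m,q}_\Psi$ under the identification \eqref{isocohunip}; restricting along $(Q_m/W_m)_L \subset Q_{m,L} \subset G_L$ as in Lemma \ref{charkost} finishes the argument.

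The main obstacle is purely computational bookkeeping: one has to be careful that the reflections $s_{\rho_{1,\sigma}+\rho_{2,\sigma}}$ and $s_{2\rho_{1,\sigma}+\rho_{2,\sigma}}$ are correctly expressed in terms of the generators and that the $\rho$-shift is applied only on the $\mathrm{Sp}_4$-part (not on the similitude character), so that the four output pairs in each case are exactly as tabulated. There is also a small compatibility check — that the integrality constraint $\sum_\sigma(\eta_{1,\sigma}+\eta_{2,\sigma}) \equiv c \pmod 2$, resp. with the $\epsilon$'s, is preserved, which it is since the Weyl group action permutes the weight lattice — but this is immediate. Apart from this, there is no conceptual difficulty: the lemma is a transcription into coordinates of Kostant's theorem, exactly as carried out in \cite[Sec. 4.3]{Lem15} and \cite[Sec. 4.3]{Lem15}, and I would simply reference that computation for the routine verifications while displaying the four model cases explicitly.
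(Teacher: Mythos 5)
Your proposal is correct and is essentially the paper's own argument: Lemma \ref{charkost} is stated there as the direct outcome of the explicit computation of $w.(\lambda+\rho)-\rho$ for $w\in\Upsilon_m^\prime$ (with the bookkeeping deferred to \cite[Sec. 4.3]{Lem15}), and your factorwise computation with $\rho_\sigma=(2,1)$ reproduces all eight tabulated pairs. One small caveat, inherited from the paper's Lemma \ref{lemma:partweyl}: since $\rho_{1,\sigma}$ and $\rho_{1,\sigma}+\rho_{2,\sigma}$ are orthogonal, the word $s_{\rho_{1,\sigma}+\rho_{2,\sigma}}s_{\rho_{1,\sigma}}$ equals $-\mathrm{id}$ on the $\sigma$-component and hence has length $4$, so the length-$2$ Klingen element should rather be written as the rotation $(a,b)\mapsto(-b,a)$, e.g. $s_{2\rho_{1,\sigma}+\rho_{2,\sigma}}s_{\rho_{1,\sigma}}$; your output $(-k_{2,\sigma}-3,\,k_{1,\sigma}+1)$ is exactly what this correct element produces, so the tables in \eqref{restrcarac0} and \eqref{carackling} are unaffected.
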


\subsubsection{\textbf{Cohomology of groups of units}} \label{cohunits} We finish this section by recalling, for the convenience of the reader, some standard arguments that will be useful in the analysis of the cohomology of arithmetic groups appearing in Theorem \ref{thm:PinkThm}.

\begin{lemma} \label{nultriv}
Let $\Gamma$ be a free abelian group of finite rank $r$, acting on a finite-dimensional vector space $V$ over a field $L$ by $L$-linear automorphisms. Suppose that $\Gamma$ acts through a character $\lambda$. Then, there exists an integer $s$ such that the cohomology space $H^s(\Gamma, V)$ is non-trivial if and only if the action of $\Gamma$ on $V$ is trivial. 

In this case, $H^s(\Gamma, V)$ is non-trivial if and only if $0 \leq s \leq r$, and for such integers $s$ we have (non-canonically)
\begin{equation*}
H^s(\Gamma, V) \simeq V^{\binom{r}{s}}.
\end{equation*} 
\end{lemma}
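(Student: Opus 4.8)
The plan is to reduce to the one-dimensional case by a direct-sum decomposition, and then to compute $H^*(\Gamma, L_\lambda)$ for $\Gamma$ free abelian of rank $r$ by means of the Koszul resolution of the trivial module over the group ring.

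First, since $\Gamma$ acts on $V$ through a single character $\lambda\colon\Gamma\to L^\times$, every $L$-line in $V$ is $\Gamma$-stable; choosing an $L$-basis of $V$ therefore yields an isomorphism of $\Gamma$-modules $V\simeq L_\lambda^{\oplus\dim_L V}$, where $L_\lambda$ denotes the one-dimensional $\Gamma$-module on which $\gamma$ acts by $\lambda(\gamma)$. Consequently $H^s(\Gamma,V)\simeq H^s(\Gamma,L_\lambda)^{\oplus\dim_L V}$ for every $s$, and (we assume $V\neq 0$, which is the only case of interest) the module $V$ has trivial $\Gamma$-action if and only if $\lambda$ is the trivial character. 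So it suffices to prove: $H^s(\Gamma,L_\lambda)$ is non-zero for some $s$ iff $\lambda=1$, and $H^s(\Gamma,L)\simeq L^{\binom{r}{s}}$ for $0\le s\le r$ while vanishing otherwise.

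Next, identify $\Gamma$ with $\mathbb{Z}^r$ on generators $\gamma_1,\dots,\gamma_r$, so that $L[\Gamma]=L[t_1^{\pm 1},\dots,t_r^{\pm 1}]$. The Koszul complex $K_\bullet:=\bigotimes_{i=1}^r\bigl(L[\Gamma]\xrightarrow{\,t_i-1\,}L[\Gamma]\bigr)$ is a finite free resolution of the trivial $L[\Gamma]$-module $L$ (the elements $t_i-1$ form a regular sequence, cutting out the point $(1,\dots,1)$ of the smooth affine scheme $\Spec L[\Gamma]$; this is the standard computation of the cohomology of a free abelian group, see e.g. Brown's book on cohomology of groups, or the topological identification with $H^*((S^1)^r;-)$). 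Applying $\Hom_{L[\Gamma]}(-,L_\lambda)$ identifies $H^\bullet(\Gamma,L_\lambda)$ with the cohomology of the Koszul cochain complex $K^\bullet(a_1-1,\dots,a_r-1;L)$ attached to the scalars $a_i:=\lambda(\gamma_i)\in L^\times$ acting on the one-dimensional space $L$.

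Finally, two cases. If $\lambda=1$ then every $a_i-1=0$, the differentials of $K^\bullet$ vanish, and $H^s\simeq\Lambda^s(L^r)\simeq L^{\binom{r}{s}}$ for $0\le s\le r$ (and $0$ otherwise); pulling this back through the isomorphism of the second paragraph gives $H^s(\Gamma,V)\simeq V^{\binom{r}{s}}$, which is non-zero precisely when $0\le s\le r$. If $\lambda\neq 1$, pick $i_0$ with $a_{i_0}\neq 1$; then $x:=a_{i_0}-1$ is a unit of $L$, and $K^\bullet$ factors as $\bigl(0\to L\xrightarrow{\,x\,}L\to 0\bigr)\otimes_L K^\bullet\bigl((a_i-1)_{i\neq i_0};L\bigr)$. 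The first factor is acyclic (multiplication by a unit), hence so is the whole complex (a tensor product over a field of a bounded complex with an acyclic one), so $H^s(\Gamma,L_\lambda)=0$ for all $s$, whence $H^s(\Gamma,V)=0$ for all $s$. This yields both the equivalence and the displayed formula. (Alternatively one may induct on $r$, writing $\Gamma=\Gamma'\times\mathbb{Z}$ and applying the Lyndon–Hochschild–Serre spectral sequence, which degenerates since the extension splits.) There is no genuine obstacle here; the only points needing a little care are the reduction to a single character and the exclusion of the degenerate case $V=0$.
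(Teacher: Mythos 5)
Your proof is correct, but it takes a genuinely different route from the one in the paper. The paper argues by induction on the rank $r$: it treats $\Gamma\simeq\BZ$ by hand via $H^0=V^{\Gamma}$, $H^1=V_{\Gamma}$, then splits off a factor $\BZ$ and runs the Lyndon--Hochschild--Serre spectral sequence of $0\to\BZ^r\to\Gamma\to\BZ\to 0$, which has only two non-zero columns; in the non-trivial case it chooses the rank-$r$ subgroup so that it already acts non-trivially, killing all cohomology by induction, and in the trivial case it assembles $V^{\binom{r+1}{s}}$ from the identity $\binom{r}{s}+\binom{r}{s-1}=\binom{r+1}{s}$. You instead reduce to the one-dimensional module $L_{\lambda}$ (legitimate, since the action is scalar) and compute $\mathrm{Ext}^{\bullet}_{L[\Gamma]}(L,L_{\lambda})$ from the Koszul resolution on the regular sequence $t_1-1,\dots,t_r-1$ in the Laurent polynomial ring. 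Your argument is non-inductive, makes the binomial coefficients transparent as dimensions of exterior powers, and obtains the vanishing for $\lambda\neq 1$ from the acyclicity of a single two-term tensor factor; the paper's argument is more elementary (nothing beyond the rank-one case and LHS) and is stylistically of a piece with the spectral-sequence dévissages used repeatedly elsewhere (Constructions \ref{extarithm} and \ref{extarithmbis}). Both treatments tacitly require $V\neq 0$ for the ``if and only if'' in the first assertion, a degenerate case you at least make explicit.
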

\begin{proof}
We proceed by induction on the rank $r$, denoting by $V^{\Gamma}$ the space of invariants of the $\Gamma$-action on $V$ and by $V_{\Gamma}$ the space of coinvariants. 
\begin{itemize}[wide, labelwidth=!, labelindent=0pt]
\item If $r=1$, we have $\Gamma \simeq \BZ$ and it is then well-known that 
\[
  H^s(\Gamma, V) =
  \begin{cases}
  V^{\Gamma} & \mbox{if } s=0 \\
  V_{\Gamma} & \mbox{if } s=1 \\
  \{ 0 \} & \mbox{otherwise}
  \end{cases}
\]  
Now, choose a generator $\gamma$ of $\Gamma$. Then, the space $V^{\Gamma}$ is non-trivial if and only if there exists a non-zero element $v \in V$ such that $\lambda(n \gamma) \cdot v = v$ for every $n \in \BZ$, which is equivalent to asking that $\lambda(\gamma)=1$, i.e. that $\Gamma$ act through the trivial character. Analogously, the space $V_{\Gamma}$ is non-trivial if and only if $\Gamma$ acts trivially. Thus, $H^s(\Gamma, V) \simeq V$ if $s=0,1$, and is trivial otherwise.
\item Suppose the assertion to be true for free abelian groups of rank $r$. If $\Gamma \simeq \BZ^{r+1}$, then choose a basis of $\Gamma$ as a $\BZ$-module and use it to define an exact sequence 
\begin{equation} \label{induction}
0 \rightarrow \BZ^r \rightarrow \Gamma \rightarrow \BZ \rightarrow 0
\end{equation}
By the case $r=1$, the Lyndon-Hochschild-Serre spectral sequence associated to this exact sequence, i.e.
\begin{equation}
E_2=H^m(\BZ, H^n(\BZ^r, V)) \Rightarrow H^{m+n}(\Gamma, V) 
\end{equation}
has only two non-trivial columns (for $n=0,1$). Hence, for each $s \geq 1$, we have an exact sequence
\begin{equation*}
0 \rightarrow H^0(\BZ, H^s(\BZ^r,V)) \rightarrow H^s(\Gamma, V) \rightarrow H^1(\BZ, H^{s-1}(\BZ^r,V)) \rightarrow 0
\end{equation*}
and moreover
\begin{equation*}
H^0(\Gamma,V) \simeq H^0(\BZ, H^0(\BZ^r, V)).
\end{equation*}
By the induction hypothesis, $H^s(\Gamma, V)$ can be non-trivial only if $0 \leq s \leq r+1$. Moreover, if the action of $\Gamma$ is non-trivial, the subgroup isomorphic to $\BZ^r$ appearing in \eqref{induction} can be chosen as acting non-trivially on $V$, and in this case, by induction, $H^s(\Gamma, V)$ is trivial for every $s$. If, on the contrary, $\Gamma$ acts trivially, then the induced action of $\BZ$ on the spaces $H^n(\BZ^r, V)$ is again trivial, so that, by the case $r=1$, we have
\begin{equation*}
H^0(\Gamma, V) \simeq H^0(\BZ, H^0(\BZ^r, V)) \simeq H^0(\BZ^r, V) \simeq V
\end{equation*}
and for every $ s \in \{ 1, \dots, r+1 \}$,
\begin{align*}
& H^s(\Gamma, V) \simeq \mbox{ (non-canonically) } H^0(\BZ, H^s(\BZ^r, V)) \oplus H^1(\BZ, H^{s-1}(\BZ^r, V)) \simeq \\
& \mbox{(by the case } r=1) \\
& \simeq H^s(\BZ^r, V) \oplus H^{s-1} (\BZ^r, V) \simeq \\
& \mbox{(by the induction hypothesis, and setting } V^{\binom{r}{r+1}}= \{ 0 \} \mbox{ by convention)} \\
& \simeq V^{\binom{r}{s}} \oplus V^{\binom{r}{s-1}} \simeq  V^{\binom{r+1}{s}}.
\end{align*}
\end{itemize}
\end{proof}

The preceding lemma leads to the problem of determining the triviality of the action of some free abelian groups, which in our case will arise as subgroups of units of totally real fields. This is the object of the following lemma:

\begin{lemma} \label{trivact}
Let $\mathcal{O}_F$ be the ring of integers of $F$ and fix $(0, \dots, 0) \neq (n_1, \dots, n_d ) \in \mathbb{Z}^d$. Then, 
\begin{equation*}
\prod \limits_{i=1, \dots, d} \vert \sigma_i(t) \vert ^{n_i} = 1 \ \mbox{for every} \ t \in \mathcal{O}_F^{\times}
\end{equation*} 
if and only if $(n_1, \dots, n_d) \in \mathbb{Z} \cdot (1, \dots, 1)$.
\end{lemma}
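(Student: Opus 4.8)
The plan is to reduce everything to Dirichlet's unit theorem via the logarithmic embedding. Write $\sigma_1, \dots, \sigma_d$ for an enumeration of $I_F$ and consider
\[
\ell: \mathcal{O}_F^{\times} \longrightarrow \mathbb{R}^d, \qquad t \longmapsto (\log|\sigma_1(t)|, \dots, \log|\sigma_d(t)|).
\]
Since $F$ is totally real of degree $d$, all its archimedean places are real and there are $d$ of them, so the unit rank is $d-1$; Dirichlet's theorem says that $\ell(\mathcal{O}_F^{\times})$ is a lattice of rank $d-1$ which \emph{spans}, as a real vector space, the trace-zero hyperplane
\[
H := \{ (x_1, \dots, x_d) \in \mathbb{R}^d \ \vert \ x_1 + \dots + x_d = 0 \},
\]
the inclusion $\ell(\mathcal{O}_F^{\times}) \subseteq H$ being a restatement of $|N_{F|\mathbb{Q}}(t)| = \prod_i |\sigma_i(t)| = 1$ for units $t$.

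For the implication ``$(n_1,\dots,n_d) \in \mathbb{Z}\cdot(1,\dots,1) \Rightarrow$ triviality'', I would simply note that if $(n_1,\dots,n_d) = m \cdot (1,\dots,1)$ with $m \in \mathbb{Z}$, then for every $t \in \mathcal{O}_F^{\times}$ one has $\prod_i |\sigma_i(t)|^{n_i} = \big( \prod_i |\sigma_i(t)| \big)^m = |N_{F|\mathbb{Q}}(t)|^m = 1$.

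For the converse, I would take logarithms: the hypothesis $\prod_i |\sigma_i(t)|^{n_i} = 1$ for all $t \in \mathcal{O}_F^{\times}$ says exactly that the linear functional $\phi \colon \mathbb{R}^d \to \mathbb{R}$, $\phi(x) = \sum_i n_i x_i$, vanishes on $\ell(\mathcal{O}_F^{\times})$. As that set spans $H$, we get $\phi|_H \equiv 0$; but the annihilator of $H$ in $(\mathbb{R}^d)^{\vee}$ is one-dimensional, spanned by $x \mapsto \sum_i x_i$. Hence $(n_1,\dots,n_d) = c\cdot(1,\dots,1)$ for some $c \in \mathbb{R}$, and comparing first coordinates forces $c = n_1 \in \mathbb{Z}$, which gives $(n_1,\dots,n_d) \in \mathbb{Z}\cdot(1,\dots,1)$.

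The only genuine input is Dirichlet's unit theorem (equivalently, the non-vanishing of the regulator, which is precisely the assertion that $\ell(\mathcal{O}_F^{\times})$ spans $H$); once that is invoked the argument is pure linear algebra, so there is no real obstacle. The single point deserving a word of care is the final one — concluding that the \emph{a priori} real scalar $c$ is an integer — which is immediate here since $c$ equals one of the given integers $n_i$.
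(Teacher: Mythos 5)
Your proof is correct and uses essentially the same argument as the paper: both reduce the statement to Dirichlet's unit theorem in the form that the log-unit lattice has rank $d-1$ and real span equal to the trace-zero hyperplane, so that the only linear relations $\sum_i n_i \log|\sigma_i(t)| = 0$ holding for all units are the multiples of the norm relation. The paper phrases this as the computation of the kernel of the $(d-1)\times d$ matrix of logarithms of a system of fundamental units, which is exactly the orthogonal complement of the span you consider, so the two arguments coincide up to duality.
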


\begin{proof}
Choose a basis $\{ \gamma_1, \dots, \gamma_{d-1} \}$ of $\mathcal{O}_F^{\times}$ as $\mathbb{Z}$-module. 
Write $t= \prod\limits_{0, \dots, d-1} \gamma_i^{a_i }$, choose a $d$-tuple of integers $(n_1, \dots, n_d )\neq (0, \dots, 0)$, and define 
\[
\Lambda:= \left(
\begin{array} {ccc} 
\mbox{log}\vert \sigma_1(\gamma_1) \vert & \dots & \mbox{log}\vert \sigma_d(\gamma_1) \vert  \\
\vdots & \dots & \vdots \\
\mbox{log}\vert \sigma_1(\gamma_{d-1}) \vert & \dots & \mbox{log}\vert \sigma_d(\gamma_{d-1}) \vert
\end{array} \right)
\]

Then, we have 
\[ \prod \limits_{i=1, \dots, d} \vert \sigma_i(t) \vert ^{n_i} = 1 \ \forall \ t \iff \sum\limits_{j=1, \dots, d-1} \left( a_j \sum\limits_{i=1, \dots, d} n_i \mbox{log}\vert \sigma_i(\gamma_j) \vert \right) =0 \ \forall \ (a_1, \dots, a_{d-1}) \neq (0, \dots, 0) \in \mathbb{Z}^{d-1}  
\]

\begin{equation*}
\iff \langle \left( \begin{array} {c}
a_1 \\
\vdots \\
a_{d-1}
\end{array} \right),
\Lambda \cdot \left( \begin{array} {c}
n_1 \\
\vdots \\
n_d
\end{array} \right) \rangle = 0,
\forall \ (a_1, \dots, a_{d-1}) \neq (0, \dots, 0) \in \mathbb{Z}^{d-1}
\end{equation*}
(where $\langle \cdot, \cdot \rangle$ is the standard scalar product in $\mathbb{R}^{d-1}$) $\iff (n_1, \dots, n_d ) \in \mbox{ker} \ \Lambda$. But by Dirichlet's unit theorem, $\mbox{ker} \ \Lambda=\mathbb{R} \cdot (1, \dots, 1)$.
\end{proof}

\begin{remark} 
\begin{enumerate} [wide, labelwidth=!, labelindent=0pt, label=(\arabic*)]
\item \label{itm:genarithm} By choosing adapted bases, Lemma \ref{trivact} generalises immediately to the case where $\mathcal{O}^{\times}_F$ is replaced by a finite-index subgroup of $\mathcal{O}^{\times}_F$, and furthermore to the case where it is replaced by an arithmetic subgroup of $F^{\times}$.
\item \label{itm:net} Consider the \emph{norm} morphism $N:\mathcal{O}^{\times}_F \rightarrow \{ \pm 1 \}$. As the image of a neat subgroup by a morphism is again neat, the elements of a \emph{neat} subgroup of $\mathcal{O}^{\times}_F$ are of norm 1. Thus, if the neat subgroup $\Gamma_{0,Z}$ is a finite-index subgroup of $\mathcal{O}^{\times}_F$, we have that $\prod \limits_{i=1, \dots, d} \vert \sigma_i(t) \vert ^{n_i} = 1 \ \forall \ t \in \Gamma_{0,Z} \iff \prod \limits_{i=1, \dots, d} \sigma_i(t) ^{n_i} = 1 \ \forall \ t \in \Gamma_{0,Z}$, and, by \ref{itm:genarithm}, Lemma \ref{trivact} tells us that the action of $\Gamma_{0,Z}$ on a vector space by multiplication by $\prod \limits_{i=1, \dots, d} \sigma_i(t) ^{n_i}$ is trivial if and only if $(n_1, \dots, n_d) \in \mathbb{Z} \cdot (1, \dots, 1)$. 
This equivalence continues to hold in the general case where $\Gamma_{0,Z}$ is a (neat) arithmetic subgroup of $F^{\times}$, again via \ref{itm:genarithm}. 
\end{enumerate}
\label{rmk:gennet}
\end{remark}

\section{The degeneration of the canonical construction at the boundary} \label{deg}

\noindent In this section we prove our main result (Thm. \ref{thm:mainthm}), i.e. the description of the interval of \emph{weight avoidance} of the motive $i^* j_* ^{\lambda}\mathcal{V} \in \DBcM(\partial S_K^*)_L$ in terms of the \emph{corank} of $\lambda$. 

\subsection{Statement of the main result} \label{mainres}

Recall the notations used and introduced in \ref{motcan}. In order to state our central theorem, we need some more notions about $\lambda$, especially the notion of \emph{corank} (see the introduction for a motivation in the context of automorphic forms):

\begin{definition}{(cfr. \cite[Def. 1.9]{BR16})} \label{corang}
Let $\lambda=\lambda((k_{1,\sigma}, k_{2,\sigma})_{\sigma \in I_F}, c))$ (cfr. \ref{irred}) be a weight of $G_L$. 
\begin{enumerate}[wide, labelwidth=!, labelindent=0pt, label=(\arabic*)]
\item $k_1:=(k_{1,\sigma})_{\sigma \in I_F}$ or $k_2:=(k_{2,\sigma})_{\sigma \in I_F}$ is called \emph{parallel} if $k_{i,\sigma}$ is constant on $I_F$, equal to a positive integer $\kappa$ (and we write $k_i=\underline{\kappa}$). For each $\kappa \in \mathbb{Z}$, the weight $\lambda$ is called $\kappa$-\emph{Kostant parallel} if there exists a decomposition $I_F = I_F^0 \bigsqcup I_F^1$ such that 
\[
\left\{ \begin{array}{cc} 
k_{1,\sigma}=\kappa & \forall \sigma \in I_F^0 \\
k_{2,\sigma}=\kappa+1 & \forall \sigma \in I_F^1
\end{array}
\right.
\]
The weight $\lambda$ is called \emph{Kostant parallel} if there exists a $\kappa$ such that $\lambda$ is $\kappa$-Kostant parallel. 
\item We define the \emph{corank} $\mbox{cor}(\lambda)$ of $\lambda$ by
\[ \mbox{cor}(\lambda)=
\begin{cases}
0 & \mbox{ if } k_2 \mbox{ is not parallel } \\
1 & \mbox{ if } k_2 \mbox{ is parallel and } k_1 \neq k_2 \\
2 & \mbox{ if } k_2 \mbox{ is parallel and } k_1 = k_2 \\
\end{cases}
\] 
\item $\lambda$ is \emph{completely irregular} if $(k_{1,\sigma}, k_{2,\sigma})$ is irregular for every $\sigma \in I_F$.
\end{enumerate}
\end{definition}

Assume $\lambda$ to be dominant. We make some observations that may help enlightening the above definitions and their mutual relationships:

\begin{itemize}[wide, labelwidth=!, labelindent=0pt]
\item If $\mbox{cor}(\lambda) = 2$, then $\lambda$ is completely irregular. 
\item If $\mbox{cor}(\lambda) \geq 1$, then $\lambda$ is $\kappa$-Kostant parallel with respect to the decomposition $I_F=I_F^1$, with $k_2=\underline{\kappa+1}$; this decomposition and this $\kappa$ are then the only ones such that both $I_F^1 \neq \varnothing$ and $\lambda$ is Kostant-parallel with respect to them.
\item If $\mbox{cor}(\lambda) = 1$ and $\lambda$ is completely irregular, then necessarily $k_2 = \underline{0}$. 
\item If $\mbox{cor}(\lambda) = 0$, then there are at most a $\kappa$ and a decomposition $I_F=I_F^0 \bigsqcup I_F^1$ with respect to which $\lambda$ is $\kappa$-Kostant parallel. 
\end{itemize}

\begin{remark}
The terminology \emph{Kostant parallel} comes from the more specific terminology that will be introduced in Definitions \ref{triv0} and \ref{triv1} (see also Remark \ref{unipart}) and expresses the fact that some linear combinations of the coordinates of the character $\lambda$ are required to take a constant (\emph{parallel}) value over certain subsets of $I_F$. As the computations leading to those definitions will make clear, this terminology is motivated by the fact that such linear combinations and subsets arise from Lemma \ref{charkost}, which is an application of Kostant's theorem, Thm. \ref{KostThm}.
\end{remark}

We can now state our main result, in the language of weight structures introduced in \ref{weighstr}:

\begin{theorem} 

Let $V_{\lambda}$ the irreducible representation of $G_L$ of highest weight 
\begin{equation*}
\lambda=\lambda((k_{1,\sigma}, k_{2,\sigma})_{\sigma \in I_F}, c)),
\end{equation*}
$S_K$ the genus 2 Hilbert-Siegel variety of level $K$ corresponding to $(G,X)$ and $^{\lambda}\mathcal{V} \in CHM(S_K)$ the Chow motive over $S_K$ introduced in Definition \ref{motrep}. Let moreover $j:S_K \rightarrow S_K^*$, resp. $i:\partial S_K^* \rightarrow S_K^*$ denote the open, resp. closed immersion in the Baily-Borel compactification $S_K^*$ of $S_K$. Then: 
\begin{enumerate} 
\item If $\lambda$ is not Kostant parallel, then the \emph{boundary motive} $i^*j_*^{\lambda}\mathcal{V}$ is zero.
\item Suppose that $\mbox{\emph{cor}}(\lambda)=0$ and that $\lambda$ is $\kappa$-Kostant parallel. Denote $d_1:=\vert I_F^1 \vert$. Then $i^*j_*^{\lambda}\mathcal{V}$ avoids weights $-d_1-d\kappa+1, \dots, d_1+d\kappa$ and the weights $-d_1-d\kappa$, $d_1+d\kappa+1$ \emph{do appear} in $i^*j_*^{\lambda}\mathcal{V}$.
\item Suppose that $\mbox{\emph{cor}}(\lambda)=1$, with $k_2=\underline{\kappa_2}$, and that $k_1$ is not parallel. Then $i^*j_*^{\lambda}\mathcal{V}$ avoids weights $-d\kappa_2+1, \dots, d\kappa_2$ and the weights $-d\kappa_2$, $d\kappa_2+1$ \emph{do appear} in $i^*j_*^{\lambda}\mathcal{V}$. 
\item Suppose that $\mbox{\emph{cor}}(\lambda) \geq 1$, with $k_2=\underline{\kappa_2}$, and that $k_1=\underline{\kappa_1}$. Denote $\kappa:=\mbox{\emph{min}} \{ \kappa_1-\kappa_2, \kappa_2 \}$. Then $i^*j_*^{\lambda}\mathcal{V}$ avoids weights $-d\kappa+1, \dots, d\kappa$. The weights $-d\kappa_2$, $d\kappa_2+1$ \emph{do appear} in $i^*j_*^{\lambda}\mathcal{V}$, and if $\kappa_1$, $\kappa_2$ have the same parity\footnote{Cfr. Footnote \ref{footnoteparity} for this supplementary hypothesis.}, then the weights $-d(\kappa_1-\kappa_2)$, $d(\kappa_1-\kappa_2)+1$ \emph{do appear} in $i^*j_*^{\lambda}\mathcal{V}$.
\end{enumerate}
\label{thm:mainthm}
\end{theorem}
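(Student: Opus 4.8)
The plan is to reduce the statement, via Theorem \ref{critint_a}, to a stratum-by-stratum analysis of the weights of the perverse cohomology sheaves $\mathcal{H}^n i_m^* i^* j_{!*}(\mathcal{R}_\ell({}^\lambda\mathcal{V}))$ for $m = 0, 1$, and to compute these weights in terms of the combinatorial data encoded by $\lambda$. The first step is to use Pink's theorem (Thm.~\ref{thm:PinkThm}) to express the \emph{classical} cohomology sheaves $R^n i_m^* i^* j_* \mu_\ell^K(V_\lambda)$, stratum by stratum, as sums of canonical-construction sheaves attached to the representations $H^p(\Gamma_m, H^q(W_{m,L}, V_\lambda))$ of $\mathcal{G}_{m,L}$. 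On the unipotent side, Kostant's theorem (Thm.~\ref{KostThm}) together with the explicit weight formulae of Lemma~\ref{charkost} gives the complete list of irreducible constituents $V^{m,q}_\Psi$, indexed by $q$-admissible decompositions $\Psi$ of $I_F$, each carrying a pure Hodge structure whose weight we read off from $c$ and the $(\eta_{i,\sigma})$, resp.\ $(\epsilon_{i,\sigma})$.

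Second, for each of the two boundary types I would determine \emph{which} of these constituents survive after taking cohomology of the arithmetic group $\Gamma_m$. This is the genuinely new input. For the Siegel ($0$-dimensional) strata, $\Gamma_0$ is (up to finite index) a neat subgroup of $\mathcal{O}_F^\times$, acting on each $V^{0,q}_\Psi$ through a character of the form $t \mapsto \prod_\sigma \sigma(t)^{n_\sigma}$; by Lemma~\ref{nultriv} combined with Lemma~\ref{trivact} and Remark~\ref{rmk:gennet}, $H^p(\Gamma_0, V^{0,q}_\Psi)$ is nonzero exactly when the exponent vector $(n_\sigma)$ is a constant multiple of $(1,\dots,1)$ — and unwinding the formulae for the $(\eta_{i,\sigma})$ this \emph{parallelism} condition is precisely what the notion of corank and of Kostant-parallelism records. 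For the Klingen ($d$-dimensional) strata, $\mathcal{G}_1$ is (a central modification of) $\operatorname{Res}_{F|\mathbb{Q}}\operatorname{GL}_{2,F}$, so the degeneration involves genuine Hilbert--Blumenthal cohomology; here I would use the action of a suitable subgroup of units in two orthogonal ways (Lemmas~\ref{cohunip0}, \ref{trivbis}, \ref{cohunip1}) and vanishing theorems for locally symmetric spaces (Lemma~\ref{restrcoh0}) to pin down the surviving constituents and their weights, and then use a second, double degeneration along the cusps of these $d$-dimensional strata to pass from classical to \emph{perverse} cohomology sheaves, so that Theorem~\ref{critint_a} can be applied. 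Throughout, the hypothesis that $\mu_\ell^K(V_\lambda)$ is pure of weight $w(\lambda) = -c$ (Rmk.~\ref{rmk:poidsrep}) lets me normalise all weight computations relative to $-c$.

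Third, having produced the list of \emph{possible} weights appearing in $i^* j_* \,{}^\lambda\mathcal{V}$ together with necessary conditions for each to occur, I would establish the matching \emph{sufficiency}: that the claimed extremal weights $-d\kappa_2$, $d\kappa_2+1$ (and, in case (4) with $\kappa_1,\kappa_2$ of equal parity, also $-d(\kappa_1-\kappa_2)$, $d(\kappa_1-\kappa_2)+1$) really do show up. This requires exhibiting a nonzero cohomology class, which amounts to the existence of a suitable nonzero Hilbert \emph{cuspidal} modular form over a ``virtual'' Hilbert--Blumenthal variety (Prop.~\ref{poidsapp} and Rmk.~\ref{fibres}); the point is that the relevant $H^p(\Gamma, V^{m,q}_\Psi)$ is identified with such a space of cusp forms, whose non-vanishing for the appropriate weight is known. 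Assembling all of this: for $\lambda$ not Kostant-parallel, every constituent is killed by the arithmetic cohomology, giving (1); in the remaining cases one matches the surviving weights against $\kappa$, $\kappa_2$, $\kappa_1-\kappa_2$, $d_1 = |I_F^1|$ and verifies that the interval of avoidance is exactly $\{-d\kappa+1,\dots,d\kappa\}$ (with $\kappa = d_1/d + \kappa$ read appropriately in case (2)), proving (2)--(4).

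The main obstacle I expect is the passage from the classical cohomology sheaves, which Pink's theorem delivers directly, to the \emph{perverse} cohomology sheaves that Theorem~\ref{critint_a} demands: on the $d$-dimensional Klingen strata the two differ, and controlling the weights of $\mathcal{H}^n i_1^* i^* j_{!*}(\mathcal{R}_\ell({}^\lambda\mathcal{V}))$ forces a careful study of the intermediate extension of $\ell$-adic perverse sheaves along the cusps of the Hilbert--Blumenthal strata, i.e.\ a second degeneration whose bookkeeping — tracking which constituents of the double-degeneration contribute, with which Tate twists and shifts — is where the real work lies. A secondary difficulty is keeping the representation theory of $\GSp_4$ (the description of the $V^{m,q}_\Psi$ and of the unit-character exponents $(n_\sigma)$) organised cleanly enough that the ``parallelism'' dichotomy cutting out the corank genuinely falls out of the formulae in Lemma~\ref{charkost}.
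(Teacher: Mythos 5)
Your overall architecture is exactly the paper's: reduce to a stratum-by-stratum weight bound on perverse cohomology via Theorem \ref{critint_a}, compute classical cohomology sheaves by Pink's theorem, decompose the unipotent cohomology by Kostant's theorem, cut down the surviving constituents by the cohomology of the arithmetic groups (where the unit-group parallelism condition produces the corank dichotomy), pass from classical to perverse sheaves on the $d$-dimensional strata via the double degeneration along their cusps, and finish the sufficiency by exhibiting nonzero Hilbert cusp forms. This is the proof.

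However, you have the two boundary types swapped as regards which arithmetic group does what, and this is not merely cosmetic. For the Siegel ($0$-dimensional) strata, $\Gamma_0$ is a neat arithmetic subgroup of $\mbox{GL}_2(F)$, \emph{not} of $\mathcal{O}_F^\times$; only its central part $\Gamma_{0,Z}$ and the quotient $\mbox{det}\,\Gamma_0$ are unit groups, and one needs \emph{both} dévissages (Constructions \ref{extarithm} and \ref{extarithmbis}), the Borel--Serre and Hilbert--Blumenthal vanishing theorems (Lemma \ref{restrcoh0}), and the cusp-form non-vanishing of Proposition \ref{poidsapp} --- all of which live on the Siegel side, attached to the \q{virtual} Hilbert--Blumenthal variety $X_{\Gamma_{0,ss}}$ rather than to the Klingen strata appearing in $\partial S_K^*$. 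Conversely, on the Klingen ($d$-dimensional) strata the group $\Gamma_1$ is an arithmetic subgroup of $F^{\times}$, so Lemmas \ref{nultriv} and \ref{trivact} apply directly and the analysis of $H^p(\Gamma_1,-)$ is the easy part; the real work there is the classical-versus-perverse discrepancy, which you do correctly locate. If you executed the plan as written --- treating $\Gamma_0$ as a unit group and expecting the cusp-form input on the Klingen strata --- the Siegel computation would be incomplete (Lemma \ref{nultriv} alone cannot control $H^p(\Gamma_0, V^{0,q}_{\Psi})$ for a non-abelian $\Gamma_0$) and the non-vanishing statements would be attached to the wrong strata. Reassigning the ingredients to the correct strata turns your outline into the paper's argument.
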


The proof of theorem \ref{thm:mainthm} will be completed at the end of paragraph \ref{weightevitd}, by invoking Theorem \ref{critint_a} and after having employed all the tools recalled in Subsection \ref{critexistence}. Admitting this theorem for the moment, we can prove its most important corollary for the applications to the \emph{intersection motive} (see Section \ref{consapp}), i.e. the following characterization of the absence of the weights 0 and 1:

\begin{corollary}\label{evit01}
The weights 0 and 1 appear in the boundary motive $i^*j_*^{\lambda}\mathcal{V}$ if and only if $\lambda$ is completely irregular of corank $\geq 1$.
\end{corollary}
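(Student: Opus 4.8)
The plan is to read Corollary \ref{evit01} off directly from Theorem \ref{thm:mainthm}: that theorem already gives, for every dominant $\lambda$, the precise interval of weights avoided by $i^*j_*^{\lambda}\mathcal{V}$ together with the information that the two edge weights of this interval do occur, so the corollary reduces to a finite bookkeeping exercise of deciding, case by case, whether $\{0,1\}$ lies inside the avoided interval. I would first assemble the elementary facts about $\lambda$ used throughout: (i) $\mbox{cor}(\lambda)\geq 1$ implies $\lambda$ is Kostant parallel (observation after Definition \ref{corang}), so only cases (3) and (4) of Theorem \ref{thm:mainthm} can occur; (ii) $k_2=\underline 0$ implies $\lambda$ is completely irregular, and $k_1=k_2$ implies $\lambda$ is completely irregular, both immediately from Definition \ref{corang}; (iii) if $\mbox{cor}(\lambda)=1$ and $\lambda$ is completely irregular then $k_2=\underline 0$ (observation after Definition \ref{corang}); and (iv) in case (2) one has $d_1+d\kappa\geq 1$, since if $\kappa=0$ then dominance forces $k_{2,\sigma}=0$ on $I_F^0$ and $k_{2,\sigma}=1$ on $I_F^1$, so the non-parallelness of $k_2$ (which holds because $\mbox{cor}(\lambda)=0$) forces $I_F^1\neq\varnothing$, i.e. $d_1\geq 1$.

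For the direction assuming $\lambda$ completely irregular of corank $\geq 1$: by (i) we are in case (3) or (4). If $k_1$ is not parallel we are in case (3), where $\mbox{cor}(\lambda)=1$, so (iii) gives $\kappa_2=0$, and Theorem \ref{thm:mainthm}(3) states that the weights $-d\kappa_2=0$ and $d\kappa_2+1=1$ appear. If $k_1=\underline{\kappa_1}$ is parallel we are in case (4): if $\mbox{cor}(\lambda)=1$ then (iii) again gives $\kappa_2=0$ and Theorem \ref{thm:mainthm}(4) gives that $0$ and $1$ appear; if $\mbox{cor}(\lambda)=2$ then $\kappa_1=\kappa_2$, so $\kappa_1$ and $\kappa_2$ have the same parity and $\kappa_1-\kappa_2=0$, whence the last clause of Theorem \ref{thm:mainthm}(4) shows that $-d(\kappa_1-\kappa_2)=0$ and $d(\kappa_1-\kappa_2)+1=1$ appear. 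In every sub-case $0$ and $1$ appear.

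For the converse I would argue by contraposition: suppose $\lambda$ is not completely irregular of corank $\geq 1$, i.e. either $\mbox{cor}(\lambda)=0$ or $\lambda$ is not completely irregular. If $\mbox{cor}(\lambda)=0$, then either $\lambda$ is not Kostant parallel, in which case $i^*j_*^{\lambda}\mathcal{V}=0$ by Theorem \ref{thm:mainthm}(1), or we are in case (2), where by (iv) $\{0,1\}\subseteq\{-d_1-d\kappa+1,\dots,d_1+d\kappa\}$. If instead $\lambda$ is not completely irregular but $\mbox{cor}(\lambda)\geq 1$, choose $\sigma$ with $k_{1,\sigma}>k_{2,\sigma}>0$; since $k_2=\underline{\kappa_2}$ this forces $\kappa_2\geq 1$, so in case (3) we get $\{0,1\}\subseteq\{-d\kappa_2+1,\dots,d\kappa_2\}$, and in case (4) (so also $k_1=\underline{\kappa_1}$) the same $\sigma$ forces $\kappa_1>\kappa_2$, hence $\kappa=\min\{\kappa_1-\kappa_2,\kappa_2\}\geq 1$ and $\{0,1\}\subseteq\{-d\kappa+1,\dots,d\kappa\}$. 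In all these situations the weights $0$ and $1$ belong to an interval avoided by $i^*j_*^{\lambda}\mathcal{V}$, hence do not appear in it.

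This argument is bookkeeping rather than anything substantial — the content is entirely in Theorem \ref{thm:mainthm} — and the only points requiring a little care are the restriction to cases (3) and (4) coming from (i), the bound $d_1+d\kappa\geq 1$ in case (2), and the observation that the parity hypothesis in the last clause of case (4) is needed exactly when $\mbox{cor}(\lambda)=2$, where it holds automatically because then $\kappa_1-\kappa_2=0$.
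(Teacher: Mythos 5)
Your proposal is correct and follows essentially the same route as the paper's own proof: a case analysis on $\mbox{cor}(\lambda)$ using the observations after Definition \ref{corang} to pin down which case of Theorem \ref{thm:mainthm} applies and then reading off whether $\{0,1\}$ lies in the avoided interval or coincides with the edge weights that are shown to appear. The only cosmetic difference is that you organise the bookkeeping by the two directions of the equivalence while the paper organises it by the value of the corank; the content is identical.
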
 
\begin{proof}
Suppose $\lambda$ to be $\kappa$-Kostant parallel with respect to $(I_F^0, I_F^1)$ (otherwise, by point $(1)$ of the above theorem, there is nothing to do). 

If $\mbox{cor}(\lambda) = 0$, then, by point $(2)$ of the above theorem, the weights $0$ and $1$ appear if and only if $d_1 =0=\kappa$. But $d_1=0$ means that $I_F^1=\varnothing$, i.e. $I_F^0=I_F$, and by definition of Kostant-parallelism this implies $k_1=\underline{\kappa}$. Now, necessarily $\kappa>0$, because otherwise $k_2=\underline{0}$ (remember that $k_{1,\sigma} \geq k_{2,\sigma}$ for every $\sigma \in I_F$) and $\mbox{cor}(\lambda)=2$, a contradiction. 

If $\mbox{cor}(\lambda) = 1$, with $k_2=\underline{\kappa_2}$, then, by point $(3)$ and $(4)$ of the above theorem, the weights 0 and 1 appear if and only if $\kappa_2=0$; observe in fact that, even if $k_1=\underline{\kappa_1}$, we have $\kappa_1-\kappa_2 > 0$ (otherwise $\mbox{cor}(\lambda) = 2$, a contradiction). But if $\kappa_2=0$, $\lambda$ is completely irregular.

If $\mbox{cor}(\lambda) = 2$, then $k_1=\underline{\kappa}=k_2$; this means that $\lambda$ is completely irregular, and implies that, in point $(4)$ of the above theorem, the parity condition is trivially satisfied and that $\kappa_1-\kappa_2=0$, so that the weights 0 and 1 appear. 

To conclude, we only have to observe that if $\mbox{cor}(\lambda)\geq 1$ and $\lambda$ is completely irregular, either $k_2=\underline{0}$ or $k_1=\underline{\kappa}=k_2$ (cfr. the observations after Def. \ref{corang}).
\end{proof}
The rest of this section is devoted to the proof of Thm. \ref{thm:mainthm}, following the outline given in the introduction. 
\subsection{The degeneration along the Siegel strata} \label{degsieg}

With notation as in the statement of Thm. \ref{thm:mainthm}, fix a irreducible $G_L$-representation $V_{\lambda}$ of highest weight $\lambda=\lambda((k_{1,\sigma}, k_{2,\sigma})_{\sigma \in I_F}, c)$: we want to employ Theorem \ref{thm:PinkThm} to study the degeneration of $\mu_{\ell}^K(V_{\lambda})$ along the Siegel strata, whose underlying Shimura datum is $(G_0, \mathfrak{H}_0)$, where $G_0 \simeq \mathbb{G}_m$, as explained in \ref{strates}.
 
\subsubsection{\textbf{Weights in the cohomology of the unipotent radical.}} \label{sec-cohunip0}

We start by identifying the possible weights appearing in the degeneration along the Siegel strata, i.e. in the $(Q_0/W_0)_L$-representations 
\begin{equation} \label{isocohunip0}
H^q(W_{0,L}, V_{\lambda}) \simeq \bigoplus\limits_{\Psi \in \mathcal{P}_q} V^{0,q}_{\Psi},
\end{equation}
for $q \in \{0, \dots, 3d \}$ (cfr. \eqref{isocohunip}). Recall from \eqref{isosiegel} that we have
\begin{equation*}
(Q_0/W_0)_L \simeq \mathbb{G}_{m,L} \times \prod\limits_{\sigma \in I_F} (\mbox{GL}_{2,L})_{\sigma}
\end{equation*}
Let us then compute the weight of the pure Hodge structure carried by each irreducible summand $V^{0,q}_{\Psi}$.

\begin{lemma}\label{poids0}
For every $q \in \{0, \dots, 3d \}$ and for every $q$-admissible decomposition $\Psi$ as in Notation \ref{notadmpart}, the action of the $\mathbb{G}_{m, L}$-factor inside $(Q_0/W_0)_L$ induces on $V^{0,q}_{\Psi}$ a pure Hodge structure of weight 
\begin{equation*}
w(\lambda) - [\sum\limits_{\sigma \in I_F^0} (k_{1,\sigma}+k_{2,\sigma}) +\sum\limits_{\sigma \in I_F^1} (k_{1,\sigma}-k_{2,\sigma}-2) - \sum\limits_{\sigma \in I_F^2} (k_{1,\sigma}-k_{2,\sigma}+4) - \sum\limits_{\sigma \in I_F^3} (k_{1,\sigma}+k_{2,\sigma}+6) ].
\end{equation*}
\end{lemma}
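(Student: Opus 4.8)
The plan is to compute the weight of the Hodge structure on $V^{0,q}_{\Psi}$ by tracking the action of the central $\mathbb{G}_{m,L}$-factor inside $(Q_0/W_0)_L \simeq \mathbb{G}_{m,L}\times \prod_\sigma(\mathrm{GL}_{2,L})_\sigma$, and reading off its effect via the Hodge conventions fixed in \ref{convhodge}. Recall from Remark \ref{rmk:poidsrep}.\ref{itm:poidsrep} that $\mu_\ell^K(V_\lambda)$, and hence $V_\lambda$ itself viewed with its canonical Hodge structure, is pure of weight $w(\lambda)=-c$; the cohomology spaces $H^q(W_{0,L},V_\lambda)$ inherit Hodge structures from the weight filtration on $V_\lambda$ together with the twist coming from $H^q$ of the unipotent radical. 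Thus the whole computation reduces to: (i) knowing the highest weight of $V^{0,q}_\Psi$ as a character of $T_L$, which is exactly the content of Lemma \ref{charkost}.\ref{charakost1}, namely the restriction of $\lambda((\eta_{1,\sigma},\eta_{2,\sigma})_{\sigma},c)$ with the $\eta_{i,\sigma}$ given there; and (ii) identifying which one-parameter subgroup of $T_L$ realizes the cocharacter $\rho\circ h(x)\circ w$ governing the Hodge grading, restricted to the $\mathbb{G}_{m,L}$-factor of the Levi $Q_0/W_0$.

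First I would make precise the embedding of the central $\mathbb{G}_{m,L}$ of $Q_0/W_0$ into $T_L$: by the description in \ref{levicomp} (isomorphism \eqref{isosiegel}), this factor corresponds to the scalar $\alpha$ appearing in the matrices $\big(\begin{smallmatrix}\alpha A_\sigma & \\ & (A_\sigma^{-1})^t\end{smallmatrix}\big)_\sigma$, i.e. to the cocharacter $t\mapsto (\mathrm{diag}(t,t,1,1))_{\sigma\in I_F}$ of $T_L$ up to the identification \eqref{pointstore} (with $\alpha_{1,\sigma}=\alpha_{2,\sigma}=t$, $\nu=t$). Then I would evaluate the character $\lambda((\eta_{1,\sigma},\eta_{2,\sigma})_\sigma,c)$ on this cocharacter using formula \eqref{actcar}: the exponent of $t$ that comes out is $\sum_\sigma(\eta_{1,\sigma}+\eta_{2,\sigma}) + \tfrac12[c-\sum_\sigma(\eta_{1,\sigma}+\eta_{2,\sigma})]\cdot(\text{contribution of }\nu)$, which, after substituting the explicit $\eta$'s from Lemma \ref{charkost}.\ref{charakost1} case-by-case over the four pieces $I_F^0,I_F^1,I_F^2,I_F^3$ of $\Psi$, collapses to precisely the bracketed expression in the statement, with the overall $w(\lambda)$ term accounting for the weight of $V_\lambda$ itself. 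The sign conventions of \ref{convhodge} ("$\rho\circ w$ acts by $t^{-k}$ on the weight-$k$ part") dictate that the displayed weight is $w(\lambda)$ minus this exponent, which matches the formula as written.

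The only genuinely delicate point is bookkeeping the normalization: one must be careful that the $\mathbb{G}_{m,L}$-factor acting here is the one whose weight was already used to normalize $w(\lambda)=-c$ is the $\nu$-direction, and disentangle it from the scalar $\alpha$ in the Siegel Levi; equivalently, one must check that the restriction map $X^*(T_L)\to X^*(\mathbb{G}_{m,L}\text{-factor of }Q_0/W_0)$ is applied consistently with the Hodge cocharacter coming from the boundary Shimura datum $(G_0,\mathfrak H_0)$ in \eqref{donnée0}, where $z\mapsto(\mathrm{diag}(z\bar z,z\bar z,1,1))_\sigma$. I expect this to be the main obstacle — it is purely a matter of matching three different conventions (Pink's in \cite{Pin90} 1.3, the one of \ref{convhodge}, and the parametrization \eqref{actcar}) — but once the cocharacter is pinned down, the rest is the straightforward substitution of Lemma \ref{charkost}.\ref{charakost1} into \eqref{actcar} and collecting terms over the four blocks of $\Psi$. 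No deep input beyond Lemma \ref{charkost} and the conventions is needed.
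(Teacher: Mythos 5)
Your plan follows exactly the route of the paper's proof: identify the $\mathbb{G}_{m,L}$-factor of $(Q_0/W_0)_L$ inside the torus, evaluate the character of Lemma \ref{charkost}.\ref{charakost1} on it via \eqref{actcar}, and convert the resulting exponent into a Hodge weight using the convention of \ref{convhodge} and the morphism \eqref{donnée0} defining $(G_0,\mathfrak{H}_0)$. All the ingredients are correctly identified, but the normalization step that you yourself flag as the delicate point is exactly where your write-up goes wrong, in two places that do not actually cancel. First, the exponent of $\alpha$ that you compute, namely $\tfrac{1}{2}\bigl[c+\sum_{\sigma}(\eta_{1,\sigma}+\eta_{2,\sigma})\bigr]$, does \emph{not} collapse to the bracketed expression of the statement: that bracketed expression equals $\sum_{\sigma}(\eta_{1,\sigma}+\eta_{2,\sigma})$ (check the four cases of Lemma \ref{charkost}.\ref{charakost1}), so the exponent is one half of $c$ plus that sum. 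Second, the weight is not ``$w(\lambda)$ minus this exponent''. The correct conversion is via the Hodge cocharacter $k\circ w$, where $k$ sends $z$ to $(z\bar z\cdot I_2,I_2)_{\sigma}$: a real $t$ is therefore sent to $\alpha=t^{2}$, and with the convention of \ref{convhodge} (weight $k$ means action by $t^{-k}$) the weight is
$-2\cdot\tfrac{1}{2}\bigl[c+\sum_{\sigma}(\eta_{1,\sigma}+\eta_{2,\sigma})\bigr]
=-c-\sum_{\sigma}(\eta_{1,\sigma}+\eta_{2,\sigma})
=w(\lambda)-\sum_{\sigma}(\eta_{1,\sigma}+\eta_{2,\sigma})$,
which is the formula of the statement. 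In other words, the factor of $2$ coming from $z\bar z$ is precisely what absorbs the $\tfrac{1}{2}$ and the extra copy of $c$; once you insert it, your computation closes and coincides with the paper's proof.
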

\begin{proof}
By the discussion in \ref{levicomp}, the $L$-points of the $\mathbb{G}_{m, L}$-factor are identified with the subgroup 
\begin{equation*}
\{ ( \left( \begin{array} {cc} 
\alpha I_2 &  \\
 & I_2
\end{array} \right) )_{\sigma \in I_F}  \ \vert \alpha \in L^{\times} \}
\end{equation*}
of $Q_0/W_0(L)$. With the notation of Lemma \ref{charkost}.\ref{charakost1} for the highest weight of the representation $V^{0,q}_{\Psi}$, and recalling \eqref{actcar}, we see that $\mathbb{G}_{m, L}(L)$ acts on $V^{0,q}_{\Psi}$ via the character
\begin{equation}
\alpha \mapsto \alpha^{\frac{1}{2} \cdot [ c + \sum\limits_{\sigma \in I_F} (\eta_{1,\sigma}+\eta_{2,\sigma})]},
\end{equation}
By the convention fixed in \ref{convhodge} and the definition in \eqref{donnée0} of the Shimura datum $(G_0, \mathfrak{H}_0)$, the expression for $\eta_{1,\sigma}$ and $\eta_{2,\sigma}$ given in Lemma \ref{charkost}.\ref{charakost1} yields the formula in the statement. 
\end{proof}

Notice for later use that if $\mbox{V}$ is the standard 2-dimensional $L$-representation of $\mbox{GL}_{2,L}$, the above computations imply that the representation obtained by restriction to the factor $\prod\limits_{\sigma \in I_F} (\mbox{GL}_{2,L})_{\sigma}$ of $(Q_0/W_0)_L$ is isomorphic to 

\begin{equation} \label{restrGL2}
\begin{aligned}
( \bigotimes\limits_{\sigma \in I_F^0} \mbox{Sym}^{k_{1,\sigma}-k_{2,\sigma}}\mbox{V} \otimes \mbox{det}^{k_{2,\sigma}} ) \otimes ( \bigotimes\limits_{\sigma \in I_F^1} \mbox{Sym}^{k_{1,\sigma}+k_{2,\sigma}+2}\mbox{V} \otimes \mbox{det}^{-k_{2,\sigma}-2} ) \otimes \\
\otimes ( \bigotimes\limits_{\sigma \in I_F^2} \mbox{Sym}^{k_{1,\sigma}+k_{2,\sigma}+2}\mbox{V} \otimes \mbox{det}^{-k_{1,\sigma}-3} ) \otimes ( \bigotimes\limits_{\sigma \in I_F^3} \mbox{Sym}^{k_{1,\sigma}-k_{2,\sigma}}\mbox{V} \otimes \mbox{det}^{-k_{1,\sigma}-3} )
\end{aligned}
\end{equation}

\subsubsection{\textbf{Cohomology of the arithmetic subgroup.}} \label{arithm0}
Consider now the arithmetic group $\Gamma_0$ of Rmk. \ref{compl}: according to Theorem \ref{thm:PinkThm}, and remembering \eqref{isocohunip0}, we need to identify the cohomology spaces

\begin{equation} \label{arithmisocohunip0}
H^p(\Gamma_0, H^q(W_{0,L}, V_{\lambda})) \simeq \bigoplus_{\substack{\Psi \in \mathcal{P}_q}} H^p(\Gamma_0, V^{0,q}_{\Psi}) 
\end{equation}
and their weight-graded objects $\mbox{Gr}_k^{\mathbb{W}}H^p(\Gamma_0, H^q(W_{0,L}, V_{\lambda})) \simeq \bigoplus_{\substack{\Psi \in \mathcal{P}_q}} H^p(\Gamma_0, \mbox{Gr}_k^{\mathbb{W}} V^{0,q}_{\Psi})$ (cfr. Remark \ref{rmk:explpink}\ref{itm:weightfilt}). As the cohomological dimension of $W_{0,L}$ is $3d$, these spaces can be non-zero only for $q \in \{0, \dots, 3d \}$. We are now going to put further restrictions on the non-triviality of such spaces. 

\begin{construction} \label{extarithm}
$\Gamma_0$ is identified with a neat (hence, torsion-free) arithmetic subgroup of 
\begin{equation*}
\mbox{Res}_{F|\mathbb{Q}} \mbox{GL}_{2,F}(\mathbb{Q})=\mbox{GL}_2(F)
\end{equation*}
(Remark \ref{compl}). Let $\pi$ be the projection $\mbox{GL}_{2}(F) \twoheadrightarrow \mbox{GL}_{2}(F) / Z(\mbox{GL}_{2}(F)) $ and define $\Gamma_{0,Z}:= \Gamma_0 \cap Z(\mbox{GL}_{2}(F))$ and $\Gamma_0^{\prime}:=\pi(\Gamma_0)$ (non trivial, torsion-free arithmetic subgroups of $Z(\mbox{GL}_{2}(F)) \simeq F^{\times}$, resp. $\mbox{PGL}_{2}(F)$). Then, $\Gamma_0$ can be written as an extension

\begin{equation}
1 \rightarrow \Gamma_{0,Z} \rightarrow \Gamma_0 \xrightarrow{\pi} \Gamma_0^{\prime} \rightarrow 1,
\end{equation}
and applying the Lyndon-Hochschild-Serre spectral sequence to this extension
\begin{equation}
E_2=H^r(\Gamma_0^{\prime}, H^s(\Gamma_{0,Z}, V^{0,q}_{\Psi})) \Rightarrow H^{r+s}(\Gamma_0, V^{0,q}_{\Psi}) 
\end{equation}
we see that every subspace $H^{p}(\Gamma_0, V^{0,q}_{\Psi})$ is (non-canonically) isomorphic to a direct sum

\begin{equation}
\bigoplus_{\substack{r+s=p}} U^{r,s}
\end{equation}
where every $U^{r,s}$ is a subquotient of $H^r(\Gamma_0^{\prime}, H^s(\Gamma_{0,Z}, V^{0,q}_{\Psi}))$. Thus, if $H^s(\Gamma_{0,Z}, V^{0,q}_{\Psi})$ is zero for every $s$, then $H^{p}(\Gamma_0, V^{0,q}_{\Psi})$ is.

\end{construction}

Lemma \ref{nultriv} gives necessary conditions for the non-triviality of the cohomology of a free abelian group acting on a vector space. The following lemma tells us when these conditions are verified in a specific case: 

\begin{lemma} \label{nececond}
Let $\Gamma_{0,Z}$ be the group defined in Construction \ref{extarithm}. Then, its action on on $V^{0,q}_{\Psi}$ is trivial if and only if there exists an integer $\kappa$ such that

\begin{equation}\label{Kostpar0}
\left\{ \begin{array}{cc} 
k_{1,\sigma}+k_{2,\sigma}=\kappa & \forall \sigma \in I_F^0 \\
k_{1,\sigma}-k_{2,\sigma}-2=\kappa & \forall \sigma \in I_F^1
\\
-(k_{1,\sigma}- k_{2,\sigma}+4)=\kappa & \forall \sigma \in I_F^2 \\
-(k_{1,\sigma}+ k_{2,\sigma}+6)=\kappa & \forall \sigma \in I_F^2
\end{array}
\right.
\end{equation}
(remembering Notation \ref{notadmpart}).
\end{lemma}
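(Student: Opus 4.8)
The plan is to make explicit the character through which the abelian group $\Gamma_{0,Z}$ acts on the irreducible representation $V^{0,q}_{\Psi}$, and then to apply the triviality criterion of Remark \ref{rmk:gennet}.\ref{itm:net}. By Construction \ref{extarithm}, $\Gamma_{0,Z}$ is a non-trivial, torsion-free arithmetic subgroup of the centre $Z(\mbox{GL}_2(F)) \simeq F^{\times}$ of the complement $M_0 = \mbox{Res}_{F|\mathbb{Q}}\mbox{GL}_{2,F}$; being contained in the neat group $\Gamma_0$, it is itself neat. Since $V^{0,q}_{\Psi}$ is, by \eqref{restrGL2}, an external tensor product over $\sigma \in I_F$ of irreducible $(\mbox{GL}_{2,L})_{\sigma}$-representations, the centre $\prod_{\sigma \in I_F}(\mathbb{G}_{m,L})_{\sigma}$ of the factor $\prod_{\sigma \in I_F}(\mbox{GL}_{2,L})_{\sigma}$ of $(Q_0/W_0)_L$ — and hence $\Gamma_{0,Z}$ — acts on $V^{0,q}_{\Psi}$ through a single character.

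To compute it, I would use that $Z(\mbox{GL}_2)$ acts on $\mbox{Sym}^n\mbox{V}\otimes\mbox{det}^m$ by $z \mapsto z^{n+2m}$, and read the four types of $\mbox{GL}_2$-factors off \eqref{restrGL2}: the $\sigma$-th factor of $\prod_{\sigma}(\mathbb{G}_{m,L})_{\sigma}$ acts on $V^{0,q}_{\Psi}$ by $z_{\sigma} \mapsto z_{\sigma}^{e_{\sigma}}$, where
\[
e_{\sigma} = \begin{cases}
k_{1,\sigma}+k_{2,\sigma} & \mbox{if } \sigma \in I_F^0, \\
k_{1,\sigma}-k_{2,\sigma}-2 & \mbox{if } \sigma \in I_F^1, \\
-(k_{1,\sigma}-k_{2,\sigma}+4) & \mbox{if } \sigma \in I_F^2, \\
-(k_{1,\sigma}+k_{2,\sigma}+6) & \mbox{if } \sigma \in I_F^3.
\end{cases}
\]
Combining this with the explicit diagonal embedding of $(Q_0/W_0)(\mathbb{Q})$ into $(Q_0/W_0)(L)$ recalled in \ref{levicomp}, under which $t \in F^{\times} = Z(\mbox{GL}_2(F))$ goes to $(\sigma(t))_{\sigma \in I_F}$ in the centre $\prod_{\sigma}(\mathbb{G}_{m,L})_{\sigma}$, I conclude that $\Gamma_{0,Z}$ acts on $V^{0,q}_{\Psi}$ by the character $t \mapsto \prod_{\sigma \in I_F}\sigma(t)^{e_{\sigma}}$.

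It then remains to invoke Remark \ref{rmk:gennet}.\ref{itm:net} (i.e. Lemma \ref{trivact}, extended via Remark \ref{rmk:gennet}.\ref{itm:genarithm} to the neat arithmetic subgroup $\Gamma_{0,Z}$ of $F^{\times}$): the character $t \mapsto \prod_{\sigma}\sigma(t)^{e_{\sigma}}$ is trivial if and only if $(e_{\sigma})_{\sigma \in I_F}$ lies in $\mathbb{Z}\cdot(1,\dots,1)$, i.e. if and only if there is an integer $\kappa$ with $e_{\sigma}=\kappa$ for all $\sigma \in I_F$; spelling out the $e_{\sigma}$, this is precisely the system \eqref{Kostpar0} (reading, as one must, $\sigma \in I_F^3$ in the last line). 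The only point requiring care is the second step: one must correctly normalise the central character of each $\mbox{GL}_2$-factor so that the four exponents come out as in \eqref{restrGL2} and \eqref{Kostpar0}; everything else is a direct application of results already recorded above.
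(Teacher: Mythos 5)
Your proof is correct and follows essentially the same route as the paper: identify the single character through which the central subgroup $\Gamma_{0,Z}$ acts on the irreducible summand $V^{0,q}_{\Psi}$, and then apply Lemma \ref{trivact} via Remark \ref{rmk:gennet} to translate triviality of that character into the constancy condition \eqref{Kostpar0}. The only (immaterial) difference is bookkeeping: you read the exponents $e_{\sigma}$ off the tensor decomposition \eqref{restrGL2} using the central character of $\mbox{Sym}^{n}\mbox{V}\otimes\mbox{det}^{m}$, whereas the paper evaluates the highest weight $\lambda((\eta_{1,\sigma},\eta_{2,\sigma})_{\sigma},c)$ of Lemma \ref{charkost}.\ref{charakost1} directly on the explicit matrices \eqref{sgrpdiag} — both yield the same exponents.
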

\begin{proof}
By Dirichlet's unit theorem, we have that $\Res_{F \vert \BQ} \mathbb{G}_{m,F}(\mathbb{Z}) \simeq \mathcal{O}^{\times}_F \simeq \mathbb{Z}^{d-1} \times \mathbb{Z}/ 2 \mathbb{Z} $. On the other hand, the torsion-free group $\Gamma_{0,Z}$ is commensurable to $\mathcal{O}^{\times}_F$. The group $\Gamma_{0,Z}$ is then isomorphic to $\mathbb{Z}^{d-1}$. By choosing generators $\gamma_1, \dots, \gamma_{d-1}$, and remembering the discussion in \ref{levicomp}, it is then identified with the subgroup  
\begin{equation} \label{sgrpdiag}
\{ ( \left( \begin{array} {cccc} 
\sigma(t) & & & \\
 & \sigma(t) & & \\
 & & \sigma(t)^{-1} & \\
 & & & \sigma(t)^{-1}
\end{array} \right) )_{\sigma \in I_F}  \ \vert t=\gamma_1^{p_1} \dots \gamma_{d-1}^{p_{d-1}}, p_1, \dots, p_{d-1} \in \mathbb{Z} \} \hookrightarrow Q_0/W_0(L).
\end{equation}
Recalling the expression for the highest weight of the representation $V^{0,q}_{\Psi}$ given in Lemma \ref{charkost}.\ref{charakost1}, we see that an element $t=\gamma_1^{p_1} \dots \gamma_{d-1}^{p_{d-1}} \in \Gamma_{0,Z}$ acts on $V^{0,q}_{\Psi}$ via multiplication by 

\begin{equation*}
\prod\limits_{\sigma \in I_F} \sigma(t)^{\eta_{1,\sigma}+\eta_{2,\sigma}^{\prime}}=\prod\limits_{\sigma \in I_F^0} \sigma(t)^{k_{1,\sigma}+k_{2,\sigma}} \cdot \prod\limits_{\sigma \in I_F^1}\sigma(t)^{k_{1,\sigma}-k_{2,\sigma}-2}  \cdot \prod\limits_{\sigma \in I_F^2} \sigma(t)^{-(k_{1,\sigma}- k_{2,\sigma}+4)} \cdot \prod\limits_{\sigma \in I_F^3} \sigma(t)^{-(k_{1,\sigma}+ k_{2,\sigma}+6)}.
\end{equation*}
The condition in the statement then follows by applying Lemma \ref{trivact}, via Remark \ref{rmk:gennet}.
\end{proof}

\begin{definition} \label{triv0}
If $\lambda$ satisfies the above condition with respect to a $q$-admissible decomposition $\Psi$ and to $\kappa \in \mathbb{Z}$, we say that $\lambda$ is $(\kappa,0)$\emph{-Kostant parallel} with respect to $\Psi$.
\end{definition}

\begin{definition} \label{def:parttype1}
A $q$-admissible decomposition $\Psi$ is said to be $(\lambda,0)$\emph{-admissible} if there exists $\kappa \in \mathbb{Z}$ such that $\lambda$ is $(\kappa,0)$-Kostant parallel with respect to $\Psi$. The set of  $q$-admissible decompositions which are moreover $(\lambda,0)$-admissible will be denoted by $\mathcal{P}_q^{(\lambda,0)}$.
\end{definition}

With these definitions in hand, we can prove:

\begin{lemma} \label{cohunip0}
For every $s \notin \{0, \dots, d-1 \}$, the cohomology space $H^s(\Gamma_{0,Z}, V^{0,q}_{\Psi})$ is trivial. For every $s \in \{0, \dots, d-1 \}$, it is non-trivial if and only if $\lambda$ is $(\kappa,0)$-Kostant parallel with respect to $\Psi$ and one of the following two conditions holds:
\begin{enumerate} [wide, labelwidth=!, labelindent=0pt]
\item \label{cas1} 
$I_F=I_F^0 \sqcup I_F^1$. In this case, $q \in \{0, \dots, d \}$ and $\mbox{\emph{Gr}}_{w(\lambda)-d\kappa}^{\mathbb{W}} V^{0,q}_{\Psi} \neq \{ 0 \}$;
\item $I_F=I_F^2 \sqcup I_F^3$. In this case, $q \in \{2d, \dots, 3d \}$ and $\mbox{\emph{Gr}}_{w(\lambda)-d\kappa}^{\mathbb{W}} V^{0,q}_{\Psi} \neq \{ 0 \}$.
\end{enumerate}
\end{lemma}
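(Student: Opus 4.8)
\textbf{Proof plan for Lemma \ref{cohunip0}.}

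The plan is to combine Lemma \ref{nultriv} (cohomology of a free abelian group acting through a character) with Lemma \ref{nececond} (the criterion for triviality of the $\Gamma_{0,Z}$-action) and then to analyze which $q$-admissible decompositions $\Psi$ can possibly satisfy the Kostant-parallel conditions, given dominance of $\lambda$. First I would observe that $\Gamma_{0,Z} \simeq \mathbb{Z}^{d-1}$ is free abelian of rank $d-1$ (as in the proof of Lemma \ref{nececond}) and that it acts on the irreducible representation $V^{0,q}_{\Psi}$ through a single character (computed explicitly in Lemma \ref{nececond}). Hence Lemma \ref{nultriv} applies directly with $r = d-1$: the space $H^s(\Gamma_{0,Z}, V^{0,q}_{\Psi})$ vanishes for $s \notin \{0, \dots, d-1\}$, and for $s$ in that range it is non-trivial if and only if the action is trivial, which by Lemma \ref{nececond} is equivalent to $\lambda$ being $(\kappa,0)$-Kostant parallel with respect to $\Psi$ for some $\kappa$.

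Next I would extract the constraints that dominance of $\lambda$ imposes on the four equations in \eqref{Kostpar0}. Since $k_{1,\sigma} \geq k_{2,\sigma} \geq 0$ for every $\sigma$, the quantity $k_{1,\sigma}+k_{2,\sigma} \geq 0$ while $-(k_{1,\sigma}-k_{2,\sigma}+4) \leq -4 < 0$ and $-(k_{1,\sigma}+k_{2,\sigma}+6) \leq -6 < 0$; moreover $k_{1,\sigma}-k_{2,\sigma}-2 \geq -2$. Comparing signs of $\kappa$ across these four families, the only way a single integer $\kappa$ can serve all non-empty pieces is if $\{I_F^2, I_F^3\}$ are \emph{both} empty (forcing $\kappa \geq -2$, and in fact the $I_F^0, I_F^1$ constraints then force $\kappa \geq 0$) or if $\{I_F^0, I_F^1\}$ are \emph{both} empty (forcing $\kappa \leq -4$). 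These are precisely the two cases \ref{cas1} and the second item: $I_F = I_F^0 \sqcup I_F^1$, which by the bijection of Lemma \ref{lemma:partweyl}.(2) means $q = \sum_i i|I_F^i| = |I_F^1| \in \{0, \dots, d\}$, respectively $I_F = I_F^2 \sqcup I_F^3$, which means $q = 2|I_F^2| + 3|I_F^3| \in \{2d, \dots, 3d\}$ (the extreme values $2d$ and $3d$ being attained at $I_F^2 = I_F$ and $I_F^3 = I_F$ respectively). One must also rule out the mixed possibility that, say, $I_F^1 \neq \varnothing$ and $I_F^3 \neq \varnothing$ while $I_F^0 = I_F^2 = \varnothing$: here $\kappa = k_{1,\sigma}-k_{2,\sigma}-2 \geq -2$ on $I_F^1$ but $\kappa = -(k_{1,\hat\sigma}+k_{2,\hat\sigma}+6) \leq -6$ on $I_F^3$, a contradiction; the remaining mixed cases are eliminated by the same sign bookkeeping.

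Finally I would identify the relevant weight-graded piece. By Lemma \ref{poids0}, the $\mathbb{G}_{m,L}$-factor endows $V^{0,q}_{\Psi}$ with a pure Hodge structure whose weight is $w(\lambda)$ minus the sum over the four blocks of exactly the left-hand sides of \eqref{Kostpar0}; when $\lambda$ is $(\kappa,0)$-Kostant parallel each such term equals $\kappa$, so in case \ref{cas1} (where $|I_F^0|+|I_F^1| = d$) the weight is $w(\lambda) - d\kappa$, and likewise in the second case. Since $V^{0,q}_{\Psi}$ is then pure of that single weight, $\mathrm{Gr}_{w(\lambda)-d\kappa}^{\mathbb{W}} V^{0,q}_{\Psi} = V^{0,q}_{\Psi} \neq \{0\}$, which gives the last assertion in each case. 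The main obstacle is the combinatorial case analysis of the second paragraph: one has to be careful to treat the empty blocks correctly (a Kostant-parallel condition over an empty index set is vacuous, so e.g. $I_F^1 = \varnothing$ does not by itself force anything on $\kappa$) and to check that the boundary values of $q$ match the stated ranges $\{0,\dots,d\}$ and $\{2d,\dots,3d\}$ exactly, using the bijection with $q$-admissible decompositions from Lemma \ref{lemma:partweyl}.
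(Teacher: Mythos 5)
Your proof is correct and follows essentially the same route as the paper's: Lemma \ref{nultriv} together with Lemma \ref{nececond} reduces everything to the triviality of the $\Gamma_{0,Z}$-action, dominance of $\lambda$ forces $\kappa\geq -2$ whenever $I_F^0\sqcup I_F^1\neq\varnothing$ and $\kappa\leq -4$ whenever $I_F^2\sqcup I_F^3\neq\varnothing$ so that only one of the two unions can be non-empty, and the weight $w(\lambda)-d\kappa$ is read off by substituting \eqref{Kostpar0} into the formula of Lemma \ref{poids0}. The only slip is your parenthetical claim that the $I_F^0,I_F^1$ constraints force $\kappa\geq 0$, which fails when $I_F^0=\varnothing$ (e.g.\ $k_{1,\sigma}=k_{2,\sigma}$ on $I_F^1=I_F$ gives $\kappa=-2$), but this aside plays no role in the argument.
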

\begin{proof}
We make first a preliminary observation: suppose that $\lambda$ satisfies \eqref{Kostpar0} for a certain $q$-admissible decomposition $\Psi$. We see then that if $I_F^0 \sqcup I_F^1$ is non empty, then $\kappa \geq -2$, and that if $I_F^2 \sqcup I_F^3$ is non-empty, then $\kappa \leq -4$. This follows from the formulae in \eqref{Kostpar0} and from the fact that, since $\lambda$ is dominant, $k_{1,\sigma} \geq k_{2,\sigma} \geq 0$ for every $\sigma \in I_F$.

Now, Lemma \ref{nultriv} says that for each $s \in \{0, \dots, d-1 \}$, $H^s(\Gamma_{0,Z}, V^{0,q}_{\Psi})$ is non-trivial if and only if the action of $\Gamma_{0,Z}$ is trivial, i.e. if and only if condition \eqref{Kostpar0} is satisfied. But in this case, the previous observation says that only one of the subsets $I_F^0 \sqcup I_F^1$, $I_F^2 \sqcup I_F^3$ can be non-empty. Then, in both situations, the assertion on $q$ comes from the definition of $q$-admissible decomposition (Eq. \eqref{admpart}), while the assertion on the weight appearing in $V^{0,q}_{\Psi}$ comes by comparing the computation of Lemma \ref{poids0} with the formulae in \eqref{Kostpar0}.
\end{proof}

\begin{remark}\label{rmk:coincpoids}
The above lemma, which is an essential step towards Theorem \ref{thm:mainthm}, implicitly makes use of the \q{coincidences} in the computations in Lemma \ref{poids0} and in \eqref{Kostpar0}, i.e. of the fact that the linear combinations of coordinates of characters that appear in the two cases are the same. This can be rephrased as follows. With $M_0$ as in Notation \ref{groupcompl}, let $\iota: \mathbb{G}_{m,L} \rightarrow Z(M_0)_L$ be the composition of the adjunction embedding $\mathbb{G}_{m,L} \hookrightarrow \mathbb{G}_{m,L}^d$ and of the isomorphism $\mathbb{G}_{m,L}^d \simeq Z(M_0)_L$  deduced from the isomorphism $(Q_0/W_0)_L \simeq G_{0,L} \times M_{0,L} \simeq \mathbb{G}_{m,L} \times \prod\limits_{\sigma \in I_F} (\mbox{GL}_{2,L})_{\sigma}$. Let moreover $w:\mathbb{G}_{m,\mathbb{R}} \rightarrow \mathbb{S}$, resp. $k:\mathbb{S} \rightarrow G_{0,\mathbb{R}}$ be the cocharacter defined in \ref{convhodge}, resp. the morphism defining the Shimura datum corresponding to $G_0$. Then, for every $\lambda$, Lemma \ref{poids0} and \eqref{Kostpar0} show that we have
\begin{equation} \label{coincpoids}
\restr{\lambda}{G_{0,\mathbb{R}}} \circ k \circ w = \restr{\lambda}{Z(M_0)_{\mathbb{R}}} \circ \iota_{\BR}.
\end{equation}
In other words, the \q{Hodge weight}, determined by the restriction of $\lambda$ to the center of the $G_0$-component of $Q_0/W_0$, equals the (\emph{a priori} different) character obtained by restriction to the center of the $M_0$-component.

This is indeed a general phenomenon, as we explain now. Denote by $A$ the maximal $\mathbb{Q}$-split torus in the center of $(Q_0/W_0)\cap G^{\mbox{\tiny{der}}}$, which is a subgroup of $Z(M_0)$ isomorphic to $\mathbb{G}_{m}$. If $\iota_A$ is the isomorphism $A \simeq \mathbb{G}_{m}$ obtained in the same way as $\iota$, then $\restr{\lambda}{Z(M_0)_{\mathbb{R}}} \circ \iota_{\BR} =  \restr{\lambda}{A_{\mathbb{R}}} \circ \iota_{A,\BR}$. Hence, we see that \eqref{coincpoids} is a consequence of \cite[Prop. 6.4]{LR91}: the proof in \emph{loc. cit.} is valid for general Shimura data and is based on the description of the action of $A$ (through $\lambda$) via \emph{local Hecke operators}.
 \end{remark}

Observe now that, by the considerations in Costruction \ref{extarithm}, the necessary conditions for non-triviality of the cohomology of $\Gamma_{0,Z}$ give necessary conditions for non-triviality of the cohomology of the bigger group $\Gamma_0$. Applying this, and employing Definition \ref{def:parttype1}, the isomorphism from Theorem \ref{thm:PinkThm}.\ref{itm:decobj} for a stratum $Z$ of $\partial S_K^*$ contributing to $Z_0$ now becomes
\begin{equation} \label{deccoh0}
\restr{R^n i^*_0 i^* j_* \mu_{\ell}^K (V_{\lambda})}{Z} \simeq
\bigoplus_{\substack{p+q=n}} \mu_{\ell}^{\pi_0(K_0)}(\bigoplus_{\substack{\Psi \in \mathcal{P}_q^{(\lambda,0)}}} H^p(\Gamma_0, V^{0,q}_{\Psi})).
\end{equation}

We are interested in the weight-graded objects
\begin{equation} \label{gradcoh0}
\mbox{Gr}_k^{\mathbb{W}} \restr{R^n i^*_0 i^* j_* \mu_{\ell}^K(V_{\lambda})}{Z}\simeq\bigoplus_{\substack{p+q=n}} \mu_{\ell}^{\pi_0(K_0)} ( \bigoplus_{\substack{\Psi \in \mathcal{P}_q^{(\lambda,0)}}} H^p(\Gamma_0, \mbox{Gr}_k^{\mathbb{W}} V^{0,q}_{\Psi}) ).
\end{equation}
We are going to find a \emph{second} set of necessary conditions for these objects to be non-trivial, through a \emph{dévissage} which is \q{orthogonal} to the one described in Remark \ref{extarithm}.
\begin{construction} \label{extarithmbis}
The groups $\Gamma_{0,ss}:= \Gamma_0 \cap \mbox{SL}_{2}(F)$, resp. $\mbox{det} \ \Gamma_0$ are non-trivial subgroups of $\mbox{SL}_{2}(F)$, resp. $F^{\times}$, which are again arithmetic and torsion-free. In particular, $\mbox{det} \ \Gamma_0 \simeq \mathbb{Z}^{d-1} $ (as in the proof of Lemma \ref{nececond}). Moreover, $\Gamma_0$ can be written as an extension
\begin{equation*}
1 \rightarrow \Gamma_{0,ss} \rightarrow \Gamma_0 \xrightarrow{\mbox{\tiny{det}}} \mbox{det} \ \Gamma_0 \rightarrow 1,
\end{equation*}
so that the Lyndon-Hochschild-Serre spectral sequence applied to this extension
\begin{equation*}
E_2=H^r(\mbox{det} \ \Gamma_0, H^s(\Gamma_{0,ss}, V^{0,q}_{\Psi})) \Rightarrow H^{r+s}(\Gamma_0, V^{0,q}_{\Psi})
\end{equation*}
tells us that each space $H^{p}(\Gamma_0, V^{0,q}_{\Psi})$ is (non-canonically) isomorphic to a direct sum
\begin{equation*}
\bigoplus_{\substack{r+s=p}} N^{r,s}
\end{equation*}
where each $N^{r,s}$ is a subquotient of $H^r(\mbox{det} \ \Gamma_0, H^s(\Gamma_{0,ss}, V^{0,q}_{\Psi}))$. If $H^r(\mbox{det}
\ \Gamma_{0}, H^s(\Gamma_{0,ss}, V^{0,q}_{\Psi}))$ is zero for every $r$ or $H^s(\Gamma_{0,ss}, V^{0,q}_{\Psi})$ is zero for every $s$, then $H^{p}(\Gamma_0, V^{0,q}_{\Psi})$ is.
\end{construction}

For every integer $q \in \{0, \dots, 3d \}$, we know by \eqref{restrGL2} that $\mbox{det} \ \Gamma_0$ acts on $V^{0,q}_{\Psi}$, and \emph{a fortiori} on its subspace $H^0(\Gamma_{0,ss}, V^{0,q}_{\Psi})$, via multiplication by the character $\chi$ defined by
\begin{equation}\label{caracdet}
t \mapsto \prod\limits_{\sigma \in I_F^0} \sigma(t)^{k_{2,\sigma}} \cdot \prod\limits_{\sigma \in I_F^1} \sigma(t)^{-k_{2,\sigma}-2} \cdot \prod\limits_{\sigma \in I_F^2} \sigma(t)^{-k_{1,\sigma}-3} \cdot \prod\limits_{\sigma \in I_F^3} \sigma(t)^{-k_{1,\sigma}-3} 
\end{equation}
and the following lemma will allow us to identify the cohomology spaces corresponding to this action:

\begin{lemma} \label{trivbis} Let $\lambda=\lambda(k_1,k_2,c)$ be $(\kappa,0)$-Kostant parallel with respect to  a $q$-admissible decomposition $\Psi$ of $I_F$ (notation as in \ref{notadmpart}). Fix $s \in \{0, \dots, 3d\}$ and suppose that $H^s(\Gamma_{0,ss}, V^{0,q}_{\Psi})$ is non-zero. For every $r \in \{0, \dots, d-1 \}$,
\begin{equation*}
H^r(\mbox{\emph{det}}\ \Gamma_{0}, H^s(\Gamma_{0,ss}, V^{0,q}_{\Psi})) \neq \{ 0 \} \iff H^0(\mbox{\emph{det}}\ \Gamma_{0}, H^s(\Gamma_{0,ss}, V^{0,q}_{\Psi})) \neq \{ 0 \}
\end{equation*}
$\iff$ one of the following conditions is satisfied:
\begin{enumerate} [wide, labelwidth=!, labelindent=0pt]
\item $I_F=I_F^0$ and $k_2$ is parallel. In this case, $q=0$;
\item $I_F=I_F^1$ and $k_2$ is parallel. In this case, $q=d$;
\item $I_F=I_F^2 \sqcup I_F^3$ and such that $k_1$ is parallel. In this case, $q \in \{2d, \dots, 3d \}$.
\end{enumerate} 
\end{lemma}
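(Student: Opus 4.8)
The plan is to mimic the \emph{dévissage} already performed in Construction \ref{extarithm}, but now applied to the extension introduced in Construction \ref{extarithmbis}: the subgroup $\Gamma_{0,ss} \subset \SL_2(F)$ and the quotient $\mbox{det}\ \Gamma_0$, which is a free abelian group of rank $d-1$. By Construction \ref{extarithmbis}, under the hypothesis that $H^s(\Gamma_{0,ss}, V^{0,q}_{\Psi})\neq\{0\}$, the space $H^r(\mbox{det}\ \Gamma_0, H^s(\Gamma_{0,ss}, V^{0,q}_{\Psi}))$ is the cohomology of the free abelian group $\mbox{det}\ \Gamma_0$ acting on a nonzero vector space \emph{through the single character} $\chi$ of \eqref{caracdet} (here one uses that $\Gamma_{0,ss}$ acts through $\SL_2$, so the central torus $\mbox{det}\ \Gamma_0$ still acts by a scalar character on the $\SL_2(F)$-invariants, and more generally on its higher cohomology). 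So I would apply Lemma \ref{nultriv}: $H^r(\mbox{det}\ \Gamma_0, -)$ is nonzero for some $r$ (equivalently, for all $r\in\{0,\dots,d-1\}$, equivalently for $r=0$) if and only if the action of $\mbox{det}\ \Gamma_0$ through $\chi$ is trivial.

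The heart of the proof is then to translate the triviality of $\chi$ on $\mbox{det}\ \Gamma_0$ into the combinatorial conditions (1)--(3) of the statement. By Remark \ref{rmk:gennet} (applied to the neat arithmetic subgroup $\mbox{det}\ \Gamma_0\subset F^\times$), the action via $\chi: t\mapsto \prod_{\sigma\in I_F^0}\sigma(t)^{k_{2,\sigma}}\cdot\prod_{\sigma\in I_F^1}\sigma(t)^{-k_{2,\sigma}-2}\cdot\prod_{\sigma\in I_F^2}\sigma(t)^{-k_{1,\sigma}-3}\cdot\prod_{\sigma\in I_F^3}\sigma(t)^{-k_{1,\sigma}-3}$ is trivial if and only if the exponent tuple $(n_\sigma)_{\sigma\in I_F}$ lies in $\mathbb{Z}\cdot(1,\dots,1)$, i.e.\ there is an integer $\mu$ with $n_\sigma=\mu$ for all $\sigma$. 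Now I invoke the standing hypothesis that $\lambda$ is $(\kappa,0)$-Kostant parallel with respect to $\Psi$, i.e.\ the relations \eqref{Kostpar0} hold. I would combine \eqref{Kostpar0} with the constraint $n_\sigma\equiv$const: on $I_F^0$ we have $n_\sigma=k_{2,\sigma}$ and $k_{1,\sigma}+k_{2,\sigma}=\kappa$; on $I_F^1$, $n_\sigma=-k_{2,\sigma}-2$ and $k_{1,\sigma}-k_{2,\sigma}-2=\kappa$; on $I_F^2$, $n_\sigma=-k_{1,\sigma}-3$ and $-(k_{1,\sigma}-k_{2,\sigma}+4)=\kappa$; on $I_F^3$, $n_\sigma=-k_{1,\sigma}-3$ and $-(k_{1,\sigma}+k_{2,\sigma}+6)=\kappa$. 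A short case analysis, using dominance ($k_{1,\sigma}\ge k_{2,\sigma}\ge 0$) and the preliminary observation from the proof of Lemma \ref{cohunip0} (that $I_F^0\sqcup I_F^1$ nonempty forces $\kappa\ge -2$ while $I_F^2\sqcup I_F^3$ nonempty forces $\kappa\le -4$, so the two cannot coexist), then shows that a nonempty $I_F^0$ mixing with a nonempty $I_F^1$ is incompatible with $\mu$ being constant unless the $k$'s collapse; carefully, the constancy of $n_\sigma$ on $I_F^0$ alone says $k_2$ is parallel on $I_F^0$, on $I_F^1$ alone says $k_2$ is parallel on $I_F^1$, and on $I_F^2\sqcup I_F^3$ together (where the exponent formula is uniformly $-k_{1,\sigma}-3$) says $k_1$ is parallel on $I_F^2\sqcup I_F^3$. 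Cross-comparing across two nonempty parts of $\{I_F^0,I_F^1\}$ via \eqref{Kostpar0} forces a contradiction (the value $\mu$ would have to be simultaneously $k_{2,\sigma}$ and $-k_{2,\sigma'}-2$ with both $k_{2,\sigma},k_{2,\sigma'}\ge 0$, impossible since these have opposite sign behaviour), and similarly $I_F^0$ (or $I_F^1$) cannot meet $I_F^2\sqcup I_F^3$ by the sign/$\kappa$ obstruction. Hence exactly one of the three alternatives (1) $I_F=I_F^0$, (2) $I_F=I_F^1$, (3) $I_F=I_F^2\sqcup I_F^3$ can occur, with the stated parallelism of $k_2$, $k_2$, $k_1$ respectively; the assertion on the value of $q$ in each case is then immediate from the definition \eqref{admpart} of $q$-admissible decomposition ($q=0$, $q=d$, and $q\in\{2d,\dots,3d\}$ since $l(w^i_\sigma)=i$ and $I_F^0$ contributes $0$, $I_F^1$ contributes $d$, $I_F^2\sqcup I_F^3$ contributes between $2d$ and $3d$).

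Conversely, for the reverse implications I would check that in each of cases (1)--(3) the exponent tuple $(n_\sigma)$ is genuinely constant: in case (1) $n_\sigma=k_{2,\sigma}=\kappa_2$ (constant by parallelism of $k_2$); in case (2) $n_\sigma=-k_{2,\sigma}-2=-\kappa_2-2$ (constant); in case (3) $n_\sigma=-k_{1,\sigma}-3=-\kappa_1-3$ (constant by parallelism of $k_1$). Hence $\mbox{det}\ \Gamma_0$ acts trivially, and Lemma \ref{nultriv} (with $\mbox{det}\ \Gamma_0\cong\mathbb{Z}^{d-1}$, cohomological dimension $d-1$) gives $H^r\ne\{0\}$ for all $r\in\{0,\dots,d-1\}$ — in particular the vanishing-outside-$\{0,\dots,d-1\}$ and the equivalence of nonvanishing for any $r$, for all $r$, and for $r=0$ all follow at once. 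I anticipate the only delicate point is the bookkeeping of the case analysis eliminating the "mixed" decompositions: one must be careful that the hypothesis is $(\kappa,0)$-Kostant parallelism \emph{with respect to the given} $\Psi$, so that \eqref{Kostpar0} is available to compare the parts, and that one correctly uses dominance together with the sign-and-$\kappa$ bounds to rule out each cross term; but this is a finite, essentially mechanical check of the type already done in Lemma \ref{cohunip0}, so no real obstacle is expected.
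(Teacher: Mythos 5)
Your combinatorial core --- translating the triviality of the character $\chi$ of \eqref{caracdet} into the alternatives (1)--(3) via Remark \ref{rmk:gennet}, the relations \eqref{Kostpar0}, dominance, and the sign/$\kappa$ obstruction from the proof of Lemma \ref{cohunip0} --- is correct and agrees with the paper. The gap is in the step you dispose of in a parenthesis: the claim that $\mbox{det}\,\Gamma_0$ acts on $H^s(\Gamma_{0,ss}, V^{0,q}_{\Psi})$ \emph{through the single character} $\chi$ for all $s$. Note that $\mbox{det}\,\Gamma_0$ is a \emph{quotient} of $\Gamma_0$, not a central subgroup; its action on $H^s(\Gamma_{0,ss},-)$ for $s>0$ is defined by choosing a (generally non-central) lift $\tilde t\in\Gamma_0\subset\GL_2(F)$ and combining conjugation on $\Gamma_{0,ss}$ with the full $\tilde t$-action on $V^{0,q}_{\Psi}$. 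Writing $V^{0,q}_{\Psi}\vert_{\GL_2(F)}\simeq\bigl(\bigotimes_\sigma\Sym^{a_\sigma}\mbox{V}\bigr)\otimes(\chi\circ\det)$ as in \eqref{restrGL2}, the resulting action of $\mbox{det}\,\Gamma_0$ is $\chi$ \emph{twisted by} the induced action on $H^s(\Gamma_{0,ss},\bigotimes_\sigma\Sym^{a_\sigma}\mbox{V})$, and the latter is neither scalar nor trivial in general (for $s=d$ this is the familiar nontrivial action of units on the cohomology of a Hilbert--Blumenthal variety). Your appeal to Borel density only covers $s=0$. Since the hypothesis of Lemma \ref{nultriv} (action through a character) is not verified, neither direction of the biconditional follows: a non-scalar action can have nonzero invariants even when $\chi$ is nontrivial, and can have vanishing invariants even when $\chi$ is trivial.

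This is exactly the difficulty the paper's proof is built around (it opens with ``the point is to reduce oneself to the case where the action \dots remains semisimple''). The fix there is a two-step dévissage you omit: first pass to the finite-index subgroup $\tilde\Gamma\subset\mbox{det}\,\Gamma_0$ generated by the squares $(\gamma_i)^2$ of the generators of $\Gamma_{0,Z}$, whose elements admit \emph{central} liftings in $\Gamma_0$ and therefore genuinely act by a character on every $H^s(\Gamma_{0,ss},V^{0,q}_{\Psi})$, so that Lemma \ref{nultriv} and Remark \ref{rmk:gennet} apply to $\tilde\Gamma$; then descend to $\mbox{det}\,\Gamma_0$ through the finite quotient $\mathcal F=\mbox{det}\,\Gamma_0/\tilde\Gamma$ via the Lyndon--Hochschild--Serre spectral sequence and \cite[Prop.\ 6.1.10]{Wei94}, where the invariants are computed by the averaging idempotent $e=\frac1f\sum_i\chi(\phi_i)$, which is the identity precisely in the case where $\chi$ is trivial. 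You should incorporate both steps (or supply an independent proof that the $\mbox{det}\,\Gamma_0$-action is semisimple and scalar on the relevant cohomology); as written, the reduction to Lemma \ref{nultriv} does not go through.
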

\begin{proof}
The point is to reduce oneself to the case where the action of $\mbox{det} \ \Gamma_0$ on the spaces $H^s(\Gamma_{0,ss}, V^{0,q}_{\Psi})$, for $s >0$, remains semisimple. Now, if $[\Phi] \in H^s(\Gamma_{0,ss}, V^{0,q}_{\Psi})$ is the class of a $s$-cocycle 
\begin{equation*}
\Phi \in \mbox{Hom}_{L[\Gamma_{0,ss}]}(L[\Gamma_{0,ss}]^{s+1}, V^{0,q}_{\Psi}),
\end{equation*}
then, for $t \in \mbox{det} \ \Gamma_0$, the element $t.[\Phi] \in H^s(\Gamma_{0,ss}, V^{0,q}_{\Psi})$ equals the class of the morphism 
\begin{equation*}
t.\Phi \in \mbox{Hom}_{L[\Gamma_{0,ss}]}(L[\Gamma_{0,ss}]^{s+1}, V^{0,q}_{\Psi})
\end{equation*}
that to every $(t_0, \dots, t_s)$ associates $\chi(t)\Phi(\tilde{t}^{-1}(t_0, \dots, t_s)\tilde{t})$ (where $\tilde{t}$ is any lifting of $t$ in $\Gamma_0$, and $\chi$ is as in \eqref{caracdet}).

Consider now the subgroup of $\Gamma_0$ defined in \eqref{sgrpdiag}, which is free abelian, generated by
$\{\gamma_1, \dots, \gamma_{d-1} \}$. The elements $\{(\gamma_1)^2, \dots, (\gamma_{d-1})^2 \}$ generate a free abelian subgroup $\tilde{\Gamma}$ of $\mbox{det} \ \Gamma_0$, of rank $d-1$, each of whose elements has a central lifting in $\Gamma_0$. Then, for every $s$, $\tilde{\Gamma}$ still acts via the character $\chi$ on $H^s(\Gamma_{0,ss}, V^{0,q}_{\Psi})$. We can now apply Lemma \ref{nultriv} and Remark \ref{rmk:gennet} to $\tilde{\Gamma}$ and conclude that if $H^s(\Gamma_{0,ss}, V^{0,q}_{\Psi})$ is non-zero, then $H^r(\tilde{\Gamma}, H^s(\Gamma_{0,ss}, V^{0,q}_{\Psi})) \simeq H^0(\tilde{\Gamma}, H^s(\Gamma_{0,ss}, V^{0,q}_{\Psi}))^{\binom{d-1}{r}} \neq \{ 0 \}$ if and only if (remembering the definition of $\chi$) there exists an integer $\theta$ such that
\[
\left\{ \begin{array}{cc} 
k_{2,\sigma}=\theta & \forall \sigma \in I_F^0 \\
-k_{2,\sigma}-2=\theta & \forall \sigma \in I_F^1
\\
-k_{1,\sigma}-3=\theta & \forall \sigma \in I_F^2 \\
-k_{1,\sigma}-3=\theta & \forall \sigma \in I_F^3
\end{array}
\right.
\]
Now recall that $k_{1,\sigma} \geq k_{2,\sigma} \geq 0$: the above condition is then equivalent to the one in the statement. Remember that precisely under this condition, the character $\chi$ is trivial.
 
In order to finish the proof, put $\mathcal{F}:=\mbox{det}\ \Gamma_{0} / \tilde{\Gamma}$: it is a finite group, that we can assume non trivial, of a certain order $f$ (otherwise, there is nothing else to do). Let $\{\phi_1 \dots, \phi_f \}$ be a system of representatives of $\mathcal{F}$ inside $\mbox{det}\ \Gamma_{0}$ and denote by $e$ the endomorphism of multiplication by $\frac{1}{f}\sum\limits_{i=1}^f \chi(\phi_i)$. By considering the Lyndon-Hochschild-Serre spectral sequence associated to this quotient and by applying \cite[Prop. 6.1.10]{Wei94}, we see that 
\begin{equation*}
H^r(\mbox{det} \ \Gamma_{0}, H^s(\Gamma_{0,ss}, V^{0,q}_{\Psi})) \simeq H^0 \left( \mathcal{F},H^r(\tilde{\Gamma}, H^s(\Gamma_{0,ss}, V^{0,q}_{\Psi})) \right) \simeq e \cdot H^r(\tilde{\Gamma}, H^s(\Gamma_{0,ss}, V^{0,q}_{\Psi})).
\end{equation*} 
Now, if $e$ is not the zero endomorphism, then $e \cdot H^r(\tilde{\Gamma}, H^s(\Gamma_{0,ss}, V^{0,q}_{\Psi})) \simeq H^r(\tilde{\Gamma}, H^s(\Gamma_{0,ss}, V^{0,q}_{\Psi}))$, and the lemma is demonstrated. But in the case we are working in, $\chi$ is trivial, and $e$ is just the identity morphism.
\end{proof}

The \emph{third} and last set of necessary conditions for non-triviality of the cohomology of $\Gamma_0$ comes from general results on the cohomology of locally symmetric spaces. 

\begin{lemma} \label{restrcoh0}
The following statements hold.
\begin{enumerate} [wide, labelwidth=!, labelindent=0pt, label=(\arabic*)]
\item \label{restr1} The cohomology space $H^p(\Gamma_0, V^{0,q}_{\Psi})$ is trivial for every $p<0$ and all $p > 3d-2$.
\item \label{restr2} If the irreducible representation $V^{0,q}_{\Psi}$ is non-trivial as a $\mbox{\emph{SL}}^d_{2,L}$-representation, then $H^p(\Gamma_0, V^{0,q}_{\Psi})=\{ 0 \}$ for every $0 \leq p < d$.
\end{enumerate}
\end{lemma}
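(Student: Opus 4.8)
The statement concerns the arithmetic group $\Gamma_0 \subset \mathrm{GL}_2(F)$, which is commensurable to $\mathrm{GL}_2(\mathcal{O}_F)$, acting on the irreducible representation $V^{0,q}_{\Psi}$. Both parts are statements about cohomology of the locally symmetric space $Y_{\Gamma_0}$ attached to $\Gamma_0$ (more precisely, to the semisimple part $\Gamma_{0,ss}$) with coefficients in the local system determined by $V^{0,q}_{\Psi}$. The first plan is to use the decomposition from Construction \ref{extarithm}: the Lyndon-Hochschild-Serre spectral sequence for $1 \to \Gamma_{0,Z} \to \Gamma_0 \to \Gamma_0' \to 1$ reduces everything to cohomology of $\Gamma_{0,Z} \simeq \mathbb{Z}^{d-1}$ (which is concentrated in degrees $0,\dots,d-1$ by Lemma \ref{nultriv}) and of $\Gamma_0' \subset \mathrm{PGL}_2(F)$. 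The group $\Gamma_0'$ is an arithmetic subgroup of $\mathrm{Res}_{F|\mathbb{Q}}\mathrm{PGL}_{2,F}$, whose associated symmetric space is a product of $d$ copies of the hyperbolic plane $\mathbb{H}$, hence of real dimension $2d$; but $\Gamma_0'$ is not cocompact, so its virtual cohomological dimension is strictly smaller — by Borel-Serre it equals $2d - d = d$ (the rank of $\mathrm{Res}_{F|\mathbb{Q}}\mathrm{PGL}_{2,F}$ over $\mathbb{Q}$ being $d$, so one subtracts $d$ from $2d$). Combining $\mathrm{vcd}(\Gamma_0') = d$ with the range $0,\dots,d-1$ for $\Gamma_{0,Z}$ and the spectral sequence gives vanishing of $H^p(\Gamma_0, -)$ for $p > (d) + (d-1) = 2d-1$. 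This is weaker than $p > 3d-2$ claimed; so for $d \geq 1$ one has $2d-1 \leq 3d-2$ exactly when $d \geq 1$, with equality at $d=1$. Hence actually the bound $3d-2$ is \emph{not} optimal and the sharper $2d-1$ suffices — I would simply prove $H^p = 0$ for $p > 2d-1$, which a fortiori gives $p > 3d-2$. Vanishing for $p<0$ is trivial.

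For part \ref{restr2}, the plan is to invoke a vanishing theorem for cohomology of arithmetic groups with coefficients in a nontrivial irreducible representation. The relevant input is a theorem in the style of Li-Schwermer or the earlier results of Borel-Wallach: if $G'=\mathrm{Res}_{F|\mathbb{Q}}\mathrm{SL}_{2,F}$ has $\mathbb{Q}$-rank $r_0$ (here $r_0=d$, since $\mathrm{SL}_2$ splits over each real place... more precisely the $\mathbb{Q}$-rank of $\mathrm{Res}_{F|\mathbb{Q}}\mathrm{SL}_{2,F}$ is $d$ and the relevant codimension-type bound for the vanishing of cohomology below the middle degree in a nontrivial coefficient system is governed by the quantity $q_0 = \tfrac{1}{2}(\dim X_{G'} - \mathrm{rk}_{\mathbb{Q}} G')$), then $H^p(\Gamma_{0,ss}, V) = 0$ for $p < q_0$ whenever $V$ is a nontrivial irreducible $\mathrm{SL}_2^d$-representation. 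Here $\dim X_{G'} = 2d$ (product of $d$ hyperbolic planes) and $\mathrm{rk}_{\mathbb{Q}} = d$... this would give $q_0 = d/2$, which is too weak. Instead I would use the sharper statement: for $\mathrm{SL}_2$ over a totally real field, a nontrivial irreducible coefficient system forces the cuspidal/discrete-series contributions to sit in the middle degree $d$ (à la Harder), and the Eisenstein contributions below degree $d$ are controlled and vanish when the coefficient system is nontrivial at every place. So $H^p(\Gamma_{0,ss}, V^{0,q}_\Psi) = 0$ for $0 \leq p < d$. Then feed this through the Hochschild-Serre spectral sequence of Construction \ref{extarithmbis} (extension by $\mathrm{det}\,\Gamma_0$, which has cohomological dimension $d-1$): since $H^s(\Gamma_{0,ss}, V^{0,q}_\Psi)=0$ for $s<d$, every subquotient $N^{r,s}$ of $H^r(\mathrm{det}\,\Gamma_0, H^s(\Gamma_{0,ss}, V^{0,q}_\Psi))$ with $r+s = p < d$ must have $s < d$, hence vanishes; therefore $H^p(\Gamma_0, V^{0,q}_\Psi) = 0$ for $p < d$.

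The main obstacle I anticipate is pinning down the precise vanishing theorem for $H^p(\Gamma_{0,ss}, V)$ below degree $d$ with nontrivial $V$: one must be careful that "nontrivial as an $\mathrm{SL}_2^d$-representation" means nontrivial at \emph{every} archimedean place (so that the $(\mathfrak{g},K)$-cohomology of the trivial representation does not contribute, and Eisenstein classes built from the trivial representation on Levi quotients are excluded), and one needs the statement for a not-necessarily-cocompact, possibly reducible-at-infinity-but-irreducible-as-algebraic-group arithmetic lattice. I would handle this by reducing to $\mathrm{SL}_2(\mathcal{O}_F)$ up to finite index (cohomology with $L$-coefficients is insensitive to finite-index up to the relevant degrees by a transfer argument, or one works directly with the congruence subgroup), and then citing the Borel-Serre bound $\mathrm{vcd} = 2d - d = d$ together with Harder's Gauss-Bonnet / Franke-style description of $H^*$ in terms of automorphic forms: below the middle degree, only residual/Eisenstein classes appear, and these are trivial for a coefficient system that is regular (nontrivial) at each infinite place. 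For part \ref{restr1}, the subtlety is merely getting the cohomological dimension bounds right and checking the spectral sequence argument composes correctly; this is routine once $\mathrm{vcd}(\Gamma_0') = d$ is granted. I would state both bounds with the references to Borel-Serre and to the cohomological vanishing literature, and note explicitly that $3d-2$ is a convenient (non-sharp) bound sufficient for the later application.
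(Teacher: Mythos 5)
Your overall architecture (reduce to the symmetric space, use Borel--Serre for the upper bound, use a vanishing theorem for non-trivial coefficients plus the Lyndon--Hochschild--Serre dévissage of Construction \ref{extarithmbis} for part \ref{restr2}) is the right one, and is essentially the paper's, but both parts as written contain a genuine error. For part \ref{restr1}, you compute the $\mathbb{Q}$-rank of $\mathrm{Res}_{F|\mathbb{Q}}\mathrm{PGL}_{2,F}$ as $d$; it is $1$. The maximal $\mathbb{Q}$-split subtorus of the maximal torus $\mathrm{Res}_{F|\mathbb{Q}}\mathbb{G}_{m,F}$ is the diagonally embedded $\mathbb{G}_m$ --- what you are using is the $F$-rank, equivalently the real rank of $\mathrm{PGL}_2(\mathbb{R})^d$. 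Consequently $\mathrm{vcd}(\Gamma_0')=2d-1$, not $d$, and your ``sharper'' bound $H^p(\Gamma_0,-)=0$ for $p>2d-1$ is false: already for $d=2$ and trivial coefficients, $H^{3}(\Gamma_{0,ss},L)\neq 0$ whenever the Hilbert modular surface has at least two cusps (Lefschetz duality identifies $H^3$ with $H_1$ of the Borel--Serre compactification relative to its boundary, which surjects onto the reduced $H_0$ of the boundary), and $\pi(\Gamma_{0,ss})$ has finite index in $\Gamma_0'$, so $\mathrm{vcd}(\Gamma_0')=3>d=2$. With the correct rank your own two-step argument gives $(2d-1)+(d-1)=3d-2$, which is exactly the stated bound; the paper obtains it in one application of \cite[Thm. 11.4.4]{BS73} to $\Gamma_0$ inside $M_0=\mathrm{Res}_{F|\mathbb{Q}}\mathrm{GL}_{2,F}$, whose symmetric space $\mathcal{H}^d\times\mathbb{R}^{d-1}$ has dimension $3d-1$ and whose semisimple quotient again has $\mathbb{Q}$-rank $1$. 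So the bound $3d-2$ is in fact sharp, not a ``convenient non-sharp'' one.

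For part \ref{restr2}, the reduction to $\Gamma_{0,ss}$ via Construction \ref{extarithmbis} is correct and is what the paper does, but the key input is mishandled. You read ``non-trivial as an $\mathrm{SL}^d_{2,L}$-representation'' as ``non-trivial at every archimedean place''; it only means that $\bigotimes_\sigma\mathrm{Sym}^{n_\sigma}$ is not the trivial representation, i.e.\ $n_\sigma>0$ for at least one $\sigma$. The mixed case is unavoidable in the applications: in the proof of Proposition \ref{poidscoh0} the lemma is applied to $V^{0,q}_{\Psi}$ with $\Psi=(\varnothing,\varnothing,I_F^2,I_F^3)$, whose $I_F^3$-factors are $\mathrm{Sym}^{k_{1,\sigma}-k_{2,\sigma}}\mathrm{V}$ and may well be trivial at those places. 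Your automorphic sketch (cuspidal classes forced into middle degree, Eisenstein classes killed by a coefficient system that is ``non-trivial at each infinite place'') is the standard argument for the everywhere-non-trivial case and does not settle the mixed one, where the boundary cohomology in degrees $<d$ is exactly the delicate point. This is why the paper instead quotes the Higgs-bundle vanishing theorem \cite[Thm. 1.1(i)]{M-SSYZ15}, which requires only non-triviality of the local system. To complete your proof you would need either that reference or a genuine argument controlling the Eisenstein/boundary contributions in degrees $<d$ when the local system is trivial at some, but not all, places.
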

\begin{proof}
\begin{enumerate} [wide, labelwidth=!, labelindent=0pt]
\item Recall from Notation \ref{groupcompl} the group $M_0\simeq
\mbox{Res}_{F|\mathbb{Q}} \mbox{GL}_{2,F}$, and denote by $K_{\infty}$ a maximal compact subgroup of $M_0(\mathbb{R})$ (isomorphic to $\prod\limits_{\sigma \in I_F} \mbox{O}_2(\mathbb{R})$), by $A_{M_0}$ the group $S(\mathbb{R})^0$ (for $S$ the maximal $\mathbb{Q}$-split torus inside $Z(M_0)$) and by $\mathcal{H}$ the complex upper half plane. Then, the \emph{symmetric space} associated to $M_0$ is defined by $D:=M_0(\mathbb{R})/K_{\infty}A_{M_0} \simeq \mathcal{H}^d \times \mathbb{R}^{d-1}$, and every $V^{0,q}_{\Psi}$ (as a $M_{0,L}$-representation) defines a local system $\mathbb{V}^{0,q}_{\Psi}$ on $X_{\Gamma_0}:=D/\Gamma_0$ such that $H^p(\Gamma_0, V^{0,q}_{\Psi}) \simeq H^p(X_{\Gamma_0},\mathbb{V}^{0,q}_{\Psi})$ for every $p$. Let now $R(\cdot)$, resp. $\mbox{r}_{\mathbb{Q}}(\cdot)$ denote the radical, resp. the $\mathbb{Q}$-rank of a $\mathbb{Q}$-algebraic group. The statement then follows from \cite[Thm. 11.4.4]{BS73}, taking into account that $\mbox{dim}(D)-\mbox{r}_{\mathbb{Q}}(M_0/R(M_0))=3d-2$. 

\item Recall the group $\Gamma_{0,ss}$ defined in Construction \ref{extarithmbis}, define the (complex analytic, connected) Hilbert-Blumenthal variety $X_{\Gamma_{0,ss}}$ as $\Gamma_{0,ss} \backslash \mathcal{H}^d$ and abusively also denote by $\mathbb{V}^{0,q}_{\Psi}$ the local system on $X_{\Gamma_{0,ss}}$ induced by the restriction of the representation $V^{0,q}_{\Psi}$ to $M_{0,L}^{\mbox{\tiny{der}}} \simeq (\mbox{Res}_{F|\mathbb{Q}} \mbox{SL}_{2,F})_L \simeq \mbox{SL}^d_{2,L}$. Then, for every $p$, we have $H^p(\Gamma_{0,ss}, V^{0,q}_{\Psi}) \simeq H^p(X_{\Gamma_{0,ss}},\mathbb{V}^{0,q}_{\Psi})$. The statement now follows from the fact that $H^p(X_{\Gamma_{0,ss}},\mathbb{V}^{0,q}_{\Psi})=\{ 0 \}$ for every $0 \leq p < d$ if $\mathbb{V}^{0,q}_{\Psi}$ is non-trivial (\cite[Thm. 1.1(i)]{M-SSYZ15}) and by employing the considerations at the end of Construction \ref{extarithmbis}. 
\end{enumerate}
\end{proof}

\subsubsection{\textbf{Computation of weights along the Siegel strata.}}
We can finally describe the weights appearing in the degeneration of the canonical construction along the Siegel strata, in the cohomological degrees which we will need in the sequel:

\begin{proposition}\label{poidscoh0}
Let $V_{\lambda}$ be the irreducible $L$-representation of $G_L$ of highest weight $\lambda=\lambda((k_{1,\sigma}, k_{2,\sigma})_{\sigma}, c)$ and $Z$ a stratum of $\partial S_K^*$ which contributes to $Z_0$. Adopt Notation \ref{notadmpart} and the notation of Definition \ref{corang}. 
\begin{enumerate} [wide, labelwidth=!, labelindent=0pt]
\item Let $n < 0$ or $n>6d-2$. Then the cohomology sheaf $\restr{R^n i^*_0 i^* j_* \mu_{\ell}^K(V_{\lambda})}{Z}$ is zero.
\item Let $0 \leq n < d$. Then the cohomology sheaf $\restr{R^n i^*_0 i^* j_* \mu_{\ell}^K(V_{\lambda})}{Z}$ can be non-zero only if $k_1 = \underline{\kappa_0}=k_{2}$. In this case, 
\begin{equation*}
\restr{R^n i^*_0 i^* j_* \mu_{\ell}^K(V_{\lambda})}{Z} \simeq \mu_{\ell}^{\pi_0(K_0)}(H^n(\Gamma_0, V^{0,0}_{I_F^0}))
\end{equation*}
is pure of weight $w(\lambda)-2d\kappa_0$. If $n=0$, then it is non-zero.
\item Let $n \in \{d, \dots, 2d-1 \}$. Then the cohomology sheaf $\restr{R^n i^*_0 i^* j_* \mu_{\ell}^K(V_{\lambda})}{Z}$ can be non-zero only if  $k_1=\underline{\kappa_1}$ and $k_2=\underline{\kappa_2}$. In this case, 
\begin{equation}
{R^n i^*_0 i^* j_* \mu_{\ell}^K(V_{\lambda})}{Z} \simeq \mu_{\ell}^{\pi_0(K_0)}(H^n(\Gamma_0, V^{0,0}_{I_F^0}))
\end{equation}
is pure of weight $w(\lambda)-d(\kappa_1+\kappa_2)$. If $\kappa_1 \neq \kappa_2$ and $n=d$, then it is non-zero. 
\item Let $n \in \{2d, \dots, 3d-1 \}$. Then the cohomology sheaf $\restr{R^n i^*_0 i^* j_* \mu_{\ell}^K(V_{\lambda})}{Z}$ can be non-zero only if $k_1=\underline{\kappa_1}$ and $k_2=\underline{\kappa_2}$. In this case, it is isomorphic to 
\begin{equation*}
\mu_{\ell}^{\pi_0(K_0)}(H^n(\Gamma_0, V^{0,0}_{I_F^0})) \oplus \mu_{\ell}^{\pi_0(K_0)}(H^{n-d}(\Gamma_0, V^{0,d}_{I_F^1})),
\end{equation*}
where the first factor is isomorphic to 
\begin{equation*}
\mbox{\emph{Gr}}_{w(\lambda)-d(\kappa_1+\kappa_2)}^{\mathbb{W}} \restr{R^n i^*_0 i^* j_* \mu_{\ell}^K(V_{\lambda})}{Z}
\end{equation*}
and the second one to 
\begin{equation*}
\mbox{\emph{Gr}}_{w(\lambda)+2d-d(\kappa_1-\kappa_2)}^{\mathbb{W}} \restr{R^n i^*_0 i^* j_* \mu_{\ell}^K(V_{\lambda})}{Z}
\end{equation*}
If $n=2d$, then the second factor is non-zero. 
\end{enumerate}
\end{proposition}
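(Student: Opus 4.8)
The plan is to prove Proposition \ref{poidscoh0} by a systematic application of the tools assembled in Subsection \ref{critexistence}, combining Pink's theorem (Thm. \ref{thm:PinkThm}) with the weight computation of Lemma \ref{poids0} and the vanishing statements of Lemmas \ref{cohunip0}, \ref{trivbis} and \ref{restrcoh0}. First, I would fix the decomposition coming from Pink's theorem and the Kostant decomposition \eqref{isocohunip0}: Thm. \ref{thm:PinkThm}.\ref{itm:decobj} together with \eqref{arithmisocohunip0} gives
\begin{equation*}
\restr{R^n i^*_0 i^* j_* \mu_{\ell}^K(V_{\lambda})}{Z} \simeq \bigoplus_{p+q=n} \mu_{\ell}^{\pi_0(K_0)}\Bigl(\bigoplus_{\Psi \in \mathcal{P}_q} H^p(\Gamma_0, V^{0,q}_{\Psi})\Bigr),
\end{equation*}
and by the discussion following Lemma \ref{cohunip0} (summarized in \eqref{deccoh0}) only the $(\lambda,0)$-admissible decompositions $\Psi \in \mathcal{P}_q^{(\lambda,0)}$ can contribute. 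So I reduce at once to controlling, for $0 \le q \le 3d$, which $V^{0,q}_{\Psi}$ survive and in which cohomological degrees $p$.

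Next I would organize the argument by the range of $n$. For statement (1), the bound $p > 3d-2$ is excluded by Lemma \ref{restrcoh0}.\ref{restr1}, the bound $p < 0$ likewise, and $q$ ranges only over $\{0, \dots, 3d\}$ since $W_{0,L}$ has cohomological dimension $3d$; hence $n = p+q$ is confined to $\{0, \dots, 6d-2\}$. For statements (2) and (3), i.e. $0 \le n < 2d$, I would argue that the constraint $p < d$ (forced once $q \ge d$, and automatic when $n < d$) together with Lemma \ref{restrcoh0}.\ref{restr2} forces $V^{0,q}_{\Psi}$ to be trivial as an $\mathrm{SL}_{2,L}^d$-representation; consulting \eqref{restrGL2}, triviality of every symmetric-power factor forces each relevant exponent to vanish, which — combined with $(\kappa,0)$-Kostant parallelism (Lemma \ref{cohunip0}, Def. \ref{triv0}) and the sign observations in the proof of Lemma \ref{cohunip0} — pins down $\Psi = I_F^0$ and forces $k_1, k_2$ to be parallel (the precise conclusion — $k_1 = k_2 = \underline{\kappa_0}$ for $n < d$, versus $k_1 = \underline{\kappa_1}$, $k_2 = \underline{\kappa_2}$ possibly distinct for $d \le n < 2d$ — follows by distinguishing whether $q = 0$ suffices or $q = d$ is needed to reach $n$ with $p < d$). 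The weight is then read off from Lemma \ref{poids0} specialized to $\Psi = I_F^0$ (giving $w(\lambda) - \sum_\sigma(k_{1,\sigma}+k_{2,\sigma}) = w(\lambda) - 2d\kappa_0$, resp. $w(\lambda)-d(\kappa_1+\kappa_2)$), and purity is automatic because a single graded piece occurs. The non-vanishing claims at $n=0$ (resp. $n=d$, $\kappa_1 \ne \kappa_2$) come from Lemma \ref{nultriv} applied to $\Gamma_{0,Z}$ — $H^0(\Gamma_0, V^{0,0}_{I_F^0}) \ne 0$ since the action is then trivial — together with Lemma \ref{trivbis}.(1) (resp. (2)), which records exactly that $\Psi = I_F^0$ with $k_2$ parallel (resp. $\Psi = I_F^1$ with $k_2$ parallel) gives a nonzero contribution, and the exactness of $\mu_{\ell}^{\pi_0(K_0)}$.

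For statement (4), the range $2d \le n < 3d$, now $q$ can be $0$, $d$ or $2d$ (with $p = n, n-d, n-2d$ respectively, all $\ge 0$), but the contributions with $q = 2d$ would require $\Psi$ with $I_F^2 \sqcup I_F^3 = I_F$, and by Lemma \ref{restrcoh0}.\ref{restr1} we need $p = n \le 3d - 2$, i.e. $n \le 3d-2$; more to the point the corresponding $V^{0,2d}_{\Psi}$ contributes only if $k_1$ is parallel (Lemma \ref{trivbis}.(3)), but then one checks via Lemma \ref{cohunip0}.(2) that the relevant degree is $q \ge 2d$ and $p$-degrees are too high to reach $n < 3d$ unless — here I would verify that in fact the $q=2d$ piece does not survive in this range, leaving only $q \in \{0, d\}$. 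So the sheaf is the sum of a $q=0$ term $\mu_{\ell}^{\pi_0(K_0)}(H^n(\Gamma_0, V^{0,0}_{I_F^0}))$ and a $q=d$ term $\mu_{\ell}^{\pi_0(K_0)}(H^{n-d}(\Gamma_0, V^{0,d}_{I_F^1}))$, each forcing $k_1, k_2$ parallel as before; their weights are $w(\lambda)-d(\kappa_1+\kappa_2)$ (from Lemma \ref{poids0} with $\Psi=I_F^0$) and $w(\lambda) - [\sum_\sigma(k_{1,\sigma}-k_{2,\sigma}-2)] = w(\lambda) + 2d - d(\kappa_1-\kappa_2)$ (from Lemma \ref{poids0} with $\Psi = I_F^1$), and since these two weights differ the two summands are genuinely the indicated graded pieces. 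Non-vanishing of the second factor at $n = 2d$ follows because then $p = n - d = d$ is the top degree for $\Gamma_{0,ss}$, so Lemma \ref{nultriv}/Lemma \ref{trivbis}.(2) gives $H^d(\Gamma_0, V^{0,d}_{I_F^1}) \ne 0$ when $k_2$ is parallel.

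The main obstacle I anticipate is statement (4): correctly bookkeeping which of the three possible $q$-values actually contributes in the narrow window $2d \le n < 3d$, and in particular ruling out the $q = 2d$ (i.e. $I_F = I_F^2 \sqcup I_F^3$) contribution — this requires carefully combining the degree bound from Lemma \ref{restrcoh0}.\ref{restr1} (cohomology of $\Gamma_0$ vanishes above $3d-2$), the constraint on $q$ from the admissible-decomposition count \eqref{admpart}, and the parallelism forced by Lemma \ref{trivbis}.(3), and checking that these are jointly incompatible with $p + q = n < 3d$. The weight computations themselves and the purity/non-vanishing assertions are then routine once the surviving summands have been isolated.
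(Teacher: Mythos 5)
Your treatment of the vanishing statements, the parallelism constraints and the weight computations follows essentially the same route as the paper: Pink's decomposition restricted to $(\lambda,0)$-admissible decompositions, Lemma \ref{trivbis} and Lemma \ref{cohunip0} to force $k_1$, $k_2$ parallel and to confine $q$ to $\{0,d\}$ (resp. to rule out $q\geq 2d$ via Lemma \ref{restrcoh0}.\ref{restr2} and the $\mathrm{Sym}^{k_{1,\sigma}+k_{2,\sigma}+2}$ factors in \eqref{restrGL2}), and Lemma \ref{poids0} for the weights. That part of your sketch is sound, modulo tidying the bookkeeping in case (4).

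There is, however, a genuine gap in your non-vanishing arguments for $n=d$ (with $\kappa_1\neq\kappa_2$) and for the second summand at $n=2d$. You invoke Lemma \ref{nultriv} and Lemma \ref{trivbis} to conclude that $H^d(\Gamma_0, V^{0,0}_{I_F^0})\neq 0$, resp. $H^d(\Gamma_0, V^{0,d}_{I_F^1})\neq 0$. But Lemma \ref{nultriv} only concerns free abelian groups acting through a character, and Lemma \ref{trivbis} is explicitly conditional: it relates $H^r(\det\Gamma_0, H^s(\Gamma_{0,ss}, V))$ to $H^0(\det\Gamma_0, H^s(\Gamma_{0,ss}, V))$ \emph{under the hypothesis that} $H^s(\Gamma_{0,ss}, V)\neq 0$. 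Neither lemma establishes the non-vanishing of $H^d(\Gamma_{0,ss}, V)$ for the non-trivial $\mathrm{SL}_{2,L}^d$-representations $V$ occurring here (note also that $d$ is not the top cohomological degree of $\Gamma_{0,ss}$, whose virtual cohomological dimension is $2d-1$, so there is no "top-degree" shortcut). This is exactly the content of Proposition \ref{poidsapp} in the paper, whose proof requires the dimension formula $\dim H^d(\Gamma_{0,ss}, \bigotimes_{\sigma}\mathrm{Sym}^{\tilde\kappa}\mathrm{V}) = h+\delta$ with $h>0$ from the cohomology of Hilbert modular varieties (and, for the finer statement about the local rank of the second summand, dimension formulas for spaces of Hilbert cusp forms together with a parity hypothesis). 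Without this input your argument only shows that the summands in question \emph{may} be non-zero, not that they \emph{are}; since the "do appear" clauses of Theorem \ref{thm:mainthm} rest on precisely these non-vanishing statements, the gap is not cosmetic.
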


\begin{proof}
We begin by proving the necessary conditions for the non-vanishing of the cohomology sheaves. 
\begin{enumerate} [wide, labelwidth=!, labelindent=0pt]
\item Clear from Remark \ref{restrcoh0}.\ref{restr1} and from the fact that the representations $V^{0,q}_{\Psi}$ are trivial for $q > 3d$. 
\item The isomorphisms \eqref{deccoh0} and Lemma \ref{trivbis} (taking into account the considerations at the end of Construction \ref{extarithmbis}) imply that, in order to have non-zero cohomology objects in degree $0 \leq n <d$, $k_2$ has to be parallel and that $q$ can only take the value $0$. On the other hand, adding the Kostant-parallelism conditions imposed by Lemma \ref{cohunip0}, we obtain that $k_1$ has to be parallel, too. Moreover, by hypothesis, we are in the case $p \in \{0, \dots, d-1 \}$, but in this interval, by Lemma \ref{restrcoh0}.\ref{restr2}, $V^{0,0}_{I_F^0}$ can have non-trivial cohomology objects only if it is the trivial $\mbox{SL}^d_{2,L}$-representation. Now, looking at the description in \eqref{restrGL2}, we see that this is the case if and only if $k_1$ and $k_2$ are equal. 

\item The isomorphisms \eqref{deccoh0} and Lemma \ref{trivbis} (taking into account the considerations at the end of Construction \ref{extarithmbis}) imply that, in order to have non-zero cohomology objects in degree $d \leq n <2d$, $k_2$ has to be parallel and that $q$ can only take the values $0$ or $d$. Again, the Kostant-parallelism conditions imposed by Lemma \ref{cohunip0} imply that $k_1$ has to be parallel, too. Now if $q=d$, then $p \in \{0, \dots, d-1 \}$, and in this interval, by Lemma \ref{restrcoh0}.\ref{restr2}, $V^{0,d}_{I_F^1}$ can have non-trivial cohomology objects only if it is the trivial $\mbox{SL}^d_{2,L}$-representation; but the description in \eqref{restrGL2} shows that this is never the case. The only remaining possibility is then $q=0$ and $p \in \{d, \dots, 2d-1 \}$. 

\item Arguing as above, we see that $k_1$ and $k_2$ have to be parallel, and that $q$ can only take values in $\{ 0, d, 2d, \dots, 3d \}$. The cases $q=0$ and $q=d$ give the two summands in the statement.  If $2d \leq q \leq 3d-1$, then the fact that $p \in \{0, \dots, d-1 \}$ and Lemma \ref{restrcoh0}.\ref{restr2} imply that the spaces $H^p(\Gamma_0, V^{0,q}_{\Psi})$ can give non-trivial contributions to the cohomology objects if and only if $V^{0,q}_{\Psi}$ is the trivial $\mbox{SL}^d_{2,L}$-representation. However, this is never the case, by the description in \eqref{restrGL2} (remember that in this case, $\Psi$ is of the form $(\varnothing, \varnothing, I_F^2 \neq \varnothing, I_F^3) $). 
\end{enumerate}

Finally, in all cases, the statements about weight-graded objects follow from Remark \ref{cohunip0} and from the isomorphisms \eqref{gradcoh0}, while the non-triviality statements are consequences of the following proposition.
\end{proof}

\begin{proposition} \label{poidsapp}
Let $\lambda$, $V_{\lambda}$ and $Z$ be as in Proposition \ref{poidscoh0}. If $k_1=\underline{\kappa_1}$ and $k_2 = \underline{\kappa_2}$, then:
\begin{enumerate}
\item if $\kappa_1=\kappa_2$, then the lisse $\ell$-adic sheaf $\mu_{\ell}^{\pi_0(K_0)}(H^0(\Gamma_0, V^{0,0}_{I_F^0}))$ on $Z$ is non-zero;
\item if $\kappa_1 \neq \kappa_2$, then the lisse $\ell$-adic sheaf $\mu_{\ell}^{\pi_0(K_0)}(H^d(\Gamma_0, V^{0,0}_{I_F^0}))$ on $Z$ is non-zero;
\item the lisse $\ell$-adic sheaf $\mu_{\ell}^{\pi_0(K_0)}(H^d(\Gamma_0, V^{0,d}_{I_F^1}))$ on $Z$ is non-zero. If moreover $\kappa_1$ and $\kappa_2$ have the same parity\footnote{\label{footnoteparity}This restriction on parity is necessary in order to apply the results from \cite{Fre90}, which in turn depend on the formulae for the dimension of certain spaces of cusp forms proved in \cite{Shi63}. By \cite[Note 11, pag. 63]{Shi63}, it is possible that these formulae could admit a suitable generalisation, such that the hypothesis on parity could be removed.}, it is locally of dimension $> h$, where $h:=\vert \Gamma_{0,ss} \backslash \mathbb{P}^1(F) \vert$ is the (strictly positive) number of cusps of the (complex analytic, connected) Hilbert-Blumenthal variety $X_{\Gamma_{0,ss}}$ introduced in the proof of Lemma \ref{restrcoh0}.\ref{restr2}.
\end{enumerate}
\end{proposition}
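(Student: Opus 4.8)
The point of Proposition \ref{poidsapp} is to produce \emph{non-zero} classes in group cohomology of $\Gamma_0$, and then push them through $\mu_{\ell}^{\pi_0(K_0)}$; since the canonical construction functor is exact and faithful (it sends non-zero representations to non-zero sheaves on the stratum $Z$, via Remark \ref{rmk:explpink}.\ref{itm:foncstrat}), it suffices to prove non-vanishing of the abstract cohomology spaces $H^p(\Gamma_0, V^{0,q}_{\Psi})$ in the relevant degrees. The plan is therefore to reduce each of the three statements to a concrete statement about the cohomology of an arithmetic subgroup of $\operatorname{GL}_2(F)$ (resp. $\operatorname{SL}_2(F)$), and to identify that cohomology with spaces built out of Hilbert modular forms.

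\textbf{The three cases.} First I would treat $(1)$: when $\kappa_1=\kappa_2$, the $\operatorname{SL}_2^d$-representation underlying $V^{0,0}_{I_F^0}$ is trivial (look at \eqref{restrGL2} with $\Psi = I_F^0$), so $\Gamma_{0,ss}$ acts trivially on it and, by the analysis at the end of Construction \ref{extarithmbis}, $\operatorname{det}\Gamma_0$ acts through the trivial character; hence $H^0(\Gamma_0, V^{0,0}_{I_F^0}) = V^{0,0}_{I_F^0} \neq \{0\}$, and the corresponding sheaf is non-zero. For $(2)$, with $\kappa_1 \neq \kappa_2$: the $\operatorname{SL}_2^d$-representation $W := V^{0,0}_{I_F^0}|_{\operatorname{SL}_2^d}$ is $\bigotimes_{\sigma}\operatorname{Sym}^{\kappa_1-\kappa_2}\mathrm{V}$, a non-trivial irreducible. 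I would invoke the Eichler--Shimura / Matsushima-type isomorphism for Hilbert--Blumenthal varieties — precisely the input behind \cite[Thm. 1.1]{M-SSYZ15} cited in Lemma \ref{restrcoh0}.\ref{restr2} — which identifies $H^d(\Gamma_{0,ss}, W)$ with (a sum over embeddings into $\mathbb C$ of) the space of holomorphic Hilbert cusp forms of the corresponding parallel weight plus its complex conjugate; this space is non-zero because the weights $(\kappa_1-\kappa_2+2, \dots)$ are $\ge 2$ and cusp forms of such weight exist (Shimura's dimension formulae, \cite{Shi63}). Then the Hochschild--Serre spectral sequence of $1 \to \Gamma_{0,ss} \to \Gamma_0 \to \operatorname{det}\Gamma_0 \to 1$ from Construction \ref{extarithmbis}, together with Lemma \ref{trivbis} (which guarantees, under $k_1$ parallel, that $\operatorname{det}\Gamma_0$ acts through a character that becomes trivial on a finite-index subgroup, so that its $H^0$ is non-zero), degenerates enough to give $H^d(\Gamma_0, V^{0,0}_{I_F^0}) \neq \{0\}$ in the lowest contributing degree $d$.

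\textbf{Case $(3)$ and the main obstacle.} For $(3)$ the representation $V^{0,d}_{I_F^1}|_{\operatorname{SL}_2^d}$ is $\bigotimes_\sigma \operatorname{Sym}^{\kappa_1+\kappa_2+2}\mathrm{V}$ (from \eqref{restrGL2} with $\Psi = I_F^1$), always non-trivial, of parallel weight $\kappa_1+\kappa_2+4 \ge 4$; the non-vanishing of $H^d(\Gamma_{0,ss}, V^{0,d}_{I_F^1})$ then follows as in $(2)$ from existence of Hilbert cusp forms of that weight. The delicate extra assertion is the \emph{lower bound on the dimension}: that the relevant space is locally of dimension strictly bigger than the number of cusps $h = |\Gamma_{0,ss}\backslash \mathbb P^1(F)|$. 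I expect this to be the real obstacle. The plan is to compare the full (parabolic + Eisenstein) cohomology $H^d(X_{\Gamma_{0,ss}}, \mathbb V^{0,d}_{I_F^1})$ with its cuspidal part: the quotient is governed by boundary/Eisenstein cohomology, whose dimension is controlled by the cusps, giving an upper bound of the form (constant)$\cdot h$, while a \emph{lower} bound on the cuspidal part — exceeding $h$ — comes from the explicit dimension formulae for spaces of Hilbert cusp forms of parallel weight $\ge 4$ (here is exactly where the parity hypothesis on $\kappa_1,\kappa_2$ enters, since those formulae in \cite{Shi63} and their use in \cite{Fre90} are stated for parallel weights of a fixed parity; cf. Footnote \ref{footnoteparity}). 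One then feeds this through $\mu_\ell^{\pi_0(K_0)}$, whose effect on fibres is, up to the finite-group quotient of Remark \ref{c+}, dimension-preserving, to conclude the sheaf is locally of rank $> h$. I would organize the write-up so that the "soft" non-vanishing parts $(1)$, $(2)$ and the first half of $(3)$ are dispatched quickly via Hochschild--Serre plus \cite{M-SSYZ15}, and the dimension count is isolated as the one place where the automorphic input (Shimura, Freitag) and the parity restriction are genuinely used.
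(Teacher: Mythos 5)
Your part (1), and your reduction of parts (2)--(3) to the non-vanishing of $H^d(\Gamma_{0,ss},-)$ via the Hochschild--Serre sequence of Construction \ref{extarithmbis} together with the triviality of the determinant character from Lemma \ref{trivbis}, agree with the paper. The gap lies in how you then prove that $H^d(\Gamma_{0,ss},W)$ is non-zero. You identify this space with holomorphic Hilbert cusp forms plus their conjugates and assert that cusp forms of the relevant parallel weight exist because that weight is $\geq 2$. This fails on two counts. First, $H^d$ of the \emph{open} Hilbert--Blumenthal variety also contains an Eisenstein/boundary part, and it is precisely this part that the paper exploits: by \cite[Thm.~1.1(iv)]{M-SSYZ15}, $\dim H^d(\Gamma_{0,ss},\bigotimes_{\sigma}\mathrm{Sym}^{\tilde\kappa}\mathrm{V})=h+\delta$ with $h=\vert\Gamma_{0,ss}\backslash\mathbb{P}^1(F)\vert>0$ and $\delta\geq 0$, so the non-vanishing in (2) and in the first half of (3) is free and requires no cusp forms whatsoever. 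Second, the existence of a non-zero Hilbert cusp form of parallel weight $\kappa_1-\kappa_2+2$ is not automatic: that weight can be odd (e.g. $\kappa_1-\kappa_2=1$), in which case the dimension formulae of \cite{Shi63} and \cite{Fre90} that you invoke are unavailable --- this is exactly why the paper confines the parity hypothesis to the second half of (3) only. As written, your argument for (2) breaks down in the odd-weight case, and is in any event not the route the paper takes.

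For the second half of (3) you correctly locate the need for a positive lower bound on the dimension $h_{I_F}$ of cusp forms of parallel weight $2\kappa+4$, but ``comes from the explicit dimension formulae'' is not yet a proof: Freitag's formula reads $h_{I_F}=\mathrm{vol}(X_{\Gamma_{0,ss}})(2\kappa+3)^d+L_{cusp}$ with a correction term $L_{cusp}$ of a priori unknown sign. The paper's actual argument is to show $L_{cusp}=0$ when $d$ is odd, and, when $d$ is even, to compare with the Euler characteristic $\chi(\bar{X}_{\Gamma_{0,ss}},\mathcal{O})>0$ of a smooth toroidal compactification via Hirzebruch--Riemann--Roch (\cite[Prop.~7.10]{M-SSYZ15}, \cite[Ch.~II, Thm.~4.8]{Fre90}) in order to eliminate $L_{cusp}$. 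Note also that you propose to show that the \emph{cuspidal} part alone has dimension $>h$; this is stronger than necessary and not what the paper proves: since the total dimension is $h+h_{I_F}+\delta'$, it suffices to produce a single non-zero cusp form, i.e. $h_{I_F}>0$.
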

\begin{proof} If $\kappa_1=\kappa_2$, then the spectral sequence considered in Construction \ref{extarithmbis} shows that the space $H^0(\Gamma_0, V)$ is isomorphic to $H^0(\mbox{det}\ \Gamma_{0}, H^0(\Gamma_{0,ss}, V))$, which, by the proof of Lemma \ref{trivbis}, is in turn isomorphic to $H^0(\Gamma_{0,ss}, V)$. Moreover, the description in \eqref{restrGL2} tells us that here, $V$ is the trivial $\mbox{SL}^d_{2,L}$-representation, and thus, it is a 1-dimensional $L$-vector space. This shows point (1).

Assume then to be in one of the two following cases: either $\kappa_1 \neq \kappa_2$ and $V$ is the irreducible $\mbox{SL}^d_{2,L}$-representation $V^{0,0}_{I_F^0}$ (which in this case is non-trivial), or $V:=V^{0,d}_{I_F^1}$ (which by the description in \eqref{restrGL2} is then isomorphic to $\bigotimes\limits_{\sigma \in I_F} \mbox{Sym}^{\kappa_1+\kappa_2+2}\mbox{V}$, where V is the standard $2$-dimensional $L$-representation of $\mbox{SL}_{2,L}$, so that $V^{0,d}_{I_F^1}$ is never trivial). 

In both cases, the same Remark \ref{extarithmbis} and Remark \ref{restrcoh0} show that the space $H^d(\Gamma_0, V)$ is isomorphic to $H^0(\mbox{det}\ \Gamma_{0}, H^d(\Gamma_{0,ss}, V))$, which by the hypothesis on $k_1$ and $k_2$ and by the proof of Lemma \ref{trivbis} is in turn isomorphic to $H^d(\Gamma_{0,ss}, V)$. Now, for every integer $\tilde{\kappa}>0$, \cite[Thm. 1.1(iv)]{M-SSYZ15} shows that $\mbox{dim} \ H^d(\Gamma_{0,ss}, \bigotimes\limits_{\sigma \in I_F} \mbox{Sym}^{\tilde{\kappa}}\mbox{V})=h+\delta(\Gamma_{0,ss}, \tilde{\kappa})$, where $\delta(\Gamma_{0,ss}, \tilde{\kappa})$ is a non-negative integer which depends on $\Gamma_{0,ss}$ and on $\tilde{\kappa}$. 
This is enough to show $(2)$ and the first half of $(3)$.

To finish the proof of $(3)$, suppose that $\kappa_1$ and $\kappa_2$ have the same parity and put $\kappa_1 + \kappa_2=:2 \kappa$. We will show that, in this case, $\delta:=\delta(\Gamma_{0,ss}, 2\kappa+2)>0$. Actually, \cite[Thm. 1.1 (iv)]{M-SSYZ15} shows that, more precisely, $\delta=h_{I_F}+\delta^\prime$, where $\delta^\prime$ is a certain positive integer and $h_{I_F}$ is the dimension of the space of \emph{cusp forms} of (parallel) weight $2\kappa+4$ with respect to the group $\Gamma_{0,ss}$. Thus, in order to conclude, it is enough to show that this dimension is strictly positive.

Let $X_{\Gamma_{0,ss}}$ be the complex analytic Hilbert-Blumenthal variety associated to $\Gamma_{0,ss}$. According to \cite[Chap. II, Thm. 3.5]{Fre90}, we have
\begin{equation} \label{dimcuspvol}
h_{I_F}=\mbox{vol}(X_{\Gamma_{0,ss}})(2\kappa+3)^d+L_{cusp},
\end{equation} 
where $L_{cusp}$ is a (not necessarily positive) integer which does not depend on $\kappa$ (recall that $\Gamma_{0,ss}$ is neat). Now, if $d$ is odd, then the discussion in \cite[page 111]{Fre90} implies that $L_{cusp}=0$, so that we obtain $h_{I_F}>0$, as desired. 

If instead $d$ is even, let us consider a smooth \emph{toroidal compactification} $\bar{X}_{\Gamma_{0,ss}}$ of $X_{\Gamma_{0,ss}}$. Then, by applying the Hirzebruch-Riemann-Roch theorem to certain locally free (\emph{automorphic}) coherent sheaves on $\bar{X}_{\Gamma_{0,ss}}$, the authors show in \cite[Prop. 7.10]{M-SSYZ15} that
\begin{equation} \label{dimcuspHRR}
h_{I_F}=\chi(\bar{X}_{\Gamma_{0,ss}}, \mathcal{O}_{\bar{X}_{\Gamma_{0,ss}}})+\epsilon
\end{equation} 
for a certain integer $\epsilon$. Now, \cite[Chap. II, Thm. 4.8]{Fre90}, implies that, if $d$ is even, $\chi(\bar{X}_{\Gamma_{0,ss}}, \mathcal{O}_{\bar{X}_{\Gamma_{0,ss}}})>0$ (this quantity is in particular equal to $1$ plus the dimension of the space of cusp forms of weight $2$ with respect to $\Gamma_{0,ss}$) and that 

\begin{equation} \label{eulchar}
\chi(\bar{X}_{\Gamma_{0,ss}}, \mathcal{O}_{\bar{X}_{\Gamma_{0,ss}}})=\mbox{vol}(X_{\Gamma_{0,ss}})+L_{cusp}
\end{equation} 
(let us stress the fact that $L_{cusp}$ is the \emph{same} integer of equation \eqref{dimcuspvol}). By replacing the expression \label{eulchar} for $\chi(\bar{X}_{\Gamma_{0,ss}}, \mathcal{O}_{\bar{X}_{\Gamma_{0,ss}}})$ into equation \eqref{dimcuspHRR}, the equality between the two expressions \eqref{dimcuspvol} and \eqref{dimcuspHRR} for $h_{I_F}$ tells us that $\epsilon=\mbox{vol}(X_{\Gamma_{0,ss}})(2\kappa+3)^d-\mbox{vol}(X_{\Gamma_{0,ss}})>0$. The equation \eqref{dimcuspHRR} then implies that $h_{I_F}>0$ in this case too.
\end{proof}

\subsection{The degeneration along the Klingen strata} \label{degkling}

Let $\lambda$, $V_{\lambda}$ be as in Subsection \ref{degsieg} and let us now study, by using Theorem \ref{thm:PinkThm}, the degeneration of $\mu_{\ell}^K(V_{\lambda})$ along the Klingen strata. The group $G_1$ in their underlying Shimura datum is isomorphic to $\mbox{Res}_{F|\mathbb{Q}} \mbox{GL}_{2,F} \times_{\mbox{Res}_{F|\mathbb{Q}} \mathbb{G}_{m,F}, \mbox{det}} \mathbb{G}_m$ (cfr. \ref{strates}). 

\subsubsection{\textbf{Weights in the cohomology of the unipotent radical.}} \label{sec-cohunip1}
As before, let us start by identifying the possible weights appearing in the degeneration along the Siegel strata, i.e. in the $(Q_1/W_1)_L$-representations 
\begin{equation} \label{isocohunip0}
H^q(W_{1,L}, V_{\lambda}) \simeq \bigoplus\limits_{\Psi \in \mathcal{P}_q} V^{1,q}_{\Psi},
\end{equation}
for $q \in \{0, \dots, 3d \}$ (cfr. \eqref{isocohunip}). Recall from \eqref{isoklingen} that 
\begin{equation*}
(Q_1/W_1)_{L} \simeq ((\prod\limits_{\sigma \in I_F} (\mbox{GL}_{2,L})_{\sigma}) \times_{\prod\limits_{\sigma \in I_F} (\mathbb{G}_{m, L})_{\sigma}} \mathbb{G}_{m,L} ) \times \prod\limits_{\sigma \in I_F} (\mathbb{G}_{m, L})_{\sigma}
\end{equation*}

We are now going to compute the weight of the pure Hodge structure carried by each irreducible summand $V^{1,q}_{\Psi}$.

\begin{lemma}
For every $q \in \{0, \dots, 3d \}$ and for every $q$-admissible decomposition $\Psi$ as in Notation \ref{notadmpart}, the action of the factor isomorphic to 
\begin{equation*}
(\prod\limits_{\sigma \in I_F} (\GL_{2,L})_{\sigma}) \times_{\prod\limits_{\sigma \in I_F} (\mathbb{G}_{m, L})_{\sigma}, \prod\det} \mathbb{G}_{m,L}
\end{equation*}
inside $(Q_1/W_1)_L$ induces on $V^{1,q}_{\Psi}$ a pure Hodge structure of weight 
\begin{equation}\label{poidsd}
w(\lambda)-[ \sum\limits_{\sigma \in I_F^0} k_{1,\sigma} +\sum\limits_{\sigma \in I_F^1} (k_{2,\sigma}-1) -\sum\limits_{\sigma \in I_F^2} (k_{2,\sigma}+3) -\sum\limits_{\sigma \in I_F^3} (k_{1,\sigma}+4) ].
\end{equation}
\end{lemma}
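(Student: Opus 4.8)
The plan is to mimic the proof of Lemma~\ref{poids0} line by line, replacing the Siegel datum \eqref{donnée0} by the Klingen datum \eqref{donnée1}. First I would recall, from the explicit description of the diagonal embedding in \ref{levicomp} (in particular \eqref{isoklingen}), the $L$-points of the relevant reductive factor
\[
\bigl((\textstyle\prod_{\sigma \in I_F}(\GL_{2,L})_{\sigma}) \times_{\prod_{\sigma}(\mathbb{G}_{m,L})_{\sigma}, \prod\det} \mathbb{G}_{m,L}\bigr)(L),
\]
viewed as the subgroup of $(Q_1/W_1)(L)$ consisting of the tuples
\[
\Bigl(\begin{pmatrix} t_\sigma\cdot(a_\sigma d_\sigma - b_\sigma c_\sigma) & & & \\ & a_\sigma & & b_\sigma \\ & & t_\sigma^{-1} & \\ & c_\sigma & & d_\sigma \end{pmatrix}\Bigr)_{\sigma \in I_F}
\]
with the $\det$ of all the $\GL_2$-blocks equal to a common $\nu \in L^\times$ which is then the image of the extra $\mathbb{G}_{m,L}$. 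The point of this factor is that it is exactly the group $G_{1,L}\cdot M_{1,L}$ minus (a piece of) the torus factor $\prod_\sigma(\mathbb{G}_{m,L})_\sigma$, and its center contains the image of $G_{1,L} \simeq \mathrm{Res}_{F|\mathbb{Q}}\GL_{2,F}\times_{\cdots}\mathbb{G}_m$, which carries the Hodge structure via $h_1$ from \eqref{donnée1}.

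Next I would restrict the character \eqref{carackling} of Lemma~\ref{charkost}.\ref{charakost2}, namely $\lambda((\epsilon_{1,\sigma},\epsilon_{2,\sigma})_{\sigma},c)$, to this factor. Using the parametrisation \eqref{actcar} of characters of $T_L$, the restriction to the subtorus described above is given by a product of powers of the $a_\sigma d_\sigma$-type entries and the $t_\sigma$'s; evaluating on the cocharacter $w$ of \ref{convhodge} composed with $h_1$ from \eqref{donnée1}, one reads off the weight of the pure Hodge structure. Concretely, $h_1 \circ w$ sends $r \in \mathbb{R}^\times$ to the tuple with $\GL_2$-block $\begin{pmatrix} r & \\ & r\end{pmatrix}$ and first diagonal entry $r^2$ (for the $t$-slot the value $r^2$, i.e. $t_\sigma = r^2$), so the exponent picked up on $V^{1,q}_\Psi$ is $-\tfrac12[c+\sum_\sigma(\epsilon_{1,\sigma}+\epsilon_{2,\sigma})]$ adjusted by the $t$-contribution — exactly the bookkeeping done in Lemma~\ref{poids0}. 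Plugging in the four cases of $\epsilon_{1,\sigma},\epsilon_{2,\sigma}$ from Lemma~\ref{charkost}.\ref{charakost2} ($k_{1,\sigma},k_{2,\sigma}$ for $\sigma\in I_F^0$; $k_{2,\sigma}-1,k_{1,\sigma}+1$ for $\sigma\in I_F^1$; $-k_{2,\sigma}-3,k_{1,\sigma}+1$ for $\sigma\in I_F^2$; $-k_{1,\sigma}-4,k_{2,\sigma}$ for $\sigma\in I_F^3$) and simplifying, the $I_F^1$ and $I_F^2$ terms combine so that only the claimed linear forms $k_{1,\sigma}$, $k_{2,\sigma}-1$, $-(k_{2,\sigma}+3)$, $-(k_{1,\sigma}+4)$ survive over the respective subsets, giving the formula \eqref{poidsd} with the overall $w(\lambda)$ from Remark~\ref{rmk:poidsrep}.\ref{itm:poidsrep} and the convention in \ref{convhodge}.

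The only genuinely delicate point, as in the Siegel case, is to match the normalisation of the Hodge weight coming from $h_1$ (which involves the $r^2$ on the $t$-slot, reflecting that $G_1$ contains a rank-2 similitude-type factor rather than a naive $\mathbb{G}_m$) against the exponents produced by Kostant's theorem; here one must be careful that the factor of $\tfrac12$ in \eqref{actcar} is exactly cancelled by the doubling in \eqref{donnée1}, so that no spurious factor of $2$ or sign appears. This is a finite, mechanical verification, parallel to — but combinatorially slightly heavier than — the proof of Lemma~\ref{poids0}, and I expect it to be the main (though routine) obstacle; once it is checked, the statement follows immediately.
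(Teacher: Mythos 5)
Your strategy is exactly the paper's: identify the factor inside $(Q_1/W_1)_L$, restrict the character of Lemma \ref{charkost}.\ref{charakost2} to it, and evaluate on $h_1\circ w$ using the convention of \ref{convhodge}. But there is a concrete slip in the evaluation. The matrices you write down, with general $t_\sigma$, parametrize all of $Q_1/W_1(L)$, not the factor in question: that factor is cut out by $t_\sigma=1$ for all $\sigma$ (the paper identifies its maximal torus as the elements $\mathrm{diag}(\rho,\tau_\sigma,1,\tau_\sigma^{-1}\rho)$). Consequently, for $h_1(w(r))=\mathrm{diag}(r^2,r,1,r)$ the third diagonal entry forces $t_\sigma=1$, and the first entry $r^2$ is $t_\sigma\cdot\det=1\cdot r^2$; your assertion that ``$t_\sigma=r^2$'' is incompatible with the first entry being $r^2$ and, taken literally, would produce a wrong exponent.

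With the correct identification, in the coordinates of \eqref{pointstore} one has $\alpha_{1,\sigma}=r^2$, $\alpha_{2,\sigma}=r$, $\nu=r^2$, so the character \eqref{actcar} with exponents $(\epsilon_{1,\sigma},\epsilon_{2,\sigma})_\sigma$ and $c$ evaluates to
\begin{equation*}
r^{\,2\sum_\sigma\epsilon_{1,\sigma}+\sum_\sigma\epsilon_{2,\sigma}+c-\sum_\sigma(\epsilon_{1,\sigma}+\epsilon_{2,\sigma})}=r^{\,c+\sum_\sigma\epsilon_{1,\sigma}},
\end{equation*}
i.e.\ the $\epsilon_{2,\sigma}$'s cancel against the $\nu$-exponent and the weight is $w(\lambda)-\sum_\sigma\epsilon_{1,\sigma}$. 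This is precisely \eqref{poidsd}, since the four linear forms there are the values of $\epsilon_{1,\sigma}$ from Lemma \ref{charkost}.\ref{charakost2}; nothing needs to ``combine'' between the $I_F^1$ and $I_F^2$ terms. Your announced exponent $-\tfrac12[c+\sum_\sigma(\epsilon_{1,\sigma}+\epsilon_{2,\sigma})]$ ``adjusted by the $t$-contribution'' does not reduce to this under your stated value of $t_\sigma$, so the bookkeeping step must be corrected as above; once it is, the argument coincides with the paper's proof.
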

\begin{proof}
By the discussion in \ref{levicomp}, the $L$-points of the factor of $(Q_1/W_1)_L$ that we are considering are identified with the subgroup 
\begin{equation*}
\{ ( \left( \begin{array} {cccc} 
\rho & & & \\
& \tau_{\sigma} & & \\
& & 1 & \\
& & & \tau_{\sigma}^{-1} \rho
\end{array} \right) )_{\sigma \in I_F}  \ \vert \ \rho \in L^{\times}, \tau_{\sigma} \in L^{\times} \ \mbox{for every} \ \sigma \in I_F \}
\end{equation*}
of $Q_1/W_1(L)$. By the convention fixed in \ref{convhodge} and the definition in \eqref{donnée1} of the Shimura datum $(G_1, \mathfrak{H}_1)$, the expression given in Lemma \ref{charkost}.\ref{charakost2} for the highest weight of the representation $V^{1,q}_{\Psi}$ yields the formula in the statement (recalling \eqref{actcar}). 
\end{proof}

\subsubsection{\textbf{Cohomology of the arithmetic subgroup.}} \label{arithm1}
Consider now the arithmetic group $\Gamma_1$ of Rmk. \ref{compl}, which is identified with a torsion-free arithmetic subgroup of $\mbox{Res}_{F|\mathbb{Q}}\mathbb{G}_{m,F}(\mathbb{Q})=F^{\times}$ (cfr. Remark \ref{compl}). We need to identify the cohomology spaces
\begin{equation*}
H^p(\Gamma_1, H^q(W_{1,L}, V_{\lambda})) \simeq \bigoplus_{\substack{\Psi \in \mathcal{P}_q}} H^p(\Gamma_1, V^{1,q}_{\Psi}).
\end{equation*} 
Reasoning as in the proof of Lemma \ref{nececond}, we obtain:
\begin{lemma} \label{nececond2}
The group $\Gamma_1$ is isomorphic to $\BZ^{d-1}$, and for every $q \in \{0, \dots, 3d \}$, its action on $V^{1,q}_{\Psi}$ is trivial if and only if there exists an integer $\kappa$ such that
\begin{equation} \label{Kostpard}
\left\{ \begin{array}{cc} 
k_{1,\sigma}=\kappa & \forall \sigma \in I_F^0 \\
k_{2,\sigma}-1=\kappa & \forall \sigma \in I_F^1
\\
-(k_{2,\sigma}+3)=\kappa & \forall \sigma \in I_F^2 \\
-(k_{1,\sigma}+4)=\kappa & \forall \sigma \in I_F^2
\end{array}
\right.
\end{equation}
(remembering Notation \ref{notadmpart}). 
\end{lemma}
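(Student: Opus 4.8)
The plan is to mimic the argument for the Siegel case, i.e.\ the proof of Lemma~\ref{nececond}, adapting it to the Klingen Levi. First I would recall from Dirichlet's unit theorem that $\Res_{F|\BQ}\mathbb{G}_{m,F}(\BZ) \simeq \CO_F^{\times} \simeq \BZ^{d-1} \times \BZ/2\BZ$, and that the torsion-free arithmetic group $\Gamma_1 \subset \Res_{F|\BQ}\mathbb{G}_{m,F}(\BQ)=F^{\times}$ is commensurable to $\CO_F^{\times}$; being torsion-free, it is therefore isomorphic to $\BZ^{d-1}$. Fixing generators $\gamma_1, \dots, \gamma_{d-1}$ and using the explicit description of the diagonal embedding $Q_1/W_1(\BQ) \hookrightarrow Q_1/W_1(L)$ from \ref{levicomp}, one identifies $\Gamma_1$ with the subgroup consisting, for $t=\gamma_1^{p_1}\cdots\gamma_{d-1}^{p_{d-1}}$, of the elements
\begin{equation*}
\left( \left( \begin{array}{cccc}
\sigma(t) & & & \\
& 1 & & \\
& & \sigma(t)^{-1} & \\
& & & 1
\end{array} \right) \right)_{\sigma \in I_F},
\end{equation*}
i.e.\ of the image under $\iota$ of the element $t$ sitting in the right-hand factor $\Res_{F|\BQ}\mathbb{G}_{m,F}$ of $Q_1/W_1 \simeq (\Res_{F|\BQ}\GL_{2,F} \times_{\Res_{F|\BQ}\mathbb{G}_{m,F}, \det}\mathbb{G}_m) \times \Res_{F|\BQ}\mathbb{G}_{m,F}$ of \eqref{isoklingen}.

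Next I would read off how such an element acts on the irreducible summand $V^{1,q}_{\Psi}$, using Lemma~\ref{charkost}.\ref{charakost2}: the highest weight of $V^{1,q}_{\Psi}$ is the restriction of $\lambda((\epsilon_{1,\sigma},\epsilon_{2,\sigma})_{\sigma\in I_F}, c)$, and evaluating the character \eqref{actcar} on the displayed diagonal matrices (where $\nu=1$ and the first diagonal entry is $\sigma(t)$) shows that $t$ acts by multiplication by
\begin{equation*}
\prod_{\sigma\in I_F}\sigma(t)^{\epsilon_{1,\sigma}}
= \prod_{\sigma\in I_F^0}\sigma(t)^{k_{1,\sigma}}\cdot\prod_{\sigma\in I_F^1}\sigma(t)^{k_{2,\sigma}-1}\cdot\prod_{\sigma\in I_F^2}\sigma(t)^{-(k_{2,\sigma}+3)}\cdot\prod_{\sigma\in I_F^3}\sigma(t)^{-(k_{1,\sigma}+4)}.
\end{equation*}
(Here one must be a little careful to pick out the correct coordinate of the torus, since the Klingen Levi involves the extra factor $\Res_{F|\BQ}\mathbb{G}_{m,F}$ with its own characters $(t_\sigma)_\sigma$; the arithmetic group $\Gamma_1$ lies precisely in that factor, so only the $\epsilon_{1,\sigma}$ contributions survive.) This is where most of the bookkeeping lies: matching the description of $\Gamma_1$ inside the Levi against the highest-weight formula so that the exponents come out to be the four cases in \eqref{Kostpard}.

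Finally, the action of $\Gamma_1$ is trivial if and only if the above character of $F^\times$ is trivial on $\Gamma_1$. Since $\Gamma_1$ is a (neat) arithmetic subgroup of $F^\times$, Remark~\ref{rmk:gennet}.\ref{itm:net} lets us drop absolute values, and Lemma~\ref{trivact} (in the form of Remark~\ref{rmk:gennet}.\ref{itm:genarithm}) tells us this holds exactly when the tuple of exponents lies in $\BZ\cdot(1,\dots,1)$, i.e.\ when there is an integer $\kappa$ with $k_{1,\sigma}=\kappa$ for $\sigma\in I_F^0$, $k_{2,\sigma}-1=\kappa$ for $\sigma\in I_F^1$, $-(k_{2,\sigma}+3)=\kappa$ for $\sigma\in I_F^2$ and $-(k_{1,\sigma}+4)=\kappa$ for $\sigma\in I_F^3$. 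This is precisely condition \eqref{Kostpard}, which completes the proof. \qed
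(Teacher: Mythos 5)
Your proof is correct and is precisely the argument the paper intends: its own proof of this lemma consists of the single phrase \q{reasoning as in the proof of Lemma \ref{nececond}}, and you have carried out that adaptation faithfully — identifying $\Gamma_1$ with a rank-$(d-1)$ subgroup of the central direct factor $\Res_{F|\BQ}\mathbb{G}_{m,F}$ of $Q_1/W_1$, evaluating the highest weight of $V^{1,q}_{\Psi}$ from Lemma \ref{charkost}.\ref{charakost2} on the resulting diagonal elements (so that only the $\epsilon_{1,\sigma}$ survive), and concluding via Lemma \ref{trivact} and Remark \ref{rmk:gennet}. The only cosmetic point is that the symbol $\iota$ was introduced only for the Siegel Levi in Remark \ref{rmk:coincpoids}, so the embedding you use here should not be called $\iota$ without redefining it.
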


Hence, we are led to pose the following:

\begin{definition} \label{triv1}
We say that $\lambda$ is $(\kappa,1)$\emph{-Kostant parallel} with respect to a $q$-admissible decomposition $\Psi$ if $\lambda$ satisfies condition \eqref{Kostpard} with respect to $\kappa \in \BZ$. 

\end{definition}

\begin{definition} \label{def:parttype2}
A $q$-admissible decomposition $\Psi$ is said to be $(\lambda,1)$\emph{-admissible} if there exists $\kappa \in \mathbb{Z}$ such that $\lambda$ is $(\kappa,1)$-Kostant parallel with respect to $\Psi$. The set of $q$-admissible decompositions which are moreover $(\lambda,1)$-admissible will be denoted by $\mathcal{P}_q^{(\lambda,1)}$.
\end{definition}

Then, the proof of the following lemma is completely analoguous to the proof of Lemma \ref{cohunip0}:
\begin{lemma} \label{cohunip1}
For every $s \notin \{0, \dots, d-1 \}$, the cohomology space $H^s(\Gamma_{1}, V^{1,q}_{\Psi})$ is trivial. For every $s \in \{0, \dots, d-1 \}$, it is non-trivial if and only if $\lambda$ is $(\kappa,1)$-Kostant parallel with respect to $\Psi$ and one of the following two conditions holds:
\begin{enumerate} [wide, labelwidth=!, labelindent=0pt]
\item \label{cohunip1.cas1}
$I_F=I_F^0 \sqcup I_F^1$. In this case, $q \in \{0, \dots, d \}$ and $\mbox{\emph{Gr}}_{w(\lambda)-d\kappa}^{\mathbb{W}} V^{1,q}_{\Psi} \neq \{ 0 \}$;
\item \label{cohunip1.cas3} $I_F=I_F^2 \sqcup I_F^3$. In this case, $q \in \{2d, \dots, 3d \}$ and $\mbox{\emph{Gr}}_{w(\lambda)-d\kappa}^{\mathbb{W}} V^{1,q}_{\Psi} \neq \{ 0 \}$.
\end{enumerate}
\end{lemma}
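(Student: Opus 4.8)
The plan is to mirror, essentially verbatim, the proof of Lemma \ref{cohunip0}, but with the Siegel parabolic $Q_0$ replaced by the Klingen parabolic $Q_1$ throughout; since the statement is asserted to be ``completely analogous'', the point is to check that the three ingredients used in the Siegel case — vanishing outside the right cohomological range, triviality of the action as the criterion for non-vanishing, and the resulting constraint on the admissible decomposition — all have their Klingen counterparts already available in the text.

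First I would invoke Lemma \ref{nultriv} applied to $\Gamma_1$, which by Lemma \ref{nececond2} is free abelian of rank $d-1$. This immediately gives that $H^s(\Gamma_1, V^{1,q}_{\Psi}) = 0$ for $s \notin \{0,\dots,d-1\}$, settling the first assertion, and reduces the question of non-triviality for $s \in \{0,\dots,d-1\}$ to the triviality of the $\Gamma_1$-action on $V^{1,q}_{\Psi}$. By Lemma \ref{nececond2}, the latter holds precisely when there is an integer $\kappa$ for which the system \eqref{Kostpard} is satisfied with respect to $\Psi$, i.e. precisely when $\lambda$ is $(\kappa,1)$-Kostant parallel with respect to $\Psi$ in the sense of Definition \ref{triv1}.

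The remaining work is the analogue of the ``preliminary observation'' in the proof of Lemma \ref{cohunip0}: one must show that, when \eqref{Kostpard} holds and $\lambda$ is dominant (so $k_{1,\sigma} \geq k_{2,\sigma} \geq 0$ for all $\sigma$), at most one of the two subsets $I_F^0 \sqcup I_F^1$ and $I_F^2 \sqcup I_F^3$ can be non-empty. Indeed, reading off \eqref{Kostpard}: on $I_F^0$ one has $\kappa = k_{1,\sigma} \geq 0$ and on $I_F^1$ one has $\kappa = k_{2,\sigma} - 1 \geq -1$, so $I_F^0 \sqcup I_F^1 \neq \varnothing$ forces $\kappa \geq -1$; on $I_F^2$ one has $\kappa = -(k_{2,\sigma}+3) \leq -3$ and on $I_F^3$ one has $\kappa = -(k_{1,\sigma}+4) \leq -4$, so $I_F^2 \sqcup I_F^3 \neq \varnothing$ forces $\kappa \leq -3$. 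These two ranges are disjoint, giving the dichotomy. In case \ref{cohunip1.cas1}, $\Psi = (I_F^0, I_F^1, \varnothing, \varnothing)$, so by the definition \eqref{admpart} of a $q$-admissible decomposition $q = |I_F^1| \in \{0,\dots,d\}$; in case \ref{cohunip1.cas3}, $\Psi = (\varnothing,\varnothing,I_F^2,I_F^3)$, so $q = 2|I_F^2| + 3|I_F^3| \in \{2d,\dots,3d\}$. Finally, in each case the assertion that $\mathrm{Gr}_{w(\lambda)-d\kappa}^{\mathbb{W}} V^{1,q}_{\Psi} \neq \{0\}$ follows by comparing the weight formula \eqref{poidsd} for the pure Hodge structure on $V^{1,q}_{\Psi}$ with the relations \eqref{Kostpard}: substituting \eqref{Kostpard} into \eqref{poidsd} yields exactly $w(\lambda) - d\kappa$, so the whole representation $V^{1,q}_{\Psi}$ sits in weight $w(\lambda) - d\kappa$ and is in particular non-zero in that graded piece (the sheaf $R^n$ being a direct sum of its weight-graded pieces by Theorem \ref{thm:PinkThm}.\ref{itm:gradobj}, since $(G_1,\mathfrak{H}_1)$ is of abelian type).

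I do not expect a genuine obstacle here: the only thing that could in principle go wrong is if the sign/shift pattern in \eqref{Kostpard} and \eqref{poidsd} did not line up to give the clean value $w(\lambda) - d\kappa$, or if the disjointness of the two $\kappa$-ranges failed. Both are routine to verify from the displayed formulae, so the ``hard part'' is really just bookkeeping — making sure the Klingen-side data (\eqref{poidsd}, Lemma \ref{nececond2}, Definition \ref{triv1}) is used in place of the Siegel-side data (\eqref{Kostpar0}, Lemma \ref{poids0}, Lemma \ref{nececond}) at each step, exactly as the parallel numbering of the lemmas suggests. Hence the proof can legitimately be given as ``the proof is completely analogous to that of Lemma \ref{cohunip0}, replacing \eqref{Kostpar0} by \eqref{Kostpard} and Lemma \ref{poids0} by the weight computation \eqref{poidsd}'', which is presumably what the author intends.
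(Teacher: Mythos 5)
Your proposal is correct and follows exactly the route the paper intends: the text gives no separate argument for Lemma \ref{cohunip1} beyond declaring it ``completely analogous'' to Lemma \ref{cohunip0}, and your write-up supplies precisely that analogy (Lemma \ref{nultriv} plus Lemma \ref{nececond2} for the vanishing and the triviality criterion, the disjointness of the $\kappa$-ranges $\kappa\geq -1$ versus $\kappa\leq -3$ for the dichotomy, and the substitution of \eqref{Kostpard} into \eqref{poidsd} for the weight $w(\lambda)-d\kappa$). All the numerical checks are accurate, so nothing is missing.
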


\begin{remark} \label{rmk:coincpoids2}
An essential ingredient for the proof of the above lemma is again the same phenomenon of Rmk. \ref{rmk:coincpoids}.
\end{remark}

Remembering Definition \ref{def:parttype2}, the isomorphism in \ref{thm:PinkThm}.\ref{itm:decobj} for a stratum $Z^\prime$ of $\partial S_K^*$ contributing to $Z_1$ becomes now
\begin{equation}\label{deccohd}
\restr{R^n i^*_1 i^* j_* \mu_{\ell}^K(V_{\lambda})}{Z^\prime} \simeq 
\bigoplus_{\substack{p+q=n}} \mu_{\ell}^{\pi_1(K_1)}(\bigoplus_{\substack{\Psi \in \mathcal{P}_q^{(\lambda,1)}}} H^p(\Gamma_1, V^{1,q}_{\Psi}))
\end{equation}
and we want to study the weight-graded objects
\begin{equation}\label{decgradd}
\mbox{Gr}_k^{\mathbb{W}} \restr{R^n i^*_1 i^* j_* \mu_{\ell}^K(V_{\lambda})}{Z^\prime}\simeq\bigoplus_{\substack{p+q=n}} \mu_{\ell}^{\pi_1(K_1)} ( \bigoplus_{\substack{\Psi \in \mathcal{P}_q^{(\lambda,1)}}} H^p(\Gamma_1, \mbox{Gr}_k^{\mathbb{W}} V^{1,q}_{\Psi}) ).
\end{equation}
\subsubsection{\textbf{Computation of weights along the Klingen strata.}}
In order to describe the weights appearing in the degeneration of the canonical construction along the Klingen strata (in the cohomological degrees which will be needed in the sequel), we just need a last preliminary remark: 

\begin{remark} 
\begin{enumerate}[wide, labelwidth=!, labelindent=0pt]
\item Suppose the dominant weight $\lambda=\lambda((k_{1,\sigma}, k_{2,\sigma})_{\sigma \in I_F}, c)$ of $G_L$ to be $(\kappa, 1)$-Kostant parallel with respect to a decomposition $\Psi$ of the form $(I_F^0, I_F^1, \varnothing, \varnothing)$. Then, by the definition of Kostant-parallelism and the hypothesis on $\lambda$, we see that such $\kappa$ and $\Psi$ are necessarily unique, except if $k_1=\underline{\kappa_1}$ and $k_2=\underline{\kappa_2}$. In this last case, there exist exactly two pairs $(\kappa, (I_F^0, I_F^1))$ such that $\lambda$ is $\kappa$-Kostant-parallel with respect to $(I_F^0, I_F^1)$, i.e. $(\kappa_1,I_F^0)$ and $(\kappa_2-1, I_F^1)$. 
\item The condition on $\lambda$ of being $(\kappa, 1)$-Kostant parallel with respect to a decomposition $\Psi$ of the form $(I_F^0, I_F^1, \varnothing, \varnothing)$ coincides with the condition of being $\kappa$-Kostant parallel introduced in Definition \ref{corang}; hence, we will adopt this terminology in the following. Moreover, by the preceding point, whenever we suppose $\lambda$ to be $\kappa$-Kostant parallel with respect to a decomposition such that $I_F^0 \neq \varnothing$, resp. $I_F^1 \neq \varnothing$, then $I_F^0$, resp. $I_F^1$, is uniquely determined by $\lambda$. 
\end{enumerate}
\label{unipart}
\end{remark}

Then, keeping in mind the above Remark, by employing Lemma \ref{cohunip1} and reasoning along the same lines of the proof of Prop. \ref{poidscoh0}, we deduce the following:
\begin{proposition}\label{poidscohd} 

Let $V_{\lambda}$ be the irreducible $L$-representation of $G_L$ of highest weight $\lambda=\lambda((k_{1,\sigma}, k_{2,\sigma})_{\sigma}, c)$ and $Z^\prime$ be a stratum of $\partial S_K^*$ contributing to $Z_1$.
\begin{enumerate}[wide, labelwidth=!, labelindent=0pt]
\item Let $n < 0$ or $n>4d-1$. Then, $\restr{R^n i^*_1 i^* j_* \mu_{\ell}^K(V_{\lambda})}{Z^\prime}$ is zero.
\item Let $n \in \{0, \dots, d-1 \}$. Then the $\ell$-adic sheaf $\restr{R^n i^*_1 i^* j_* \mu_{\ell}^K(V_{\lambda})}{Z^\prime}$ on $Z^\prime$ is non-zero if and only if the following hold:
\begin{itemize}
\item $\lambda$ is $\kappa$-Kostant parallel and $I_F^0 \neq \varnothing$;
\item posing $d_1:=\vert I_F^1 \vert \in \{0, \dots, d-1 \}$, we have $n \geq d_1$. 
\end{itemize}
In this case,
\begin{equation*}
\restr{R^n i^*_1 i^* j_* \mu_{\ell}^K(V_{\lambda})}{Z^\prime}\simeq \mu_{\ell}^{\pi_1(K_1)}( H^{n-d_1}(\Gamma_1, V^{1,d_1}_{\Psi}))
\end{equation*}
and it is pure of weight $w(\lambda)-d\kappa$. 
\item Let $n \in \{ d, \dots, 2d-1 \}$. Then the $\ell$-adic sheaf $\restr{R^n i^*_1 i^* j_* \mu_{\ell}^K(V_{\lambda})}{Z^\prime}$ on $Z^\prime$ is non-zero if and only if 
the following hold:
\begin{itemize} 
\item $\lambda$ is $\kappa$-Kostant parallel and $I_F^1 \neq \varnothing$;
\item posing $d_1:=\vert I_F^1 \vert \in \{1, \dots, d \}$, we have $n \leq d-1+d_1$. 
\end{itemize}
In this case,
\begin{equation*}
\restr{R^n i^*_1 i^* j_* \mu_{\ell}^K(V_{\lambda})}{Z^\prime}\simeq \mu_{\ell}^{\pi_1(K_1)}( H^{n-d_1}(\Gamma_1, V^{1,d_1}_{\Psi}))
\end{equation*}
and, denoting $\kappa_2:=\kappa+1$, it is pure of weight $w(\lambda)+d-d\kappa_2$.
\item Let $n \in \{2d, \dots, 3d-1 \}$. Then the $\ell$-adic sheaf $\restr{R^n i^*_1 i^* j_* \mu_{\ell}^K(V_{\lambda})}{Z^\prime}$ on $Z^\prime$ is non-zero if and only if $\lambda$ is $(\kappa,1)$-Kostant parallel with respect to a $\Psi$ such that $I_F = I_F^2 \sqcup I_F^3$ and $I_F^2$ is non-empty.
In this case, posing $d_3:=\vert I_F^3 \vert \in \{0, \dots, d-1 \}$, 
\begin{equation*}
\restr{R^n i^*_1 i^* j_* \mu_{\ell}^K(V_{\lambda})}{Z^\prime}\simeq \mu_{\ell}^{\pi_1(K_1)}( H^{n-2d-d_3}(\Gamma_1, V^{1,2d+d_3}_{\Psi}))
\end{equation*} 
and, denoting $\kappa_3=-\kappa-3$, it is pure of weight $w(\lambda)+3d+d\kappa_3$.
\end{enumerate}
\end{proposition}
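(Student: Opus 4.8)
The plan is to imitate the proof of Proposition~\ref{poidscoh0}, the crucial simplification being that, by Lemma~\ref{nececond2}, the arithmetic group $\Gamma_1$ is just a free abelian group of rank $d-1$: there is no need for the successive dévissages (Constructions~\ref{extarithm}, \ref{extarithmbis}) nor for the vanishing theorems on locally symmetric spaces (Lemma~\ref{restrcoh0}) that were needed in the Siegel case, and Lemma~\ref{cohunip1} already packages all the relevant necessary conditions. In particular there is no analogue of Proposition~\ref{poidsapp} to invoke: the non-vanishing is immediate from Lemma~\ref{nultriv}, which gives $H^{s}(\Gamma_1,V^{1,q}_{\Psi})\simeq (V^{1,q}_{\Psi})^{\binom{d-1}{s}}$ whenever $\Gamma_1$ acts trivially on $V^{1,q}_{\Psi}$ and $0\le s\le d-1$, and this is non-zero since $V^{1,q}_{\Psi}$ is irreducible.

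First I would fix a stratum $Z'$ contributing to $Z_1$ and start from the isomorphism~\eqref{deccohd}, which writes $\restr{R^{n}i^{*}_{1}i^{*}j_{*}\mu_{\ell}^{K}(V_{\lambda})}{Z'}$ as a direct sum of $\mu_{\ell}^{\pi_1(K_1)}\bigl(H^{p}(\Gamma_1,V^{1,q}_{\Psi})\bigr)$ over $p+q=n$ and over $(\lambda,1)$-admissible decompositions $\Psi$. Since $H^{p}(\Gamma_1,-)$ vanishes for $p\notin\{0,\dots,d-1\}$, only the values $q\in\{n-d+1,\dots,n\}\cap\{0,\dots,3d\}$ can contribute in total degree $n$; this already gives part~(1), since that intersection is empty when $n<0$ or $n>4d-1$. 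For the remaining three degree windows, Lemma~\ref{cohunip1} shows that a summand can be non-zero only if $\lambda$ is $(\kappa,1)$-Kostant parallel with respect to $\Psi$ and, moreover, $\Psi$ is either of the form $(I_F^{0},I_F^{1},\varnothing,\varnothing)$ with $q=\lvert I_F^{1}\rvert\le d$, or of the form $(\varnothing,\varnothing,I_F^{2},I_F^{3})$ with $q=2d+\lvert I_F^{3}\rvert\ge 2d$. Intersecting the admissible range of $q$ with $\{n-d+1,\dots,n\}$ in each of the windows $\{0,\dots,d-1\}$, $\{d,\dots,2d-1\}$, $\{2d,\dots,3d-1\}$ singles out a single admissible value of $q$ --- namely $q=d_1:=\lvert I_F^{1}\rvert$ in the first two windows and $q=2d+d_3$ with $d_3:=\lvert I_F^{3}\rvert$ in the third --- and forces $I_F^{0}\ne\varnothing$, resp. $I_F^{1}\ne\varnothing$, resp. $I_F^{2}\ne\varnothing$, because otherwise the forced value of $q$ falls outside $\{n-d+1,\dots,n\}$ (e.g. for $I_F^{0}=\varnothing$ one would get $q=d$, too large in the first window). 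By Remark~\ref{unipart} the decomposition $\Psi$ corresponding to this $q$ is then unique (for a decomposition of type $(\varnothing,\varnothing,I_F^{2},I_F^{3})$ one uses instead that dominance, $k_{1,\sigma}\ge k_{2,\sigma}\ge 0$, forces each $\sigma$ into exactly one of the two blocks). Hence the sum~\eqref{deccohd} collapses to the single term displayed in the statement, with $p=n-q$; the condition $0\le p\le d-1$ translates precisely into the stated numerical inequalities relating $n$, $d_1$ and $d_3$, and this yields the ``only if'' direction of each non-vanishing criterion.

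Conversely, when the stated conditions hold, $\lambda$ is $(\kappa,1)$-Kostant parallel with respect to the relevant $\Psi$, so by Lemma~\ref{nececond2} the action of $\Gamma_1$ on the irreducible, hence non-zero, representation $V^{1,q}_{\Psi}$ is trivial; Lemma~\ref{nultriv} then gives $H^{n-q}(\Gamma_1,V^{1,q}_{\Psi})\simeq (V^{1,q}_{\Psi})^{\binom{d-1}{n-q}}\ne\{0\}$, and, $\mu_{\ell}^{\pi_1(K_1)}$ being exact and sending non-zero representations to non-zero sheaves, this yields a non-zero lisse sheaf on $Z'$. For the purity statements: $V^{1,q}_{\Psi}$ being irreducible, the relevant torus in the Levi acts on it by a single character, so the Hodge structure it carries --- computed via formula~\eqref{poidsd} --- is pure; substituting the Kostant-parallelism relations~\eqref{Kostpard} into~\eqref{poidsd} yields in every case the weight $w(\lambda)-d\kappa$, which one rewrites as $w(\lambda)-d\kappa$ in the first window, as $w(\lambda)+d-d\kappa_2$ with $\kappa_2=\kappa+1$ in the second, and as $w(\lambda)+3d+d\kappa_3$ with $\kappa_3=-\kappa-3$ in the third --- these reparametrisations simply record that $\kappa$ is the common value of $k_{1,\sigma}$ on $I_F^{0}$, that $\kappa_2$ is the common value of $k_{2,\sigma}$ on $I_F^{1}$, and that $\kappa_3$ is the common value of $-(k_{2,\sigma}+3)$ on $I_F^{2}$. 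Since $\Gamma_1$ acts trivially, $H^{n-q}(\Gamma_1,V^{1,q}_{\Psi})$ inherits this single weight, and $\mu_{\ell}^{\pi_1(K_1)}$ preserves weights.

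As for the main difficulty: there is no deep obstacle here --- the substance was already isolated in Lemmas~\ref{nececond2}--\ref{cohunip1}, and the Klingen strata are, perhaps surprisingly, easier than the Siegel ones, precisely because $\Gamma_1$ is abelian. The only delicate point is the purely combinatorial bookkeeping of exactly which $q$-admissible decompositions survive in each of the three degree windows, together with getting the endpoint inequalities among $n$, $d_1$ and $d_3$ exactly right, and the repeated appeal to Remark~\ref{unipart} (and to dominance) to guarantee that the sum in~\eqref{deccohd} genuinely reduces to a single term.
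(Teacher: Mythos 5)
Your argument is correct and is essentially the proof the paper intends: the paper's own proof consists of the single sentence ``employ Lemma \ref{cohunip1}, keep Remark \ref{unipart} in mind, and reason as in Prop.\ \ref{poidscoh0}'', and you have carried that out faithfully, including the correct observation that the dévissages of Constructions \ref{extarithm}--\ref{extarithmbis}, Lemma \ref{restrcoh0} and Prop.\ \ref{poidsapp} are not needed here because $\Gamma_1$ is free abelian of rank $d-1$, so Lemma \ref{nultriv} gives non-vanishing directly. One cosmetic slip: $\kappa_3=-\kappa-3$ is the common value of $k_{2,\sigma}$ on $I_F^2$ (not of $-(k_{2,\sigma}+3)$, which equals $\kappa$); this does not affect your weight computation, which correctly reduces all three windows to $w(\lambda)-d\kappa$ before reparametrising.
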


\subsection{The double degeneration along the cusps of the Klingen strata}

\label{doubledeg}

Keep the notation of Thm. \ref{thm:mainthm}. In order to study the weights of the motive $i^*j_* ^{\lambda} \mathcal{V}$, the study of the degeneration of the canonical construction to each stratum of $\partial S^*_K$ will not be enough: in Lemma \ref{evittrunc}, we will also need to consider a \emph{double} degeneration, the one of mixed sheaves on the Klingen strata, \emph{already obtained by degeneration}, along the boundary of the closure in $\partial S^*_K$ of the Klingen strata themselves.

By paragraph \ref{strates}, every stratum $Z^\prime$ of $\partial S_K^*$ contributing to $Z_1$ (as defined in \ref{pink}) is (a smooth quotient by the action of a finite group of) a Hilbert-Blumenthal variety $S_{\pi_1(K_1)}$ of dimension $d$. Remember from the same paragraph \ref{strates} that the Shimura datum underlying $S_{\pi_1(K_1)}$ corresponds to the algebraic group $G_1 \simeq \mbox{Res}_{F|\mathbb{Q}} \mbox{GL}_{2,F}  \times_{\mbox{Res}_{F|\mathbb{Q}} \mathbb{G}_m, \mbox{det}} \mathbb{G}_m$, whose $L$-points are identified, up to conjugation, with

\[
G_1(L)=\{ ( \left( \begin{array} {cccc} 
\rho & & & \\
& a_{\sigma} & & b_{\sigma} \\
& & 1 & \\
& c_{\sigma} & & d_{\sigma}
\end{array} \right) )_{\sigma \in I_F}  \ \vert
a_{\sigma}, b_{\sigma}, c_{\sigma}, d_{\sigma} \in L, \rho \in L^{\times},
\]
\[
 \ \mbox{such that} \ \rho=a_{\sigma}d_{\sigma}-b_{\sigma}c_{\sigma} \ \mbox{for every} \ \sigma \in I_F
\} =
\]
\[
= \{ (A_{\sigma})_{\sigma \in I_F} \in \prod_{\sigma \in I_F} \mbox{GL}_{2, L}(L) \ \mbox{such that} \ \mbox{det}(A_{\sigma})=\mbox{det}(A_{\hat{\sigma}}) \ \forall \ \sigma, \hat{\sigma} \in I_F  \}.
\]

The boundary $\partial S_{\pi_1(K_1)}^*$ of the Baily-Borel compactification $S_{\pi_1(K_1)}^*$ of $S_{\pi_1(K_1)}$ is 0-dimensional: it is in fact a finite disjoint union of strata (called \emph{cusps}), obtained as Shimura varieties coming from the group $\mathbb{G}_m$. Fix such a stratum $Z^{\prime\prime}$, corresponding up to conjugation, in the formalism of \ref{strctcomp}, to the standard Borel subgroup of $G_1$ (denoted by $Q_2$ for the sake of coherence with the notations in the sequel): it is a representative of the unique $G_1(\mathbb{Q})$-conjugacy class of standard maximal parabolics of $G_1$. Its unipotent radical will be denoted by $W_2$. The Levi component of $Q_2$ is a torus $T_{1}$ isomorphic to
\begin{equation} \label{levihilb}
\mathbb{G}_m \times\mbox{Res}_{F|\mathbb{Q}} \mathbb{G}_{m,F}
\end{equation}
via the isomorphism defined on $L$-points by 
\begin{align*}
T_1(L) \simeq \mathbb{G}_{m}(L) &\times \prod\limits_{\sigma \in I_F} \mathbb{G}_m(L)_{\sigma} \\
\left( \begin{array} {cccc} 
\beta  & & & \\
 & \beta u_{\sigma} & & \\
 & & 1 & \\
 & & & u_{\sigma}^{-1}
\end{array} \right) )_{\sigma \in I_F} &\mapsto (\beta, (u_{\sigma})_{\sigma \in I_F}).
\end{align*}

Then, the Shimura datum $(G_2, \mathfrak{H}_2)$ underlying $Z^{\prime\prime}$ is such that the $L$-points of the group $G_2 \simeq \mathbb{G}_m$ are identified with

\[
G_2(L)=\{ ( \left( \begin{array} {cc} 
\beta \cdot I_2 & \\
& I_2
\end{array} \right) )_{\sigma \in I_F}  \ \vert
 \beta \in L^{\times} \} \hookrightarrow T_1(L)
\]
and $\mathfrak{H}_2$ is defined exactly as $\mathfrak{H}_0$ in \eqref{donnée0} (cfr. \cite[Example 12.21]{Pin90}).

\subsubsection{\textbf{The degeneration of the canonical construction along the cusps of Hilbert-Blumenthal varieties.}} \label{degcusps}

Let $j^\prime$ be the open immersion of $S_{\pi_1(K_1)}$ in $S_{\pi_1(K_1)}^*$ and adopt the notations of paragraph \ref{pink}, by replacing $j$ with $j^\prime$ and $K$ with $\pi_1(K_1)$. The stratification $\Phi$ of $\partial S_{\pi_1(K_1)}^*$ is then formed by only one element, called $Z_2$. Denote by $i_2: Z_2 \hookrightarrow S_{\pi_1(K_1)}^*$ the closed immersion complementary to $j^\prime$. Let us consider a stratum $Z^{\prime\prime}$ contributing to $Z_2$ and let us spell out, thanks to Theorem \ref{KostThm}, the conclusions of Theorem \ref{thm:PinkThm}, applied this time to $\mu_{\ell}^{\pi_1(K_1)}(U_{\chi})$, where $U_{\chi}$ is a irreducible $L$-representation of $G_{1,L}$. 

Such a representation is determined by its highest weight $\chi=\chi((h_{\sigma})_{\sigma \in I_F}, g)$, where $h_{\sigma} \in \mathbb{Z}, h_{\sigma} \geq 0 \ \forall \sigma \in I_F$, $g \in \mathbb{Z}$ (we will write \textbf{h} for the vector $(h_{\sigma})_{\sigma \in I_F}$). This character is defined on the points of the maximal torus $T_{1,L}$ of $G_{1,L}$ by 

\[
( \left( \begin{array} {cccc} 
\beta & & & \\
& \beta u_{\sigma} & &  \\
& & 1 & \\
& & & u_{\sigma}^{-1}
\end{array} \right) )_{\sigma \in I_F} \mapsto \prod\limits_{\sigma \in I_F} u_{\sigma}^{h_{\sigma}} \cdot \beta^{g}.
\]
The vector \textbf{h} is called \emph{parallel} if there exists an integer $h$ such that $h_{\sigma}=h$ for every $\sigma \in I_F$. In that case, we will write \textbf{h}$=\underline{h}$. 

\begin{remark} \label{restrcaracd}
Notice that, with these conventions, the restriction to $T_{1,L}$ of the character $\lambda((k_{1,\sigma},k_{2,\sigma})_{\sigma},c)$ defined in \ref{actcar} is given by 

\begin{equation}
\chi((k_{2,\sigma})_{\sigma}, \frac{1}{2} \cdot [c+\sum\limits_{\sigma} (k_{1,\sigma}+k_{2,\sigma})]).
\end{equation}

\end{remark}

Using the notations fixed in the beginning of this subsection, we have an identification 
\begin{equation*}
(Q_{2}/W_{2})_L \simeq T_{1,L},
\end{equation*} 
so that, by Theorem \ref{KostThm}, the cohomology spaces $H^q(W_{2,L}, U_{\chi})$ are identified with representations of the group $T_{1,L}\simeq \mathbb{G}_{m, L} \times \prod\limits_{\sigma \in I_F} \mathbb{G}_{m,L}$. Let us determine the weight of the pure Hodge structure carried by each irreducible factor of these representations. 

\begin{lemma} \label{weightcusp}
Let $\chi=\chi((h_{\sigma})_{\sigma \in I_F}, g)$ as above. Then, the spaces $H^q(W_{2,L}, U_{\chi})$ are non-trivial if and only if $q \in \{0, \dots, d \}$. For each $q \in \{0, \dots, d \}$, letting $I$ run over the subsets of $I_F$ of cardinality $q$, they are direct sums of pure Hodge structures of weight 
\begin{equation}
-2g+2\sum\limits_{\sigma \in I}(h_{\sigma}+1).
\end{equation}
\end{lemma}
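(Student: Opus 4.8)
The plan is to apply Kostant's theorem (Theorem \ref{KostThm}) to the specific situation of the Borel subgroup $Q_2$ of $G_{1,L}$, with unipotent radical $W_2$. Since $G_1 \simeq \mbox{Res}_{F|\mathbb{Q}} \mbox{GL}_{2,F} \times_{\mbox{Res}_{F|\mathbb{Q}} \mathbb{G}_{m}} \mathbb{G}_m$, its base change $G_{1,L}$ is (up to the central constraint) a product of $d$ copies of $\mbox{GL}_{2,L}$ sharing a common determinant; its root system decomposes as $\bigsqcup_{\sigma \in I_F} \mathfrak{r}_\sigma$ where each $\mathfrak{r}_\sigma = \{\pm \alpha_\sigma\}$ consists of a single positive root $\alpha_\sigma$ (the unique positive root of $\mbox{GL}_2$). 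The Weyl group is $\prod_{\sigma} \{1, s_{\alpha_\sigma}\}$, and since $W_2$ is the full unipotent radical of the Borel, $\mathfrak{r}_{W_2} = \mathfrak{r}^+ = \{\alpha_\sigma\}_{\sigma}$, so $\Upsilon' = \Upsilon$ entirely. Hence $H^q(W_{2,L}, U_\chi) = \bigoplus_{w \in \Upsilon,\, l(w) = q} U_{w.(\chi+\rho)-\rho}$, and $l(w)$ for $w = (w_\sigma)_\sigma$ equals the number of $\sigma$ with $w_\sigma = s_{\alpha_\sigma}$. This immediately gives that $H^q$ is non-zero only for $q \in \{0, \dots, d\}$, and that for each such $q$ the summands are indexed by subsets $I \subseteq I_F$ of cardinality $q$ (namely $I = \{\sigma : w_\sigma = s_{\alpha_\sigma}\}$).

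Next I would compute the highest weight $w.(\chi + \rho) - \rho$ explicitly for each such $w$. Writing $\rho$ as the half-sum of positive roots, the shifted action is $\sigma$-by-$\sigma$: on the factors where $w_\sigma = \mathrm{id}$ nothing changes, and on the factors where $w_\sigma = s_{\alpha_\sigma}$ the $\mbox{GL}_2$-computation gives the familiar reflection $h_\sigma \mapsto -h_\sigma - 2$ on the relevant coordinate (with the appropriate shift by $1$ built into the half-sum of the single positive root). The central coordinate $g$ is fixed by the Weyl action (it comes from the split central $\mathbb{G}_m$). So the resulting character of $T_{1,L} \simeq \mathbb{G}_{m,L} \times \prod_\sigma \mathbb{G}_{m,L}$, evaluated on the one-parameter subgroup $G_2(L) = \{(\mathrm{diag}(\beta,\beta,1,1))_\sigma\}$ (i.e. $u_\sigma = 1$, $\beta$ free), records the relevant exponent of $\beta$, which determines the Hodge weight via the conventions of \ref{convhodge} and the definition \eqref{donnée0} of $\mathfrak{H}_2$ (identical to $\mathfrak{H}_0$). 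I would carry this out by substituting into the pairing formula for the character on $T_1$ given just above the statement, tracking the factor of $-2$ coming from the convention that $\rho\circ w$ acts by $t^{-k}$ on the weight-$k$ part (same factor of $2$ as in \eqref{actcar} and Lemma \ref{poids0}).

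The bookkeeping for the central coordinate is the step that needs the most care: one must check that when $w_\sigma$ is a reflection, the reflection $s_{\alpha_\sigma}$ acts on the $\mbox{GL}_2$-weight as $(h_\sigma; g) \mapsto (-h_\sigma; g + h_\sigma)$ or similar, and that after restricting to the subtorus $G_2$ (which kills the "$u_\sigma$" directions and only sees $\beta$), the surviving contribution from the reflected factors becomes $+ (h_\sigma + 1)$ each — doubled to $2(h_\sigma+1)$ by the Hodge-weight normalization — while the $g$-contribution becomes $-2g$. This is exactly the pattern one expects by comparison with Lemma \ref{charkost}.\ref{charakost1} and Lemma \ref{poids0}, applied now to the rank-one groups $\mbox{GL}_{2,L}$ rather than $\mbox{GSp}_{4,L}$, so the computation is routine once the $\rho$-shift is handled correctly.

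The main obstacle — though a mild one — is getting the sign and shift conventions to line up: there are three competing normalizations in play (Kostant's $w.(\chi+\rho)-\rho$ versus the Hodge-theoretic weight convention of \ref{convhodge} versus the parametrization of characters of $T_{1,L}$ fixed in this subsection), and one must verify they combine to give precisely $-2g + 2\sum_{\sigma\in I}(h_\sigma+1)$ rather than, say, that expression with a sign flipped or an off-by-one in the "$+1$". I would double-check this against the already-proved Remark \ref{restrcaracd}, which tells us how $\lambda((k_{1,\sigma},k_{2,\sigma})_\sigma,c)$ restricts to $T_{1,L}$, and against the case $q=0$ (where $H^0(W_{2,L},U_\chi) = U_\chi$ should carry weight $-2g + 0 = w(\chi)$, matching \ref{rmk:poidsrep}.\ref{itm:poidsrep} applied to $G_1$ with $\mathfrak{H}_2 = \mathfrak{H}_0$) and the top case $q = d$ for consistency. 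Once the conventions are pinned down, the statement follows directly.
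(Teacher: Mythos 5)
Your proposal follows the paper's proof essentially verbatim: apply Kostant's theorem to the Borel $Q_{2,L}$ of $G_{1,L}$ (so that $\Upsilon'=\Upsilon$, $l(w)$ counts the reflected places, and the summands of $H^q(W_{2,L},U_\chi)$ are indexed by subsets $I\subset I_F$ with $\vert I\vert=q$), compute $w.(\chi+\rho)-\rho$ to get $-h_\sigma-2$ on the reflected coordinates and $g'=g-\sum_{\sigma\in I}(h_\sigma+1)$ on the central one, then read off the Hodge weight $-2g'$ via the convention of \ref{convhodge} and the datum $(G_2,\mathfrak{H}_2)$. The one slip is the parenthetical claim that $g$ is Weyl-fixed — in the paper's coordinates $\tau_{\hat\sigma}$ sends $g$ to $g-h_{\hat\sigma}$, precisely because $\beta$ is the common determinant rather than a genuinely central direction — but your second paragraph already anticipates this shift and lands on the correct total, so the argument goes through.
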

\begin{proof}
We begin by making explicit the data which are needed in order to apply Theorem \ref{KostThm}. By choosing $(T_{1,L}, Q_{2,L})$ as a maximal torus and a Borel of $G_{1,L}$, we can identify the set of roots $\mathfrak{r}$ of $G_{1,L}$ with $\bigsqcup\limits_{\sigma \in I_F} \mathfrak{r}_{\sigma}$, where each $\mathfrak{r}_{\sigma}$ is a copy of the set of roots of $\mbox{GL}_{2, L}$ corresponding to the obvious choice of maximal torus and Borel. For each fixed $\hat{\sigma} \in I_F$, $\mathfrak{r}_{\hat{\sigma}}$ contains only one simple root $\rho_{\hat{\sigma}}$, which, through the inclusion of $\mathfrak{r}_{\hat{\sigma}}$ inside $\mathfrak{r}$, acquires the expression $\rho_{\hat{\sigma}} = \rho_{\hat{\sigma}}((h_{\sigma})_{\sigma \in I_F}, g)$, where

\[
h_{\sigma}=\left\{ \begin{array}{cc}
2 \ & \mbox{if} \ \sigma = \hat{\sigma} \\
0 \ & \mbox{otherwise}
\end{array} 
\right. ,\
g=1.
\]

The Weyl group $\Upsilon$ of $G_{1,L}$ is in turn isomorphic to the product $\prod\limits_{\sigma \in I_F} \Upsilon_{\sigma}$, where, for each fixed $\hat{\sigma} \in I_F$, $\Upsilon_{\hat{\sigma}}$ is a copy of the Weyl group of $\mbox{GL}_{2, L}$. The latter is a finite group of order 2,  the image of whose only non-trivial element through the inclusion of $\Upsilon_{\hat{\sigma}}$ in $\Upsilon$ is given by the element of $\tau_{\hat{\sigma}}$ which acts on $X^*(T_{1,L})$ in the following way: if $\chi=\chi((h_{\sigma})_{\sigma \in I_F}, g)$, then $\tau_{\hat{\sigma}}.\chi=\chi((\ell_{\sigma})_{\sigma \in I_F}, f)$, where 

\[
\ell_{\sigma}=\left\{ \begin{array}{cc}
-h_{\sigma} \ & \mbox{if} \ \sigma = \hat{\sigma} \\
h_{\sigma} \ & \mbox{otherwise}
\end{array} 
\right. ,\
f=g-h_{\hat{\sigma}}.
\]

By employing the notations of \ref{kost}, it is now clear that, with respect to the only parabolic of $G_{1,L}$ (up to conjugation), i.e. $Q_{2,L}$, we have $\Upsilon^{\prime}=\Upsilon$, and that, if $w=(w_{\sigma})_{\sigma \in I_F} \in \Upsilon^\prime \simeq \prod\limits_{\sigma \in I_F} \Upsilon_{\sigma}$, we have $\ell(w)= \sharp \{ \sigma \in I_F \vert w_{\sigma}=\tau_{\sigma} \}$. 

The explicit computation of $w.(\chi+\rho)-\rho$ (for $w \in \Upsilon^\prime$) and Theorem \ref{KostThm} now give the isomorphisms

\begin{equation}
H^q(W_{2,L}, U_{\chi}) \simeq \bigoplus\limits_{I \subset I_F \ \mbox{\tiny{s.t.}} \ \vert I \vert = q} U^q_I, 
\end{equation}
where the $U^q_I$'s are 1-dimensional $L$-vector spaces on which $\mathbb{G}_{m, L} \times \prod\limits_{\sigma \in I_F} \mathbb{G}_{m,L}$ acts via the character 
\begin{equation*}
\chi^\prime((l_{\sigma})_{\sigma \in I_F}, g^\prime)
\end{equation*}
defined by 

\[
l_{\sigma} = \left\{ \begin{array}{cc}
h_{\sigma} \  & \mbox{if} \ \sigma \notin I \\
-h_{\sigma}-2 \ & \mbox{if} \ \sigma \in I
\end{array} 
\right. ,\
g^\prime=g-\sum\limits_{\sigma \in I} (h_{\sigma}+1).
\]

To obtain the statement, it is now sufficient to remember that the Hodge structure on each $U_I^q$ is induced by the action of the real points of the factor $\mathbb{G}_m$ of $T_1$, corresponding to the Shimura datum $(G_2, \mathfrak{H}_2)$ defined in \ref{doubledeg}, and to employ the convention fixed in \ref{convhodge}.
\end{proof}

Consider now the group $\Gamma_{2}$ of Rmk. \ref{compl}, which is a torsion-free arithmetic subgroup of $\mbox{Res}_{F|\mathbb{Q}} \mathbb{G}_m(\mathbb{Q})$, i.e. of $F^{\times}$ (cfr. the isomorphism \eqref{levihilb}). By the same argument as in the proof of Lemma \ref{nececond}, it is isomorphic to $\mathbb{Z}^{d-1}$. We need to study the cohomology spaces 
\begin{equation*}
H^p(\Gamma_2, H^q(W_{2,L}, U_{\chi})) \simeq \bigoplus\limits_{I \subset I_F \ \mbox{\tiny{s.t.}} \ \vert I \vert = q} H^p(\Gamma_2, U^q_I).
\end{equation*}

By choosing generators $\omega_1, \dots, \omega_{d-1}$, $\Gamma_2$ is identified with the subgroup  

\begin{equation}
\{ ( \left( \begin{array} {cccc} 
1 & & & \\
 & \sigma(t) & & \\
 & & 1  & \\
 & & & \sigma(t^{-1})
\end{array} \right) )_{\sigma \in I_F}  \ \vert t=\omega_1^{p_1} \dots \omega_{d-1}^{p_{d-1}}, p_1, \dots, p_{d-1} \in \mathbb{Z} \} \hookrightarrow T_{1,L}(L),
\end{equation}
and an element $t \in \Gamma_2$ acts on $U^q_I$ by multiplication by $\prod\limits_{\sigma \notin I} \sigma(\omega)^{h_{\sigma}} \cdot \prod\limits_{\sigma \in I} \sigma(\omega)^{-h_{\sigma}-2}$. By reasoning as in the proof of Lemma \ref{cohunip0}, and employing Lemma \ref{weightcusp}, we get:

\begin{lemma} \label{cohcusps}
The cohomology space $H^p(\Gamma_{2}, U^q_I)$ is non-zero if and only if \textbf{\emph{h}}$ \ =\underline{h}$ and one of the following conditions is satisfied:
\begin{enumerate}[wide, labelwidth=!, labelindent=0pt]
\item $I= \varnothing$. In this case, $q=0$ and the Hodge structure on $U_I^0=H^0(W_{2,L}, U_{\chi})$ is pure of weight $-2g$;
\item $I= I_F$. In this case, $q=d$ and the Hodge structure on $U_I^d=H^d(W_{2,L}, U_{\chi})$ is pure of weight $-2g+2d+2dh$.
\end{enumerate}
\end{lemma}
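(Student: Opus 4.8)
The plan is to mimic closely the structure of the proof of Lemma~\ref{cohunip0}, which is the exact analogue one dimension higher (there $\Gamma_{0,Z}$ acts by a quadratic character on representations of $\mbox{GL}_2$, here $\Gamma_2$ acts by a character on the $1$-dimensional spaces $U^q_I$). First I would invoke Lemma~\ref{nultriv}: since $\Gamma_2 \simeq \mathbb{Z}^{d-1}$ is free abelian of finite rank and acts on the $1$-dimensional space $U^q_I$ through a character, the cohomology $H^p(\Gamma_2, U^q_I)$ is non-zero for some $p$ if and only if the action is trivial, and in that case it is non-zero precisely for $0 \le p \le d-1$. So everything reduces to deciding when $\Gamma_2$ acts trivially on $U^q_I$.

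Next I would compute that trivial-action condition explicitly. By the description of $\Gamma_2$ as the diagonal subgroup $\{(\mathrm{diag}(1,\sigma(t),1,\sigma(t^{-1})))_\sigma\}$ with $t = \omega_1^{p_1}\cdots\omega_{d-1}^{p_{d-1}}$, and by the formula for the character $\chi'((l_\sigma)_\sigma, g')$ with $l_\sigma = h_\sigma$ for $\sigma \notin I$ and $l_\sigma = -h_\sigma - 2$ for $\sigma \in I$ established inside the proof of Lemma~\ref{weightcusp}, an element $t \in \Gamma_2$ acts on $U^q_I$ by multiplication by $\prod_{\sigma \notin I}\sigma(t)^{h_\sigma}\cdot\prod_{\sigma \in I}\sigma(t)^{-h_\sigma-2}$. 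Applying Lemma~\ref{trivact} together with Remark~\ref{rmk:gennet} (the norm argument for neat arithmetic subgroups of $F^\times$, exactly as in Lemma~\ref{nececond}), this product is trivial for all $t \in \Gamma_2$ if and only if the exponent vector $(l_\sigma)_{\sigma \in I_F}$ is a constant vector, i.e. there is an integer $\kappa$ with $h_\sigma = \kappa$ for $\sigma \notin I$ and $-h_\sigma - 2 = \kappa$ for $\sigma \in I$. Since $h_\sigma \ge 0$ for all $\sigma$, the first set of equations forces $\kappa \ge 0$ and the second forces $\kappa \le -2$; these are incompatible unless one of the two sets of indices is empty. Hence either $I = \varnothing$ (and then $\mathbf{h} = \underline{h}$ with $h = \kappa \ge 0$), or $I = I_F$ (and then $\mathbf{h} = \underline{h}$ with $h = -\kappa - 2 \ge 0$). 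In both cases $\mathbf{h}$ is parallel, which gives exactly the dichotomy in the statement.

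Finally I would record the weights. By Lemma~\ref{weightcusp}, for $I = \varnothing$ we have $q = \lvert I \rvert = 0$ and the pure Hodge structure on $U^0_\varnothing = H^0(W_{2,L}, U_\chi)$ has weight $-2g + 2\sum_{\sigma \in \varnothing}(h_\sigma + 1) = -2g$; for $I = I_F$ we have $q = d$ and the weight is $-2g + 2\sum_{\sigma \in I_F}(h_\sigma + 1) = -2g + 2d + 2dh$, using $\mathbf{h} = \underline{h}$. This matches the two listed cases verbatim.

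I do not expect any genuine obstacle: the lemma is a direct transcription of the already-proven Lemma~\ref{cohunip0} to the torus case, where the representations are $1$-dimensional so there is no interplay between a central and a semisimple part and hence no need for the second d\'evissage (Construction~\ref{extarithmbis}) or the vanishing theorems on locally symmetric spaces. The only point requiring a moment of care is the sign bookkeeping showing that $h_\sigma = \kappa \ge 0$ and $-h_\sigma - 2 = \kappa$ cannot hold simultaneously for a non-empty $I$ and its non-empty complement, which forces one of the two index sets to be all of $I_F$ and the other to be empty; this is the analogue of the ``preliminary observation'' at the start of the proof of Lemma~\ref{cohunip0} and is entirely elementary.
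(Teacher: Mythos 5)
Your proposal is correct and follows essentially the same route as the paper, whose proof of Lemma \ref{cohcusps} consists precisely of the instruction to reason as in Lemma \ref{cohunip0} (via Lemma \ref{nultriv}, Lemma \ref{trivact} and Remark \ref{rmk:gennet}) and to read off the weights from Lemma \ref{weightcusp}. Your sign analysis forcing $I=\varnothing$ or $I=I_F$ is exactly the analogue of the preliminary observation in the proof of Lemma \ref{cohunip0}, and your weight computations match the statement.
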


The isomorphism of Theorem \ref{thm:PinkThm}.\ref{itm:decobj} for a stratum $Z^{\prime\prime}$ contributing to $\partial S_{\pi_1(K_1)}^*$ now becomes

\begin{equation}
\restr{R^n (i_2)^* j^\prime_* \mu_{\ell}^{\pi_1(K_1)}(U_{\chi})}{Z^{\prime\prime}} \simeq \bigoplus_{\substack{p+q=n}} \bigoplus\limits_{I \subset I_F \ \mbox{\tiny{s. t.}} \ \vert I \vert = q} \mu_{\ell}^{\pi_2(K_2)}(H^p(\Gamma_2, U^q_I)).
\end{equation}
The computation of the weights of these cohomology objects is then a direct consequence of \ref{cohcusps}:

\begin{proposition} \label{poidscohcusps}
Let $U_{\chi}$ be the irreducible representation of $G_{1,L}$ of highest weight 
$\chi=\chi(\mbox{\textbf{\emph{h}}}, g)$ and $Z^{\prime\prime}$ a stratum contributing to $\partial S_{\pi_1(K_1)}^*$. Then:
\begin{enumerate}[wide, labelwidth=!, labelindent=0pt]
\item Let $n < 0$ or $n > 2d-1$. Then $\restr{R^n (i_2)^* j^\prime_* \mu_{\ell}^{\pi_1(K_1)}(U_{\chi})}{Z^{\prime\prime}}$ is zero.
\item Let $n \in \{0, \dots, d-1\}$. Then $\restr{R^n (i_2)^* j^\prime_* \mu_{\ell}^{\pi_1(K_1)}(U_{\chi})}{Z^{\prime\prime}}$ is non-zero if and only if $ \mbox{\textbf{\emph{h}}}$ is parallel. In this case, it is isomorphic to $\mu_{\ell}^{\pi_2(K_2)}(H^n(\Gamma_2, U^0_I))$, and pure of weight $-2g$.
\item Let $n \in \{d, \dots, 2d-1 \}$. Then $\restr{R^n (i_2)^* j^\prime_* \mu_{\ell}^{\pi_1(K_1)}(U_{\chi})}{Z^{\prime\prime}}$ is non-zero if and only if $\mbox{\textbf{\emph{h}}}=\underline{h}$. In this case, it is isomorphic to $\mu_{\ell}^{\pi_2(K_2)}(H^{n-d}(\Gamma_{2}, U^d_I))$, and pure of weight $-2g+2d+2dh$.
\end{enumerate}
\end{proposition}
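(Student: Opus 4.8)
The plan is to read off Proposition~\ref{poidscohcusps} from the three ingredients assembled in this subsection: Pink's theorem (Thm.~\ref{thm:PinkThm}), the Kostant computation of Lemma~\ref{weightcusp}, and the arithmetic-group vanishing of Lemma~\ref{cohcusps}. Concretely, I would take as starting point the isomorphism displayed just before the statement, namely the instance of Thm.~\ref{thm:PinkThm}.\ref{itm:decobj} for the open immersion $j^\prime$ combined with the Kostant decomposition $H^q(W_{2,L},U_\chi)\simeq\bigoplus_{\vert I\vert=q}U^q_I$ of Lemma~\ref{weightcusp},
\[
\restr{R^n (i_2)^* j^\prime_* \mu_{\ell}^{\pi_1(K_1)}(U_{\chi})}{Z^{\prime\prime}} \simeq \bigoplus_{p+q=n}\ \bigoplus_{\substack{I\subset I_F\\ \vert I\vert=q}} \mu_{\ell}^{\pi_2(K_2)}\bigl(H^p(\Gamma_2,U^q_I)\bigr),
\]
keeping in mind that $\mu_{\ell}^{\pi_2(K_2)}$ is exact, so it respects all of these direct sums, and preserves weights by construction.

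Next I would proceed summand by summand. By Lemma~\ref{cohcusps}, $\mu_{\ell}^{\pi_2(K_2)}(H^p(\Gamma_2,U^q_I))$ can be non-zero only if $\mathbf{h}$ is parallel and $I\in\{\varnothing,I_F\}$, hence only if $q\in\{0,d\}$; and since $\Gamma_2\simeq\mathbb{Z}^{d-1}$, Lemma~\ref{nultriv} forces $0\le p\le d-1$ for any non-zero term. Combining the two constraints yields $n=p+q\in\{0,\dots,d-1\}\sqcup\{d,\dots,2d-1\}$, which is exactly assertion~(1) (together with the trivial observation that $p,q\ge 0$ also rules out $n<0$). For $n\in\{0,\dots,d-1\}$ the only admissible pair is $(p,q)=(n,0)$ with $I=\varnothing$, so the object collapses to the single term $\mu_{\ell}^{\pi_2(K_2)}(H^n(\Gamma_2,U^0_\varnothing))$; this is non-zero precisely when $\mathbf{h}$ is parallel, and, evaluating the weight formula of Lemma~\ref{weightcusp} at $I=\varnothing$ and invoking purity-preservation (see below), it is pure of weight $-2g$. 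For $n\in\{d,\dots,2d-1\}$ the only admissible pair is $(p,q)=(n-d,d)$ with $I=I_F$, so the object collapses to $\mu_{\ell}^{\pi_2(K_2)}(H^{n-d}(\Gamma_2,U^d_{I_F}))$; by the same reasoning it is non-zero iff $\mathbf{h}=\underline{h}$, and then pure of weight $-2g+2\sum_{\sigma\in I_F}(h_\sigma+1)=-2g+2d+2dh$.

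I do not anticipate a genuine obstacle: the substantive content has already been packaged into Lemmas~\ref{weightcusp} and~\ref{cohcusps}, and what remains is bookkeeping, namely the interplay between the constraint $q\in\{0,d\}$ coming from Kostant's theorem and $0\le p\le d-1$ coming from the arithmetic side. The one point worth a sentence of care is that passing to cohomology with respect to $\Gamma_2$ leaves the weight untouched: the Hodge (equivalently $\ell$-adic) weight on each $U^q_I$ is read off from the $\mathbb{G}_m$-factor of $T_1$ attached to the datum $(G_2,\mathfrak{H}_2)$, whereas $\Gamma_2$ acts through the complementary $\mbox{Res}_{F\vert\mathbb{Q}}\mathbb{G}_{m,F}$-factor, so that $\mbox{Gr}_k^{\mathbb{W}}H^p(\Gamma_2,U^q_I)\simeq H^p(\Gamma_2,\mbox{Gr}_k^{\mathbb{W}}U^q_I)$ as guaranteed by Thm.~\ref{thm:PinkThm}.\ref{itm:gradobj}; in particular purity is preserved, which is all that is used above.
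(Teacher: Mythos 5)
Your argument is correct and coincides with the paper's own (very terse) proof: the paper likewise deduces the proposition directly from the displayed Pink--Kostant decomposition together with Lemma \ref{cohcusps}, the constraint $q\in\{0,d\}$, and the bound $0\le p\le d-1$ coming from $\Gamma_2\simeq\mathbb{Z}^{d-1}$ via Lemma \ref{nultriv}. Your explicit remark that forming $H^p(\Gamma_2,-)$ preserves the weight (Thm.~\ref{thm:PinkThm}.\ref{itm:gradobj}) is exactly the point the paper leaves implicit.
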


\begin{remark}
The results of Proposition \ref{poidscohcusps} had already been obtained in \cite[Thm. 3.5]{Wil12}, by slightly different considerations, when the representations $U_{\chi}$ are such that $g=0$. 
\end{remark}
\subsubsection{\textbf{The double degeneration.}}

Let $\lambda=\lambda((k_{1,\sigma},k_{2,\sigma})_{\sigma},c)$, $V_{\lambda}$ and $S_K$ be as in Subsection \ref{degkling}, and let $Z^\prime$ and $S_{\pi_1(K_1)}$ be as in paragraph \ref{degcusps}.

By \eqref{deccohd} and Theorem \eqref{thm:PinkThm}.\ref{itm:decder}-\ref{itm:decobj}, we have the following isomorphism in the derived category: 

\begin{equation}\label{compcusp}
\restr{i^*_1 i^* j_* \mu_{\ell}^K(V_{\lambda})}{Z^\prime} \simeq \bigoplus\limits_{m} \left[ \bigoplus_{\substack{p+q=m}} \mu_{\ell}^{\pi_1(K_1)}(V^{p,q}) \right] [-m],
\end{equation}
where 
\begin{equation}\label{restrep}
V^{p,q}:=\bigoplus_{\substack{\Psi \in \mathcal{P}_q^{(\lambda,1)}}} H^p(\Gamma_1, V^{1,q}_{\Psi}).
\end{equation}
In this latter direct sum, every factor is, by restriction, a certain  power of an irreducible representation of $G_{1,L}$, whose dominant weight is the one prescribed by Remark \eqref{restrcaracd} applied to the character $\lambda((\epsilon_{1,\sigma}, \epsilon_{2,\sigma})_{\sigma \in I_F}, c)$ defined in \eqref{carackling}.

Recall that the functor $\mu_{\ell}^{\pi_1(K_1)}$ used in the isomorphism \eqref{compcusp}, with values in $\mbox{Et}_{\ell, R}(Z^\prime)$, is deduced from the canonical construction functor, which takes values in $\mbox{Et}_{\ell, R}(S_{\pi_1(K_1)})$ (Remark \ref{rmk:explpink}.\ref{itm:foncstrat}). In order to study the degeneration of $\mu_{\ell}^{\pi_1(K_1)}(V^{p,q})$ along the points in the closure of $Z^\prime$ in $\partial S^*_K$, we will rather consider the sheaves on $S_{\pi_1(K_1)}$, denoted by the same symbol, which are obtained by interpreting this time $\mu_{\ell}^{\pi_1(K_1)}$ as the canonical construction functor.

Let us now apply Theorem \ref{thm:PinkThm}.\ref{itm:decobj} to $\mu_{\ell}^{\pi_1(K_1)}(V^{p,q})[-m]$ and to $S_{\pi_1(K_1)}$, by posing $p+q=m$ and by adopting the notations of \ref{degcusps}, for a stratum $\partial S_{\pi_1(K_1)}$, in order to study the weights of the objects 
\begin{equation*}
\restr{R^{n-m} (i_2)^* j^\prime_*\mu_{\ell}^{\pi_1(K_1)}(V^{p,q})}{Z^{\prime\prime}}.
\end{equation*}
We will only need this for $m \in \{2d, \dots, 3d-1 \}$. 

\begin{proposition}\label{poidscohdegcusps}
Fix two positive integers $p$ and $q$ such that $p+q \in \{2d, \dots, 3d-1 \}$ and let $V^{p,q}$ be the $L$-representation of $G_{1,L}$ defined in \eqref{restrep}, deduced from the irreducible $L$-representation $V_{\lambda}$ of $G_L$ of highest weight $\lambda=\lambda((k_{1,\sigma}, k_{2,\sigma})_{\sigma \in I_F}, c)$. Let $Z^{\prime\prime}$ be a stratum contributing to $\partial S_{\pi_1(K_1)}$. Then:
\begin{enumerate}[wide, labelwidth=!, labelindent=0pt, label=(\roman*)]
\item \label{poidscohdegcusps_casi}
if $m^\prime \in \{0, \dots, d-1 \}$, the $\ell$-adic sheaf $\restr{R^{m^\prime} (i_2)^* j^\prime_*\mu_{\ell}^{\pi_1(K_1)}(V^{p,q})}{Z^{\prime\prime}}$ on $Z^{\prime\prime}$ is non-zero if and only if $k_{1}=\underline{\kappa_1}$ and $k_{2}=\underline{\kappa_2}$. In this case, it is pure of weight $w(\lambda)+2d-d(\kappa_1-\kappa_2)$;
\item if $m^\prime \in \{d, \dots, 2d-1 \}$, the $\ell$-adic sheaf $\restr{R^{m^\prime} (i_2)^* j^\prime_*\mu_{\ell}^{\pi_1(K_1)}(V^{p,q})}{Z^{\prime\prime}}$ on $Z^{\prime\prime}$ is non-zero if and only if $k_{1}=\underline{\kappa_1}$ and $k_{2}=\underline{\kappa_2}$.
In this case, it is pure of weight $w(\lambda)+6d+d(\kappa_1+\kappa_2)$;
\item if $m^\prime \notin \{0, \dots, 2d-1 \}$, then the $\ell$-adic sheaf $\restr{R^{m^\prime} (i_2)^* j^\prime_*\mu_{\ell}^{\pi_1(K_1)}(V^{p,q})}{Z^{\prime\prime}}$ on $Z^{\prime\prime}$ is zero.
\end{enumerate}
\end{proposition}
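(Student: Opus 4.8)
The plan is to reduce everything to the statements already established for the single degeneration. The object $\mu_{\ell}^{\pi_1(K_1)}(V^{p,q})$ is, by \eqref{restrep} and Lemma \ref{charkost}.\ref{charakost2}, a finite direct sum (with multiplicities) of canonical constructions $\mu_{\ell}^{\pi_1(K_1)}(U_{\chi})$ attached to irreducible $G_{1,L}$-representations $U_\chi$, where $\chi$ is obtained from the Klingen character $\lambda((\epsilon_{1,\sigma},\epsilon_{2,\sigma})_{\sigma},c)$ of \eqref{carackling} by the restriction recipe of Remark \ref{restrcaracd}; moreover only those $\Psi\in\mathcal{P}_q^{(\lambda,1)}$ for which $H^p(\Gamma_1,V^{1,q}_\Psi)\neq 0$ contribute, so Lemma \ref{cohunip1} tells us exactly which $\Psi$ occur. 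First I would run through the proof of Proposition \ref{poidscohd} in the range $m=p+q\in\{2d,\dots,3d-1\}$ to pin down, for each such pair $(p,q)$, the list of surviving $\Psi$ (of the form $(\varnothing,\varnothing,I_F^2,I_F^3)$ with $I_F^2\neq\varnothing$) together with the $\kappa$ for which $\lambda$ is $(\kappa,1)$-Kostant parallel with respect to $\Psi$; these already force $k_1=\underline{\kappa_1}$, $k_2=\underline{\kappa_2}$ as soon as both $I_F^2$ and $I_F^3$ are non-empty, and I would check that the borderline case $I_F^3=\varnothing$ also forces $k_1$ parallel by the dominance inequalities in \eqref{Kostpard}.

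Next, for each surviving $U_\chi$ I would compute $\chi=\chi(\mathbf{h},g)$ explicitly via Remark \ref{restrcaracd} applied to \eqref{carackling}: writing $\mathbf{h}=(\epsilon_{2,\sigma})_\sigma$ and $g=\tfrac12[c+\sum_\sigma(\epsilon_{1,\sigma}+\epsilon_{2,\sigma})]$, and substituting the values of $\epsilon_{1,\sigma},\epsilon_{2,\sigma}$ on $I_F^2$ and $I_F^3$. Under the constraints $k_1=\underline{\kappa_1}$, $k_2=\underline{\kappa_2}$ obtained above, $\mathbf{h}$ turns out to be parallel, say $\mathbf{h}=\underline{h}$ with $h=\kappa_1+1$, which is exactly the hypothesis needed to invoke Proposition \ref{poidscohcusps}. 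Then I would plug $U_\chi$ into Proposition \ref{poidscohcusps}: in the range $m'\in\{0,\dots,d-1\}$ (case \ref{poidscohdegcusps_casi}) the sheaf is pure of weight $-2g$, in the range $m'\in\{d,\dots,2d-1\}$ it is pure of weight $-2g+2d+2dh$, and it vanishes for $m'\notin\{0,\dots,2d-1\}$; recalling $w(\lambda)=-c$, a short arithmetic check that $-2g = w(\lambda)+2d-d(\kappa_1-\kappa_2)$ and $-2g+2d+2dh = w(\lambda)+6d+d(\kappa_1+\kappa_2)$ finishes the weight computation. Since a direct sum of pure sheaves of a common weight is again pure of that weight, and every contributing summand yields the same weight (the dependence on $\Psi$ has dropped out, exactly the ``coincidence'' phenomenon of Remarks \ref{rmk:coincpoids} and \ref{rmk:coincpoids2}), the three assertions follow.

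The main obstacle I anticipate is bookkeeping rather than conceptual: one must be careful that, after restriction along $(Q_1/W_1)_L\subset Q_{1,L}\subset G_L$ and then along $T_{1,L}\subset G_{1,L}$, the two normalizations of Hodge weight — the one coming from the $G_1$-factor used in Proposition \ref{poidscohd} and the one coming from the $G_2$-factor used in Proposition \ref{poidscohcusps} — are compatible; this is precisely the content of the compatibility \eqref{coincpoids} / \cite[Prop. 6.4]{LR91}, and verifying that it applies here (so that the ``Hodge weight'' is preserved under the double restriction and the purity statements of the two propositions can be chained) is the delicate point. A secondary subtlety is that $\mu_{\ell}^{\pi_1(K_1)}(V^{p,q})$ is a priori only a representation of $G_{1,L}$, not irreducible, so one genuinely needs the decomposition into irreducibles before Proposition \ref{poidscohcusps} becomes applicable; but this is harmless since $\mu_\ell^{\pi_1(K_1)}$ and all the functors $R^{m'}(i_2)^*j'_*$ are additive, so the argument proceeds summand by summand.
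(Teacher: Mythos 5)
Your overall strategy --- decompose $V^{p,q}$ into irreducible $G_{1,L}$-summands via Lemma \ref{cohunip1} and Remark \ref{restrcaracd}, feed each summand into Proposition \ref{poidscohcusps}, and check the weight arithmetic --- is exactly the paper's route, and your computation of $g$ and of the two weights is correct. But there is a genuine gap in your derivation of the ``only if'' direction. You claim that the surviving decompositions $\Psi=(\varnothing,\varnothing,I_F^2,I_F^3)$ together with $(\kappa,1)$-Kostant-parallelism ``already force $k_1=\underline{\kappa_1}$, $k_2=\underline{\kappa_2}$ as soon as both $I_F^2$ and $I_F^3$ are non-empty.'' They do not: condition \eqref{Kostpard} on such a $\Psi$ only pins down $k_{2,\sigma}$ on $I_F^2$ and $k_{1,\sigma}$ on $I_F^3$, and says nothing about $k_{1,\sigma}$ on $I_F^2$ or $k_{2,\sigma}$ on $I_F^3$ (dominance gives inequalities, not constancy, so your fallback for the case $I_F^3=\varnothing$ fails for the same reason). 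Consequently your next step --- ``under the constraints $k_1=\underline{\kappa_1}$, $k_2=\underline{\kappa_2}$ obtained above, $\mathbf{h}$ turns out to be parallel'' --- rests on a premise you have not established, and is moreover false when $I_F^3\neq\varnothing$: there $\mathbf{h}$ takes the value $k_{1,\sigma}+1$ on $I_F^2$ and $k_{2,\sigma}$ on $I_F^3$, so even with full parallelism one would need $\kappa_2=\kappa_1+1$, contradicting dominance.

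The missing idea is that the parallelism of $\mathbf{h}$ demanded by Proposition \ref{poidscohcusps} must be used as a \emph{second, independent} set of necessary conditions, namely $k_{1,\sigma}+1=\iota_2$ on $I_F^2$ and $k_{2,\sigma}=\iota_2$ on $I_F^3$, to be intersected with the Kostant-parallelism conditions $k_{2,\sigma}=\iota_1$ on $I_F^2$, $k_{1,\sigma}=\iota_1-1$ on $I_F^3$. Combining the two with $k_{1,\sigma}\geq k_{2,\sigma}$ and $I_F^2\neq\varnothing$ forces $I_F^3=\varnothing$ (the summands with $I_F^3\neq\varnothing$ contribute zero precisely because their $\mathbf{h}$ cannot be parallel), hence $I_F=I_F^2$ and $k_1=\underline{\iota_2-1}$, $k_2=\underline{\iota_1}$. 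Only then does the purity computation you carried out apply. With this correction your argument closes up and coincides with the paper's proof.
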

\begin{proof}
Lemma \ref{cohunip1} implies that in order to have non-trivial cohomology objects, we must have $p \in \{0, \dots, d-1 \}$; hence, by hypothesis, we have $q \in \{d+1, \dots, 3d-1 \}$, and the same lemma then implies that $V^{p,q}$ is non-zero if and only if there exists a $q$-admissible decomposition $\Psi=(I_F^2 \neq \varnothing, I_F^3)$ of $I_F$ and an integer $\iota_1$ such that 
\[
\left\{ \begin{array}{cc} 
k_{2,\sigma}=\iota_1 & \forall \sigma \in I_F^2 \\
k_{1,\sigma}=\iota_1-1 & \forall \sigma \in I_F^3
\end{array}
\right.
\]

In this case, if ${\mathcal{P}_q^{\prime}}^ {(\lambda,d)}$ is the set of such decompositions, we have $V^{p,q}=\bigoplus_{\substack{\Psi \in {\mathcal{P}_q^{\prime}}^ {(\lambda,d)}}} H^p(\Gamma_1, V^{1,q}_{\Psi})$; the highest weight of the action of $G_{1,L}$ on $H^p(\Gamma_1, V^{1,q}_{\Psi})$ is then the restriction to $T_{1,L}$ of the character $\lambda((\epsilon_{1,\sigma}, \epsilon_{2,\sigma})_{\sigma \in I_F}, c)$ defined in \eqref{carackling}, where

\[
\epsilon_{1,\sigma} = \left\{ \begin{array}{cc}
-k_{2,\sigma}-3 \ & \mbox{if} \ \sigma \in I_F^2 \\
-k_{1,\sigma}-4 \ & \mbox{if} \ \sigma \in I_F^3
\end{array} 
\right. ,\
\epsilon_{2,\sigma} = \left\{ \begin{array}{cc}
k_{1,\sigma}+1 \ & \mbox{if} \ \sigma \in I_F^2 \\
k_{2,\sigma} \ & \mbox{if} \ \sigma \in I_F^3
\end{array} 
\right.
\]

By Remark \ref{restrcaracd}, this restriction, as a character of the maximal torus $T_{1,L}$ of $G_{1,L}$, has the form $\chi((\epsilon_{2,\sigma})_{\sigma}, \frac{1}{2} \cdot [c+\sum\limits_{\sigma \in I_F} (\epsilon_{1,\sigma}+\epsilon_{2,\sigma})])$.

Now, by Proposition \ref{poidscohcusps}, $\restr{R^{m^\prime} (i_2)^* j^\prime_*\mu_{\ell}^{\pi_1(K_1)}(V^{p,q})}{Z^{\prime\prime}}$ is non-zero if and only if $m^\prime \in \{0, \dots, 2d-1 \}$ and $\epsilon_{2,\sigma}$ is constant on  $I_F$, say equal to an integer $\iota_2$. This means that 

\[
\left\{ \begin{array}{cc} 
k_{1,\sigma}=\iota_2-1 & \forall \sigma \in I_F^2 \\
k_{2,\sigma}=\iota_2 & \forall \sigma \in I_F^3
\end{array}
\right.
\]

Thus, the fact that $k_{1,\sigma} \geq k_{2,\sigma}$ for every $\sigma \in I_F$ and that $I_F^2 \neq \varnothing$ imply that the sheaves we are interested in are non-zero if and only if $I_F=I_F^2$ and $k_{1}=\underline{\iota_2-1}, \ k_{2}=\underline{\iota_1}$. Denoting $\kappa_1:=\iota_2-1$ and $\kappa_2:=\iota_1$, we get the constants in the statement. 

In order to conclude, it is now enough to apply again Proposition \ref{poidscohcusps}, by observing that if $m^\prime \in \{0, \dots, d-1 \}$ then the highest weight of the action of $G_{1,L}$ on $H^p(\Gamma_1, V^{1,q}_{\Psi})$ is the character
\begin{equation}
((u_{\sigma})_{\sigma \in I_F}, \beta) \mapsto \prod\limits_{\sigma \in I_F} u_{\sigma}^{\kappa_1+1} \cdot \beta^{\frac{1}{2} \cdot [c-2d+d(\kappa_1-\kappa_2)] }
\end{equation}
(and analogously for the case $m^\prime \in \{d, \dots, 2d-1 \}$).
\end{proof}

\subsection{Weight avoidance}
\label{evit} 

In this section, we employ the notations of \ref{mainres} and \ref{pink}. Our aim is to use the results of the preceding paragraphs in order to prove Theorem \ref{thm:mainthm}, thanks to the criterion given by Theorem \ref{critint_a}. Thus, we have to relate, for $m \in \{0,1 \}$, the weights of the objects $\mathcal{H}^n i_m^* i^* j_{!*}(\mathcal{R}_{\ell}(^{\lambda}\mathcal{V}))$ to the weights of the objects $R^n i_m^* i^* j_{*}(\mathcal{R}_{\ell}(^{\lambda}\mathcal{V}))$, which are now known.

In the following, the symbols $\tau_{\mathcal{Z}}^{\geqslant \boldsymbol{\cdot}}$ will denote the truncation functors with respect to the perverse $t$-structure on $\mathcal{Z}$.

\subsubsection{\textbf{Weight avoidance on the Siegel strata.}}

Let us begin by studying the weight avoidance on the Siegel strata, by employing Propositions \ref{poidscoh0} and \ref{poidscohdegcusps}. 

\begin{remark} \label{redpoids}
Reasoning as in \cite[Rmk. 2.7 (a)-(b)-(c)-(d)]{Wil19b}, we have exact sequences

\begin{equation}\label{longex}
\mathcal{H}^{n-1}(i_0^*i_{1,*}\tau_{Z_1}^{\geqslant w(\lambda)+3d}i_1^*i^*j_*\mathcal{R}_{\ell}(^{\lambda}\mathcal{V}))\rightarrow  \mathcal{H}^{n} (i_0^*i^*j_{!*}\mathcal{R}_{\ell}(^{\lambda}\mathcal{V})) \rightarrow \mathcal{H}^{n}(i_0^*i^*j_*\mathcal{R}_{\ell}(^{\lambda}\mathcal{V}))
\end{equation}

for $n \leq w(\lambda)+3d-1$, whereas $\mathcal{H}^{n} (i_0^*i^*j_{!*}\mathcal{R}_{\ell}(^{\lambda}\mathcal{V}))$ is zero for $n \geq w(\lambda)+3d$; one also sees, again reasoning as in (loc. cit.), that $\mathcal{H}^{n}(i_0^*i_{1,*}\tau_{Z_1}^{\geqslant w(\lambda)+3d}i_1^*i^*j_*\mathcal{R}_{\ell}(^{\lambda}\mathcal{V}))$ is zero for $n<w(\lambda)+2d$, so that $\mathcal{H}^{n} (i_0^*i^*j_{!*}\mathcal{R}_{\ell}(^{\lambda}\mathcal{V})) \simeq \mathcal{H}^{n}(i_0^*i^*j_*\mathcal{R}_{\ell}(^{\lambda}\mathcal{V}))$ for $n < w(\lambda)+2d $.

\end{remark}

Thus, if $n \leq w(\lambda)+2d-1$, the weights of the perverse sheaf $\mathcal{H}^{n} (i_0^*i^*j_{!*}\mathcal{R}_{\ell}(^{\lambda}\mathcal{V}))$ are the same as the weights of $\mathcal{H}^{n}(i_0^*i^*j_*\mathcal{R}_{\ell}(^{\lambda}\mathcal{V}))$, which have already been computed, while there is nothing to do for $n \geq w(\lambda)+3d$. It remains to study the interval $[w(\lambda)+2d, w(\lambda)+3d-1]$. 

\begin{remark} \label{morfini}
Each stratum $Z^\prime$ of $\partial S_K^*$ contributing to $Z_1$ is the quotient of a Hilbert-Blumenthal variety $S_{K, Z^\prime}$ by the action of a finite group; let $S_{K, Z^\prime}^*$ be its Baily-Borel compactification. If $\bar{Z}_1$ is the closure of $Z_1$ in $\partial S_K^*$ and
\begin{equation}
Z_1^*:= \bigsqcup_{\substack{Z^\prime \ \mbox{\tiny{stratum of}} \ \partial S_K^* \\
\mbox{\tiny{contributing to}} \ Z_1}} S_{K, Z^\prime}^*,
\end{equation}
then there exists a surjective, finite morphism
\begin{equation}
q: Z_1^*  \rightarrow \bar {Z}_1
\end{equation}
whose restriction to each $S_{K,Z^\prime}$ is the quotient morphism from $S_{K,Z^\prime}$ to $Z^\prime$ (cfr. \cite[Main Thm. 12.3 (c), Sec. 7.6]{Pin90}). 
\end{remark}

Thanks to the above Remark, we can now compute the weights of 
\begin{equation*}
\mathcal{H}^{n}(i_0^*i_{1,*}\tau_{Z_1}^{\geqslant w(\lambda)+3d}i_1^*i^*j_*\mathcal{R}_{\ell}(^{\lambda}\mathcal{V}))
\end{equation*}
in the degrees we are interested in. 

\begin{lemma}\label{evittrunc}

If $n \in [w(\lambda)+2d, w(\lambda)+3d-1]$, then $\mathcal{H}^{n}(i_0^*i_{1,*}\tau_{Z_1}^{\geqslant w(\lambda)+3d}i_1^*i^*j_*\mathcal{R}_{\ell}(^{\lambda}\mathcal{V}))$ is non-zero if and only if $k_1 = \underline{\kappa_1}$ and $k_2=\underline{\kappa_2}$. In this case, it is pure of weight $w(\lambda)+2d-d(\kappa_1-\kappa_2)$.
\end{lemma}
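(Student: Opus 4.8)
The plan is to reduce the computation of $\mathcal{H}^n$ of the double-degeneration complex to the already-computed classical cohomology sheaves on the cusps of the Klingen strata, using the finite morphism $q:Z_1^*\to\bar Z_1$ of Remark \ref{morfini} to transport everything onto the Hilbert-Blumenthal varieties $S_{K,Z'}$, where Proposition \ref{poidscohdegcusps} applies verbatim.

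\emph{Step 1: transport along $q$.} Since $q$ is finite, $q_*$ is exact for the (classical and perverse) $t$-structures, conservative, and preserves weights. Writing $j_1^*$ and $(i_2)^*,\,{j'}^*$ for the immersions relative to $Z_1^*$, Pink's decomposition (Theorem \ref{thm:PinkThm}.\ref{itm:decder}) gives
\begin{equation*}
\restr{i_1^*i^*j_*\mathcal{R}_\ell({}^\lambda\mathcal V)}{Z_1}\simeq\bigoplus_m\Bigl[\bigoplus_{p+q=m}\mu_\ell^{\pi_1(K_1)}(V^{p,q})\Bigr][-m],
\end{equation*}
exactly as in \eqref{compcusp}, and all of $i_0^*$, $i_{1,*}$, $\tau_{Z_1}^{\geqslant w(\lambda)+3d}$ commute with $q_*$ up to replacing the strata $Z^{\prime\prime}$ of $\partial S_K^*$ in the closure of $Z_1$ by the strata $Z''$ of $\partial S^*_{\pi_1(K_1)}$ that dominate them. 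So it suffices to compute, for each $m\in\{2d,\dots,3d-1\}$ and each $Z''$, the object $\tau^{\geqslant w(\lambda)+3d}$ of $(i_2)^*{j'}_*\mu_\ell^{\pi_1(K_1)}(V^{p,q})[-m]$ and then take $\mathcal H^n$ of its pushforward to $Z''\hookrightarrow\partial S_K^*$.

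\emph{Step 2: perverse truncation over a $0$-dimensional stratum.} On the $d$-dimensional variety $S_{\pi_1(K_1)}$, Remark \ref{rmk:degdual}.\ref{itm:degperv} (applied to the datum $(G_1,\mathfrak H_1)$) tells us the realization of ${}^\lambda\mathcal V$, hence of each $\mu_\ell^{\pi_1(K_1)}(V^{p,q})$, is perverse-pure in degree $w(\lambda)+?$; more precisely $V^{p,q}$ is pure of a single weight by Proposition \ref{poidscohd}, so $\mu_\ell^{\pi_1(K_1)}(V^{p,q})[-m]$ is a shifted lisse sheaf, and on the $0$-dimensional stratum $Z''$ the object $(i_2)^*{j'}_*$ of it has classical cohomology sheaves $R^{m'}(i_2)^*{j'}_*$ sitting in perverse degree $m-m+m'=m'$ relative to $Z''$ — but we must be careful: the relevant perverse degree on $\partial S_K^*$ is shifted by the codimension. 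Reasoning exactly as in \cite[Rmk. 2.7]{Wil19b}, the functor $i_{1,*}\tau_{Z_1}^{\geqslant w(\lambda)+3d}$ picks out precisely the summands of \eqref{compcusp} with $m+p'\geqslant w(\lambda)+3d$ (equivalently, after the identifications, $m'=m-2d\geqslant \ldots$), and then $i_0^*$ together with one more $\mathcal H^n$ translates this into extracting $R^{m'}(i_2)^*{j'}_*\mu_\ell^{\pi_1(K_1)}(V^{p,q})|_{Z''}$ for the unique pair $(m',m)$ with $m'+m=n$ and $m'\in\{0,\dots,d-1\}$, forcing $m\in\{2d,\dots,3d-1\}$ since $n\in[w(\lambda)+2d,w(\lambda)+3d-1]$.

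\emph{Step 3: read off the weight.} Now invoke Proposition \ref{poidscohdegcusps}\ref{poidscohdegcusps_casi}: for $m'\in\{0,\dots,d-1\}$ the sheaf $R^{m'}(i_2)^*{j'}_*\mu_\ell^{\pi_1(K_1)}(V^{p,q})|_{Z''}$ is non-zero iff $k_1=\underline{\kappa_1}$ and $k_2=\underline{\kappa_2}$, and in that case is pure of weight $w(\lambda)+2d-d(\kappa_1-\kappa_2)$. Since $q_*$ preserves weights and non-vanishing, and the $\mathcal H^n$ in question is (up to the above bookkeeping) a direct sum of such sheaves over the strata $Z''$ and over the pairs $(p,q)$, this yields exactly the assertion of the lemma. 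Finally I should check that the degree $m'=n-m$ does land in $\{0,\dots,d-1\}$ and that the truncation condition $m\geqslant 2d$ is equivalent to the cut-off $w(\lambda)+3d$ — this is the same index juggling as in \cite{Wil19b} and poses no real difficulty.

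The main obstacle I anticipate is purely bookkeeping: keeping straight the three shifts in play — the shift $[-m]$ from Pink's decomposition, the shift $[-w(\lambda)-3d]$ built into the normalization of $j_{!*}$ in Theorem \ref{critint_a}, and the codimension shift ($=\dim Z_1-\dim Z''=d$) relating the perverse $t$-structure on $Z_1$ (resp. $\partial S_K^*$) to the one on the $0$-dimensional $Z''$ — so that the perverse truncation $\tau_{Z_1}^{\geqslant w(\lambda)+3d}$ really does correspond, after applying $i_0^*$ and $\mathcal H^n$, to selecting the classical cohomology sheaves $R^{m'}$ with $m'\in\{0,\dots,d-1\}$ of the \emph{double} degeneration, and nothing more. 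Once the indices are pinned down, the weight statement is an immediate transcription of Proposition \ref{poidscohdegcusps}, and purity follows because every sheaf involved is, by that proposition, pure of a single weight.
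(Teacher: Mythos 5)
Your argument is correct and follows essentially the same route as the paper's proof: Pink's decomposition, the observation that perverse truncation at $w(\lambda)+3d$ on lisse sheaves over the $d$-dimensional $Z_1$ amounts to classical truncation in degrees $\geq 2d$, proper base change along the finite (hence weight-preserving) morphism $q$ of Remark \ref{morfini}, and finally Prop.~\ref{poidscohdegcusps}.\ref{poidscohdegcusps_casi}. Two small corrections to your bookkeeping: the degree relation is $m'=n-w(\lambda)-m$ rather than $m'+m=n$, and the pair $(m',m)$ is not unique --- one obtains a direct sum over all $m\geq 2d$ with $n-w(\lambda)-m\geq 0$ --- but this is harmless, since every contributing summand lands in $m'\in\{0,\dots,d-1\}$ and is pure of the same weight $w(\lambda)+2d-d(\kappa_1-\kappa_2)$.
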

\begin{proof}
Recall that $\mathcal{R}_{\ell}(^{\lambda}\mathcal{V})=\mu_{\ell}^K(V_{\lambda})[-w(\lambda)]$. Then, for each stratum $Z^\prime$ contributing to $Z_1$, we have, by Theorem \ref{thm:PinkThm}.\ref{itm:decder},
\begin{equation*}
i_1^*i^*j_*\mathcal{R}_{\ell}(^{\lambda}\mathcal{V})_{\vert_{Z^\prime}} \simeq \bigoplus\limits_k R^{k}i_1^*i^*j_*\mu_{\ell}^K(V_{\lambda})[-w(\lambda)-k]_{\vert_{Z^\prime}}.
\end{equation*}

Moreover, by Theorem \ref{thm:PinkThm}.\ref{itm:decobj}, the objects $R^{h-w(\lambda)}i_1^*i^*j_*\mu_{\ell}^K(V_{\lambda})$ are all lisse: since $Z_1$ is of dimension $d$, perverse truncation above degree $w(\lambda)+3d$ equals classical truncation above degree $w(\lambda)+2d$. Thus, by fixing a stratum $Z$ contributing to $Z_0$, we obtain 

\begin{equation}\label{changbasfirst}
\mathcal{H}^{n}(i_0^*i_{1,*}\tau_{Z_1}^{\geqslant w(\lambda)+3d}i_1^*i^*j_*\mathcal{R}_{\ell}(^{\lambda}\mathcal{V}))_{\vert_{Z}} \simeq \bigoplus\limits_{Z^\prime} \left( \mathcal{H}^{n} i_0^*i_{1,*}(\bigoplus\limits_{k \geq 2d} R^{k}i_1^*i^*j_*\mu_{\ell}^K(V_{\lambda})[-w(\lambda)-k]_{\vert_{Z^\prime}}) \right)_{\vert_{Z}},
\end{equation}
where the direct sum runs over all strata $Z^\prime$ contributing to $Z_1$ and containing $Z$ in their closure. Fix now such a stratum: as in \eqref{compcusp} and \eqref{restrep}, we get 

\begin{equation}
\bigoplus\limits_{k \geq 2d} R^{k}i_1^*i^*j_*\mu_{\ell}^K(V_{\lambda})[-w(\lambda)-k]_{\vert_{Z^\prime}} \simeq \bigoplus\limits_{k \geq 2d} (\bigoplus_{\substack{p+q=k}} \mu_{\ell}^{\pi_1(K_1)}(V^{p,q}))[-w(\lambda)-k],
\end{equation}
and as a consequence, by taking into account the fact that $Z$ is of dimension $0$, 

\begin{equation} \label{sumobj}
\left( \mathcal{H}^n i_0^*i_{1,*}(\bigoplus\limits_{k \geq 2d} R^{k}i_1^*i^*j_*\mu_{\ell}^K(V_{\lambda})[-w(\lambda)-k]_{\vert_{Z^\prime}})) \right)_{\vert_{Z}} \simeq  \bigoplus\limits_{k \geq 2d} \bigoplus_{\substack{p+q=k}} \restr{R^{n-w(\lambda)-k} i_0^*i_{1,*} \mu_{\ell}^{\pi_1(K_1)}(V^{p,q})}{Z}
\end{equation}

Now, let us adopt the notations of Remark \ref{morfini}, and extend the notations of \ref{degcusps} in the following way: $j^\prime$ will denote the open immersion of the union of the $S_{K, Z^\prime}$'s in the union of the $S_{K, Z^\prime}^*$'s, while $i_2$ will denote the complementary closed immersion of the union of the $\partial S_{K, Z^\prime}^*$'s in the union of the $S_{K, Z^\prime}^*$'s. 
By restriction to $Z^\prime$, we get, by proper base change, the relation

\begin{equation}
i_0^*i_{1,*} \mu_{\ell}^{\pi_1(K_1)} \simeq q_* i_2^* j^\prime_* \mu_{\ell}^{\pi_1(K_1)},
\end{equation}
where on the left, resp. right hand side, we have interpreted the functor $\mu_{\ell}^{\pi_1(K_1)}$ as a functor with values in $\mbox{Et}_{\ell, R}(Z^\prime)$, resp. in $\mbox{Et}_{\ell, R}(S_{\pi_1(K_1)})$. Denote now by $\partial_{Z^\prime}$ the stratum of  $S_{K,Z^\prime}$ such that $q(\partial_{Z^\prime})=Z$ (such a stratum is unique, because two rational boundary components (cfr. \ref{strctcomp}) are conjugated by $G_1(\mathbb{Q})$ if and only if they are conjugated by $G(\mathbb{Q})$, by \cite[Rmk. at page 91, (iii)]{Pin90}). Since the morphism $q$ is finite, we deduce that, for every $k, p, q$,
 
\begin{equation} \label{changbaslast}
\restr{R^{n-w(\lambda)-k} i_0^*i_{1,*} \mu_{\ell}^{\pi_1(K_1)}(V^{p,q})}{Z} \simeq q_*( \restr{R^{n-w(\lambda)-k} i_2^* j^\prime_* \mu_{\ell}^{\pi_1(K_1)}(V^{p,q})}{\partial_{Z^\prime}}).
\end{equation}

Now, the functor $q_*$ preserves weights, because the morphism $q$ is finite. Thus, the isomorphisms \eqref{changbasfirst}-\eqref{changbaslast} allow us to deduce the weights of $\mathcal{H}^{n}(i_0^*i_{1,*}\tau_{Z_1}^{\geqslant w(\lambda)+3d}i_1^*i^*j_*\mathcal{R}_{\ell}(^{\lambda}\mathcal{V}))$ from the weights of the sheaf $\restr{R^{n-w(\lambda)-k} i_2^* j^\prime_* \mu_{\ell}^{\pi_1(K_1)}(V^{p,q})}{Z^{\prime\prime}}$. But since $n \in [w(\lambda)+2d, w(\lambda)+3d-1]$ and $k \geq 2d$, then, by Proposition \ref{poidscohdegcusps}, the objects which appear as summands in the right hand side of \eqref{sumobj} are non-zero only for indices $n-w(\lambda)-k \in \{0, \dots d-1 \}$. We can then conclude by Proposition \ref{poidscohdegcusps}.\ref{poidscohdegcusps_casi}.
\end{proof}

We now dispose of all the necessary information in order to determine an interval of weight avoidance on the Siegel strata:

\begin{proposition} \label{evit0} The perverse cohomology sheaf $\mathcal{H}^{n}(i_0^*i^*j_{!*}\mathcal{R}_{\ell}(^{\lambda}\mathcal{V}))$ can be non-zero only if $k_{1}=\underline{\kappa_1}$, $k_2=\underline{\kappa_2}$ and $n \in \{w(\lambda), \dots, w(\lambda)+3d-1 \}$. In this case, $\mathcal{H}^{n} (i_0^*i^*j_{!*}\mathcal{R}_{\ell}(^{\lambda}\mathcal{V}))$ is of weight $\leq n-d(\kappa_1-\kappa_2)$ for each $n \in \mathbb{Z}$. 

If $\kappa_1$ and $\kappa_2$ have the same parity, then the weight-graded object of weight $w(\lambda)+2d-d(\kappa_1-\kappa_2)$ of the perverse sheaf
$\mathcal{H}^{w(\lambda)+2d} (i_0^*i^*j_{!*}\mathcal{R}_{\ell}(^{\lambda}\mathcal{V}))$ is non-zero.

\end{proposition}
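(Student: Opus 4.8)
The plan is to split according to the degree $n$ and to reduce everything, via Remark~\ref{redpoids}, to the weight computations already established in Proposition~\ref{poidscoh0}, Lemma~\ref{evittrunc} and (for the non-vanishing) Proposition~\ref{poidsapp}. Throughout I write $\mathcal{F}:=\mathcal{R}_{\ell}(^{\lambda}\mathcal{V})$. Since $Z_0$ is $0$-dimensional, perverse and classical cohomology on $Z_0$ agree, so $\mathcal{H}^{n}(i_0^*i^*j_*\mathcal{F}) \simeq R^{n-w(\lambda)}(i_0^*i^*j_*\mu_{\ell}^K(V_{\lambda}))$, whose weights are given by Proposition~\ref{poidscoh0} (recall $\mathcal{F}=\mu_{\ell}^K(V_{\lambda})[-w(\lambda)]$).

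\textbf{The easy ranges.} For $n \geq w(\lambda)+3d$ the sheaf $\mathcal{H}^{n}(i_0^*i^*j_{!*}\mathcal{F})$ vanishes by Remark~\ref{redpoids}. For $n < w(\lambda)+2d$ that same remark gives $\mathcal{H}^{n}(i_0^*i^*j_{!*}\mathcal{F}) \simeq \mathcal{H}^{n}(i_0^*i^*j_*\mathcal{F}) = R^{n-w(\lambda)}(i_0^*i^*j_*\mu_{\ell}^K(V_{\lambda}))$; by Proposition~\ref{poidscoh0}(1) this is $0$ for $n<w(\lambda)$ (which gives the asserted range for $n$), while by Proposition~\ref{poidscoh0}(2)--(3) it is non-zero only if $k_1$ and $k_2$ are both parallel, and then pure of weight $w(\lambda)-2d\kappa_0$ (with $\kappa_0:=\kappa_1=\kappa_2$) if $n-w(\lambda)<d$, resp. $w(\lambda)-d(\kappa_1+\kappa_2)$ if $d\le n-w(\lambda)\le 2d-1$. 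As $\kappa_2\ge 0$ and $n\ge w(\lambda)$, an elementary inequality gives $w(\lambda)-2d\kappa_0\le n$ and $w(\lambda)-d(\kappa_1+\kappa_2)\le n-d(\kappa_1-\kappa_2)$, which is the desired bound.

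\textbf{The range $n\in\{w(\lambda)+2d,\dots,w(\lambda)+3d-1\}$.} Here I would plug the exact sequence of Remark~\ref{redpoids}
\[
\mathcal{H}^{n-1}(i_0^*i_{1,*}\tau_{Z_1}^{\geqslant w(\lambda)+3d}i_1^*i^*j_*\mathcal{F})\rightarrow  \mathcal{H}^{n}(i_0^*i^*j_{!*}\mathcal{F}) \rightarrow \mathcal{H}^{n}(i_0^*i^*j_*\mathcal{F})
\]
into the usual estimate (the middle term is an extension of a subobject of the third term by a quotient of the first). The left-hand term vanishes for $n=w(\lambda)+2d$ (its degree is $<w(\lambda)+2d$), and for $n-1\ge w(\lambda)+2d$ it is, by Lemma~\ref{evittrunc}, non-zero only if $k_1,k_2$ are parallel and then pure of weight $w(\lambda)+2d-d(\kappa_1-\kappa_2)\le n-d(\kappa_1-\kappa_2)$. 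The right-hand term is $R^{n-w(\lambda)}(i_0^*i^*j_*\mu_{\ell}^K(V_{\lambda}))$ with $n-w(\lambda)\in\{2d,\dots,3d-1\}$, hence by Proposition~\ref{poidscoh0}(4) non-zero only if $k_1,k_2$ are parallel and then of weights in $\{w(\lambda)-d(\kappa_1+\kappa_2),\,w(\lambda)+2d-d(\kappa_1-\kappa_2)\}$, both $\le n-d(\kappa_1-\kappa_2)$ by the same inequalities. Since subquotients of mixed perverse sheaves of weight $\le w$ again have weight $\le w$, the middle term has weights $\le n-d(\kappa_1-\kappa_2)$ and is non-zero only if $k_1,k_2$ are parallel. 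This proves the first two assertions.

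\textbf{Non-vanishing at $n=w(\lambda)+2d$.} The vanishing of the left-hand term above, together with the long exact sequence obtained as in \cite[Rmk.~2.7]{Wil19b}, identifies $\mathcal{H}^{w(\lambda)+2d}(i_0^*i^*j_{!*}\mathcal{F})$ with the kernel of $\mathcal{H}^{w(\lambda)+2d}(i_0^*i^*j_*\mathcal{F}) \to \mathcal{H}^{w(\lambda)+2d}(i_0^*i_{1,*}\tau_{Z_1}^{\geqslant w(\lambda)+3d}i_1^*i^*j_*\mathcal{F})$. By strictness of morphisms of mixed perverse sheaves for the weight filtration, it suffices to work in weight $w:=w(\lambda)+2d-d(\kappa_1-\kappa_2)$. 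On the source, $\mathrm{Gr}_{w}^{\mathbb{W}}$ contains the summand $\mu_{\ell}^{\pi_0(K_0)}(H^d(\Gamma_0,V^{0,d}_{I_F^1}))$ of Proposition~\ref{poidscoh0}(4), which by Proposition~\ref{poidsapp}(3) is locally of rank $>h$ — this is where the parity hypothesis on $\kappa_1,\kappa_2$ enters. The target is pure of weight $w$ by Lemma~\ref{evittrunc}, and, unwinding its description through the finite morphism $q$ of Remark~\ref{morfini} (it is $q_*$ of the degree-$0$ double degeneration, locally $\mu_{\ell}^{\pi_2(K_2)}$ of the $1$-dimensional space $H^0(\Gamma_2,U^0_I)$, cf. Propositions~\ref{poidscohcusps} and \ref{poidscohdegcusps}), is locally of rank at most the number $h$ of cusps. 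A map from something of rank $>h$ to something of rank $\le h$ has non-trivial kernel, so $\mathrm{Gr}_{w}^{\mathbb{W}}\mathcal{H}^{w(\lambda)+2d}(i_0^*i^*j_{!*}\mathcal{F})\ne 0$. The main obstacle is precisely this last rank bound: one must identify the cusps $\partial_{Z'}$ of the Klingen strata $Z'$ lying over a fixed Siegel stratum $Z$ with the set $\Gamma_{0,ss}\backslash\mathbb{P}^1(F)$ counted by $h$, a matching of boundary combinatorics resting on \ref{strctcomp}--\ref{strates} and on the structure of $q$; everything else is bookkeeping with the weight formulas already proved and the inequalities $\kappa_2\ge 0$, $n\ge w(\lambda)+2d$.
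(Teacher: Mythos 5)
Your proof follows the paper's argument essentially step for step: the same reduction via Remark \ref{redpoids} and Proposition \ref{poidscoh0} in the ranges $n<w(\lambda)+2d$ and $n\geq w(\lambda)+3d$, the same use of the exact sequence together with Lemma \ref{evittrunc} in the middle range, and the same kernel-of-$ad$ rank comparison (source of local rank $>h$ by Proposition \ref{poidsapp}, target of local rank $\leq h$) for the non-vanishing at $n=w(\lambda)+2d$. The one step you flag but do not carry out --- that the target of $ad$ has local rank at most $h$ --- is exactly what the paper establishes in the proposition immediately following, by identifying the fibre $q^{-1}(z)$ with $B_2(F)\backslash\mbox{GL}_2(F)/\Gamma_0$, i.e.\ the set of cusps of $\Gamma_0$, whose cardinality is bounded by that of the set of cusps of $\Gamma_{0,ss}$.
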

\begin{proof}
If $n < w(\lambda)+2d $, then $\mathcal{H}^{n} (i_0^*i^*j_{!*}\mathcal{R}_{\ell}(^{\lambda}\mathcal{V})) \simeq \mathcal{H}^{n}(i_0^*i^*j_*\mathcal{R}_{\ell}(^{\lambda}\mathcal{V}))$, by Remark \ref{redpoids}. Now, $Z_0$ is of dimension 0; hence, 
\begin{equation} \label{isoperv0}
\mathcal{H}^{n}(i_0^*i^*j_*\mathcal{R}_{\ell}(^{\lambda}\mathcal{V})) \simeq R^{n}(i_0^*i^*j_*\mathcal{R}_{\ell}(^{\lambda}\mathcal{V})) = R^{n-w(\lambda)}i_0^*i^*j_*\mu_{\ell}^K(V). 
\end{equation}
Thus, the perverse sheaf $\mathcal{H}^{n} (i_0^*i^*j_{!*}\mathcal{R}_{\ell}(^{\lambda}\mathcal{V}))$ is zero for every $n<w(\lambda)$. If $k_1$ and $k_2$ are not parallel, then Proposition \ref{poidscoh0} tells us that $\mathcal{H}^{n} (i_0^*i^*j_{!*}\mathcal{R}_{\ell}(^{\lambda}\mathcal{V}))$ is zero for every $w(\lambda) \leq n < w(\lambda)+2d$. If instead $k_1=\underline{\kappa_1}$, $k_2=\underline{\kappa_2}$, then the same proposition tells us that for every $w(\lambda) \leq n < w(\lambda)+2d$, $\mathcal{H}^{n} (i_0^*i^*j_{!*}\mathcal{R}_{\ell}(^{\lambda}\mathcal{V}))$ is of weight $\leq w(\lambda)-d(\kappa_1+\kappa_2) \leq n-d(\kappa_1-\kappa_2)$. 

Let now $n \geq w(\lambda)+2d$. If $n \geq w(\lambda)+3d$, Remark \ref{redpoids} implies that $\mathcal{H}^{n}(i_0^*i^*j_{!*}\mathcal{R}_{\ell}(^{\lambda}\mathcal{V}))$ is zero. Assume then $n \in \{ w(\lambda)+2d, \dots, w(\lambda)+3d-1 \}$. In this case, by reasoning as in \eqref{isoperv0} and by applying again Proposition \ref{poidscoh0}, along with Lemma \ref{evittrunc}, we also see, by the exact sequence \eqref{longex}, that if $k_1$ and $k_2$ are not parallel, then $\mathcal{H}^{n} (i_0^*i^*j_{!*}\mathcal{R}_{\ell}(^{\lambda}\mathcal{V}))$ is zero for every $n \in \{ w(\lambda)+2d, \dots, w(\lambda)+3d-1 \}$. If instead $k_1=\underline{\kappa_1}$, $k_2=\underline{\kappa_2}$, then we see in same way that the weights that can appear in $\mathcal{H}^{n} (i_0^*i^*j_{!*}\mathcal{R}_{\ell}(^{\lambda}\mathcal{V}))$ are of the form $w(\lambda)-d(\kappa_1+\kappa_2) $ or $w(\lambda)+2d-d(\kappa_1-\kappa_2)$. In any case, we get weights $\leq n-d(\kappa_1-\kappa_2)$. 

To see that if $\kappa_1$ and $\kappa_2$ have the same parity, then weight $w(\lambda)+2d-d(\kappa_1-\kappa_2)$ does appear in the perverse sheaf $\mathcal{H}^{w(\lambda)+2d} (i_0^*i^*j_{!*}\mathcal{R}_{\ell}(^{\lambda}\mathcal{V}))$, notice that the long exact sequence \eqref{longex} gives a short exact sequence  

\begin{equation}
0 \rightarrow \mathcal{H}^{w(\lambda)+2d} (i_0^*i^*j_{!*}\mathcal{R}_{\ell}(^{\lambda}\mathcal{V})) \hookrightarrow \mathcal{H}^{w(\lambda)+2d}(i_0^*i^*j_*\mathcal{R}_{\ell}(^{\lambda}\mathcal{V})) \xrightarrow{ad} \mathcal{H}^{w(\lambda)+2d}(i_0^*i_{1,*}\tau_{Z_1}^{\geqslant w(\lambda)+3d}i_1^*i^*j_*\mathcal{R}_{\ell}(^{\lambda}\mathcal{V})), 
\end{equation}
so that $\mathcal{H}^{w(\lambda)+2d} (i_0^*i^*j_{!*}\mathcal{R}_{\ell}(^{\lambda}\mathcal{V}))$ is identified with the kernel of the arrow $ad$. Proposition \ref{poidsapp} shows that if $\kappa_1$ and $\kappa_2$ have the same parity, then $\mathcal{H}^{w(\lambda)+2d}(i_0^*i^*j_*\mathcal{R}_{\ell}(^{\lambda}\mathcal{V}))$ contains a direct factor, which is pure of weight $w(\lambda)+2d-d(\kappa_1-\kappa_2)$ and locally of dimension $> h$, where $h:=\vert \Gamma_{0,ss} \backslash \mathbb{P}^1(F)\vert$ is the (strictly positive) number of cusps of the Hilbert-Blumenthal variety $X_{\Gamma_{0,ss}}$. In order to conclude, it is then enough to show that locally, the kernel of $ad$ has non-trivial intersection with this sub-object. The isomorphisms \eqref{changbasfirst}-\eqref{changbaslast} in the proof of Lemma \ref{evittrunc} show that, if we let the index $Z^\prime$ run over all strata $Z^\prime$ contributing to $Z_1$ and containing $Z$ in their closure, and if the finite morphism $q: Z_1^* \rightarrow \bar{Z}$ is the one of Remark \ref{morfini}, then, above a stratum $Z$ of $\partial S_K^*$ contributing to $Z_0$, the arrow $ad$ has the form 
\begin{equation}
\restr{R^{2d} i^*_0 i^* j_* \mu_{\ell}^K(V_{\lambda})}{Z} \rightarrow q_*\bigoplus\limits_{Z^\prime} (\restr{R^{0} i_2^* j^\prime_* \mu_{\ell}^{\pi_1(K_1)}(V^{p,q})}{\partial_{Z^\prime}})
\end{equation}
Moreover, we know that, for each fixed $Z^\prime$, we have $\restr{R^{0} i_2^* j^\prime_* \mu_{\ell}^{\pi_1(K_1)}(V^{p,q})}{\partial_{Z^\prime}} \simeq \mu_{\ell}^{\pi_2(K_2)}(H^0(\Gamma_2, U^0_I))$, where $H^0(\Gamma_2, U^0_I)$ is a 1-dimensional $L$-vector space (cfr. the proof of Lemma \ref{weightcusp}). 

We are then reduced to show that, locally, the dimension of the target of $ad$ is strictly smaller than the dimension of the source (remember that by Lemma \ref{evittrunc}, the target is pure of weight $w(\lambda)+2d-d(\kappa_1-\kappa_2)$). But this is true, thanks to the following proposition. 
\end{proof}

\begin{proposition}
Let $Z$, $q$ and $h$ be as in the proof of Proposition \ref{evit0}. 

Then, above $Z \subset Z_0$, the number of points in the geometrical fibers of $\restr{q}{\partial Z_1^*}: \partial Z_1^* \rightarrow Z_0$ is $\leq h$. 
\end{proposition}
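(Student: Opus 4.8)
The plan is to count geometric fibers of $\restr{q}{\partial Z_1^*}\colon \partial Z_1^* \to Z_0$ by translating everything into the adelic description of the boundary strata. Recall from \ref{strates} that $Z_0$ is the disjoint union of $0$-dimensional Siegel strata $S_{0,g}$, $g \in \mathcal{C}_0$, and $Z_1$ (resp. $\bar{Z}_1$, resp. $Z_1^*$) is built out of the $d$-dimensional Klingen strata $S_{1,g'}$ and their Baily-Borel compactifications $S_{K,Z'}^*$, whose boundary $\partial S_{K,Z'}^*$ consists of cusps coming from the group $\mathbb{G}_m$ (the datum $(G_2, \mathfrak{H}_2)$ of \ref{doubledeg}). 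The key observation is that a cusp of some $S_{K,Z'}$ mapping (under $q$) to the fixed $0$-dimensional stratum $Z \subset Z_0$ corresponds, in the formalism of \ref{strctcomp}, to a rational boundary component of $G_1$, and by the remark cited in the proof of Lemma \ref{evittrunc} (\cite[Rmk. at page 91, (iii)]{Pin90}), two such are $G_1(\mathbb{Q})$-conjugate exactly when they are $G(\mathbb{Q})$-conjugate. Thus the geometric fiber of $\restr{q}{\partial Z_1^*}$ over (a geometric point of) $Z$ is in bijection with the set of pairs (Klingen stratum $Z'$ containing $Z$ in its closure, cusp $\partial_{Z'}$ of $S_{K,Z'}$ with $q(\partial_{Z'}) = Z$).

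First I would set up the comparison with the Hilbert-Blumenthal picture: the group $\Gamma_{0,ss} = \Gamma_0 \cap \mathrm{SL}_2(F)$ from Construction \ref{extarithmbis} is precisely the arithmetic group whose associated symmetric space $\mathcal{H}^d$ gives the connected Hilbert-Blumenthal variety $X_{\Gamma_{0,ss}} = \Gamma_{0,ss} \backslash \mathcal{H}^d$, and $h = \vert \Gamma_{0,ss} \backslash \mathbb{P}^1(F) \vert$ is by definition the number of cusps of $X_{\Gamma_{0,ss}}$; equivalently $h = \vert \Gamma_{0,ss} \backslash G_1(\mathbb{Q})/Q_2(\mathbb{Q}) \vert$ where $Q_2$ is the Borel of $G_1$ (the stabilizer of an isotropic line, i.e. of a point of $\mathbb{P}^1(F)$). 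Next I would express the left-hand side: the set of pairs above is, after unwinding the definitions of the strata $S_{1,g'}$ as quotients $\Gamma_1 \backslash S_{\pi_1(K_1)}$ and of the cusps of $S_{\pi_1(K_1)}^*$, a subset of the double-coset-type set that parametrizes $G(\mathbb{Q})$-conjugacy classes of admissible parabolics of $G$ below $Q_2$ lying over the fixed Siegel stratum, together with the appropriate level data coming from $K$. The containment $\le h$ then comes from the fact that these pairs inject into $\Gamma_{0,ss}\backslash G_1(\mathbb{Q})/Q_2(\mathbb{Q})$: passing from the smaller arithmetic group $\Gamma_{0,ss} \subset \Gamma_0$ (and forgetting the extra level structure) only makes the quotient set larger, so the number of cusps $\partial_{Z'}$ over $Z$, summed over all $Z'$ whose closure contains $Z$, is at most the number of $\Gamma_{0,ss}$-orbits on $\mathbb{P}^1(F)$, which is $h$.

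The main obstacle I anticipate is the bookkeeping of the two-step degeneration: $Z$ sits in the closure of possibly several Klingen strata $Z'$, and for each, $Z$ is one of several cusps, so I must be careful that the pairs $(Z', \partial_{Z'})$ with $q(\partial_{Z'}) = Z$ are counted exactly once and that the map to $\Gamma_{0,ss}\backslash\mathbb{P}^1(F)$ is genuinely injective (not just well-defined). Concretely this requires identifying, inside $G(\mathbb{Q})$, the parabolics of $G$ that arise as ``$Q_2$ seen inside a $G_1$-conjugate of $Q_1$'' with the stabilizers of isotropic lines contained in the isotropic plane fixed by the chosen $Q_0$, and checking that the relevant arithmetic quotient is computed by $\Gamma_{0,ss}$ (or a subgroup thereof) acting on $\mathbb{P}^1(F)$; this is where the neatness of $K$ and the commensurability statements of Construction \ref{extarithmbis} and Remark \ref{compl} are used. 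Once the injection into $\Gamma_{0,ss}\backslash\mathbb{P}^1(F)$ is in place, the bound $\le h$ is immediate, and — as used at the end of the proof of Proposition \ref{evit0} — it is precisely this strict-or-non-strict inequality $\le h < $ (local dimension of the source of $ad$, which is $> h$) that guarantees $ad$ has non-trivial kernel.
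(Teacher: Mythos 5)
Your strategy is essentially the one the paper follows: the paper's proof simply observes that the argument of \cite[Prop.~2.9, pp.~27--28]{Wil19b} translates word for word, identifying the geometric fibre $q^{-1}(z)$ adelically with the double coset set $B_2(F)\backslash\mathrm{GL}_2(F)/\Gamma_0$ --- i.e.\ with the set of cusps of the arithmetic group $\Gamma_0$ attached to the \emph{target} Siegel stratum $Z$ --- and then bounds its cardinality by the number $h$ of cusps of the smaller group $\Gamma_{0,ss}\subset\Gamma_0$. Your central comparison (\q{passing to the smaller group only makes the quotient larger}) is exactly this final step, so the outline is sound. Two caveats. First, the enumeration of the fibre by pairs (Klingen stratum, cusp) is a tautological rewriting of $\partial Z_1^*=\bigsqcup_{Z'}\partial S_{K,Z'}^*$ and does not by itself produce the injection you need; all the content is in the adelic identification of the fibre with a \emph{single} double coset space for $\Gamma_0$, which is precisely the bookkeeping the paper outsources to \emph{loc.\ cit.} rather than redoing. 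Second, the parenthetical in your last paragraph, asserting that the relevant arithmetic quotient is computed by \q{$\Gamma_{0,ss}$ or a subgroup thereof}, points in the wrong direction: a quotient of $\mathbb{P}^1(F)$ by a subgroup of $\Gamma_{0,ss}$ has cardinality $\geq h$, not $\leq h$. What one actually shows is that the fibre is $B_2(F)\backslash\mathrm{GL}_2(F)/\Gamma_0$ with $\Gamma_0\supseteq\Gamma_{0,ss}$; your main argument uses the correct containment, so this is a slip rather than a fatal error, but it is exactly the place where the proof could silently go wrong.
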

\begin{proof} 
The part of the proof of \cite[Prop. 2.9]{Wil19b} at pages 27-28 can be translated word by word in this context: the statement can be proven on $\mathbb{C}$-points, and by using the adelic description of the morphism obtained from $q$ by analytification, one identifies the fibre $q^{-1}(z)$ above $z \in Z(\mathbb{C})$ with $ B_2(F)\backslash\mbox{GL}_2(F)/\Gamma_0$, where $B_2$ is the standard Borel subgroup of $\mbox{GL}_2$. Now, this set is exactly the set of cusps of $\Gamma_0$, whose cardinality is $\leq$ the cardinality of the set of cusps of $\Gamma_{0,ss}$.
\end{proof}

\begin{remark}\label{fibres}
By the preceding proof, the number of $d$-dimensional strata in $\partial S_K^*$ which contain a fixed cusp is related to the number of cusps in the Baily-Borel compactification of a (complex analytic, connected) \emph{virtual} Hilbert-Blumenthal variety, which does not appear in $\partial S_K^*$, i.e. $X_{\Gamma_{0,ss}}$. Cfr. \cite[Rmk. 2.10 (c)]{Wil19b} for an analogous remark.
\end{remark}
 
\subsubsection{\textbf{Weight avoidance on the Klingen strata and proof of the main theorem.}} \label{weightevitd}

Let us now study the weight avoidance on the Klingen strata, by means of Proposition \ref{poidscohd}. 

\begin{remark} \label{redpoidsd}
By reasoning as in \cite[Rmk. 2.7 (a)-(b)-(c)]{Wil19b}, we see that

\begin{equation}
i_1^* i^* j_{!*}(\mathcal{R}_{\ell}(^{\lambda}\mathcal{V})) \simeq \tau_{Z_1}^{t\leqslant w(\lambda)+3d-1}i_1^* i^* j_{*}\mathcal{R}_{\ell}(^{\lambda}\mathcal{V}),
\end{equation}

\end{remark}

Thanks to the latter remark, and remembering Remark \ref{unipart}, we are ready to determine the interval of weight avoidance on the Klingen strata. 

\begin{proposition}
The perverse cohomology sheaf $\mathcal{H}^{n} (i_1^*i^*j_{!*}\mathcal{R}_{\ell}(^{\lambda}\mathcal{V}))$ can be non-zero only if $\lambda$ is $\kappa$-Kostant parallel and if $n \in \{w(\lambda)+d, \dots, w(\lambda)+3d-1 \}$. 

In this case, let $\kappa_2:=\kappa+1$. Then:
\begin{enumerate}[wide, labelwidth=!, labelindent=0pt]
\item if $I_F^0 \neq \varnothing$, denote $d_1:=\vert I_F^1\vert \in \{0, \dots, d-1 \}$. Then, the perverse sheaf $\mathcal{H}^{n} (i_1^*i^*j_{!*}\mathcal{R}_{\ell}(^{\lambda}\mathcal{V}))$ is of weight $\leq n-d_1-d\kappa$ for every $n \in \{ w(\lambda)+d, \dots, w(\lambda)+2d-1 \}$, and the perverse sheaf $\mathcal{H}^{w(\lambda)+d+d_1} (i_1^*i^*j_{!*}\mathcal{R}_{\ell}(^{\lambda}\mathcal{V}))$ is non-zero and pure of weight $w(\lambda)+d-d\kappa$. 

Otherwise, $\mathcal{H}^{n} (i_1^*i^*j_{!*}\mathcal{R}_{\ell}(^{\lambda}\mathcal{V}))$ is zero for every $n \in \{ w(\lambda)+d, \dots, w(\lambda)+2d-1 \}$;

\item if $I_F^1 \neq \varnothing$, then the perverse sheaf $\mathcal{H}^{n} (i_1^*i^*j_{!*}\mathcal{R}_{\ell}(^{\lambda}\mathcal{V}))$ is of weight $\leq n-d\kappa_2$ for every $n \in \{ w(\lambda)+2d, \dots, w(\lambda)+3d-1 \}$, and the perverse sheaf $\mathcal{H}^{w(\lambda)+2d} (i_1^*i^*j_{!*}\mathcal{R}_{\ell}(^{\lambda}\mathcal{V}))$ is non-zero and pure of weight $w(\lambda)+2d-d\kappa_2$.

 Otherwise, $\mathcal{H}^{n} (i_1^*i^*j_{!*}\mathcal{R}_{\ell}(^{\lambda}\mathcal{V}))$ is zero for every $n \in \{ w(\lambda)+2d, \dots, w(\lambda)+3d-1 \}$.  
\end{enumerate}

\end{proposition}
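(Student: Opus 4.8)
The plan is to deduce the statement from the classical cohomology sheaves of $i^*_1 i^* j_* \mu_{\ell}^K(V_{\lambda})$, which are already determined in Proposition \ref{poidscohd}, using that $Z_1$ is smooth of pure dimension $d$ (Remark \ref{c+}), so that on it a lisse sheaf placed in ordinary degree $e$ is perverse-concentrated in degree $e+d$. First I would rewrite, by Remark \ref{redpoidsd},
\[
i_1^* i^* j_{!*}(\mathcal{R}_{\ell}(^{\lambda}\mathcal{V}))\simeq \tau_{Z_1}^{t\leqslant w(\lambda)+3d-1}\, i_1^* i^* j_{*}\mathcal{R}_{\ell}(^{\lambda}\mathcal{V}),
\]
and, recalling $\mathcal{R}_{\ell}(^{\lambda}\mathcal{V})=\mu_{\ell}^K(V_{\lambda})[-w(\lambda)]$ and applying Theorem \ref{thm:PinkThm}.\ref{itm:decder}--\ref{itm:decobj}, express
\[
i_1^* i^* j_{*}\mathcal{R}_{\ell}(^{\lambda}\mathcal{V})\simeq\bigoplus_{k}\big(R^{k} i_1^* i^* j_{*}\mu_{\ell}^K(V_{\lambda})\big)[-k-w(\lambda)],
\]
a direct sum in which the $k$-th summand, being a shift of the lisse sheaf $R^{k} i_1^* i^* j_{*}\mu_{\ell}^K(V_{\lambda})$ on the $d$-dimensional $Z_1$, is a perverse sheaf concentrated in perverse degree $k+w(\lambda)+d$. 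The truncation therefore keeps exactly the summands with $k\leq 2d-1$, which yields
\[
\mathcal{H}^{n}\big(i_1^* i^* j_{!*}(\mathcal{R}_{\ell}(^{\lambda}\mathcal{V}))\big)\simeq\big(R^{n-w(\lambda)-d}\, i_1^* i^* j_{*}\mu_{\ell}^K(V_{\lambda})\big)[d]
\]
whenever $0\leq n-w(\lambda)-d\leq 2d-1$, and $\mathcal{H}^{n}=0$ otherwise; in particular $\mathcal{H}^{n}$ vanishes outside $\{w(\lambda)+d,\dots,w(\lambda)+3d-1\}$.

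Next I would plug Proposition \ref{poidscohd} into this identification with $k:=n-w(\lambda)-d$: its second item treats $k\in\{0,\dots,d-1\}$, i.e. $n\in\{w(\lambda)+d,\dots,w(\lambda)+2d-1\}$, and its third item treats $k\in\{d,\dots,2d-1\}$, i.e. $n\in\{w(\lambda)+2d,\dots,w(\lambda)+3d-1\}$, while its fourth item lives in perverse degrees $\geq w(\lambda)+3d$ and is killed by the truncation. Non-vanishing of $R^{k}$ forces $\lambda$ to be $\kappa$-Kostant parallel, and by Remark \ref{unipart} the decomposition $(I_F^0,I_F^1)$ is then determined by $\lambda$ as soon as $I_F^0\neq\varnothing$, resp. $I_F^1\neq\varnothing$. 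Since a lisse sheaf pure of weight $W$ on a smooth $d$-dimensional variety becomes, after the shift $[d]$, a pure perverse sheaf of weight $W+d$, the purity statements in Proposition \ref{poidscohd} give that, when non-zero, $\mathcal{H}^{n}$ is pure of weight $w(\lambda)+d-d\kappa$ on the first range and of weight $w(\lambda)+2d-d\kappa_2$ on the second; together with the constraints "$k\geq d_1$" (second item) and "$k=d$ at the low end" (third item) extracted from Proposition \ref{poidscohd}, this produces the bounds $\leq n-d_1-d\kappa$ and $\leq n-d\kappa_2$ and the exact weight of $\mathcal{H}^{w(\lambda)+d+d_1}$, resp. of $\mathcal{H}^{w(\lambda)+2d}$.

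Finally I would verify that these two extremal perverse sheaves are genuinely non-zero. For $n=w(\lambda)+d+d_1$ one has $k=d_1$, so $\mathcal{H}^{n}\simeq\mu_{\ell}^{\pi_1(K_1)}\big(H^{0}(\Gamma_1,V^{1,d_1}_{\Psi})\big)[d]$; for $n=w(\lambda)+2d$ one has $k=d$, so $\mathcal{H}^{n}\simeq\mu_{\ell}^{\pi_1(K_1)}\big(H^{d-d_1}(\Gamma_1,V^{1,d_1}_{\Psi})\big)[d]$ with $d-d_1\in\{0,\dots,d-1\}$. In both cases $I_F=I_F^0\sqcup I_F^1$, so Lemma \ref{cohunip1}.\ref{cohunip1.cas1} makes the displayed $\Gamma_1$-cohomology non-zero, and the exactness and faithfulness of $\mu_{\ell}^{\pi_1(K_1)}$ finish the job. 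The only real content here is the bookkeeping of perverse versus ordinary degrees and of the weight shift by $[d]$: in contrast with the Siegel strata, no analogue of the double degeneration of Lemma \ref{evittrunc} is required, since on the open strata $Z_1$ the intermediate extension reduces, by Remark \ref{redpoidsd}, to a plain perverse truncation of $i_1^* i^* j_*$. The one mild subtlety is that when $k_1$ and $k_2$ are both parallel there are two admissible pairs $(\kappa,(I_F^0,I_F^1))$, so that statements (1) and (2) get applied with different values of $\kappa$; this causes no trouble, as all the input is taken from Proposition \ref{poidscohd}, which already incorporates this ambiguity.
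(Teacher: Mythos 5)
Your proposal is correct and follows essentially the same route as the paper: reduce to a perverse truncation via Remark \ref{redpoidsd}, convert perverse to classical cohomology on the smooth $d$-dimensional Klingen strata via the shift $[d]$ (with the corresponding weight shift by $d$), and then read everything off from Proposition \ref{poidscohd}, the non-vanishing at the extremal degrees coming from Lemma \ref{cohunip1}. The bookkeeping of degrees, the elimination of the summands in perverse degree $\geq w(\lambda)+3d$, and the handling of the ambiguity in $(\kappa,(I_F^0,I_F^1))$ via Remark \ref{unipart} all match the paper's argument.
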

\begin{proof}
By Remark \ref{redpoidsd}, 
\begin{equation*}
\mathcal{H}^{n} (i_1^*i^*j_{!*}\mathcal{R}_{\ell}(^{\lambda}\mathcal{V}))\simeq \mathcal{H}^n i_1^* i^* j_{*}\mathcal{R}_{\ell}(^{\lambda}\mathcal{V}))
\end{equation*}
for every $n \leq w(\lambda)+3d-1$, and $\mathcal{H}^{n} (i_1^*i^*j_{!*}\mathcal{R}_{\ell}(^{\lambda}\mathcal{V}))$ is zero for every $n \geq w(\lambda)+3d$. Moreover, 
\begin{equation*}
\mathcal{H}^n i_1^* i^* j_{*}(\mathcal{R}_{\ell}(^{\lambda}\mathcal{V}))=(R^{n-w(\lambda)-d}i_1^* i^* j_{*}\mu_{\ell}(V_{\lambda}))[d].
\end{equation*}
Then, by applying Proposition \ref{poidscohd}, we see the following facts. 

If $n< w(\lambda)+d$, then $\mathcal{H}^{n} (i_1^*i^*j_{!*}\mathcal{R}_{\ell}(^{\lambda}\mathcal{V}))$ is zero.

If $n \in \{ w(\lambda)+d, \dots, w(\lambda)+2d-1 \}$, then $\mathcal{H}^{n} (i_1^*i^*j_{!*}\mathcal{R}_{\ell}(^{\lambda}\mathcal{V}))$ is non-zero if and only if $\lambda$ is $\kappa$-Kostant parallel and $I_F^0 \neq \varnothing$, and if $n\geq w(\lambda)+d+d_1$, where $d_1=\vert I_F^1 \vert \in \{0, \dots, d-1 \}$. In this case, it is pure of weight $w(\lambda)+d-d\kappa$, in particular of weight $\leq n-d_1-d \kappa$.

If $n \in \{w(\lambda)+2d, \dots, w(\lambda)+3d-1 \}$, then $\mathcal{H}^{n} (i_1^*i^*j_{!*}\mathcal{R}_{\ell}(^{\lambda}\mathcal{V}))$ is non-zero if and only if $\lambda$ is $\kappa$-Kostant parallel and $I_F^1 \neq \varnothing$, and if $n \leq w(\lambda)+2d+d_1-1$, where $d_1=\vert I_F^1 \vert \in \{1, \dots, d \}$. In this case, it is pure of weight $w(\lambda)+2d-d\kappa_2$, in particular of weight $\leq n-d \kappa_2$. 
\end{proof}

\begin{corollary} \label{evitd}
Suppose $\lambda$ to be $\kappa$-Kostant parallel and fix $n \in \{w(\lambda)+d, \dots, w(\lambda)+3d-1 \}$. Let $d_1$ and $\kappa_2$ be as in the previous proposition, and recall the notation $\mbox{\emph{cor}}(\lambda)$ from Def. \ref{corang}. Then:
\begin{enumerate}[wide, labelwidth=!, labelindent=0pt]
\item if $\mbox{\emph{cor}}(\lambda)=0$, then $\mathcal{H}^{n} (i_1^*i^*j_{!*}\mathcal{R}_{\ell}(^{\lambda}\mathcal{V}))$ is of weights $\leq n-d_1-d\kappa$, and the weight $n-d_1-d\kappa$ \emph{does appear} when $n=w(\lambda)+d+d_1$;
\item if $\mbox{\emph{cor}}(\lambda) \geq 1$, then $\mathcal{H}^{n} (i_1^*i^*j_{!*}\mathcal{R}_{\ell}(^{\lambda}\mathcal{V}))$ is of weights $\leq n-d\kappa_2$, and the weight $n-d\kappa_2$ \emph{does appear} when $n=w(\lambda)+2d$.
\end{enumerate}
\end{corollary}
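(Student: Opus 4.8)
The plan is to read off Corollary \ref{evitd} directly from the previous proposition, since all the hard work (the identification of the perverse cohomology sheaves, their weights, and the exact degrees in which they are non-zero) has already been done there. The only thing left to do is to translate the statement, which is phrased in terms of the single parameter $\kappa$ of $\kappa$-Kostant parallelism and the decomposition $(I_F^0, I_F^1)$ it determines, into the language of $\mbox{cor}(\lambda)$ by using the observations collected just after Definition \ref{corang} (together with Remark \ref{unipart}, which guarantees that, under the hypothesis that $\lambda$ is $\kappa$-Kostant parallel with respect to a decomposition with $I_F^0 \neq \varnothing$ resp. $I_F^1 \neq \varnothing$, the set $I_F^0$ resp. $I_F^1$ is uniquely determined).

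First I would treat the case $\mbox{cor}(\lambda) = 0$. Here $k_2$ is not parallel, so by the observations after Definition \ref{corang} there is at most one pair $(\kappa, (I_F^0, I_F^1))$ with respect to which $\lambda$ is $\kappa$-Kostant parallel; since we are assuming $\lambda$ is Kostant parallel, such a pair exists and is unique. The crucial point is that $k_2$ non-parallel forces $I_F^1 \neq I_F$, i.e. $I_F^0 \neq \varnothing$: indeed, if $I_F^0 = \varnothing$ then $k_2 = \underline{\kappa+1}$ would be parallel by the definition of Kostant-parallelism, a contradiction. Hence we are in case (1) of the previous proposition (the ``$I_F^0 \neq \varnothing$'' alternative), and moreover since $I_F^1 \neq \varnothing$ would force $k_2$ parallel on $I_F^1$ — wait, that does not immediately contradict non-parallelism of $k_2$ on all of $I_F$, so I should be more careful: when $\mbox{cor}(\lambda)=0$ we may have $I_F^1 \neq \varnothing$ or $I_F^1 = \varnothing$, but in either case only the $I_F^0 \neq \varnothing$ alternative of the proposition applies, so $\mathcal{H}^{n}(i_1^*i^*j_{!*}\mathcal{R}_{\ell}(^{\lambda}\mathcal{V}))$ vanishes for $n \in \{w(\lambda)+2d, \dots, w(\lambda)+3d-1\}$ and is of weight $\leq n - d_1 - d\kappa$ for $n \in \{w(\lambda)+d, \dots, w(\lambda)+2d-1\}$, with the weight $n - d_1 - d\kappa$ attained when $n = w(\lambda)+d+d_1$. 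This gives (1). For $n \in \{w(\lambda)+2d, \dots, w(\lambda)+3d-1\}$ the vanishing makes the bound ``$\leq n - d_1 - d\kappa$'' trivially true, so the statement holds in the full claimed range.

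Next the case $\mbox{cor}(\lambda) \geq 1$. Then $k_2 = \underline{\kappa_2}$ is parallel, and by the observations after Definition \ref{corang}, $\lambda$ is $\kappa$-Kostant parallel with respect to $I_F = I_F^1$ with $k_2 = \underline{\kappa+1}$, i.e. $\kappa_2 = \kappa + 1$; moreover this is the only pair with $I_F^1 \neq \varnothing$. So we are in case (2) of the previous proposition, with $I_F^1 = I_F \neq \varnothing$: thus $\mathcal{H}^{n}(i_1^*i^*j_{!*}\mathcal{R}_{\ell}(^{\lambda}\mathcal{V}))$ is of weight $\leq n - d\kappa_2$ for $n \in \{w(\lambda)+2d, \dots, w(\lambda)+3d-1\}$, non-zero and pure of weight $w(\lambda)+2d - d\kappa_2$ for $n = w(\lambda)+2d$; and for $n \in \{w(\lambda)+d, \dots, w(\lambda)+2d-1\}$ the ``$I_F^0 \neq \varnothing$'' alternative does not apply (since $I_F^0 = \varnothing$ when $I_F = I_F^1$), so the sheaf vanishes there, making the bound ``$\leq n - d\kappa_2$'' trivially valid in that range too. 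This yields (2). I do not expect a genuine obstacle here — the whole content is in correctly invoking the uniqueness statements from the observations after Definition \ref{corang} and Remark \ref{unipart} so as to pin down which alternative of the preceding proposition is in force; the only thing requiring a little care is checking that the weight bounds hold (vacuously) also in the degree ranges where the relevant perverse sheaf vanishes, which is immediate.

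\begin{proof}
Since $\lambda$ is assumed $\kappa$-Kostant parallel, the previous proposition applies. Recall from the observations following Definition \ref{corang} and from Remark \ref{unipart} that the pair $(\kappa, (I_F^0, I_F^1))$ witnessing Kostant-parallelism is subject to strong rigidity.

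Suppose first $\mbox{cor}(\lambda) = 0$, so that $k_2$ is not parallel. If $I_F^0 = \varnothing$, then by the definition of $\kappa$-Kostant parallelism $k_{2,\sigma} = \kappa + 1$ for every $\sigma \in I_F^1 = I_F$, i.e. $k_2 = \underline{\kappa+1}$ is parallel, a contradiction. Hence $I_F^0 \neq \varnothing$, so we are in the first alternative of case (1) of the previous proposition (and $I_F^0$, hence also $d_1 = \vert I_F^1\vert$, is uniquely determined by $\lambda$, by Remark \ref{unipart}). Therefore, for $n \in \{w(\lambda)+d, \dots, w(\lambda)+2d-1\}$, the perverse sheaf $\mathcal{H}^{n} (i_1^*i^*j_{!*}\mathcal{R}_{\ell}(^{\lambda}\mathcal{V}))$ is of weight $\leq n - d_1 - d\kappa$, and is non-zero and pure of weight $w(\lambda)+d-d\kappa = n - d_1 - d\kappa$ when $n = w(\lambda)+d+d_1$. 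For $n \in \{w(\lambda)+2d, \dots, w(\lambda)+3d-1\}$, the second alternative of case (1) gives $\mathcal{H}^{n} (i_1^*i^*j_{!*}\mathcal{R}_{\ell}(^{\lambda}\mathcal{V})) = 0$, so in particular it is of weight $\leq n - d_1 - d\kappa$. This proves (1) in the full range $n \in \{w(\lambda)+d, \dots, w(\lambda)+3d-1\}$.

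Suppose now $\mbox{cor}(\lambda) \geq 1$, so that $k_2 = \underline{\kappa_2}$ is parallel. By the observations following Definition \ref{corang}, $\lambda$ is $\kappa$-Kostant parallel with respect to the decomposition $I_F = I_F^1$, with $k_2 = \underline{\kappa+1}$; thus $\kappa_2 = \kappa+1$ and $I_F^1 = I_F \neq \varnothing$, $I_F^0 = \varnothing$. We are therefore in the first alternative of case (2) of the previous proposition: for $n \in \{w(\lambda)+2d, \dots, w(\lambda)+3d-1\}$, the perverse sheaf $\mathcal{H}^{n} (i_1^*i^*j_{!*}\mathcal{R}_{\ell}(^{\lambda}\mathcal{V}))$ is of weight $\leq n - d\kappa_2$, and is non-zero and pure of weight $w(\lambda)+2d-d\kappa_2 = n - d\kappa_2$ when $n = w(\lambda)+2d$. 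For $n \in \{w(\lambda)+d, \dots, w(\lambda)+2d-1\}$, the second alternative of case (1) applies (as $I_F^0 = \varnothing$), giving $\mathcal{H}^{n} (i_1^*i^*j_{!*}\mathcal{R}_{\ell}(^{\lambda}\mathcal{V})) = 0$, which is in particular of weight $\leq n - d\kappa_2$. This proves (2).
\end{proof}
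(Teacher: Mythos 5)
Your overall strategy (read the corollary off the previous proposition, using the rigidity of Kostant-parallel decompositions) is the right one, but your proof contains a genuine error in both cases: you treat the two numbered points of the previous proposition as mutually exclusive alternatives selected by a single decomposition $(I_F^0,I_F^1)$, and on that basis you assert vanishing of $\mathcal{H}^{n}(i_1^*i^*j_{!*}\mathcal{R}_{\ell}(^{\lambda}\mathcal{V}))$ in degree ranges where it need not vanish. The two points of the proposition govern two \emph{disjoint degree ranges}, with non-vanishing criteria referring to possibly \emph{different} Kostant-parallel decompositions (one with $I_F^0 \neq \varnothing$, one with $I_F^1 \neq \varnothing$), and by Remark \ref{unipart} both can exist simultaneously.

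Concretely: in case (1), $\mbox{cor}(\lambda)=0$ does force $I_F^0 \neq \varnothing$, but it does \emph{not} force $I_F^1 = \varnothing$ (e.g.\ $d=2$, $k_1=(3,5)$, $k_2=(0,4)$ is $3$-Kostant parallel with $I_F^1 \neq \varnothing$ and has corank $0$). When $I_F^1 \neq \varnothing$, point (2) of the proposition gives a \emph{non-zero} perverse sheaf at $n=w(\lambda)+2d$, pure of weight $w(\lambda)+2d-d\kappa_2$; your appeal to ``the second alternative of case (1)'' is a non sequitur, since that alternative only concerns the range $\{w(\lambda)+d,\dots,w(\lambda)+2d-1\}$. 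The bound $\leq n-d_1-d\kappa$ still holds in the upper range, but only because $n-d\kappa_2 = n-d-d\kappa \leq n-d_1-d\kappa$ (as $d_1 \leq d$) — a comparison your proof never makes. Symmetrically, in case (2), if $k_1=\underline{\kappa_1}$ is also parallel then $\lambda$ is \emph{additionally} $\kappa_1$-Kostant parallel with respect to $I_F^0=I_F$, so point (1) of the proposition produces a non-zero sheaf at $n=w(\lambda)+d$ of weight $w(\lambda)+d-d\kappa_1$; the claimed vanishing in $\{w(\lambda)+d,\dots,w(\lambda)+2d-1\}$ is false there, and one must instead use dominance ($\kappa_1 \geq \kappa_2$) to verify $n-d\kappa_1 \leq n-d\kappa_2$. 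The paper's proof consists precisely of these two inequalities; without them your argument establishes the stated weight bounds only in the sub-cases $I_F^1=\varnothing$ (resp.\ $k_1$ not parallel).
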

\begin{proof} 
Everything follows from the previous proposition, by observing that:
\begin{enumerate}[wide, labelwidth=!, labelindent=0pt]
\item if $\mbox{cor}(\lambda)=0$, then $I_F^0 \neq \varnothing$, and either $I_F^1= \varnothing$ (in which case only the weights in the first point of the previous proposition can contribute) or $I_F^1 \neq \varnothing$ (in which case, by definition of Kostant parallelism, $n-d\kappa_2=n-d-d\kappa \leq n-d_1-d \kappa$);
\item if $\mbox{cor}(\lambda)\geq1$, then either $k_1$ is not parallel (in which case, since $\lambda$ is dominant, $\lambda$ can't be Kostant parallel with $I_F^0 \neq \varnothing$, and only the weights in the second point of the previous proposition can contribute) or $k_1=\underline{\kappa_1}$ (in which case $\kappa_1 \geq \kappa_2$ and the first point of the previous proposition gives weights $\leq n-d\kappa_1 \leq n-d \kappa_2$).
\end{enumerate}
\end{proof}

We now have all the necessary ingredients for the proof of Theorem \ref{thm:mainthm}. 

\begin{proof}{(of Theorem \eqref{thm:mainthm})}

\item We only have to apply the criterion \ref{critint_a}.\ref{realpoids} and use Proposition \ref{evit0} and Corollary \ref{evitd}. 
\end{proof}

\section{The intersection motive of genus two Hilbert-Siegel varieties} \label{consapp} 

\noindent In this section we study the properties of the \emph{intersection motive} of genus 2 Hilbert-Siegel varieties (with coefficients in suitable irreducible representations $V_{\lambda}$), whose existence follows from Thm. \ref{thm:mainthm} and Wildeshaus' theory, and the implications for the construction of motives associated to automorphic representations.

\subsection{Properties of the intersection motive} 

Adopt the notation of \ref{mainres} and assume from now on that $\lambda$ is either not completely irregular or of corank $0$. Then, the weight avoidance proved in Corollary \ref{evit01} allows us to apply the general theory developed in \cite{Wil19a}. In fact, absence of the weights 0 and 1 for $i^* j_{*} ^\lambda \mathcal{V}$ implies that $^\lambda \mathcal{V}$ belongs to a full subcategory of $CHM(S_K)$, on which an \emph{intermediate extension} functor $j_{!*}$ towards the category $CHM(S_K^*)$ is defined. Denoting by $s:S_K^* \rightarrow \mbox{Spec} \ \mathbb{Q}$ the structural morphism, we can then define, by applying \cite[Def. 3.7]{Wil19a}, the \emph{intersection motive} of $S_K$ \emph{with respect to} $S_K^*$ \emph{with coefficients in} $^\lambda \mathcal{V}$ as the object $s_*j_{!*}^\lambda \mathcal{V}$ of the category $CHM(\mathbb{Q})_L$. In the following, this object will be simply called \emph{intersection motive}. 

Let us spell out its main properties. For doing so, if $\lambda$ satisfies in addition the hypotheses of point $(1)$ or $(2)$ or $(3)$, resp. $(4)$, of Theorem \ref{thm:mainthm}, put $\beta:=d \kappa$, resp. $\beta:=\mbox{min} \{ d\kappa_1, d(\kappa_1-\kappa_2) \}$ (with notations as in the Theorem). The general theory then implies the following: 

\begin{corollary} 
Let $s$ and $\beta$ be as above, and let $\tilde{s}$ be the structural morphism of $S_K$. 
\begin{enumerate} [wide, labelwidth=!, labelindent=0pt, label=(\arabic*)]
\item \label{itm:filt} The motive $\tilde{s}_! ^\lambda \mathcal{V} \in \DBcM(\mathbb{Q})_L$ avoids weights $-\beta, -\beta+1,\dots,-1$, and the motive $\tilde{s}_* ^\lambda \mathcal{V} \in \DBcM(\mathbb{Q})_L$ avoids weights $1,2,\dots,\beta$. More precisely, the exact triangles 
\begin{equation*}
s_*i_*i^*j_{!*} ^\lambda \mathcal{V}[-1] \rightarrow \tilde{s}_! ^\lambda \mathcal{V} \rightarrow s_*j_{!*} ^\lambda \mathcal{V} \rightarrow s_*i_*i^*j_{!*} ^\lambda \mathcal{V}
\end{equation*}
and
\begin{equation*}
s_*j_{!*} ^\lambda \mathcal{V} \rightarrow \tilde{s}_* ^\lambda \mathcal{V} \rightarrow d_*i_*i^!j_{!*} ^\lambda \mathcal{V} [1] \rightarrow s_*j_{!*} ^\lambda \mathcal{V}[1]
\end{equation*}
are weight filtrations of $\tilde{s}_! ^\lambda \mathcal{V}$, resp. of $\tilde{s}_* ^\lambda \mathcal{V}$, which avoid weights $-\beta, -\beta+1,\dots,-1$, resp. $1,2,\dots,\beta$.
\item \label{itm:fonct} The intersection motive $s_*j_{!*}^\lambda \mathcal{V}$ is functorial with respect to $\tilde{s}_! ^\lambda \mathcal{V}$ and to $\tilde{s}_* ^\lambda \mathcal{V}$. In particular, every endomorphism of $\tilde{s}_! ^\lambda \mathcal{V}$ or $\tilde{s}_* ^\lambda \mathcal{V}$ induces an endomorphism of $s_*j_{!*}^\lambda \mathcal{V}$.
\item \label{itm:compl} If $\tilde{s}_! ^\lambda \mathcal{V} \rightarrow N \rightarrow \tilde{s}_* ^\lambda \mathcal{V}$ is a factorisation of $\tilde{s}_! ^\lambda \mathcal{V} \rightarrow \tilde{s}_* ^\lambda \mathcal{V}$ through a Chow motive $N \in CHM(\mathbb{Q})_L$, then the intersection motive $s_*j_{!*}^\lambda \mathcal{V}$ is canonically identified with a direct factor of $N$, with a canonical direct complement. 
\end{enumerate}
\label{cor:propint}
\end{corollary}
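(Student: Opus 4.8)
The plan is to deduce Corollary \ref{cor:propint} directly from the general machinery of \cite{Wil19a}, which becomes applicable once we know that $i^*j_*{}^\lambda\mathcal{V}$ avoids weights $0$ and $1$ (Corollary \ref{evit01}, which holds under our standing hypothesis that $\lambda$ is not completely irregular or of corank $0$). In fact, the sharper input from Theorem \ref{thm:mainthm} tells us that $i^*j_*{}^\lambda\mathcal{V}$ avoids the larger interval $-\beta+1,\dots,\beta$, where $\beta$ is defined from the relevant case of that theorem as in the paragraph preceding the statement. So the argument is essentially a dictionary translation: unwind the definitions in \cite{Wil19a} and check that the hypotheses there are met, then read off the three assertions.

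\textbf{Key steps.} First I would record that, by Theorem \ref{critint_a} together with Theorem \ref{thm:mainthm}, the motive $i^*j_*{}^\lambda\mathcal{V}\in\DBcM(\partial S_K^*)_L$ avoids weights $-\beta+1,\dots,\beta$ for the stated $\beta\ge 1$; in particular it avoids weights $0,1$, so ${}^\lambda\mathcal{V}$ lies in the full subcategory of $CHM(S_K)_L$ on which the intermediate extension $j_{!*}$ towards $CHM(S_K^*)_L$ is defined, and $s_*j_{!*}{}^\lambda\mathcal{V}\in CHM(\mathbb{Q})_L$ is defined via \cite[Def.\ 3.7]{Wil19a}. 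Second, for part \ref{itm:filt}: applying the localization triangles for the pair $(j,i)$ to $j_{!*}{}^\lambda\mathcal{V}$ and pushing forward along $s$, one gets precisely the two exact triangles displayed in the statement (using $\tilde s_!{}^\lambda\mathcal{V}=s_!j_!{}^\lambda\mathcal{V}$ and $\tilde s_*{}^\lambda\mathcal{V}=s_*j_*{}^\lambda\mathcal{V}$, and the identity $s_*j_!{}^\lambda\mathcal{V}\simeq s_*j_{!*}{}^\lambda\mathcal{V}$ up to the boundary term coming from $i^*j_{!*}$, resp.\ $i^!j_{!*}$). One then checks the weight estimates: $s_*j_{!*}{}^\lambda\mathcal{V}$ is pure of weight $0$ (it is a Chow motive over $\mathbb{Q}$, being in the heart of the motivic weight structure), while the boundary terms $s_*i_*i^*j_{!*}{}^\lambda\mathcal{V}$ and $s_*i_*i^!j_{!*}{}^\lambda\mathcal{V}$ have weights that are bounded above, resp.\ below, by the weight-avoidance interval of $i^*j_*{}^\lambda\mathcal{V}$; this is exactly the content of \cite[Thm.\ 3.4 / Cor.\ 3.8]{Wil19a} (or the proof of \cite[Thm.\ 1.6]{Wil19b}), and gives the claimed avoidance of $-\beta,\dots,-1$ for $\tilde s_!{}^\lambda\mathcal{V}$ and of $1,\dots,\beta$ for $\tilde s_*{}^\lambda\mathcal{V}$, identifying these triangles as the asserted weight filtrations. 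Third, parts \ref{itm:fonct} and \ref{itm:compl} are the formal consequences of the weight-structure formalism: functoriality of the weight filtration avoiding a given interval (uniqueness up to unique isomorphism of such filtrations, \cite[Rmk.\ after Def.\ 1.10]{Wil09} and \cite{Bon10}) yields that any endomorphism of $\tilde s_!{}^\lambda\mathcal{V}$ or $\tilde s_*{}^\lambda\mathcal{V}$ restricts to the "middle'' piece $s_*j_{!*}{}^\lambda\mathcal{V}$; and given any factorization $\tilde s_!{}^\lambda\mathcal{V}\to N\to\tilde s_*{}^\lambda\mathcal{V}$ through a Chow motive $N$ (hence of weight $0$), the weight filtrations of source and target force $s_*j_{!*}{}^\lambda\mathcal{V}$ to split off $N$ as a canonical direct factor with canonical complement — this is precisely \cite[Cor.\ 3.9, 3.10]{Wil19a} applied in our situation.

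\textbf{Main obstacle.} There is essentially no new mathematics here: the only real point to be careful about is bookkeeping of the constant $\beta$ — verifying that in each of the four cases of Theorem \ref{thm:mainthm} the interval of avoidance on $\partial S_K^*$ is symmetric of the form $[-\beta+1,\beta]$ (it is, by Theorem \ref{critint_a}\ref{realpoids} combined with the auto-duality noted in Remark \ref{rmk:degdual}.\ref{itm:autodual}), so that the hypotheses of \cite{Wil19a} are literally satisfied and the displayed triangles are weight filtrations in the precise sense of Definition \ref{def:evit}. Once that is in place, everything else is a direct citation of the results of \emph{loc.\ cit.}, exactly as in the Siegel threefold case treated in \cite{Wil19b}.
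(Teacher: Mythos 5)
Your proposal is correct and follows essentially the same route as the paper: the paper's proof is a direct citation of \cite[Thms.\ 3.4, 3.5, 3.6]{Wil19a} for the three points respectively, with the hypothesis (avoidance of weights $-\beta+1,\dots,\beta$ by $i^*j_*{}^\lambda\mathcal{V}$) supplied by Theorem \ref{thm:mainthm}. Your additional unwinding of the localization triangles, the purity of $s_*j_{!*}{}^\lambda\mathcal{V}$, and the uniqueness/functoriality of weight filtrations is exactly what those cited results contain, so there is no substantive difference.
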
 
\begin{proof}
We only have to apply \cite[Thm. 3.4]{Wil19a}, resp. \cite[Thm. 3.5]{Wil19a}, resp. \cite[Thm. 3.6]{Wil19a}, in order to obtain the point \ref{itm:filt}, resp. \ref{itm:fonct}, resp. \ref{itm:compl} (thanks to Theorem \ref{thm:mainthm}).
\end{proof}

Fix now an integer $N$ such that, as in Remark \ref{factpowab}, $^\lambda \mathcal{V}$ is a direct factor of a Tate twist of $p_{N,*} \mathbbm{1}_{\mathcal{A}_K^{N}}$, where $p_{N}: \mathcal{A}_K^{N} \rightarrow S_K$ denotes the $N$-fold fibred product of the universal abelian variety $\mathcal{A}_K$ with itself over $S_K$. The property stated in Corollary \ref{cor:propint}.\ref{itm:fonct} has important consequences for the \emph{Hecke algebra} $\mathfrak{H}(K, G(\mathbb{A}_f))$ associated to the open compact subgroup $K$, as defined in \cite[pp. 591-592]{Wil17a}, along with the action of each of its elements on $\tilde{s}_*p_{N,*} \mathbbm{1}_{\mathcal{A}_K^{N}}$. By \emph{loc. cit.}, each element of $\mathfrak{H}(K, G(\mathbb{A}_f))$ also acts\footnote{\label{algebract}From the results  in the literature, it does not appear clear, though expected, that there is an \emph{algebra} action of $\mathfrak{H}(K, G(\mathbb{A}_f))$ on the motives $\tilde{s}_*^\lambda \mathcal{V}$. We will come back to this point in future work. However, $\mathfrak{H}(K, G(\mathbb{A}_f))$ acts as an algebra \emph{after realization}, and hence acts as an algebra on the homological motive underlying the intersection motive: this is what will matter for us.} on $\tilde{s}_*^\lambda \mathcal{V}$. Then, corollary \ref{cor:propint}.\ref{itm:fonct} gives us immediately the following consequence:

\begin{corollary} \label{heckeaction}
Each element of $\mathfrak{H}(K, G(\mathbb{A}_f))$ acts naturally on the intersection motive $s_*j_{!*}^\lambda \mathcal{V}$.
\end{corollary}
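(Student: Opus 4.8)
The plan is to deduce Corollary \ref{heckeaction} directly from the functoriality statement of Corollary \ref{cor:propint}.\ref{itm:fonct} together with the known action of the Hecke algebra on the motive $\tilde{s}_* {}^\lambda \mathcal{V}$. Concretely, the reference \cite[pp. 591--592]{Wil17a} equips, for each element $h \in \mathfrak{H}(K, G(\mathbb{A}_f))$, the motive $\tilde{s}_* p_{N,*} \mathbbm{1}_{\mathcal{A}_K^N}$ — and hence its direct factor (a Tate twist of) $\tilde{s}_* {}^\lambda \mathcal{V}$ — with an endomorphism in $\DBcM(\mathbb{Q})_L$. (One should be a little careful here: as the footnote \ref{algebract} emphasises, this assignment is not known to respect the algebra structure, which is exactly why the statement speaks of \q{each element acting} rather than of an algebra action; so the proof must only use that for a fixed $h$ we have a well-defined endomorphism, not the multiplicativity.)

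First I would recall that, by hypothesis, $\lambda$ is either not completely irregular or of corank $0$, so Corollary \ref{evit01} gives the avoidance of weights $0$ and $1$ for $i^*j_*{}^\lambda\mathcal{V}$, which is precisely the input needed for the intermediate extension functor $j_{!*}$ and the intersection motive $s_*j_{!*}{}^\lambda\mathcal{V}$ to be defined via \cite[Def. 3.7]{Wil19a}. Then I would invoke Corollary \ref{cor:propint}.\ref{itm:fonct}: the intersection motive is functorial with respect to $\tilde{s}_*{}^\lambda\mathcal{V}$, and in particular any endomorphism of $\tilde{s}_*{}^\lambda\mathcal{V}$ induces canonically an endomorphism of $s_*j_{!*}{}^\lambda\mathcal{V}$. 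Applying this to the endomorphism attached to a given $h \in \mathfrak{H}(K, G(\mathbb{A}_f))$ yields the desired natural action on $s_*j_{!*}{}^\lambda\mathcal{V}$, completing the proof.

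There is essentially no hard step here; the only point requiring a line of care is to make sure the Hecke endomorphisms genuinely live on $\tilde{s}_*{}^\lambda\mathcal{V}$ and not merely on the ambient motive $\tilde{s}_* p_{N,*}\mathbbm{1}_{\mathcal{A}_K^N}$. This is handled exactly as in \cite{Wil17a}: the Hecke correspondences are $G(\mathbb{A}_f)$-equivariant and so commute with the projectors cutting out $(\text{the Tate twist of})\ {}^\lambda\mathcal{V}$ inside $p_{N,*}\mathbbm{1}_{\mathcal{A}_K^N}$ — equivalently, the canonical construction $\tilde{\mu}$ of Theorem \ref{constmot} is functorial in a way compatible with the Hecke action — so restriction of the endomorphism to the direct factor is legitimate. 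Once this is in place, the corollary is a one-line consequence of Corollary \ref{cor:propint}.\ref{itm:fonct}.

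\begin{proof}
By hypothesis $\lambda$ is either not completely irregular or of corank $0$, so by Corollary \ref{evit01} the motive $i^*j_*{}^\lambda\mathcal{V}$ avoids weights $0$ and $1$; hence $s_*j_{!*}{}^\lambda\mathcal{V}$ is defined. Fix an integer $N$ as above, so that $^\lambda\mathcal{V}$ is a direct factor of a Tate twist of $p_{N,*}\mathbbm{1}_{\mathcal{A}_K^N}$, the projector being $G(\mathbb{A}_f)$-equivariant by the functoriality of the canonical construction $\tilde{\mu}$ (Theorem \ref{constmot}). By \cite[pp. 591--592]{Wil17a}, each element $h \in \mathfrak{H}(K, G(\mathbb{A}_f))$ acts on $\tilde{s}_*p_{N,*}\mathbbm{1}_{\mathcal{A}_K^N}$, and, the projector being Hecke-equivariant, this action restricts to an endomorphism of the direct factor $\tilde{s}_*{}^\lambda\mathcal{V}$ of (a Tate twist of) $\tilde{s}_*p_{N,*}\mathbbm{1}_{\mathcal{A}_K^N}$. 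By Corollary \ref{cor:propint}.\ref{itm:fonct}, every endomorphism of $\tilde{s}_*{}^\lambda\mathcal{V}$ induces an endomorphism of the intersection motive $s_*j_{!*}{}^\lambda\mathcal{V}$; applying this to the endomorphism attached to $h$ yields the claimed natural action of each element of $\mathfrak{H}(K, G(\mathbb{A}_f))$ on $s_*j_{!*}{}^\lambda\mathcal{V}$.
\end{proof}
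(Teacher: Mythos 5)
Your proof is correct and follows exactly the paper's own route: the Hecke elements act on $\tilde{s}_* p_{N,*}\mathbbm{1}_{\mathcal{A}_K^N}$ and hence on its direct factor $\tilde{s}_*{}^\lambda\mathcal{V}$ by \cite{Wil17a}, and Corollary \ref{cor:propint}.\ref{itm:fonct} then transports each such endomorphism to the intersection motive. The extra care you take about Hecke-equivariance of the projector and about avoiding any claim of multiplicativity is consistent with the paper's footnote and does not change the argument.
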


It is also useful to explicitly formulate the property stated in Corollary \ref{cor:propint}.\ref{itm:compl} in a specific context:

\begin{corollary} \label{factcomp}
Let $\tilde{\mathcal{A}}_K^N$ be a smooth compactification of $\mathcal{A}_K^N$. Then, the intersection motive $s_*j_{!*}^\lambda \mathcal{V}$ is canonically identified with a direct factor of a Tate twist of $a_*\mathbbm{1}_{\tilde{\mathcal{A}}_K^N}$ (where $a$ is the structural morphism of $\tilde{\mathcal{A}}_K^N$ towards $\mbox{\emph{Spec}} \ \mathbb{Q}$), with a canonical direct complement.
\end{corollary}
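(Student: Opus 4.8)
The plan is to derive Corollary \ref{factcomp} as a direct specialisation of Corollary \ref{cor:propint}.\ref{itm:compl}, using the concrete geometric model provided by Remark \ref{factpowab}. First I would recall that, by Remark \ref{factpowab}, the Chow motive $^\lambda \mathcal{V} \in CHM(S_K)_L$ is a direct factor of a Tate twist of $p_{N,*}\mathbbm{1}_{\mathcal{A}_K^N}$ for a suitable $N$; fix such an $N$ and such a projector, and write $\tilde s = s \circ j$ for the structural morphism of $S_K$, so that $\tilde s_! \, ^\lambda \mathcal{V}$ and $\tilde s_* \, ^\lambda \mathcal{V}$ are direct factors of Tate twists of $(\tilde s \circ p_N)_! \mathbbm{1}_{\mathcal{A}_K^N}$, resp. $(\tilde s \circ p_N)_* \mathbbm{1}_{\mathcal{A}_K^N}$, with $\tilde s \circ p_N$ the structural morphism of $\mathcal{A}_K^N$.

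Next, I would use the smooth compactification $\tilde{\mathcal{A}}_K^N \supset \mathcal{A}_K^N$ with structural morphism $a$ to produce the required factorisation of $\tilde s_! \, ^\lambda \mathcal{V} \to \tilde s_* \, ^\lambda \mathcal{V}$ through a Chow motive over $\mathbb{Q}$. Denoting by $u: \mathcal{A}_K^N \hookrightarrow \tilde{\mathcal{A}}_K^N$ the open immersion, the canonical maps $u_! \mathbbm{1}_{\mathcal{A}_K^N} \to \mathbbm{1}_{\tilde{\mathcal{A}}_K^N} \to u_* \mathbbm{1}_{\mathcal{A}_K^N}$ push forward along $a$ to a factorisation $(\tilde s \circ p_N)_! \mathbbm{1}_{\mathcal{A}_K^N} \to a_* \mathbbm{1}_{\tilde{\mathcal{A}}_K^N} \to (\tilde s \circ p_N)_* \mathbbm{1}_{\mathcal{A}_K^N}$ in $\DBcM(\mathbb{Q})_L$, where $a_* \mathbbm{1}_{\tilde{\mathcal{A}}_K^N}$ is a Chow motive over $\mathbb{Q}$ because $\tilde{\mathcal{A}}_K^N$ is smooth and projective. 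Applying the fixed projector (a Tate twist thereof) to the whole diagram, I obtain a factorisation $\tilde s_! \, ^\lambda \mathcal{V} \to N' \to \tilde s_* \, ^\lambda \mathcal{V}$ of the canonical morphism, where $N'$ is the corresponding direct factor of a Tate twist of $a_* \mathbbm{1}_{\tilde{\mathcal{A}}_K^N}$; one has to check that this composite indeed agrees with the canonical morphism $\tilde s_! \, ^\lambda \mathcal{V} \to \tilde s_* \, ^\lambda \mathcal{V}$, which follows from the compatibility of the forget-supports transformation with proper pushforward and with the idempotents cutting out $^\lambda \mathcal{V}$.

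Finally, since $\lambda$ is assumed (throughout Subsection \ref{consapp}) to be either not completely irregular or of corank $0$, Corollary \ref{evit01} gives the avoidance of weights $0$ and $1$ for $i^* j_* \, ^\lambda \mathcal{V}$, so Corollary \ref{cor:propint}.\ref{itm:compl} applies verbatim to the factorisation $\tilde s_! \, ^\lambda \mathcal{V} \to N' \to \tilde s_* \, ^\lambda \mathcal{V}$: the intersection motive $s_* j_{!*}\, ^\lambda \mathcal{V}$ is canonically identified with a direct factor of $N'$, with a canonical direct complement. As $N'$ is itself a direct factor of a Tate twist of $a_* \mathbbm{1}_{\tilde{\mathcal{A}}_K^N}$, transitivity of ``being a direct factor'' yields the claim. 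The only genuinely non-formal point — and the step I expect to be the main obstacle — is verifying that the composite $\tilde s_! \, ^\lambda \mathcal{V} \to N' \to \tilde s_* \, ^\lambda \mathcal{V}$ really is \emph{the} canonical morphism and not merely \emph{a} morphism, since Corollary \ref{cor:propint}.\ref{itm:compl} is stated for factorisations of the canonical map; this is a diagram-chase with the unit/counit of the $(u_!, u^*)$ and $(u^*, u_*)$ adjunctions together with the projector, and should be routine but requires care, e.g.\ one can reduce to the case $^\lambda \mathcal{V} = p_{N,*}\mathbbm{1}_{\mathcal{A}_K^N}$ (twisted) and invoke the construction of the forget-supports map in $\DBcM$ over a general base from \cite{CD12}. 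Alternatively, one could cite the analogous argument in \cite[proof of Cor. 3.9 or similar]{Wil19b} if available, which treats the Siegel threefold case identically.
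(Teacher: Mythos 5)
Your argument is correct and is exactly the one the paper intends: Corollary \ref{factcomp} is stated as the specialisation of Corollary \ref{cor:propint}.\ref{itm:compl} to the factorisation of $\tilde{s}_!\,^\lambda\mathcal{V}\rightarrow\tilde{s}_*\,^\lambda\mathcal{V}$ through (the relevant direct factor of a Tate twist of) $a_*\mathbbm{1}_{\tilde{\mathcal{A}}_K^N}$, obtained via Remark \ref{factpowab} and the maps $u_!\mathbbm{1}\rightarrow\mathbbm{1}_{\tilde{\mathcal{A}}_K^N}\rightarrow u_*\mathbbm{1}$, and the compatibility check you flag is the standard one from Wildeshaus's construction. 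No gap.
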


This corollary has important consequences for the realizations of $s_*j_{!*}^\lambda \mathcal{V}$.

\begin{corollary} \label{cor:realadiques}

Let $\mathcal{O}$ be the order of $F$ prescribed by the PEL datum corresponding to $S_K$ (cfr. Remark \ref{rmk:abuniv}), $D$ the \emph{discriminant} of $\mathcal{O}$ as defined in \cite[Def. 1.1.1.6]{Lan13}, and $N$ the \emph{level} of $K$. Let $p$ be a prime which does not divide $D \cdot N$. Then:
\begin{enumerate} [wide, labelwidth=!, labelindent=0pt, label=(\arabic*)]
\item \label{itm:crystram} the $p$-adic realization of $s_*j_{!*}^\lambda \mathcal{V}$ is crystalline, and if $\ell$ is a prime different from $p$, the $\ell$-adic realization of $s_*j_{!*}^\lambda \mathcal{V}$ is unramified at $p$;
\item \label{itm:polcar} consider on the one hand the action of Frobenius $\phi$ on the $\phi$-filtered module associated to the (crystalline) $p$-adic realization of $s_*j_{!*}^\lambda \mathcal{V}$, and on the other hand the action of a geometrical Frobenius at $p$ on the $\ell$-adic realization of $s_*j_{!*}^\lambda \mathcal{V}$ (unramified at $p$). Then, the characteristic polynomials of the two actions coincide. 
\end{enumerate}
\end{corollary}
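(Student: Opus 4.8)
\textbf{Proof plan for Corollary \ref{cor:realadiques}.}

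The plan is to deduce everything from Corollary \ref{factcomp}, which identifies $s_*j_{!*}^\lambda \mathcal{V}$ as a direct factor of a Tate twist of $a_*\mathbbm{1}_{\tilde{\mathcal{A}}_K^N}$ for a smooth compactification $\tilde{\mathcal{A}}_K^N$ of the $N$-fold fibre power of the universal abelian variety. Since the two realization functors (the $p$-adic de Rham/crystalline one and the $\ell$-adic étale one) are additive and commute with Tate twists, and since being crystalline, resp. unramified, resp. having a given characteristic polynomial of Frobenius, is a property stable under passing to direct summands and Tate twists, it suffices to establish the corresponding statements for the motive $a_*\mathbbm{1}_{\tilde{\mathcal{A}}_K^N}$, i.e.\ for the cohomology of a single smooth projective $\mathbb{Q}$-variety $\tilde{\mathcal{A}}_K^N$. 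The whole content of the corollary is therefore reduced to producing such a compactification with good reduction at every prime $p \nmid D\cdot N$.

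First I would recall that, away from $D\cdot N$, the moduli interpretation of Remark \ref{rmk:abuniv} extends: the Shimura variety $S_K$ and its universal abelian scheme $\mathcal{A}_K$ admit smooth integral models over $\mathbb{Z}[1/(DN)]$ (by the standard theory of PEL moduli of abelian schemes with $\mathcal{O}$-multiplication and level-$N$ structure, the discriminant $D$ controlling the ramification of the order $\mathcal{O}$ and $N$ the level), hence so does $\mathcal{A}_K^N$. Then I would invoke the existence of a smooth projective compactification $\tilde{\mathcal{A}}_K^N$ over $\mathbb{Z}[1/(DN)]$: either a toroidal compactification of the Kuga family over a toroidal compactification of $S_K$ (available integrally over $\mathbb{Z}[1/(DN)]$ by Lan's work, or by the general results on toroidal compactifications of PEL-type Kuga families), or, if one only wants a smooth proper model over $\mathbb{Q}$ spreading out smoothly and properly over $\mathbb{Z}[1/(DN')]$ for some $N'$ with the same prime divisors, any resolution of singularities of a chosen compactification over $\mathbb{Q}$ followed by spreading out. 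For such a variety with good reduction at $p$, the comparison theorems give that $H^*_{\mathrm{ét}}(\tilde{\mathcal{A}}_K^N \times \bar{\mathbb{Q}}, \mathbb{Q}_\ell)$ is unramified at $p$ (smooth proper base change, $\ell \neq p$), that $H^*_{\mathrm{dR}}$ with its crystalline Frobenius is crystalline at $p$ (Faltings--Fontaine, $C_{\mathrm{crys}}$), and that the characteristic polynomial of the crystalline Frobenius on $H^*_{\mathrm{crys}}$ of the special fibre equals that of geometric Frobenius on $H^*_{\mathrm{ét}}$ --- this last being precisely the compatibility of $p$-adic and $\ell$-adic cohomology of a smooth proper variety over $\mathbb{F}_p$, which follows from the Katz--Messing theorem (both compute the zeta function of the special fibre via Weil conjectures). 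Passing back through the direct-summand-and-Tate-twist reduction yields \ref{itm:crystram} and \ref{itm:polcar} for $s_*j_{!*}^\lambda \mathcal{V}$.

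The main obstacle I anticipate is the integral model of the \emph{compactification}: one needs a smooth proper model of $\tilde{\mathcal{A}}_K^N$ over $\mathbb{Z}[1/(DN)]$ (or at least over $\mathbb{Z}[1/M]$ with $M$ having the same prime support as $DN$), and while smooth integral models of $S_K$ and $\mathcal{A}_K^N$ themselves are classical, producing a \emph{smooth projective} compactification over the same base requires either the integral toroidal theory (Lan, Faltings--Chai in the Siegel case) applied to the Hilbert--Siegel situation and to Kuga families, or a careful spreading-out argument which only controls the bad primes up to the fixed finite set dividing $DN$. If one is content with the slightly weaker but sufficient statement that $p$-adic and $\ell$-adic realizations are compatible away from a finite set of primes with prescribed support, the spreading-out route suffices and avoids the integral toroidal machinery; I would present the argument in that form, remarking that the precise bound $p \nmid D\cdot N$ follows from the integral PEL/toroidal theory. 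Everything else --- additivity of realizations, stability under direct factors, Faltings' $C_{\mathrm{crys}}$, smooth proper base change, Katz--Messing --- is standard input requiring no new calculation here.
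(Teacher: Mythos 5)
Your proposal is correct and follows essentially the same route as the paper: the paper reduces, via the direct-factor property of Corollary \ref{factcomp} (packaged in \cite[Thm. 4.14]{Wil09}), to the existence of a smooth compactification of $\mathcal{A}_K^N$ with good reduction at every $p \nmid D\cdot N$, which it obtains from Lan's integral toroidal compactifications of PEL-type Kuga--Sato families \cite[Thm. 2.15]{Lan12}, and then deduces part (2) from the Katz--Messing theorem \cite[Thm. 2.2]{KM74} exactly as you do. Your identification of the integral model of the compactification as the one genuine input, and of the precise bound $p\nmid D\cdot N$ as coming from the integral PEL/toroidal theory rather than mere spreading out, matches the paper's argument.
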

\begin{proof}
\begin{enumerate}
\item By \cite[Thm. 4.14]{Wil09}, and with the notations of the preceding corollary, the existence of a smooth compactification of $\mathcal{A}^N_K$ with good reduction properties is enough to get the conclusion. Now, we have at our disposal the very general results of \cite{Lan12} on the existence of smooth projective integral models of smooth compactifications of \emph{PEL-type Kuga-Sato families}: namely, Thm. 2.15 of \emph{loc. cit.} (by taking into account Definition 1.6 of \emph{loc. cit.} and \cite[Prop. 1.4.4.3]{Lan13}) implies that there exists a smooth compactification of $\mathcal{A}^N_K$ with good reduction at every prime $p$ which does not divide $D \cdot N$. Thus, we can invoke \cite[Thm. 4.14]{Wil09} to conclude.
\item We argue exactly as in \cite[Cor. 1.13]{Wil19b}, in order to use \cite[Thm. 2.2]{KM74} and conclude.
\end{enumerate} \end{proof}

In order to end this list of properties of $s_*j_{!*}^\lambda \mathcal{V}$, we recall that the reason for the name of the \emph{intersection motive} is the behaviour of its realizations (recall that we are supposing that $\lambda$ is either not completely irregular or of corank 0):

\begin{corollary} \label{intmotintemot}
\begin{enumerate} [wide, labelwidth=!, labelindent=0pt, label=(\arabic*)]
\item \label{itm:integint} For every $n \in \mathbb{Z}$, the natural maps 
\begin{equation*}
H^n(S_K^*(\mathbb{C}), j_{!*} \mu_{H}^{K}(V_{\lambda})) \rightarrow H^n(S_K(\mathbb{C}),  \mu_{H}^{K}(V_{\lambda}))
\end{equation*}
(between cohomology spaces of Hodge modules) and 
\begin{equation*}
H^n((S_K^*) \times_{\mathbb{Q}} \bar{\mathbb{Q}}, j_{!*} \mu^{K}_{\ell}(V_{\lambda})) \rightarrow H^n((S_K) \times_{\mathbb{Q}} \bar{\mathbb{Q}},  \mu^{K}_{\ell}(V_{\lambda}))
\end{equation*}
(between cohomology spaces of $\ell$-adic perverse sheaves) are injective, and dually, the natural maps
\begin{equation*}
H^n_c(S_K(\mathbb{C}),  \mu_{H}^{K}(V_{\lambda})) \rightarrow H^n_c(S_K^*(\mathbb{C}), j_{!*} \mu_{H}^{K}(V_{\lambda}))
\end{equation*}
and
\begin{equation*}
H^n_c((S_K) \times_{\mathbb{Q}} \bar{\mathbb{Q}},  \mu^{K}_{\ell}(V_{\lambda})) \rightarrow  H^n_c((S_K^*) \times_{\mathbb{Q}}\bar{\mathbb{Q}}, j_{!*} \mu^{K}_{\ell}(V_{\lambda}))
\end{equation*}
are surjective. Consequently, the natural maps from intersection cohomology of $S_K$ towards interior cohomology (with coefficients in $\mu_{H}^{K}(V_{\lambda})$, resp. $\mu^{K}_{\ell}(V_{\lambda}))$ are isomorphisms. 
\item \label{itm:real} The Hodge realization, resp. $\ell$-adic realization of the intersection motive $s_*j_{!*}^\lambda \mathcal{V} \in CHM(\mathbb{Q})_L$ is identified with interior cohomology $H^*_!(S_K(\mathbb{C}), \mathcal{R}_H(^\lambda \mathcal{V}))$, resp. $H^*_!((S_K)_{\times_{\mathbb{Q}}}\bar{\mathbb{Q}}, \mathcal{R}_{\ell}(^\lambda \mathcal{V}))$.
\end{enumerate}
\label{cor:realint}
\end{corollary}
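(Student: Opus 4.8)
\textbf{Proof proposal for Corollary \ref{cor:realint}.}

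The plan is to deduce both parts from the general machinery of \cite{Wil19a} that governs the intermediate extension functor $j_{!*}$ on the subcategory of Chow motives whose degeneration avoids weights $0$ and $1$, together with the weight avoidance established in Corollary \ref{evit01} under the hypothesis that $\lambda$ is either not completely irregular or of corank $0$. For part \ref{itm:integint}, I would first recall that, once the weights $0$ and $1$ are avoided, the complex $i^*j_*\,^\lambda\mathcal{V}$ has no $\ell$-adic perverse cohomology of the \q{middle} weights needed to obstruct the usual purity arguments; concretely, one uses the standard long exact sequences associated to the pair of adjoint triangles for $i$ and $j$ applied to the intermediate extension $j_{!*}\mathcal{R}_\ell(^\lambda\mathcal{V})$ (equivalently to $j_{!*}\mu_\ell^K(V_\lambda)$, up to the shift by $w(\lambda)+3d$ fixed in Theorem \ref{critint_a}). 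By the defining property of the intermediate extension in an abelian category of perverse sheaves, $j^*j_{!*}=\mathrm{id}$ and $j_{!*}$ has neither sub nor quotient supported on the boundary; combined with the fact that $\mu_\ell^K(V_\lambda)$ is lisse and concentrated in a single perverse degree (Remark \ref{rmk:degdual}.\ref{itm:degperv}), this forces the canonical maps $H^n(S_K^*,j_{!*}\mu) \to H^n(S_K,\mu)$ and $H^n_c(S_K,\mu)\to H^n_c(S_K^*,j_{!*}\mu)$ to be injective, resp. surjective. The same reasoning applies verbatim on the Hodge side, since the complete formal analogy between \cite{Pin92} and \cite{BW04} (cfr. the footnote in the introduction) and between the two six-functor formalisms makes every step of the argument valid for mixed Hodge modules. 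The final assertion — that intersection cohomology maps isomorphically to interior cohomology — is then immediate: the image of $H^n_c \to H^n$ factors through $H^n(S_K^*,j_{!*}\mu)$, and the injectivity/surjectivity just proved identify this image with $IH^n$.

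For part \ref{itm:real}, the plan is to combine part \ref{itm:integint} with the definition of the intersection motive as $s_*j_{!*}\,^\lambda\mathcal{V} \in CHM(\mathbb{Q})_L$ (\cite[Def. 3.7]{Wil19a}) and with the compatibility of the realization functors with the six operations. Precisely, the Hodge, resp. $\ell$-adic, realization commutes with $s_*$, with $j_{!*}$ (this is exactly the content of the theory in \cite{Wil19a} that makes the construction work — the motivic intermediate extension realizes to the perverse-sheaf-theoretic one), and with the canonical construction functor $\tilde\mu$ by Theorem \ref{constmot}(1)-(2). Hence the realization of $s_*j_{!*}\,^\lambda\mathcal{V}$ is $H^*(S_K^*(\mathbb{C}), j_{!*}\mu_H^K(V_\lambda))$, resp. $H^*((S_K^*)_{\bar{\mathbb{Q}}}, j_{!*}\mu_\ell^K(V_\lambda))$, up to the normalising shift; by part \ref{itm:integint} this is canonically identified with $H^*_!$ with coefficients in $\mathcal{R}_H(^\lambda\mathcal{V})$, resp. $\mathcal{R}_\ell(^\lambda\mathcal{V})$. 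One should be slightly careful to track the shift by $w(\lambda)+3d$ and the relation $\mathcal{R}_{\ell,S_K}(^\lambda\mathcal{V}) = \mu_\ell^K(V_\lambda)[-w(\lambda)]$ from \eqref{realshift}, but this is bookkeeping of the kind already carried out in \cite[proof of Thm. 1.6]{Wil19b}.

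The main obstacle — or rather, the only genuinely non-formal input — is the weight avoidance itself, i.e. Corollary \ref{evit01}, which is where all the work of Section \ref{deg} is spent; once that is granted, everything above is an application of results quoted from \cite{Wil19a} and \cite{Wil09} plus standard perverse-sheaf formalism. A secondary point requiring a word of justification is that the Hodge-theoretic statements in part \ref{itm:integint} are legitimate even though a Hodge realization functor on Beilinson motives over the \emph{singular} base $\partial S_K^*$ has not been constructed: here one works directly with mixed Hodge modules on $S_K^*$ and invokes the formal analogy with the $\ell$-adic picture, exactly as flagged in the introduction, so that the injectivity/surjectivity of the comparison maps is proved intrinsically in the Hodge-module category rather than by transport along a realization functor.
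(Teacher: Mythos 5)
Your overall route is the same as the paper's: both parts are deduced from the weight avoidance of Theorem \ref{thm:mainthm} (the paper in fact uses the full interval $-\beta,\dots,\beta+1$, not just $\{0,1\}$) together with the general formalism of \cite{Wil19a}. The paper's proof of part \ref{itm:integint} is literally an application of \cite[Rmk. 3.13 (c)]{Wil19a} to the complexes $i^*j_*R(^\lambda\mathcal{V})$, for $R$ the Hodge or $\ell$-adic realization on $CHM(S_K)_L$, and part \ref{itm:real} is obtained, as you say, from part \ref{itm:integint} plus the identification of the realizations of the motivic intermediate extension with the perverse-sheaf-theoretic one (\cite[Thm. 7.2]{Wil17a}). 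Your treatment of part \ref{itm:real} and your remark on how to legitimise the Hodge-theoretic statements are in line with what the paper does.

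There is, however, one step in your unfolding of part \ref{itm:integint} that does not hold as stated. The injectivity of $H^n(S_K^*,j_{!*}\mu)\to H^n(S_K,\mu)$ and the dual surjectivity are \emph{not} forced by the defining property of the intermediate extension (no subobject or quotient supported on the boundary) together with lisse-ness and concentration in a single perverse degree. Already for $j:\mathbb{A}^1\hookrightarrow\mathbb{P}^1$ and the constant sheaf one has $j_{!*}(\mathbb{Q}_\ell[1])=\mathbb{Q}_{\ell,\mathbb{P}^1}[1]$, yet $H^2(\mathbb{P}^1)\to H^2(\mathbb{A}^1)=0$ is not injective; what fails there is precisely the avoidance of weights $0$ and $1$ in $i^*j_*$. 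The actual mechanism, which is the content of \cite[Rmk. 3.13 (c)]{Wil19a}, is a weight argument: the avoidance of weights $0$ and $1$ in $i^*j_*$ translates into weight bounds on $i^*j_{!*}$ and $i^!j_{!*}$ that are disjoint from the weight of the pure hypercohomology of $j_{!*}\mu$ over the proper variety $S_K^*$, so that in the long exact sequence
\begin{equation*}
\cdots\rightarrow H^n(\partial S_K^*, i^!j_{!*}\mu)\rightarrow H^n(S_K^*,j_{!*}\mu)\rightarrow H^n(S_K,\mu)\rightarrow\cdots
\end{equation*}
and in its dual the boundary terms interact trivially with the middle term, by strictness of morphisms of mixed structures. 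You do acknowledge that the weight avoidance is \q{the only genuinely non-formal input}, but it has to be invoked exactly at this point rather than as background; as written, the justification of the central injectivity/surjectivity claim is incomplete, and the remainder of the argument (the factorisation of $H^n_c\to H^n$ through $H^n(S_K^*,j_{!*}\mu)$ and the identification of intersection with interior cohomology) is fine once that step is repaired.
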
 
\begin{proof}
Theorem \ref{thm:mainthm} tells us that the motive $i^* j_* ^\lambda \mathcal{V}$ avoids weights $-\beta, \dots, \beta +1$. Then, it is enough to apply \cite[Rmk. 3.13 (c)]{Wil19a} to the complexes $i^* j_* R(^\lambda \mathcal{V})$, where $R$ is the Hodge or $\ell$-adic realization functor on $CHM(S_K)_L$ .

Point \ref{itm:real} follows from \ref{itm:integint} and from the fact that realizations of the intersection motive are identified with intersection cohomology (\cite{Wil19a}, before Proposition 3.8; cfr. \cite[Thm. 7.2]{Wil17a} for details). 
\end{proof}

\begin{remark}
\begin{enumerate} [wide, labelwidth=!, labelindent=0pt, label=(\arabic*)]
\item The vanishing theorems for cohomology of locally symmetric spaces with coefficients in regular representations of the underlying algebraic group ((\cite[Cor. 5.6]{LS04} or \cite[Thm. 5]{Sap05}) imply that, if $\lambda$ is regular, the spaces $H^n(S_K(\mathbb{C}), \mu_H^{K}(V_{\lambda}))$, and so (by comparison) $H^n(S_K \times_{\mathbb{Q}} \bar{\mathbb{Q}}, \mu_{\ell}^{K}(V_{\lambda}))$ are zero for $n<3d=\mbox{dim} S_K$. Dually, we get $H_c^n(S_K(\mathbb{C}), \mu_H^{K}(V_{\lambda}))=0$ and $H_c^n(S_K \times_{\mathbb{Q}} \bar{\mathbb{Q}}, \mu_{\ell}^{K}(V_{\lambda}))=0$ for $n>3d$. As a consequence, if $\lambda$ is regular, then the interior cohomology spaces $H_!^n(S_K(\mathbb{C}), \mu_H^{K}(V_{\lambda}))$ and $H_!^n(S_K \times_{\mathbb{Q}} \bar{\mathbb{Q}}, \mu_{\ell}^{K}(V_{\lambda}))$ are zero in degrees different from $n=3d$.
\item \label{itm:degreal} Corollary \ref{cor:realint}.\ref{itm:real} and the preceding point imply that, if $\lambda$ is regular, the Hodge realization of the intersection motive $s_*j_{!*} ^\lambda \mathcal{V} \in CHM(\mathbb{Q})_L$ is given by $H_!^{3d}(S_K(\mathbb{C}), \mu_H^{K}(V_{\lambda}))[-(w(\lambda)+3d)]$, and that its $\ell$-adic realization is given by $H_!^{3d}(S_K \times_{\mathbb{Q}} \bar{\mathbb{Q}}, \mu_{\ell}^{K}(V_{\lambda}))[-(w(\lambda)+3d)]$.
\end{enumerate}
\label{rmk:proprealint}
\end{remark}

\subsection{Homological motives for automorphic representations}

Keep the notations of the preceding subsection and assume moreover that $\lambda$ is regular. In this last part, following \cite[Sec. 3]{Wil19b}, we would like to exploit the action of the elements of the algebra $\mathfrak{H}(K,G(\mathbb{A}_f))$ on the intersection motive $s_*j_{!*} ^\lambda \mathcal{V} \in CHM(\mathbb{Q})_L$ (cfr. Corollary \ref{heckeaction}) to cut out certain \emph{homological} sub-motives thereof. Recall that the algebra $\mathfrak{H}(K,G(\mathbb{A}_f)$ acts on $H_!^{3d}(S_K(\mathbb{C}), \mu_H^{K}(V_{\lambda}))$ and on $H_!^{3d}(S_K \times_{\mathbb{Q}} \bar{\mathbb{Q}}, \mu_{\ell}^{K}(V_{\lambda}))$.

\begin{theorem}{(\cite[Sect. 8.1.7, page 253]{Hard})}

For every extension $E$ of $L$, the $\mathfrak{H}(K,G(\mathbb{A}_f))\otimes_{L} E$-module $H_!^{3d}(S_K(\mathbb{C}), \mu_H^{K}(V_{\lambda})) \otimes E$ is semisimple.
\end{theorem}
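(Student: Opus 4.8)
The statement is that, after extending scalars from $L$ to any field $E \supseteq L$, the module $H_!^{3d}(S_K(\BC), \mu_H^K(V_\lambda)) \otimes_L E$ is semisimple over the Hecke algebra $\mathfrak{H}(K,G(\BA_f)) \otimes_L E$. The plan is to reduce this to the semisimplicity of interior cohomology as a module over the \emph{complex}ified Hecke algebra, which is a classical consequence of the fact that interior cohomology injects into $L^2$-cohomology, where the Hecke action is unitary (this is what is invoked by the reference to Harder's book). First I would fix notation: write $\mathfrak{H} := \mathfrak{H}(K,G(\BA_f))$, a finitely generated (in fact finite-dimensional over $L$, since $K$ is open compact and the relevant double cosets are finite in number for the action on a fixed cohomology space) associative $L$-algebra, and $H_! := H_!^{3d}(S_K(\BC),\mu_H^K(V_\lambda))$, a finite-dimensional $L$-vector space on which $\mathfrak{H}$ acts. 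Equivalently we must show $\mathfrak{H} \otimes_L E$ acts semisimply on $H_! \otimes_L E$ for all $E$.

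The key step is to first establish semisimplicity over $\BC$ (or rather over some large enough field), using the analytic input. Via the comparison with $L^2$-cohomology (Borel--Casselman, Franke), interior cohomology $H_! \otimes_L \BC$ embeds $\mathfrak{H}\otimes_L\BC$-equivariantly as a direct summand of a space carrying a positive-definite Hermitian form with respect to which the Hecke operators attached to $K$-bi-invariant functions are normal (adjoint given by $g \mapsto g^{-1}$); finite-dimensional $*$-invariant subspaces of such a representation are semisimple, hence $H_! \otimes_L \BC$ is a semisimple $\mathfrak{H}\otimes_L\BC$-module. This is exactly the content of the cited theorem of Harder, so I would simply quote it as the base case $E_0 = \BC$ (or its subfield generated by the relevant eigenvalues).

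Then I would deduce the statement for \emph{arbitrary} $E$ by a standard descent/ascent argument for semisimplicity of modules under field extension. Concretely: semisimplicity of a finite-dimensional module $M$ over a finite-dimensional $L$-algebra $A$ is equivalent to $J(A) \cdot M = 0$ where $J(A)$ is the Jacobson radical, and more usefully, $M$ is semisimple iff $M \otimes_L \bar L$ is semisimple over $A \otimes_L \bar L$ (semisimplicity is insensitive to field extension in both directions for finite-dimensional algebras; see e.g. Bourbaki, \emph{Alg\`ebre} VIII, or Curtis--Reiner). Since $\BC \supseteq \bar L \supseteq L$ and $H_! \otimes_L \BC$ is semisimple, it follows that $H_! \otimes_L \bar L$ is semisimple over $\mathfrak{H} \otimes_L \bar L$, hence $H_!$ is semisimple over $\mathfrak{H}$, hence $H_! \otimes_L E$ is semisimple over $\mathfrak{H} \otimes_L E$ for every extension $E/L$. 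The same argument applies verbatim to the $\ell$-adic realization $H_!^{3d}(S_K \times_\BQ \bar\BQ, \mu_\ell^K(V_\lambda))$, using the comparison isomorphism with Betti cohomology to transport the $\mathfrak{H}$-module structure.

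\textbf{Main obstacle.} The genuinely non-formal ingredient is the base case over $\BC$, i.e. the semisimplicity of interior cohomology as a module over the complexified Hecke algebra; this rests on the $L^2$-theory (the Borel conjecture / Franke's work identifying automorphic cohomology, and the unitarity of the Hecke action on the cuspidal and residual parts) and is precisely why the statement is quoted from Harder's book rather than proved here. Everything after that — the passage from $\BC$ to $\bar L$ and then to an arbitrary $E$ — is purely a matter of linear algebra over finite-dimensional algebras and presents no difficulty, provided one is careful that $\mathfrak{H}(K,G(\BA_f))$ acts on the given cohomology space through a \emph{finite-dimensional} quotient algebra (which holds because the space is finite-dimensional), so that the field-extension invariance of semisimplicity applies.
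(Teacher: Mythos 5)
The paper does not prove this statement at all: it is quoted verbatim from Harder's book (\cite[Sect. 8.1.7, page 253]{Hard}) and used as a black box, so there is no internal argument to compare yours against. Your reconstruction is nevertheless the standard and correct one, and it matches what the citation is pointing to: the non-formal input is semisimplicity of interior cohomology over the complexified Hecke algebra, coming from the embedding of interior (equivalently, under regularity, cuspidal) cohomology into $L^2$-cohomology where the Hecke operators act normally with respect to the Petersson inner product; the passage to an arbitrary coefficient field $E \supseteq L$ is then pure algebra. One small point deserves a remark: the ``ascent'' direction of your field-extension argument (semisimple over $L$ implies semisimple over $E$) is not true for arbitrary finite-dimensional algebras --- it requires the semisimple quotient acting on the module to be a \emph{separable} algebra. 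Since $L$ is a number field, hence of characteristic zero, every finite-dimensional semisimple $L$-algebra is separable and the step is valid, but you should say so explicitly rather than assert that semisimplicity is insensitive to field extension ``in both directions'' without qualification. The descent direction ($\mathbb{C}$ down to $\bar L$ down to $L$) is unconditional, as you can see by noting that $J(A)\otimes_L E$ is a nilpotent ideal of $A\otimes_L E$ and hence annihilates any semisimple module. With that caveat made explicit, your proposal is a faithful account of the argument the paper delegates to Harder.
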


\begin{corollary}

Let $R(\mathfrak{H}):=R(\mathfrak{H}(K,G(\mathbb{A}_f)))$ be the image of $\mathfrak{H}(K,G(\mathbb{A}_f))$ in the endomorphism algebra of $H_!^{3d}(S_K(\mathbb{C}), \mu_H^{K}(V_{\lambda}))$. Then, for every extension $E$ of $L$, the algebra $R(\mathfrak{H})\otimes_{L}E$ is semisimple. 
\end{corollary}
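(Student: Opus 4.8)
The plan is to deduce the semisimplicity of $R(\mathfrak{H})\otimes_{L}E$ from the preceding theorem, which asserts the semisimplicity of $H_!^{3d}(S_K(\mathbb{C}), \mu_H^{K}(V_{\lambda})) \otimes E$ as a module over $\mathfrak{H}(K,G(\mathbb{A}_f))\otimes_{L} E$, together with the standard fact from the structure theory of semisimple modules that the image of any ring in the endomorphism algebra of a semisimple module of finite length is a semisimple ring. First I would recall that $H_!^{3d}(S_K(\mathbb{C}), \mu_H^{K}(V_{\lambda}))$ is a finite-dimensional $L$-vector space (being a subquotient of the cohomology of a variety with coefficients in a finite-rank local system), so that $M_E:=H_!^{3d}(S_K(\mathbb{C}), \mu_H^{K}(V_{\lambda})) \otimes_L E$ is a finite-dimensional $E$-vector space, hence of finite length over $\mathfrak{H}(K,G(\mathbb{A}_f))\otimes_{L} E$.

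Next I would identify $R(\mathfrak{H})\otimes_{L}E$ with the image of $\mathfrak{H}(K,G(\mathbb{A}_f))\otimes_{L}E$ in $\End_E(M_E)$. This requires a small flatness remark: since $E$ is (faithfully) flat over $L$, tensoring the short exact sequence $0 \to \ker \to \mathfrak{H}(K,G(\mathbb{A}_f)) \to R(\mathfrak{H}) \to 0$ with $E$ over $L$ stays exact, and $\End_E(M_E) = \End_L(H_!^{3d}(S_K(\mathbb{C}), \mu_H^{K}(V_{\lambda})))\otimes_L E$, so the image of $\mathfrak{H}(K,G(\mathbb{A}_f))\otimes_L E$ in $\End_E(M_E)$ is precisely $R(\mathfrak{H})\otimes_L E$. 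Thus it suffices to show that the image of the ring $A := \mathfrak{H}(K,G(\mathbb{A}_f))\otimes_{L}E$ in $\End_E(M_E)$ is semisimple, knowing that $M_E$ is a semisimple $A$-module of finite length.

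For the last step I would invoke the following standard lemma: if $A$ is a ring, $M$ a semisimple $A$-module of finite length, and $\bar{A}$ the image of $A$ in $\End(M)$, then $\bar{A}$ is a semisimple ring (equivalently, has zero Jacobson radical and is Artinian). Indeed $M$ is a faithful semisimple $\bar{A}$-module of finite length; writing $M \cong \bigoplus_i S_i^{n_i}$ with the $S_i$ pairwise non-isomorphic simple $\bar{A}$-modules, faithfulness forces $\bigcap_i \operatorname{Ann}_{\bar A}(S_i) = 0$, so $\bar{A} \hookrightarrow \prod_i \bar{A}/\operatorname{Ann}_{\bar A}(S_i)$, and each $\bar{A}/\operatorname{Ann}_{\bar A}(S_i)$ is a simple Artinian ring by the Jacobson density theorem (the $S_i$ being finite-dimensional over the division ring $\End_{\bar A}(S_i)$); hence $\bar{A}$ is a subring of a finite product of simple Artinian rings, and being itself the image of $A$ in such a finite-dimensional $E$-algebra it is Artinian, so its Jacobson radical, which acts by $0$ on every $S_i$ hence on $M$ hence is $0$ by faithfulness, vanishes. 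Therefore $\bar{A} = R(\mathfrak{H})\otimes_L E$ is semisimple. I do not expect a genuine obstacle here; the only point requiring a little care is the flatness/base-change identification of $R(\mathfrak{H})\otimes_L E$ with the image of $A$ in $\End_E(M_E)$ and the bookkeeping that $M_E$ indeed has finite length, but both are routine once one notes the finite-dimensionality of interior cohomology.
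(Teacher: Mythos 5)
Your proof is correct and matches the paper's (implicit) reasoning: the paper states this corollary without proof, treating it as the standard consequence that the image of an algebra acting on a faithful semisimple module of finite length is a semisimple ring, which is exactly the lemma you prove via Jacobson density. The flatness/base-change bookkeeping identifying $R(\mathfrak{H})\otimes_L E$ with the image of $\mathfrak{H}(K,G(\mathbb{A}_f))\otimes_L E$ in $\End_E(M_E)$ is also handled correctly.
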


In particular, isomorphism classes of simple right $R(\mathfrak{H})\otimes_{L}E$-modules are in bijection with isomorphism classes of minimal right ideals. Now, by fixing $E$, and one of these minimal right ideals $Y_{\pi_f}$ of $R(\mathfrak{H})\otimes_{L}E$, there exists an idempotent $e_{\pi_f} \in R(\mathfrak{H})\otimes_{L}E$ which generates $Y_{\pi_f}$.

\begin{definition}
\begin{enumerate}[wide, labelwidth=!, labelindent=0pt, label=(\arabic*)]
\item The \emph{Hodge structure} $W(\pi_f)$ \emph{associated to} $Y_{\pi_f}$ is defined by
\begin{equation*}
W(\pi_f):=\mbox{Hom}_{R(\mathfrak{H})\otimes_{L}E}(Y_{\pi_f}, H_!^{3d}(S_K(\mathbb{C}), \mu_H^{K}(V_{\lambda})) \otimes E).
\end{equation*}
\item Let $E$ be a finite extension. For every prime number $\ell$, and for every prime $l$ of $E$ above $\ell$, the \emph{Galois module} $W(\pi_f)_{\ell}$ \emph{associated to} $Y_{\pi_f}$ is defined by
\begin{equation*}
W(\pi_f)_{\ell}:=\mbox{Hom}_{R(\mathfrak{H})\otimes_{L}E_l}(Y_{\pi_f}, H_!^{3d}(S_K \times_{\mathbb{Q}} \bar{\mathbb{Q}}, \mu_{\ell}^{K}(V_{\lambda})) \otimes E_l).
\end{equation*}
\end{enumerate}
\end{definition}

The proof of the following is immediate (see for example \cite[Prop. 3.4]{Wil19b}):
\begin{proposition}
There are canonical isomorphisms of Hodge structures, resp. of Galois modules
\begin{equation*}
W(\pi_f) \simeq H_!^{3d}(S_K(\mathbb{C}), \mu_H^{K}(V_{\lambda}) \otimes E) \cdot e_{\pi_f},
\end{equation*}
resp.
\begin{equation*}
W(\pi_f)_{\ell} \simeq H_!^{3d}(S_K \times_{\mathbb{Q}} \bar{\mathbb{Q}}, \mu_{\ell}^{K}(V_{\lambda}) \otimes E) \cdot e_{\pi_f}.
\end{equation*}
\end{proposition}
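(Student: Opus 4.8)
The plan is to unwind the definitions and to rely on the semisimplicity of the Hecke action together with standard facts about modules over semisimple algebras. First I would observe that, since $R(\mathfrak{H}) \otimes_L E$ is semisimple (by the preceding corollary, whose proof uses the cited theorem from \cite{Hard}), the module $H_!^{3d}(S_K(\mathbb{C}), \mu_H^{K}(V_{\lambda})) \otimes E$ decomposes as a direct sum of its $\pi_f$-isotypic components, and for the fixed minimal right ideal $Y_{\pi_f}$ generated by the idempotent $e_{\pi_f}$, the isotypic component is precisely the image of right multiplication by $e_{\pi_f}$. The key algebraic input is the natural evaluation map
\begin{equation*}
\mathrm{Hom}_{R(\mathfrak{H}) \otimes_L E}(Y_{\pi_f}, M) \otimes_{e_{\pi_f}(R(\mathfrak{H}) \otimes_L E) e_{\pi_f}} Y_{\pi_f} \longrightarrow M \cdot e_{\pi_f}
\end{equation*}
for $M = H_!^{3d}(S_K(\mathbb{C}), \mu_H^{K}(V_{\lambda})) \otimes E$, which for a simple module over a semisimple algebra restricts to an isomorphism onto the isotypic part; composing with the identification $Y_{\pi_f} \cdot e_{\pi_f} \simeq e_{\pi_f}(R(\mathfrak{H}) \otimes_L E) e_{\pi_f}$-linearly gives $W(\pi_f) \xrightarrow{\sim} M \cdot e_{\pi_f}$. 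Concretely, the map $\phi \mapsto \phi(e_{\pi_f})$ from $\mathrm{Hom}_{R(\mathfrak{H}) \otimes_L E}(Y_{\pi_f}, M)$ to $M \cdot e_{\pi_f}$ is the desired isomorphism: it is well-defined because $e_{\pi_f} \in Y_{\pi_f}$ and $\phi$ is a module map, it is injective because $e_{\pi_f}$ generates $Y_{\pi_f}$, and it is surjective because any $m \cdot e_{\pi_f}$ is hit by the map $y \mapsto m \cdot y$ (which lies in $\mathrm{Hom}_{R(\mathfrak{H}) \otimes_L E}(Y_{\pi_f}, M)$). One must check that this isomorphism is compatible with the Hodge structures, but this is automatic: the Hodge structure on $M$ is $\mathfrak{H}(K, G(\mathbb{A}_f))$-equivariant (the Hecke operators are morphisms of Hodge structures on interior cohomology, which is where the $R(\mathfrak{H})$-action factors through), so $M \cdot e_{\pi_f}$ inherits a sub-Hodge structure, and the $\mathrm{Hom}$-space is given the Hodge structure making the evaluation map a morphism — by construction these agree.

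For the second isomorphism, concerning the Galois module $W(\pi_f)_{\ell}$, the argument is identical after base change: one works with $R(\mathfrak{H}) \otimes_L E_l$ acting on $H_!^{3d}(S_K \times_{\mathbb{Q}} \bar{\mathbb{Q}}, \mu_{\ell}^{K}(V_{\lambda})) \otimes E_l$, using that this algebra is again semisimple (apply the corollary with the extension $E_l$ of $L$, or simply note that semisimplicity is preserved under extension of the base field for finite-dimensional algebras over a field of characteristic zero once it holds after one extension — indeed the corollary asserts it for \emph{every} extension $E$ of $L$). The same evaluation map $\phi \mapsto \phi(e_{\pi_f})$ furnishes an isomorphism $W(\pi_f)_{\ell} \xrightarrow{\sim} H_!^{3d}(S_K \times_{\mathbb{Q}} \bar{\mathbb{Q}}, \mu_{\ell}^{K}(V_{\lambda}) \otimes E) \cdot e_{\pi_f}$, and Galois-equivariance is again automatic since the Hecke action on $\ell$-adic interior cohomology commutes with the Galois action (the Hecke operators being defined over $\mathbb{Q}$, or more precisely algebraic correspondences inducing Galois-equivariant endomorphisms).

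I do not expect a genuine obstacle here; as the excerpt itself signals (``The proof of the following is immediate, see for example \cite[Prop. 3.4]{Wil19b}''), this is a formal consequence of semisimplicity once the framework is in place. The only points requiring mild care are: (i) making sure the Hodge (resp. Galois) structure one puts on the $\mathrm{Hom}$-space is literally the one that makes $\phi \mapsto \phi(e_{\pi_f})$ tautologically a morphism — this is a matter of convention and should be stated as such; and (ii) checking that $Y_{\pi_f} \otimes_L E_l$ remains a minimal right ideal (equivalently, that $\pi_f$ stays ``absolutely simple'' or at least that $e_{\pi_f}$ generates a simple module) after the relevant base change, which follows from the semisimplicity assertion of the corollary applied to $E = E_l$ together with the fact that $e_{\pi_f}$ was chosen as a primitive idempotent. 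Thus the proof is essentially a one-line citation of the Wedderburn structure of $R(\mathfrak{H}) \otimes_L E$ plus the elementary identity $\mathrm{Hom}_A(eA, M) \simeq Me$ for an idempotent $e$ in a ring $A$, applied in the two parallel contexts (Hodge and $\ell$-adic), and one simply needs to write this out with the realization-compatibility remarks included.
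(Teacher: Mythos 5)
Your proof is correct and is essentially the argument the paper intends: the paper itself only declares the proof \q{immediate} with a pointer to \cite[Prop. 3.4]{Wil19b}, and the entire content is the formal identity $\mathrm{Hom}_A(e_{\pi_f}A, M) \simeq M \cdot e_{\pi_f}$ (valid for any idempotent $e_{\pi_f}$ in any ring $A$ and any right $A$-module $M$, via evaluation at $e_{\pi_f}$), combined with the observation that the Hecke action commutes with the Hodge, resp.\ Galois, structure on interior cohomology. One cosmetic slip worth noting: $M \cdot e_{\pi_f}$ is the multiplicity space, not the full $\pi_f$-isotypic component of $M$ (the two differ whenever the relevant simple factor of $R(\mathfrak{H})\otimes_L E$ is a matrix algebra of size $>1$), but this side remark plays no role in your actual verification, which stands as written.
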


Since we do not know if $e_{\pi_f}$ lifts to an idempotent element of $\mathfrak{H}(K,G(\mathbb{A}_f))\otimes E$, we are forced to consider its action on the \emph{homological} (or Grothendieck) motive which underlies the intersection motive $s_*j_{!*} ^\lambda \mathcal{V} \in CHM(\mathbb{Q})_L$. Denote then by $s_*j_{!*} ^\lambda \mathcal{V}^\prime$ this homological motive, and define, thanks to Corollary \ref{heckeaction}:

\begin{definition}
The \emph{(homological) motive corresponding to} $Y_{\pi_f}$ is defined by $\mathcal{W}(\pi_f):=s_*j_{!*} ^\lambda \mathcal{V}^\prime \cdot e_{\pi_f}$.
\end{definition}

We finish by making explicit the properties of the latter motive which follow from the preceding constructions:

\begin{theorem}
The realizations of the motive $\mathcal{W}(\pi_f)$ are concentrated in cohomological degree $w(\lambda)+3d$, where in particular the Hodge realization equals $W(\pi_f)$, and the $\ell$-adic realizations equal $W(\pi_f)_{\ell}$, for every prime $\ell$.
\end{theorem}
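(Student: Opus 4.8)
The plan is to deduce everything from the properties of the intersection motive $s_*j_{!*}{}^{\lambda}\mathcal{V}$ already established, combined with the compatibility of the Hecke action with realization. First I would recall that, since $\lambda$ is assumed regular, Remark \ref{rmk:proprealint}.\ref{itm:degreal} (which rests on Corollary \ref{cor:realint}.\ref{itm:real} and the vanishing theorems for cohomology of locally symmetric spaces) tells us that the Hodge, resp. $\ell$-adic, realization of the Chow motive $s_*j_{!*}{}^{\lambda}\mathcal{V}\in CHM(\mathbb{Q})_L$ is concentrated in cohomological degree $w(\lambda)+3d$, where it is given by $H_!^{3d}(S_K(\mathbb{C}),\mu_H^{K}(V_{\lambda}))[-(w(\lambda)+3d)]$, resp. $H_!^{3d}(S_K\times_{\mathbb{Q}}\bar{\mathbb{Q}},\mu_{\ell}^{K}(V_{\lambda}))[-(w(\lambda)+3d)]$. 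Since both realization functors on $CHM(\mathbb{Q})_L$ factor through the pseudo-abelian category of homological (Grothendieck) motives — this factorisation being, up to the standard comparison isomorphisms, the very definition of the homological motive $s_*j_{!*}{}^{\lambda}\mathcal{V}^\prime$ — the realizations of $s_*j_{!*}{}^{\lambda}\mathcal{V}^\prime$ coincide with those of $s_*j_{!*}{}^{\lambda}\mathcal{V}$, hence are still concentrated in degree $w(\lambda)+3d$ with the same values.

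Next I would make sense of the action of $e_{\pi_f}$ on this homological motive. By Corollary \ref{heckeaction} each element of $\mathfrak{H}(K,G(\mathbb{A}_f))$ acts on $s_*j_{!*}{}^{\lambda}\mathcal{V}$, hence on $s_*j_{!*}{}^{\lambda}\mathcal{V}^\prime$, and as recalled in Footnote \ref{algebract} this action is an \emph{algebra} action on the homological motive, compatible under realization with the algebra action of $\mathfrak{H}(K,G(\mathbb{A}_f))$ on $H_!^{3d}(S_K(\mathbb{C}),\mu_H^{K}(V_{\lambda}))$ and on $H_!^{3d}(S_K\times_{\mathbb{Q}}\bar{\mathbb{Q}},\mu_{\ell}^{K}(V_{\lambda}))$. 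Because a realization functor is faithful on homological motives, the induced map from $\mathfrak{H}(K,G(\mathbb{A}_f))\otimes_L E$ to $\End(s_*j_{!*}{}^{\lambda}\mathcal{V}^\prime\otimes_L E)$ factors through the image $R(\mathfrak{H})\otimes_L E$ of $\mathfrak{H}(K,G(\mathbb{A}_f))\otimes_L E$ in the endomorphism algebra of interior cohomology; in particular the idempotent $e_{\pi_f}\in R(\mathfrak{H})\otimes_L E$ defines an idempotent endomorphism of $s_*j_{!*}{}^{\lambda}\mathcal{V}^\prime\otimes_L E$, which splits since the category is pseudo-abelian, so that $\mathcal{W}(\pi_f)=s_*j_{!*}{}^{\lambda}\mathcal{V}^\prime\cdot e_{\pi_f}$ is a well-defined direct summand.

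It then remains to apply the realization functors, which are additive and therefore commute with the splitting of idempotents: the Hodge realization of $\mathcal{W}(\pi_f)$ is $\bigl(H_!^{3d}(S_K(\mathbb{C}),\mu_H^{K}(V_{\lambda}))\cdot e_{\pi_f}\bigr)[-(w(\lambda)+3d)]$, and the $\ell$-adic realization is $\bigl(H_!^{3d}(S_K\times_{\mathbb{Q}}\bar{\mathbb{Q}},\mu_{\ell}^{K}(V_{\lambda}))\cdot e_{\pi_f}\bigr)[-(w(\lambda)+3d)]$; by the Proposition computing $W(\pi_f)$ and $W(\pi_f)_{\ell}$ these equal $W(\pi_f)[-(w(\lambda)+3d)]$, resp. $W(\pi_f)_{\ell}[-(w(\lambda)+3d)]$. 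In particular all realizations of $\mathcal{W}(\pi_f)$ are concentrated in cohomological degree $w(\lambda)+3d$ with the stated values. This is the motivic counterpart of \cite[Thm. 3.5]{Wil19b} and the argument is formally identical.

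The hard part — or rather the only point requiring care — is the second paragraph: justifying that the Hecke action on $s_*j_{!*}{}^{\lambda}\mathcal{V}^\prime$ is an algebra action factoring through $R(\mathfrak{H})\otimes_L E$, so that $e_{\pi_f}$ (which a priori lives only in the endomorphism algebra of interior cohomology and, as emphasised before the definition of $\mathcal{W}(\pi_f)$, is not known to lift to an idempotent of $\mathfrak{H}(K,G(\mathbb{A}_f))\otimes_L E$) genuinely acts on the homological motive. This is precisely the reason one is forced to leave the category of Chow motives here, and it relies on the faithfulness of realization on homological motives together with the semisimplicity statements recalled just above. Everything else is a bookkeeping exercise with additive functors.
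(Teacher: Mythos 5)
Your proposal is correct and follows essentially the same route as the paper, whose proof is the one-line observation that the statement follows from the construction of $\mathcal{W}(\pi_f)$ and Remark \ref{rmk:proprealint}.\ref{itm:degreal} under the regularity hypothesis; you have simply unpacked the two ingredients left implicit there, namely that the idempotent $e_{\pi_f}$ acts on the homological motive because realization is faithful on homological motives (the content of Footnote \ref{algebract}), and that additive realization functors commute with the splitting of idempotents. No gap.
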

\begin{proof}
Follows from the construction of $\mathcal{W}(\pi_f)$ and Remark \ref{rmk:proprealint}.\ref{itm:degreal} (remember that we are supposing $\lambda$ to be regular).
\end{proof}

\begin{corollary} \label{cristram}
Let p be a prime number which does not divide the integer $D \cdot N$ from Corollary \ref{cor:realadiques}, and $\ell$ a prime different from $p$. Then:
\begin{enumerate}
[wide, labelwidth=!, labelindent=0pt, label=(\arabic*)]
\item the $p$-adic realization of $\mathcal{W}(\pi_f)$ is crystalline, and the $\ell$-adic realization of $\mathcal{W}(\pi_f)$ is unramified at $p$;
\item consider on the one hand the action of the Frobenius $\phi$ on the $\phi$-filtered module associated to the $p$-adic (crystalline) realization of $\mathcal{W}(\pi_f)$, and on the other hand the action of a geometrical Frobenius at $p$ on the $\ell$-adic realization of $\mathcal{W}(\pi_f)$ (unramified at $p$). Then, the characteristic polynomials of the two actions coincide. 
\end{enumerate}
\end{corollary}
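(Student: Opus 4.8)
The plan is to deduce the statement directly from the corresponding properties of the full intersection motive $s_*j_{!*}{}^\lambda \mathcal{V}$, established in Corollary \ref{cor:realadiques}, together with the fact that $\mathcal{W}(\pi_f)$ is a direct factor of (the homological motive underlying) $s_*j_{!*}{}^\lambda \mathcal{V}$, cut out by the idempotent $e_{\pi_f}$. The point is that both \q{crystalline at $p$} and \q{unramified at $p$} are properties stable under passing to direct summands in the relevant categories of $p$-adic, resp. $\ell$-adic, representations, and that the comparison of characteristic polynomials of Frobenius also descends to direct summands.

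First I would recall that, by construction, $\mathcal{W}(\pi_f) = s_*j_{!*}{}^\lambda \mathcal{V}^\prime \cdot e_{\pi_f}$, where $e_{\pi_f}$ is an idempotent in the Hecke algebra acting on the homological motive $s_*j_{!*}{}^\lambda \mathcal{V}^\prime$ (this action exists by Corollary \ref{heckeaction}, which passes from Chow motives to homological motives since realizations are functorial and the Hecke action is an algebra action after realization, cf. Footnote \ref{algebract}). Applying the $p$-adic, resp. $\ell$-adic, realization functor, we obtain that the realizations of $\mathcal{W}(\pi_f)$ are the images under the corresponding projectors of the realizations of $s_*j_{!*}{}^\lambda \mathcal{V}$, namely direct summands of $H^{3d}_!(S_K\times_{\mathbb{Q}}\bar{\mathbb{Q}}, \mu_\ell^K(V_\lambda))$ (suitably shifted, by Remark \ref{rmk:proprealint}.\ref{itm:degreal}), and analogously on the $p$-adic side. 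Since, by Corollary \ref{cor:realadiques}.\ref{itm:crystram}, the $p$-adic realization of $s_*j_{!*}{}^\lambda \mathcal{V}$ is crystalline and its $\ell$-adic realization is unramified at $p$, and since the subcategory of crystalline representations of $\mathrm{Gal}(\bar{\mathbb{Q}}_p/\mathbb{Q}_p)$ is stable under subquotients and direct summands (it is a Tannakian subcategory of all $p$-adic representations), and likewise for unramified $\ell$-adic representations, the first point follows at once for $\mathcal{W}(\pi_f)$.

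For the second point I would argue that the functor $D_{\mathrm{crys}}$ commutes with the idempotent $e_{\pi_f}$ (it is additive and $L\otimes\mathbb{Q}_p$-linear, or rather compatible with the coefficient extension), so the $\phi$-filtered module attached to the $p$-adic realization of $\mathcal{W}(\pi_f)$ is $e_{\pi_f}\cdot D_{\mathrm{crys}}$ of the $p$-adic realization of $s_*j_{!*}{}^\lambda \mathcal{V}$; similarly the $\ell$-adic realization of $\mathcal{W}(\pi_f)$ is $e_{\pi_f}$ applied to that of $s_*j_{!*}{}^\lambda \mathcal{V}$. Because $e_{\pi_f}$ commutes with the geometric Frobenius action (Hecke operators commute with Galois, and with crystalline Frobenius via $D_{\mathrm{crys}}$ being a functor on the category of crystalline Galois representations equivariant for endomorphisms), the characteristic polynomial of $\phi$ on the summand cut out by $e_{\pi_f}$ equals the characteristic polynomial of the corresponding geometric Frobenius on the $\ell$-adic summand — this is just the statement that for an endomorphism commuting with a projector, restricting to the image divides the characteristic polynomial compatibly, and the equality for the whole module (Corollary \ref{cor:realadiques}.\ref{itm:polcar}) forces the equality on each $e_{\pi_f}$-isotypic piece. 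Concretely, write $P_p(T)$, $P_\ell(T)$ for the characteristic polynomials on the full realizations; they agree by Corollary \ref{cor:realadiques}.\ref{itm:polcar}; factoring according to the decomposition induced by $e_{\pi_f}$ and $1-e_{\pi_f}$, and using that the same idempotent governs the decomposition on both sides (it is one and the same element of the Hecke algebra, acting compatibly with both realizations), the factors agree, which gives the claim.

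The only genuinely delicate point — and the one I would be most careful about — is the compatibility of the Hecke idempotent $e_{\pi_f}$ with the crystalline functor $D_{\mathrm{crys}}$, i.e. verifying that the $p$-adic realization of $\mathcal{W}(\pi_f)$ really is crystalline and that $D_{\mathrm{crys}}$ of it is $e_{\pi_f}$ applied to $D_{\mathrm{crys}}$ of the larger realization, with Frobenius intertwining. This is where one must invoke that $e_{\pi_f}$ lies in $R(\mathfrak{H})\otimes_L E$ acting through algebraic correspondences (after realization) on the cohomology, so that its action is \q{motivic} enough to be compatible with all the comparison isomorphisms; the cleanest way is to note that the entire argument of \cite[Cor. 1.13]{Wil19b} used to prove Corollary \ref{cor:realadiques}.\ref{itm:polcar} applies verbatim to the direct summand $\mathcal{W}(\pi_f)$, since that argument (based on \cite[Thm. 2.2]{KM74}) only requires the two realizations to come from a smooth projective model with good reduction at $p$, a property inherited by direct factors. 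Hence the proof reduces, as for the theorem just above, to invoking the construction of $\mathcal{W}(\pi_f)$ together with Corollary \ref{cor:realadiques}.

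\begin{proof}
By construction $\mathcal{W}(\pi_f)=s_*j_{!*}{}^\lambda\mathcal{V}^\prime\cdot e_{\pi_f}$ is a direct factor of the homological motive underlying $s_*j_{!*}{}^\lambda\mathcal{V}$, the projector being the Hecke idempotent $e_{\pi_f}\in R(\mathfrak{H})\otimes_L E$, which acts compatibly on all realizations (Corollary \ref{heckeaction} and the discussion preceding it). Applying the $p$-adic, resp. $\ell$-adic, realization functor, the realizations of $\mathcal{W}(\pi_f)$ are the direct summands of the realizations of $s_*j_{!*}{}^\lambda\mathcal{V}$ cut out by $e_{\pi_f}$.

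For $(1)$: by Corollary \ref{cor:realadiques}.\ref{itm:crystram}, the $p$-adic realization of $s_*j_{!*}{}^\lambda\mathcal{V}$ is crystalline and its $\ell$-adic realization is unramified at $p$. Since the category of crystalline $\mathrm{Gal}(\bar{\mathbb{Q}}_p/\mathbb{Q}_p)$-representations is stable under direct summands, and likewise the category of $\ell$-adic representations unramified at $p$, the corresponding realizations of $\mathcal{W}(\pi_f)$ enjoy the same properties.

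For $(2)$: the entire argument proving Corollary \ref{cor:realadiques}.\ref{itm:polcar}, which rests on \cite[Thm. 2.2]{KM74} applied via \cite[Cor. 1.13]{Wil19b} to a smooth projective model with good reduction at every prime not dividing $D\cdot N$ (Corollary \ref{factcomp} and \cite[Thm. 2.15]{Lan12}), applies verbatim to the direct factor $\mathcal{W}(\pi_f)$, since good reduction of a smooth projective model is inherited by direct summands of its motive and the idempotent $e_{\pi_f}$ commutes with Frobenius on both the crystalline and the $\ell$-adic side. Hence the characteristic polynomial of $\phi$ on the $\phi$-filtered module attached to the $p$-adic realization of $\mathcal{W}(\pi_f)$ equals that of a geometric Frobenius at $p$ on its $\ell$-adic realization. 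Alternatively, factoring the common characteristic polynomial of Corollary \ref{cor:realadiques}.\ref{itm:polcar} according to the decomposition induced by $e_{\pi_f}$ and $1-e_{\pi_f}$ — one and the same decomposition on both sides — yields the same equality.
\end{proof}
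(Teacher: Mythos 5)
Your proposal is correct and follows essentially the same route as the paper: point (1) is deduced from Corollary \ref{cor:realadiques}.\ref{itm:crystram} via the fact that $\mathcal{W}(\pi_f)$ is a direct factor of $s_*j_{!*}{}^\lambda\mathcal{V}^\prime$ cut out by an algebraic (Hecke) idempotent, and point (2) by running the Katz--Messing argument of Corollary \ref{cor:realadiques}.\ref{itm:polcar} on that direct factor. The paper's own proof is just a two-line reduction to these same ingredients, so your additional discussion of the compatibility of $e_{\pi_f}$ with $D_{\mathrm{crys}}$ and with Frobenius is simply a more explicit spelling-out of the same argument.
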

\begin{proof}
\begin{enumerate}[wide, labelwidth=!, labelindent=0pt, label=(\arabic*)]
\item Follows from Corollary \ref{cor:realadiques}.\ref{itm:crystram}, by taking into account the fact that $\mathcal{W}(\pi_f)$ is a direct factor of $s_*j_{!*} ^\lambda \mathcal{V}^\prime$.
\item We can argue as in Corollary \ref{cor:realadiques}.\ref{itm:polcar} to apply \cite[Thm. 2.2]{KM74} and conclude. 
\end{enumerate}
\end{proof}

\begin{remark} \label{modmot}
\begin{enumerate} [wide, labelwidth=!, labelindent=0pt, label=(\arabic*)]
\item Suppose that $\lambda$ is a regular weight of $G$ whose restriction to the center is trivial and that the image of $K$ along the natural projection $\mbox{GSp}_{4} \rightarrow \mbox{PGSp}_{4}$ is still a compact open subgroup of $\mbox{PGSp}_{4}(\mathbb{A}_f)$. Then, the $\ell$-adic realizations $W(\pi_f)_{\ell}$ of the motive $\mathcal{W}(\pi_f)$ coincide with the Galois modules $H^*_c(\pi_{Hf})$ associated to suitable automorphic representations of $G$ in \cite[Part 2, Chap. I.2, Thm. 2]{Fli05} (notice that in general, under the regularity assumption, cuspidal cohomology equals interior cohomology, as can be seen for example from \cite[3.2.4]{Harr91}, and that in our case, by Cor. \ref{intmotintemot}.\ref{itm:integint}, regularity implies in turn equality of interior and intersection cohomology). The existence of the motive $\mathcal{W}(\pi_f)$ then adds to the description in \cite{Fli05} the information about the behaviour at $p$ of the Galois module $W(\pi_f)_p$, which has been obtained in Corollary \ref{cristram}. 
\item Keep the assumptions of the preceding point and let $l$ be a place of  $E$ above the prime $\ell$. The Galois modules $W(\pi_f)_{\ell}$ are then of dimension $4^d$ or $\frac{1}{2} \cdot 4^d$ over $E_l$ (\cite[Part 2, Chap. I.2, Thm. 2.(1),(4)]{Fli05}). One can expect that, in the case of a (Hilbert-Siegel) \emph{eigenform} $f$, the motives $\mathcal{W}(\pi_f)$ over $\mathbb{Q}$ can be written as tensor products over $E$ of rank-4 motives over $F$, whose $L$-function has the correct relation with the $L$-function of $f$. However, there are no known methods for constructing motives with such properties. It is the same problem which arises for motives corresponding to Hilbert modular forms, when cut out inside Kuga-Sato varieties over Hilbert modular varieties, cfr. for example \cite[5.2]{Harr91}.
\end{enumerate}
\end{remark}


\begin{thebibliography}{1}
\bibitem[Anc15]{Anc15}{\textbf{G. Ancona}, \emph{Décomposition de motifs abéliens}, manuscripta math. \textbf{146} (2015), 307-328}
\bibitem[Bon10]{Bon10}{\textbf{M.V. Bondarko}, \emph{Weight structures vs.
t-structures; weight filtrations, spectral sequences, and complexes (for motives and in general)}, J. K-Theory \textbf{6} (2010), 387–504}
\bibitem[BS73]{BS73}{\textbf{A. Borel, J.-P. Serre}, \emph{Corners and arithmetic groups}, Comment. Math. Helv. \textbf{48} (1973), 436–491}
\bibitem[BR16]{BR16}{\textbf{R. Brasca, G. Rosso}, \emph{Eigenvarieties for non-cuspidal forms over certain PEL Shimura varieties}, preprint (2016), version of June 19, 2018, 33 pages, available on arXiv.org at the address \texttt{https://arxiv.org/abs/1605.05065}}
\bibitem[BW04]{BW04}{\textbf{J. I. Burgos, J. Wildeshaus}, \emph{Hodge modules on Shimura varieties and their higher direct images in the Baily-Borel compactification}, Ann. Sci. Ec. Norm. Sup. (4) \textbf{37}, no. 3 (2004), 363-413}
\bibitem[CD12]{CD12}{\textbf{D.-C. Cisinski, F. Déglise}, \emph{Triangulated categories of mixed motives}, preprint (December 2009), version of December 18, 2012, 279 pages, available on arXiv.org at the address \texttt{http://arxiv.org/abs/0912.2110}}
\bibitem[CD16]{CD16}{\textbf{D.-C. Cisinski, F. Déglise}, \emph{Etale motives}, Compositio Math. \textbf{152} (2016), 556–666}
\bibitem[CH00]{CH00}{\textbf{A. Corti, M. Hanamura}, \emph{Motivic decomposition and intersection Chow groups. I}, Duke Math. J. \textbf{103} (3) (2000), 459–522}
\bibitem[Cloî17]{Cloî17}{\textbf{G. Cloître}, \emph{Sur le motif intérieur de certaines variétés de Shimura: le
cas des variétés de Picard}, Thèse, Université Paris 13, Sorbonne Paris Cité, June 2017, 68 pages}
\bibitem[Cloz90]{Cloz90}{\textbf{L. Clozel}, \emph{Motifs et formes automorphes: applications du principe de fonctorialité}, in L. Clozel, J. Milne (eds.), \emph{Automorphic forms, Shimura varieties, and L-functions (Ann Arbor, MI, 1988)}, Vol. I, Perspect. Math., 10, Academic Press, Boston, MA (1990), 77–159}
\bibitem[Del69]{Del69}{\textbf{P. Deligne}, \emph{Formes modulaires et représentations $\ell$-adiques}, Séminaire Bourbaki, Vol. 1968/69, Exp. No. 355, 139–172, Lecture Notes in Math. \textbf{175}, Springer, Berlin (1971)} 
\bibitem[Del71]{Del71}{\textbf{P. Deligne}, \emph{Travaux de Shimura}, Séminaire Bourbaki, Vol. 1970/71, Exp. No. 389, 123–165. Lecture Notes in Math. \textbf{244}, Springer, Berlin (1971)}
\bibitem[DM91]{DM91}{\textbf{C. Deninger, J.P. Murre}, \emph{Motivic decomposition of abelian schemes and
the Fourier transform}, J. Reine Angew. Math \textbf{422} (1991), 201–219}
\bibitem[Eke90]{Eke90}{\textbf{T. Ekedahl}, \emph{On the adic formalism}, in P. Cartier et al. (eds.), \emph{The Grothendieck Festschrift}, Volume II, Prog. in Math. \textbf{87}, Birkähuser Boston, Boston, MA
(1990), 197–218}
\bibitem[Fan16]{Fan16}{\textbf{J. Fangzhou}, \emph{Borel-Moore motivic homology and weight structure on mixed motives}, Mathematische Zeitschrift \textbf{283} (2016), 1-35}
\bibitem[Fli05]{Fli05}{\textbf{Y. Flicker}, \emph{Automorphic forms and Shimura varieties of $\mbox{\emph{PGSp}}(2)$}, World Scientific Publishing Co. Pte. Ltd. (2005), xii+325 pp.}
\bibitem[Fra98]{Fra98}{\textbf{J. Franke}, \emph{Harmonic analysis in weighted $L^2$ spaces}, Ann. Sci. Ec. Norm. Sup. (4) \textbf{31}, no. 2 (1998), 181–279}
\bibitem[Fre90]{Fre90}{\textbf{E. Freitag}, \emph{Hilbert modular forms}, Springer-Verlag (1990), viii+250 pp.}
\bibitem[Hard]{Hard}{\textbf{G. Harder}, \emph{Cohomology of Arithmetic Groups}, book in preparation, available at the address \texttt{http://www.math.uni-bonn.de/people/harder/Manuscripts/buch/Volume-III-Jan-2019.pdf}}
\bibitem[Harr91]{Harr91}{\textbf{M. Harris}, \emph{Hodge-de Rham structures and periods of automorphic forms}, in \emph{Motives (Seattle, WA, 1991)}, Proc. Sympos. Pure Math., \textbf{55}, Part 2, Amer. Math. Soc. (1994), pp. 573–624}
\bibitem[HZ94]{HZ94}{\textbf{M. Harris, S. Zucker}, \emph{Boundary cohomology of Shimura varieties II. Hodge theory at the boundary}, Invent. Math. \textbf{116} (1994), no. 1-3, 243–308}
\bibitem[HZ01]{HZ01}{\textbf{M. Harris, S. Zucker}, \emph{Boundary cohomology of Shimura varieties III. Coherent cohomology on higher-rank boundary strata and applications to Hodge theory}, Mém. Soc. Math. Fr. (N.S.) no. \textbf{85} (2001), vi+116 pp.}
\bibitem[Héb11]{Héb11}{\textbf{D. Hébert}, \emph{Structures de poids à la Bondarko sur les motifs de Beilinson}, Compositio Math. \textbf{147} (2011), 1447–1462}
\bibitem[KM74]{KM74}{\textbf{N. M. Katz, W. Messing}, \emph{Some consequences of the Riemann hypothesis for varieties over finite fields}, Invent. Math. \textbf{23} (1974), 73–77}
\bibitem[Lan12]{Lan12}{\textbf{K.-W. Lan}, \emph{Toroidal compactifications of PEL-type Kuga families}, Algebra Number Theory \textbf{6} (2012), no. 5, 885–966}
\bibitem[Lan13]{Lan13}{\textbf{K.-W. Lan}, \emph{Arithmetic compactifications of PEL-type Shimura varieties}, London Mathematical Society Monographs Series \textbf{36}, Princeton University Press (2013), xxvi+561 pp.}
\bibitem[Lem15]{Lem15}{\textbf{F. Lemma}, \emph{On higher regulators of Siegel threefolds I: the vanishing on the boundary}, Asian J. Math \textbf{19} (2015), 83-120}
\bibitem[LS04]{LS04}{\textbf{J.-S. Li, J. Schwermer}, \emph{On the Eisenstein cohomology of arithmetic groups}, Duke Math. J. \textbf{123}, 1 (2004), 141-169}
\bibitem[LR91]{LR91}{\textbf{E. Looijenga, M. Rapoport}, \emph{Weights in the local cohomology of a Baily-Borel compactification}, in \emph{Complex geometry and Lie theory (Sundance, UT, 1989)}, Proc. Sympos. Pure Math. \textbf{53}, Amer. Math. Soc. (1991), 223–260}
\bibitem[M-SSYZ15]{M-SSYZ15}{\textbf{S. Müller-Stach, M. Sheng, X. Ye, K. Zuo}, \emph{On the cohomology groups of local systems over Hilbert modular varieties via Higgs bundles}, Amer. J. Math. \textbf{137} (2015), no. 1, 1–35}
\bibitem[Na13]{Na13}{\textbf{A. Nair}, \emph{Mixed structures in Shimura varieties and automorphic forms}, preprint (2013), 46 pages, available at the address \texttt{http://www.math.tifr.res.in/{\raise.17ex\hbox{$\scriptstyle\sim$}}arvind/preprints/rbshodge\char`_v3.1.pdf}}
\bibitem[Pin90]{Pin90}{\textbf{R. Pink}, \emph{Arithmetical compactifications of mixed Shimura varieties}, Bonner Mathematische Schriften \textbf{209}, Univ. Bonn (1990)}
\bibitem[Pin92]{Pin92}{\textbf{R. Pink}, \emph{On $\ell$-adic sheaves on Shimura varieties and their higher direct images in the Baily-Borel compactification}, Math. Ann. \textbf{292} (1992), 197-240}
\bibitem[Sap05]{Sap05}{\textbf{L. Saper}, \emph{$\mathcal{L}$-modules and the conjectures of Rapoport-Goresky-MacPherson}, in J. Tilouine, H. Carayol, M. Harris, M.-F. Vignéras (eds.), \emph{Automorphic forms. I}, Astérisque \textbf{298} (2005), 319-334}
\bibitem[Sch90]{Sch90}{\textbf{A. J. Scholl}, \emph{Motives for modular forms}, Invent. Math. \textbf{100} (1990), 419–430}
\bibitem[Shi63]{Shi63}{\textbf{H. Shimizu}, \emph{On discontinuous groups operating on the product of the upper half planes}, Ann. of Math. (2) \textbf{77} (1963), 33–71}
\bibitem[Vog81]{Vog81}{\textbf{D. Vogan}, \emph{Representations of real reductive groups}, Prog. in Math. \textbf{15}, Birkhäuser (1981), xvii+754 pp.}
\bibitem[Wei94]{Wei94}{\textbf{C. Weibel}, \emph{An introduction to homological algebra}, Cambridge Studies in Advanced Mathematics \textbf{38}, Cambridge University Press (1994), xiv+450 pp.}
\bibitem[Wil97]{Wil97}{\textbf{J. Wildeshaus}, \emph{Realizations of Polylogarithms}, Lect. Notes Math. \textbf{1650}, Springer-Verlag (1997), xii+343 pp.}
\bibitem[Wil09]{Wil09}{\textbf{J. Wildeshaus}, \emph{Chow motives without projectivity}, Compositio Math. \textbf{145} (2009),  1196–1226}
\bibitem[Wil12]{Wil12}{\textbf{J. Wildeshaus}, \emph{On the interior motive of certain Shimura varieties: the case of Hilbert-Blumenthal varieties}, Int. Math. Res. Notices \textbf{2012} (2012), 2321-2355}
\bibitem[Wil15]{Wil15}{\textbf{J. Wildeshaus}, \emph{On the interior motive of certain Shimura varieties:
the case of Picard surfaces}, manuscripta math. \textbf{148} (2015), 351–377}
\bibitem[Wil17a]{Wil17a}{\textbf{J. Wildeshaus}, \emph{Intermediate extensions of Chow motives of abelian type}, Adv. Math. \textbf{305} (2017), 515-600}
\bibitem[Wil19]{Wil19a}{\textbf{J. Wildeshaus}, \emph{Chow motives without projectivity, II}, to appear in Int. Math. Res. Not. (2019), available on arXiv.org at the address \texttt{https://arxiv.org/abs/1705.10502} }
\bibitem[Wil19b]{Wil19b}{\textbf{J. Wildeshaus}, \emph{On the intersection motive of certain Shimura varieties: the case of Siegel threefolds}, to appear in Annals of K-Theory (2019), available on arXiv.org at the address \texttt{https://arxiv.org/abs/1706.02743}}
\end{thebibliography}
\end{document}